\documentclass[11pt]{amsart}
\usepackage[top=1.5in, bottom=1.5in, left=1in, right=1in]{geometry}
\usepackage{todonotes}

\usepackage{tikz}
\usetikzlibrary{decorations.pathmorphing} % for wiggly lines

\newcommand{\gray}{white!50!gray}
\newcommand{\black}{black}

\newcounter{pick}
\newcounter{valley}
\newcounter{valleyleft}
\newcounter{height}
\newcounter{previousheight}
\newcounter{totalheight}
\newcounter{parentheight}
\newcounter{width}
\newcounter{widthzero}
\newcounter{relativedistance}
\newcounter{toremove}
\newcounter{firstremove}

\newcommand{\nuPath}[7]{
\begin{tikzpicture}[
    scale=#3,
    pathedge/.style={line width=1.3pt}
]

\setcounter{pick}{0}
\setcounter{valley}{0}
\setcounter{height}{0}
\setcounter{previousheight}{0}
\setcounter{totalheight}{-1}

\foreach \step in {#1}{
    \addtocounter{totalheight}{1}
}

\draw[\gray,dashed] (0,0) -- (0,\thetotalheight);

\foreach \step in {#1}{

    \addtocounter{valley}{\step}

    \draw[\gray,dashed] (0,\theheight) -- (\thevalley,\theheight);

    \ifthenelse{\thepick=\thevalley}{}{
        \pgfmathparse{\thepick+1}
        \let\respick\pgfmathresult
        \foreach \x in {\respick,...,\thevalley}{
            \draw[\gray,dashed] (\x,\theheight) -- (\x,\thetotalheight);
        }
    }

    \addtocounter{pick}{\step}
    \addtocounter{height}{1}
}

\setcounter{pick}{0}
\setcounter{valley}{0}
\setcounter{height}{0}

\foreach \step in {#2}{

    \addtocounter{valley}{\step}

    \draw[pathedge]
        (\thepick,\thepreviousheight)
        --
        (\thepick,\theheight)
        --
        (\thevalley,\theheight);

    \addtocounter{pick}{\step}
    \setcounter{previousheight}{\theheight}
    \addtocounter{height}{1}
}

\foreach \a/\b/\c/\d in {#4}{
    \ifthenelse{\equal{\c}{}}{}{
        \draw[\c,line width=.5pt] (\a,\b) circle(.25);
    }
    \fill[\d,line width=.5pt] (\a,\b) circle(.15);
}

\foreach \a/\b/\c/\d/\e in {#5}{
    \draw[\d] (\a,\b) node[scale=.9,anchor=\e]{\c};
}

#7

\end{tikzpicture}
}

\newcommand{\nuTree}[7]{
\begin{tikzpicture}[
    scale=#3,
    fillnode/.style={fill=black,draw=black,circle,scale=0.8},
    circlenode/.style={draw=black,circle,scale=1},
    treeedge/.style={line width=0.9pt,black}
]

\setcounter{pick}{0}
\setcounter{valley}{0}
\setcounter{height}{0}
\setcounter{previousheight}{0}
\setcounter{totalheight}{-1}

#7
        
\foreach \step in {#1}{
    \addtocounter{totalheight}{1}
}

\draw[\gray,dashed] (0,0) -- (0,\thetotalheight);

\foreach \step in {#1}{

    \addtocounter{valley}{\step}

    \draw[\gray,dashed] (0,\theheight) -- (\thevalley,\theheight);

    \ifthenelse{\thepick=\thevalley}{}{
        \pgfmathparse{\thepick+1}
        \let\respick\pgfmathresult
        \foreach \x in {\respick,...,\thevalley}{
            \draw[\gray,dashed] (\x,\theheight) -- (\x,\thetotalheight);
        }
    }

    \addtocounter{pick}{\step}
    \addtocounter{height}{1}
}

    \setcounter{height}{0}
    \setcounter{width}{0}

    \edef\mynu{{#1}}
    \edef\mymu{{#2}}

        \pgfmathsetmacro{\stepnu}{\mynu[0]}
        \pgfmathsetmacro{\stepmu}{\mymu[0]}
        \addtocounter{width}{\stepnu}

        \foreach \x in {0,...,\stepmu}{
            \node[fillnode,scale=#3] at (\thewidth-\x,\theheight) {};
        }
        
        \draw[treeedge] (\thewidth-\stepmu,\theheight) -- (\thewidth,\theheight);

        \ifthenelse{0=\thetotalheight}{}{
        \setcounter{parentheight}{\theheight}
        \setcounter{relativedistance}{0}
        \foreach \k in {0,...,\thetotalheight}{
            \addtocounter{parentheight}{1}
            
            \pgfmathsetmacro{\substepnuabove}{\mynu[\k+1]}
            \pgfmathsetmacro{\substepmuabove}{\mymu[\k+1]}
            \addtocounter{relativedistance}{\substepnuabove-\substepmuabove}
            
            \ifthenelse{\therelativedistance>0}{}{\breakforeach} 
        }            
        \draw[treeedge] (\thewidth-\stepmu,\theheight) -- (\thewidth-\stepmu,\theparentheight);
        }

        \addtocounter{height}{1}

    \foreach \j in {1,...,\thetotalheight}{

        \pgfmathsetmacro{\stepnu}{\mynu[\j]}
        \pgfmathsetmacro{\stepmu}{\mymu[\j]}
        \addtocounter{width}{\stepnu}

        \foreach \x in {0,...,\stepmu}{
            \setcounter{relativedistance}{0}
            \setcounter{toremove}{\x}
            \foreach \k in {\j,...,1}{
                \pgfmathsetmacro{\substepnu}{\mynu[\k]}
                \pgfmathsetmacro{\substepmu}{\mymu[\k]}
                \pgfmathsetmacro{\substepmubelow}{\mymu[\k-1]}
                
                \ifthenelse{\k=\j}
                    {\addtocounter{relativedistance}{\x-\substepnu}}
                    {\addtocounter{relativedistance}{\substepmu-\substepnu}}

                \ifthenelse{\therelativedistance<0}
                    {\breakforeach}
                    {\addtocounter{toremove}{\substepmubelow}}
            }

            \ifthenelse{\x=0}
            {\setcounter{firstremove}{\thetoremove}}
            {}
            
            \node[fillnode,scale=#3] at (\thewidth-\thetoremove,\theheight) {};
        }

        \draw[treeedge] (\thewidth-\thetoremove,\theheight) -- (\thewidth-\thefirstremove,\theheight);
        
        \ifthenelse{\j=\thetotalheight}{}{
        \setcounter{parentheight}{\theheight}
        \setcounter{relativedistance}{0}
        \foreach \k in {\j,...,\thetotalheight}{
            \addtocounter{parentheight}{1}

            \pgfmathsetmacro{\substepnuabove}{\mynu[\k+1]}
            \pgfmathsetmacro{\substepmuabove}{\mymu[\k+1]}
            \addtocounter{relativedistance}{\substepnuabove-\substepmuabove}

            \ifthenelse{\therelativedistance>0}{}{\breakforeach} 
        }            
        \draw[treeedge] (\thewidth-\thetoremove,\theheight) -- (\thewidth-\thetoremove,\theparentheight);
        }
        
        \addtocounter{height}{1}
    }

\foreach \a/\b/\c/\d in {#4}{
    \ifthenelse{\equal{\c}{}}{}{
        \draw[\c,line width=.5pt] (\a,\b) circle(.25);
    }
    \fill[\d,line width=.5pt] (\a,\b) circle(.15);
}

\foreach \a/\b/\c/\d/\e in {#5}{
    \draw[\d] (\a,\b) node[scale=.9,anchor=\e]{\c};
}

\end{tikzpicture}
}

\newcommand{\altnuTree}[8]{
\begin{tikzpicture}[
    scale=#4,
    fillnode/.style={fill=black,draw=black,circle,scale=0.8},
    circlenode/.style={draw=black,circle,scale=1},
    treeedge/.style={line width=0.9pt,black}
]

\setcounter{valleyleft}{0}
\setcounter{height}{0}
\setcounter{previousheight}{0}
\setcounter{totalheight}{-1}

#8
        
\foreach \step in {#1}{
    \addtocounter{totalheight}{1}
}

\edef\mynu{{#1}}
\edef\mydelta{{#2}}
\edef\mymu{{#3}}

    \foreach \j in {1,...,\thetotalheight}{    
        \pgfmathsetmacro{\stepnu}{\mynu[\j]}
        \pgfmathsetmacro{\stepdelta}{\mydelta[\j-1]}
        \addtocounter{valleyleft}{\stepnu-\stepdelta}
    }
    \setcounter{valley}{\thevalleyleft}
    \pgfmathsetmacro{\stepnuzero}{\mynu[0]}
    \addtocounter{valley}{\stepnuzero}
    \setcounter{widthzero}{\thevalley}

    \draw[\gray, dashed] (\thevalleyleft,\theheight) -- (\thevalley, \theheight);
    \foreach \i in  {\thevalleyleft,...,\thevalley}{
        \draw[\gray, dashed] (\i,\theheight) -- (\i,\theheight+1);
    }
    \addtocounter{height}{1}

    \foreach \j in {1,...,\thetotalheight}{
        \pgfmathsetmacro{\stepnu}{\mynu[\j]}
        \pgfmathsetmacro{\stepdelta}{\mydelta[\j-1]}
        \addtocounter{valleyleft}{\stepdelta-\stepnu}
        \addtocounter{valley}{\stepdelta}

        \ifthenelse{\j=\thetotalheight}{
            \draw[\gray, dashed] (\thevalleyleft,\theheight) -- (\thevalley, \theheight);
            }
            {
                \draw[\gray, dashed] (\thevalleyleft,\theheight) -- (\thevalley, \theheight);
                \foreach \i in  {\thevalleyleft,...,\thevalley}{
                    \draw[\gray, dashed] (\i,\theheight) -- (\i,\theheight+1);
                }
            }
            
        \addtocounter{height}{1}
    }

    \setcounter{height}{0}
    \setcounter{width}{0}

        \pgfmathsetmacro{\stepnu}{\mynu[0]}
        \pgfmathsetmacro{\stepmu}{\mymu[0]}
        \addtocounter{width}{\thewidthzero} %modified

        \foreach \x in {0,...,\stepmu}{
            \node[fillnode,scale=#4] at (\thewidth-\x,\theheight) {};
        }
        
        \draw[treeedge] (\thewidth-\stepmu,\theheight) -- (\thewidth,\theheight);

        \ifthenelse{0=\thetotalheight}{}{
        \setcounter{parentheight}{\theheight}
        \setcounter{relativedistance}{0}
        \foreach \k in {0,...,\thetotalheight}{
            \addtocounter{parentheight}{1}
            
            \pgfmathsetmacro{\substepnuabove}{\mydelta[\k]} %%modified nu
            \pgfmathsetmacro{\substepmuabove}{\mymu[\k+1]}
            \addtocounter{relativedistance}{\substepnuabove-\substepmuabove}
            
            \ifthenelse{\therelativedistance>0}{}{\breakforeach} 
        }            
        \draw[treeedge] (\thewidth-\stepmu,\theheight) -- (\thewidth-\stepmu,\theparentheight);
        }

        \addtocounter{height}{1}

    \foreach \j in {1,...,\thetotalheight}{

        \pgfmathsetmacro{\stepnu}{\mynu[\j]}
        \pgfmathsetmacro{\stepmu}{\mymu[\j]}
        \pgfmathsetmacro{\stepdelta}{\mydelta[\j-1]}
        \addtocounter{width}{\stepdelta} %% modified

        \foreach \x in {0,...,\stepmu}{
            \setcounter{relativedistance}{0}
            \setcounter{toremove}{\x}
            \foreach \k in {\j,...,1}{
                \pgfmathsetmacro{\substepnu}{\mydelta[\k-1]} %%modified nu
                \pgfmathsetmacro{\substepmu}{\mymu[\k]}
                \pgfmathsetmacro{\substepmubelow}{\mymu[\k-1]}
                
                \ifthenelse{\k=\j}
                    {\addtocounter{relativedistance}{\x-\substepnu}}
                    {\addtocounter{relativedistance}{\substepmu-\substepnu}}

                \ifthenelse{\therelativedistance<0}
                    {\breakforeach}
                    {\addtocounter{toremove}{\substepmubelow}}
            }

            \ifthenelse{\x=0}
            {\setcounter{firstremove}{\thetoremove}}
            {}
            
            \node[fillnode,scale=#4] at (\thewidth-\thetoremove,\theheight) {};
        }

        \draw[treeedge] (\thewidth-\thetoremove,\theheight) -- (\thewidth-\thefirstremove,\theheight);
        
        \ifthenelse{\j=\thetotalheight}{}{
        \setcounter{parentheight}{\theheight}
        \setcounter{relativedistance}{0}
        \foreach \k in {\j,...,\thetotalheight}{
            \addtocounter{parentheight}{1}

            \pgfmathsetmacro{\substepnuabove}{\mydelta[\k]} %%modified nu
            \pgfmathsetmacro{\substepmuabove}{\mymu[\k+1]}
            \addtocounter{relativedistance}{\substepnuabove-\substepmuabove}

            \ifthenelse{\therelativedistance>0}{}{\breakforeach} 
        }            
        \draw[treeedge] (\thewidth-\thetoremove,\theheight) -- (\thewidth-\thetoremove,\theparentheight);
        }
        
        \addtocounter{height}{1}
    }

\foreach \a/\b/\c/\d in {#5}{
    \ifthenelse{\equal{\c}{}}{}{
        \draw[\c,line width=.5pt] (\a,\b) circle(.25);
    }
    \fill[\d,line width=.5pt] (\a,\b) circle(.15);
}

\foreach \a/\b/\c/\d/\e in {#6}{
    \draw[\d] (\a,\b) node[scale=.9,anchor=\e]{\c};
}

\end{tikzpicture}
}

\newcommand{\altnuShape}[8]{
\begin{tikzpicture}[
    scale=#4
]

\setcounter{valleyleft}{0}
\setcounter{height}{0}
\setcounter{previousheight}{0}
\setcounter{totalheight}{-1}

#8
        
\foreach \step in {#1}{
    \addtocounter{totalheight}{1}
}

\edef\mynu{{#1}}
\edef\mydelta{{#2}}
\edef\mymu{{#3}}

    \foreach \j in {1,...,\thetotalheight}{   
        \pgfmathsetmacro{\stepnu}{\mynu[\j]}
        \pgfmathsetmacro{\stepdelta}{\mydelta[\j-1]}
        \addtocounter{valleyleft}{\stepnu-\stepdelta}
    }
    \setcounter{valley}{\thevalleyleft}
    \pgfmathsetmacro{\stepnuzero}{\mynu[0]}
    \addtocounter{valley}{\stepnuzero}
    \setcounter{widthzero}{\thevalley}

    \draw[\black, dashed] (\thevalleyleft,\theheight) -- (\thevalley, \theheight);
    \foreach \i in  {\thevalleyleft,...,\thevalley}{
        \draw[\black, dashed] (\i,\theheight) -- (\i,\theheight+1);
    }
    \addtocounter{height}{1}

    \foreach \j in {1,...,\thetotalheight}{
        \pgfmathsetmacro{\stepnu}{\mynu[\j]}
        \pgfmathsetmacro{\stepdelta}{\mydelta[\j-1]}
        \addtocounter{valleyleft}{\stepdelta-\stepnu}
        \addtocounter{valley}{\stepdelta}

        \ifthenelse{\j=\thetotalheight}{
            \draw[\black, dashed] (\thevalleyleft,\theheight) -- (\thevalley, \theheight);
            }
            {
                \draw[\black, dashed] (\thevalleyleft,\theheight) -- (\thevalley, \theheight);
                \foreach \i in  {\thevalleyleft,...,\thevalley}{
                    \draw[\black, dashed] (\i,\theheight) -- (\i,\theheight+1);
                }
            }
            
        \addtocounter{height}{1}
    }

\foreach \a/\b/\c/\d in {#5}{
    \ifthenelse{\equal{\c}{}}{}{
        \draw[\c,line width=.5pt] (\a,\b) circle(.25);
    }
    \fill[\d,line width=.5pt] (\a,\b) circle(.15);
}

\foreach \a/\b/\c/\d/\e in {#6}{
    \draw[\d] (\a,\b) node[scale=.9,anchor=\e]{\c};
}

\end{tikzpicture}
}

\newcommand{\altnuShapegray}[8]{
\begin{tikzpicture}[
    scale=#4
]

\setcounter{valleyleft}{0}
\setcounter{height}{0}
\setcounter{previousheight}{0}
\setcounter{totalheight}{-1}

#8
        
\foreach \step in {#1}{
    \addtocounter{totalheight}{1}
}

\edef\mynu{{#1}}
\edef\mydelta{{#2}}
\edef\mymu{{#3}}

    \foreach \j in {1,...,\thetotalheight}{   
        \pgfmathsetmacro{\stepnu}{\mynu[\j]}
        \pgfmathsetmacro{\stepdelta}{\mydelta[\j-1]}
        \addtocounter{valleyleft}{\stepnu-\stepdelta}
    }
    \setcounter{valley}{\thevalleyleft}
    \pgfmathsetmacro{\stepnuzero}{\mynu[0]}
    \addtocounter{valley}{\stepnuzero}
    \setcounter{widthzero}{\thevalley}

    \draw[\gray, dashed] (\thevalleyleft,\theheight) -- (\thevalley, \theheight);
    \foreach \i in  {\thevalleyleft,...,\thevalley}{
        \draw[\gray, dashed] (\i,\theheight) -- (\i,\theheight+1);
    }
    \addtocounter{height}{1}

    \foreach \j in {1,...,\thetotalheight}{
        \pgfmathsetmacro{\stepnu}{\mynu[\j]}
        \pgfmathsetmacro{\stepdelta}{\mydelta[\j-1]}
        \addtocounter{valleyleft}{\stepdelta-\stepnu}
        \addtocounter{valley}{\stepdelta}

        \ifthenelse{\j=\thetotalheight}{
            \draw[\gray, dashed] (\thevalleyleft,\theheight) -- (\thevalley, \theheight);
            }
            {
                \draw[\gray, dashed] (\thevalleyleft,\theheight) -- (\thevalley, \theheight);
                \foreach \i in  {\thevalleyleft,...,\thevalley}{
                    \draw[\gray, dashed] (\i,\theheight) -- (\i,\theheight+1);
                }
            }
            
        \addtocounter{height}{1}
    }

\foreach \a/\b/\c/\d in {#5}{
    \ifthenelse{\equal{\c}{}}{}{
        \draw[\c,line width=.5pt] (\a,\b) circle(.25);
    }
    \fill[\d,line width=.5pt] (\a,\b) circle(.15);
}

\foreach \a/\b/\c/\d/\e in {#6}{
    \draw[\d] (\a,\b) node[scale=.9,anchor=\e]{\c};
}

\end{tikzpicture}
}

\newcommand{\altnuShapegraybonus}[9]{
\begin{tikzpicture}[
    scale=#4
]

\setcounter{valleyleft}{0}
\setcounter{height}{0}
\setcounter{previousheight}{0}
\setcounter{totalheight}{-1}

#8
        
\foreach \step in {#1}{
    \addtocounter{totalheight}{1}
}

\edef\mynu{{#1}}
\edef\mydelta{{#2}}
\edef\mymu{{#3}}

    \foreach \j in {1,...,\thetotalheight}{   
        \pgfmathsetmacro{\stepnu}{\mynu[\j]}
        \pgfmathsetmacro{\stepdelta}{\mydelta[\j-1]}
        \addtocounter{valleyleft}{\stepnu-\stepdelta}
    }
    \setcounter{valley}{\thevalleyleft}
    \pgfmathsetmacro{\stepnuzero}{\mynu[0]}
    \addtocounter{valley}{\stepnuzero}
    \setcounter{widthzero}{\thevalley}

    \draw[\gray, dashed] (\thevalleyleft,\theheight) -- (\thevalley, \theheight);
    \foreach \i in  {\thevalleyleft,...,\thevalley}{
        \draw[\gray, dashed] (\i,\theheight) -- (\i,\theheight+1);
    }
    \addtocounter{height}{1}

    \foreach \j in {1,...,\thetotalheight}{
        \pgfmathsetmacro{\stepnu}{\mynu[\j]}
        \pgfmathsetmacro{\stepdelta}{\mydelta[\j-1]}
        \addtocounter{valleyleft}{\stepdelta-\stepnu}
        \addtocounter{valley}{\stepdelta}

        \ifthenelse{\j=\thetotalheight}{
            \draw[\gray, dashed] (\thevalleyleft,\theheight) -- (\thevalley, \theheight);
            }
            {
                \draw[\gray, dashed] (\thevalleyleft,\theheight) -- (\thevalley, \theheight);
                \foreach \i in  {\thevalleyleft,...,\thevalley}{
                    \draw[\gray, dashed] (\i,\theheight) -- (\i,\theheight+1);
                }
            }
            
        \addtocounter{height}{1}
    }

\foreach \a/\b/\c/\d in {#5}{
    \ifthenelse{\equal{\c}{}}{}{
        \draw[\c,line width=.5pt] (\a,\b) circle(.25);
    }
    \fill[\d,line width=.5pt] (\a,\b) circle(.15);
}

\foreach \a/\b/\c/\d/\e in {#6}{
    \draw[\d] (\a,\b) node[scale=.9,anchor=\e]{\c};
}

#9

\end{tikzpicture}
}

\usepackage{amssymb}
\usepackage{amsthm}
\usepackage{amsmath}
\usepackage{mathrsfs}
\usepackage{amsbsy}
\usepackage{bm}
\usepackage{hyperref}
\usepackage{array}
\usepackage{enumerate}
\usepackage{bbm}
\usepackage{comment}
\usepackage{mathtools}
\usepackage{tabu}
\usepackage{makecell} 
\usepackage{colortbl}
\usepackage{xcolor, cancel}
\usepackage{tikz}
\usepackage{tikz-cd}
\usetikzlibrary{positioning,arrows.meta,patterns.meta}

\definecolor{blue}{rgb}{0, 0.445, 0.695}
\definecolor{bluishgreen}{rgb}{0, 0.626, 0.456}
\definecolor{red}{rgb}{0.896, 0.395, 0}
\definecolor{purple}{rgb}{0.783, 0.464, 0.640}
\definecolor{orange}{rgb}{0.999, 0.706, 0.0}
\definecolor{gold}{rgb}{0.81, 0.71, 0.23}
\definecolor{olive}{RGB}{116,141,19}
\definecolor{green}{RGB}{108,208,48}
\definecolor{teal}{RGB}{47,77,62}
\definecolor{turquoise}{RGB}{86,235,211}
\definecolor{lightblue}{RGB}{150,178,153}
\definecolor{skyblue}{rgb}{0.359, 0.752, 0.973}
\definecolor{blue2}{RGB}{25,50,191}
\definecolor{indigo}{RGB}{142,128,251}
\definecolor{indigo2}{RGB}{114,32,246}
\definecolor{lightpurple}{RGB}{243,197,250}
\definecolor{purple2}{RGB}{105,66,131}
\definecolor{magenta}{RGB}{206,43,188}
\definecolor{brown}{rgb}{0.69, 0.4, 0.0}
\definecolor{LightBlue}{rgb}{0,0.8,1} 
\definecolor{NewBlue}{rgb}{0,0.3,0.8}
\definecolor{Turquoise}{rgb}{0,0.7,0.4}

\newcommand{\Tam}[1]{\operatorname{Tam}({#1})}
\newcommand{\altTam}[2]{\operatorname{Tam}_{#1}(#2)}
\newcommand{\Stan}[1]{\operatorname{Stan}({#1})}

\newcommand{\up}{^{\operatorname{up}}}
\newcommand{\down}{^{\operatorname{down}}}
\newcommand{\ver}{^{\operatorname{vert}}}

\DeclareMathOperator{\cat}{Cat}
\DeclareMathOperator{\row}{Row}
\DeclareMathOperator{\flush}{flush}
\DeclareMathOperator{\ddeg}{ddeg} 
\DeclareMathOperator{\udeg}{udeg}

\newcommand{\dfn}[1]{\textcolor{Turquoise}{\emph{#1}}}

\hypersetup{colorlinks=true, citecolor=LightBlue, linkcolor=NewBlue, urlcolor=NewBlue} 

\usepackage{hhline}
\allowdisplaybreaks
\usepackage[noadjust]{cite}

\usepackage{caption}
\usepackage[noabbrev,capitalise,nameinlink]{cleveref}
\crefname{conjecture}{Conjecture}{Conjectures}

\newtheorem{theorem}{Theorem}[section]
\newtheorem{proposition}[theorem]{Proposition}
\newtheorem{corollary}[theorem]{Corollary}
\newtheorem{conjecture}[theorem]{Conjecture}
\newtheorem{question}[theorem]{Question}

\newtheorem{lemma}[theorem]{Lemma}

\theoremstyle{definition}
\newtheorem{definition}[theorem]{Definition}
\newtheorem{remark}[theorem]{Remark}
\newtheorem{example}[theorem]{Example}

\begin{document}

\title[Invariance of rowmotion for variants of the Tamari lattice]{Invariance of rowmotion \\ for variants of the Tamari lattice} 

\author[Adenbaum]{Ben Adenbaum}
\address{}
\email{benadenbaummath@gmail.com} 

\author[Barnard]{Emily Barnard}
\address{Department of Mathematical Sciences, DePaul University, Chicago, IL 60614, USA}
\email{e.barnard@depaul.edu} 

\author[Ceballos]{Cesar Ceballos}
\address{Institute of Geometry, TU Graz, Graz, Austria}
\email{cesar.ceballos@tugraz.at} 

\author[Chenevi\`{e}re]{Cl\'{e}ment Chenevi\`{e}re}
\address{LIGM, Univ Gustave Eiffel, CNRS, ESIEE Paris, F-77454 Marne-la-Vall\'{e}e, France}
\email{clement.cheneviere@univ-eiffel.fr} 

\author[Defant]{Colin Defant}
\address{Department of Mathematics, Harvard University, Cambridge, MA 02138, USA}
\email{colindefant@gmail.com} 

\author[Hopkins]{Sam Hopkins}
\address{Department of Mathematics, Howard University, Washington, DC 20059, USA}
\email{samuelfhopkins@gmail.com} 

\author[M\"{u}ller]{Matthias M\"{u}ller}
\address{Institute of Geometry, TU Graz, Graz, Austria}
\email{matthias.mueller@tugraz.at} 

\author[Rubey]{Martin Rubey}
\address{Institut f\"{u}r Diskrete Mathematik und Geometrie, TU Wien, Wien, Austria}
\email{martin.rubey@tuwien.ac.at} 

\author[Striker]{Jessica Striker}
\address{Department of Mathematics, North Dakota State University, Fargo, ND 58105, USA}
\email{jessica.striker@ndsu.edu} 

\begin{abstract}
We show that the rowmotion operator from dynamical algebraic combinatorics behaves the same on all alt~$\nu$-Tamari lattices for a fixed lattice path~$\nu$. We use this invariance of rowmotion to establish cyclic sieving and homomesy results for rational Tamari lattices. We also conjecture that this invariance extends to the more general cross Tamari lattices. 
\end{abstract} 

\subjclass[2020]{05E18, 06A07, 06B05}
\keywords{rowmotion, alt~$\nu$-Tamari lattices, cross Tamari lattices, cyclic sieving, homomesy}

\maketitle

\section{Introduction} \label{sec:intro}

\dfn{Rowmotion} is an invertible operator that was originally defined on the set $J(P)$ of order ideals of a finite poset $P$~\cite{brouwer1974period, cameron1995orbits, panyushev2009orbits, striker2012rowmotion}. By viewing $J(P)$ as a finite distributive lattice, the definition of rowmotion has more recently been extended to act on broader classes of finite lattices, such as semidistributive lattices, trim lattices, and beyond~\cite{barnard2019canonical, thomas2019rowmotion, thomas2019independence, defant2023semidistrim, defant2025echelonmotion}.

Rowmotion is one of the operators that has been most intensively studied from the perspective of dynamical algebraic combinatorics~\cite{roby2016dac, striker2017dac}. Two major themes in this field concern the orbit structures of operators acting on combinatorial sets (e.g., the \dfn{cyclic sieving phenomenon}~\cite{reiner2004cyclic, sagan2011cyclic}) and the behavior of statistics along orbits of these operators (e.g., \dfn{homomesy}~\cite{propp2015homomesy}). Much of the previous research on rowmotion has focused on special lattices for which rowmotion behaves particularly well~\cite{panyushev2009orbits,armstrong2013uniform, rush2013orbits}, meaning lattices for which rowmotion has a small, predictable order, exhibits cyclic sieving with respect to natural $q$-analogues, and exhibits homomesy for natural statistics. One common homomesic statistic is the down-degree statistic.

In this paper, rather than studying the behavior of rowmotion on one particular lattice, we compare the behavior of rowmotion among different lattices. More precisely, we show that rowmotion behaves \emph{the same} for several different lattices, where ``the same'' means that rowmotion has the same orbit structure and the same down-degree orbit averages for these lattices.
  
There has been some prior work showing that rowmotion behaves the same for different lattices. For example, in~\cite[Proposition~4.10]{hopkins2022minuscule} it was shown that if the posets $P$ and $Q$ have isomorphic comparability graphs, then the distributive lattices $J(P)$ and $J(Q)$ have the same rowmotion behavior. Similarly, in~\cite{dao2022rowmotion} it was shown that if $P$ is a rectangle poset and $Q$ is a trapezoid poset of the corresponding dimensions, then $J(P)$ and $J(Q)$ have the same rowmotion behavior.

Examples in the existing literature of rowmotion behaving the same for different lattices, where at least one of the lattices is not distributive, are rarer. But one such example is implicitly known in the context of Coxeter--Catalan combinatorics. Fix a positive integer $n$, and consider the lattice of Dyck paths of semilength $n$ ordered by nesting, which is a distributive lattice that is sometimes called the \dfn{Stanley lattice}. In~\cite{armstrong2013uniform}, it was shown that rowmotion acting on this lattice is in equivariant bijection with the Kreweras complement acting on the noncrossing set partitions of the set $[n]\coloneq\{1,\ldots,n\}$. On the other hand, consider the \dfn{Tamari lattice} of triangulations of an $(n+2)$-gon, which is not distributive, but is semidistributive. In~\cite{barnard2019canonical}, it was explained that rowmotion acting on the Tamari lattice is also in equivariant bijection with the Kreweras complement of noncrossing partitions of~$[n]$. Consequently, rowmotion behaves the same for the Stanley and Tamari lattices.

In this paper we dramatically generalize this example of the Stanley lattice and Tamari lattice having the same rowmotion behavior. Specifically, we study rowmotion on the \dfn{alt~$\nu$-Tamari lattices}~\cite{ceballos2024altnu}. We use $\row\colon L \to L$ to denote the rowmotion operator on a lattice~$L$, and $\ddeg\colon L \to \mathbb{N}$ to denote the down-degree statistic on $L$, i.e., $\ddeg(x)$ is the number of elements of~$L$ that $x$ covers. Our main result is the following. 

\begin{theorem} \label{thm:intro_main}
Fix a lattice path $\nu$, and let $L_1$ and $L_2$ be two alt~$\nu$-Tamari lattices. Then rowmotion behaves the same for $L_1$ and $L_2$. That is, there is a bijection $\Phi\colon L_1 \to L_2$ such that:
\begin{itemize}
\item $\Phi( \row (x) ) = \row (\Phi (x))$ for all $x \in L_1$;
\item $\sum_{y \in \{\row^j(x)\colon j\geq 0\}} \ddeg(y) = \sum_{y \in \{\row^j(x)\colon j\geq 0\}} \ddeg(\Phi(y))$ for all $x \in L_1$.
\end{itemize} 
\end{theorem}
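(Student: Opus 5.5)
Alt $\nu$-Tamari lattices are semidistributive, and the proof should exploit that. Recall that on a finite semidistributive lattice $L$, rowmotion is described via canonical join and meet representations. Each $x \in L$ has a canonical join representation $\mathcal{D}(x)$, a set of join-irreducibles. It also has a canonical meet representation $\mathcal{U}(x)$, a set of meet-irreducibles. There is a bijection $\kappa$ from join-irreducibles to meet-irreducibles, and $\row(x)$ is the unique $y$ with $\mathcal{U}(y)=\kappa(\mathcal{D}(x))$. Moreover, $\ddeg(x)=|\mathcal{D}(x)|$.

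The natural model is the \emph{canonical join complex}, the simplicial complex of canonical join representations, together with the map $\kappa$. The first step is to describe, for an alt $\nu$-Tamari lattice $\operatorname{Tam}_\delta(\nu)$, the join-irreducibles and the compatibility relation for canonical joins. The description should be combinatorial, in terms of data attached to $\nu$ and the increment vector $\delta$. I expect the join-irreducibles to be indexed by pairs (a lattice point of the $\nu$-region together with a length, or an ``arc'' between two valleys of $\nu$) in a way that does not depend on $\delta$. The same should hold for the meet-irreducibles. I would then identify $\kappa$ as an explicit map on these indices, for instance a shift or reflection determined by $\nu$ alone.

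Rather than comparing two arbitrary $\delta,\delta'$ directly, I would connect any two alt $\nu$-Tamari lattices by elementary moves changing one entry of $\delta$ by one; the results of \cite{ceballos2024altnu} suggest these interpolate between the $\nu$-Tamari and $\nu$-Stanley-type extremes. For one move $\delta\to\delta'$, I would construct a bijection $\Phi$ on join-irreducibles that preserves the size of canonical join representations. The bijection should send canonical join representations to canonical join representations, i.e. be an isomorphism of canonical join complexes. It should also commute with $\kappa$, and the analogous statement should hold for canonical meet representations. Then define $\Phi(x)$ as the element with $\mathcal{D}(\Phi(x))=\Phi(\mathcal{D}(x))$. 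Equivariance follows from $\row=\mathcal{U}^{-1}\circ\kappa\circ\mathcal{D}$.

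The down-degree condition needs care, since an isomorphism of canonical join complexes gives $\ddeg(\Phi(x))=\ddeg(x)$ only if $\Phi$ maps canonical join sets to canonical join sets of the same size. The theorem only demands equal orbit sums, which suggests $\Phi$ need \emph{not} be a complex isomorphism. A fallback is to use $\ddeg(x)+\operatorname{udeg}(x)$-type identities. Along a rowmotion orbit, $\ddeg(\row(x))$ relates to $\operatorname{udeg}(x)=|\mathcal{U}(\row x)|=|\mathcal{D}(x)|$ shifted. So the orbit sum of $\ddeg$ equals the orbit sum of $|\kappa(\mathcal{D}(x))|$ read as meet-representation sizes, and it suffices for $\Phi$ to preserve either the join-side or the meet-side sizes consistently around the orbit.

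The main obstacle is the first step. The compatibility relation (which sets of join-irreducibles form canonical joins) genuinely depends on $\delta$; the lattices are not isomorphic, so the complexes differ. I would first try to show that, although the complexes differ, the map $\mathcal{D}(x)\mapsto\kappa(\mathcal{D}(x))$ transported to a $\delta$-independent encoding (e.g.\ as a bijection on certain $\nu$-trees or on ``$\nu$-noncrossing arc diagrams'') is the same map for all $\delta$, with the same cardinality statistic. Concretely, I would find a $\delta$-independent set $\mathcal{A}(\nu)$ and bijections $L_\delta\to\mathcal{A}(\nu)$ intertwining rowmotion with one fixed map $K$ on $\mathcal{A}(\nu)$, a Kreweras-type complement. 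This generalizes the Stanley/Tamari–Kreweras picture from the introduction, and under these bijections $\ddeg$ corresponds to one $\delta$-independent statistic. Then $\Phi$ is the composite of these bijections.
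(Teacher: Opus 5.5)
There is a genuine gap. The proposal never actually constructs an intertwining bijection, and both concrete routes you describe require something that is false.

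\textbf{What your two routes require.} Your first plan defines $\Phi$ by $\mathcal D(\Phi(x))=\Phi(\mathcal D(x))$, which forces $\ddeg(\Phi(x))=\ddeg(x)$ for every $x$. Your fallback maps every $\altTam{\nu}{\delta}$ to a common model $\mathcal A(\nu)$ on which $\ddeg$ becomes one $\delta$-independent statistic, and this forces the same pointwise equality for the composite.

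\textbf{Why that is impossible.} No rowmotion-equivariant bijection can preserve $\ddeg$ pointwise, even across a single elementary move. Take $\nu=ENEEN=(1,2,0)$ with $\delta=(1,0)$ and $\widetilde\delta=(0,0)$, and write paths by their east-step vectors, so $\hat 0=\nu$ is $120$. Each lattice is a single rowmotion orbit of size $7$.
For $\delta$, the orbit from $\hat 0$ is $120,021,012,102,030,111,003$, with down-degrees $0,2,1,1,1,1,2$.
For $\widetilde\delta$, it is $120,021,102,030,111,012,003$, with down-degrees $0,2,1,1,1,2,1$.
Since $\hat 0$ is the unique element of down-degree $0$, an equivariant $\ddeg$-preserving bijection would have to fix $\hat 0$ and match these sequences term by term, which is impossible. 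In this example the two canonical join complexes are even isomorphic, so the obstruction is compatibility with rowmotion, not the complexes.

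\textbf{Further problems with the fallback.} Even setting this aside, the fallback is a restatement of the theorem: take $\mathcal A(\nu)=\altTam{\nu}{(0,\dots,0)}$ and $K=\row$. You give no candidate for $K$, and a $\nu$-analogue of the Kreweras complement is not currently known.

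\textbf{What is correct.} Your reduction to elementary moves is right and is also used in the paper. So is the observation that $\ddeg$-orbit sums equal $\udeg$-orbit sums. In your convention that identity reads $\udeg(\row x)=\ddeg(x)$; as written, $\udeg(x)=|\mathcal U(\row x)|$ is garbled.

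\textbf{The missing idea.} What is needed is a local comparison of rowmotion for $\delta$ and $\widetilde\delta$ that preserves degrees only on average along orbits. The paper does this as follows.
\begin{enumerate}
\item It uses the Thomas--Williams slow-motion formula. Reading the boxes of $F_{\delta,\nu}$ top-to-bottom and right-to-left gives $\row=\tau\down\circ\tau\up$, split at the height of the last nonzero entry $\delta_k$. Decrementing $\delta_k$ shifts the lower part of the shape one column to the right.
\item The bijection is $\varphi=\flush\circ\tau\up$. Flushing alone does not intertwine rowmotion, because $\tau\up$ and $\tau\down$ do not commute.
\item One shows $\flush\circ\tau\up=\widetilde{\tau}\up\circ\flush$, since on $F\up$ the toggles behave like distributive-lattice toggles on a northwest path with some forbidden columns. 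Hence $\widetilde{\tau}\up\circ\varphi=\varphi\circ\tau\up$.
\item One shows $\widetilde{\tau}\down\circ\varphi=\varphi\circ\tau\down$ by tracking the vertical part. There $\varphi$ moves each horizontal level $h_i$ to $h_{i-1}+1$, compatibly with every lower toggle.
\item For the statistic, split $\udeg=\udeg\down+\udeg\up$ according to where the ascent boxes lie. The map $\varphi$ preserves $\udeg\down$ but satisfies $\udeg\up(\varphi(T))=\udeg\up(\row(T))$.
\end{enumerate}
This one-step shift of the upper contribution is exactly why orbit sums agree while pointwise values, as in the example above, do not.
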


In the case where $\nu$ is a staircase lattice path, \cref{thm:intro_main} recovers the invariance of rowmotion for the Stanley and Tamari lattices, and in fact interpolates between these. For example, for the staircase lattice path $\nu=ENENEN$, we depict all the rowmotion orbits for the four alt~$\nu$-Tamari lattices in \cref{fig:alt_tamaris_14}, with the classical Tamari lattice on the top and the Stanley lattice on bottom. Note that each lattice is drawn three times to make the three rowmotion orbits clear. We can check that rowmotion behaves the same for all four lattices, confirming \cref{thm:intro_main} in this case.

\begin{figure}[ht]
\begin{center}
\scalebox{0.45}{\begin{tikzpicture}%
	[z={(0cm, 4.5cm)},
	y={(-4.5cm, 0cm)},
	x={(2.2cm, 2.2cm)},
	scale=0.3,rotate=-10]

%% Coordinates of the vertices:
\coordinate (c0003) at (3,4,3);
\coordinate (c0012) at (2,4,3);
\coordinate (c0021) at (1,3,3);
\coordinate (c0030) at (0,2,2);
\coordinate (c0102) at (3,2,3);
\coordinate (c0111) at (1,2,3);
\coordinate (c0120) at (0,1,2);
\coordinate (c0201) at (3,0,1);
\coordinate (c0210) at (0,0,1);
\coordinate (c1002) at (3,4,0);
\coordinate (c1011) at (2,4,0);
\coordinate (c1020) at (0,2,0);
\coordinate (c1101) at (3,0,0);
\coordinate (c1110) at (0,0,0);

%% Drawing the vertices (added inner sep=1.5pt to make them smaller)
\node[draw,circle,fill=white,inner sep=1.5pt] (n0003) at (c0003) {};
\node[draw,circle,fill=white,inner sep=1.5pt] (n0012) at (c0012) {};
\node[draw,circle,fill=white,inner sep=1.5pt] (n0021) at (c0021) {};
\node[draw,circle,fill=white,inner sep=1.5pt] (n0030) at (c0030) {};
\node[draw,circle,fill=white,inner sep=1.5pt] (n0102) at (c0102) {};
\node[draw,circle,fill=white,inner sep=1.5pt] (n0111) at (c0111) {};
\node[draw,circle,fill=white,inner sep=1.5pt] (n0120) at (c0120) {};
\node[draw,circle,fill=white,inner sep=1.5pt] (n0201) at (c0201) {};
\node[draw,circle,fill=white,inner sep=1.5pt] (n0210) at (c0210) {};
\node[draw,circle,fill=white,inner sep=1.5pt] (n1002) at (c1002) {};
\node[draw,circle,fill=white,inner sep=1.5pt] (n1011) at (c1011) {};
\node[draw,circle,fill=white,inner sep=1.5pt] (n1020) at (c1020) {};
\node[draw,circle,fill=white,inner sep=1.5pt] (n1101) at (c1101) {};
\node[draw,circle,fill=white,inner sep=1.5pt] (n1110) at (c1110) {};

%% Drawing edges in the back
%% List of edges in the back
\def\listEdgesBack{1002/0003, 1011/1002, 1101/1002}
\foreach \x/\y in \listEdgesBack{
    \draw [color=black, line width=1] (n\x) -- (n\y);
}

%% Drawing edges in the front
%% List of edges in the front
\def\listEdgesFront{
0012/0003, 0021/0012, 0111/0021, 0111/0102, 0102/0003,
1110/1020, 1020/1011, 1110/1101,
1110/0210, 0210/0120, 0120/0030, 0030/0021, 0120/0111,
1020/0030,
1011/0012,
1101/0201, 0201/0102,
0210/0201}
\foreach \x/\y in \listEdgesFront{
    \draw [color=black, line width=1] (n\x) -- (n\y);
}

\draw[->, line width=3pt, blue, opacity=0.5] (c1110) to[bend right=20]  (c0003);
\draw[->, line width=3pt, blue, opacity=0.5] (c0003) to[bend right=20]  (c1110);

\end{tikzpicture} \hspace{3cm}
\begin{tikzpicture}%
	[z={(0cm, 4.5cm)},
	y={(-4.5cm, 0cm)},
	x={(2.2cm, 2.2cm)},
	scale=0.3,rotate=-10]

%% Coordinates of the vertices:
\coordinate (c0003) at (3,4,3);
\coordinate (c0012) at (2,4,3);
\coordinate (c0021) at (1,3,3);
\coordinate (c0030) at (0,2,2);
\coordinate (c0102) at (3,2,3);
\coordinate (c0111) at (1,2,3);
\coordinate (c0120) at (0,1,2);
\coordinate (c0201) at (3,0,1);
\coordinate (c0210) at (0,0,1);
\coordinate (c1002) at (3,4,0);
\coordinate (c1011) at (2,4,0);
\coordinate (c1020) at (0,2,0);
\coordinate (c1101) at (3,0,0);
\coordinate (c1110) at (0,0,0);

%% Drawing the vertices (added inner sep=1.5pt to make them smaller)
\node[draw,circle,fill=white,inner sep=1.5pt] (n0003) at (c0003) {};
\node[draw,circle,fill=white,inner sep=1.5pt] (n0012) at (c0012) {};
\node[draw,circle,fill=white,inner sep=1.5pt] (n0021) at (c0021) {};
\node[draw,circle,fill=white,inner sep=1.5pt] (n0030) at (c0030) {};
\node[draw,circle,fill=white,inner sep=1.5pt] (n0102) at (c0102) {};
\node[draw,circle,fill=white,inner sep=1.5pt] (n0111) at (c0111) {};
\node[draw,circle,fill=white,inner sep=1.5pt] (n0120) at (c0120) {};
\node[draw,circle,fill=white,inner sep=1.5pt] (n0201) at (c0201) {};
\node[draw,circle,fill=white,inner sep=1.5pt] (n0210) at (c0210) {};
\node[draw,circle,fill=white,inner sep=1.5pt] (n1002) at (c1002) {};
\node[draw,circle,fill=white,inner sep=1.5pt] (n1011) at (c1011) {};
\node[draw,circle,fill=white,inner sep=1.5pt] (n1020) at (c1020) {};
\node[draw,circle,fill=white,inner sep=1.5pt] (n1101) at (c1101) {};
\node[draw,circle,fill=white,inner sep=1.5pt] (n1110) at (c1110) {};

%% Drawing edges in the back
%% List of edges in the back
\def\listEdgesBack{1002/0003, 1011/1002, 1101/1002}
\foreach \x/\y in \listEdgesBack{
    \draw [color=black, line width=1] (n\x) -- (n\y);
}

%% Drawing edges in the front
%% List of edges in the front
\def\listEdgesFront{
0012/0003, 0021/0012, 0111/0021, 0111/0102, 0102/0003,
1110/1020, 1020/1011, 1110/1101,
1110/0210, 0210/0120, 0120/0030, 0030/0021, 0120/0111,
1020/0030,
1011/0012,
1101/0201, 0201/0102,
0210/0201}
\foreach \x/\y in \listEdgesFront{
    \draw [color=black, line width=1] (n\x) -- (n\y);
}

\draw[->, line width=3pt, red, opacity=0.5] (c0120) to[bend right=40]  (c0021);
\draw[->, line width=3pt, red, opacity=0.5] (c0021) to[bend right=40]  (c1011);
\draw[->, line width=3pt, red, opacity=0.5] (c1011) to[bend right=40]  (c0201);
\draw[->, line width=3pt, red, opacity=0.5] (c0201) to[bend left=40]  (c0120);

\end{tikzpicture} \hspace{3cm}
\begin{tikzpicture}%
	[z={(0cm, 4.5cm)},
	y={(-4.5cm, 0cm)},
	x={(2.2cm, 2.2cm)},
	scale=0.3,rotate=-10]

%% Coordinates of the vertices:
\coordinate (c0003) at (3,4,3);
\coordinate (c0012) at (2,4,3);
\coordinate (c0021) at (1,3,3);
\coordinate (c0030) at (0,2,2);
\coordinate (c0102) at (3,2,3);
\coordinate (c0111) at (1,2,3);
\coordinate (c0120) at (0,1,2);
\coordinate (c0201) at (3,0,1);
\coordinate (c0210) at (0,0,1);
\coordinate (c1002) at (3,4,0);
\coordinate (c1011) at (2,4,0);
\coordinate (c1020) at (0,2,0);
\coordinate (c1101) at (3,0,0);
\coordinate (c1110) at (0,0,0);

%% Drawing the vertices (added inner sep=1.5pt to make them smaller)
\node[draw,circle,fill=white,inner sep=1.5pt] (n0003) at (c0003) {};
\node[draw,circle,fill=white,inner sep=1.5pt] (n0012) at (c0012) {};
\node[draw,circle,fill=white,inner sep=1.5pt] (n0021) at (c0021) {};
\node[draw,circle,fill=white,inner sep=1.5pt] (n0030) at (c0030) {};
\node[draw,circle,fill=white,inner sep=1.5pt] (n0102) at (c0102) {};
\node[draw,circle,fill=white,inner sep=1.5pt] (n0111) at (c0111) {};
\node[draw,circle,fill=white,inner sep=1.5pt] (n0120) at (c0120) {};
\node[draw,circle,fill=white,inner sep=1.5pt] (n0201) at (c0201) {};
\node[draw,circle,fill=white,inner sep=1.5pt] (n0210) at (c0210) {};
\node[draw,circle,fill=white,inner sep=1.5pt] (n1002) at (c1002) {};
\node[draw,circle,fill=white,inner sep=1.5pt] (n1011) at (c1011) {};
\node[draw,circle,fill=white,inner sep=1.5pt] (n1020) at (c1020) {};
\node[draw,circle,fill=white,inner sep=1.5pt] (n1101) at (c1101) {};
\node[draw,circle,fill=white,inner sep=1.5pt] (n1110) at (c1110) {};

%% Drawing edges in the back
%% List of edges in the back
\def\listEdgesBack{1002/0003, 1011/1002, 1101/1002}
\foreach \x/\y in \listEdgesBack{
    \draw [color=black, line width=1] (n\x) -- (n\y);
}

%% Drawing edges in the front
%% List of edges in the front
\def\listEdgesFront{
0012/0003, 0021/0012, 0111/0021, 0111/0102, 0102/0003,
1110/1020, 1020/1011, 1110/1101,
1110/0210, 0210/0120, 0120/0030, 0030/0021, 0120/0111,
1020/0030,
1011/0012,
1101/0201, 0201/0102,
0210/0201}
\foreach \x/\y in \listEdgesFront{
    \draw [color=black, line width=1] (n\x) -- (n\y);
}

\draw[->, line width=3pt, green, opacity=0.5] (c1101) to[bend left=20]  (c0030);
\draw[->, line width=3pt, green, opacity=0.5] (c0030) to[bend right=20]  (c0111);
\draw[->, line width=3pt, green, opacity=0.5] (c0111) to[bend right=20]  (c1002);
\draw[->, line width=3pt, green, opacity=0.5] (c1002) to[bend right=20]  (c0210);
\draw[->, line width=3pt, green, opacity=0.5] (c0210) to[bend right=20]  (c0102);
\draw[->, line width=3pt, green, opacity=0.5] (c0102) to[bend right=90]  (c1020);
\draw[->, line width=3pt, green, opacity=0.5] (c1020) to[bend left=40]  (c0012);
\draw[->, line width=3pt, green, opacity=0.5] (c0012) to[bend left=40]  (c1101);

\end{tikzpicture}}
\vspace{3mm}
\scalebox{0.45}{\begin{tikzpicture}%
	[z={(0cm, 4.5cm)},
	y={(-4.5cm, 0cm)},
	x={(2cm, 2cm)},
	scale=0.3,rotate=-10]

%% Coordinates of the vertices:
\coordinate (c0003) at (3,6,6);
\coordinate (c0012) at (2,5,6);
\coordinate (c0021) at (1,4,5);
\coordinate (c0030) at (0,3,4);
\coordinate (c0102) at (2,4,6);
\coordinate (c0111) at (1,3,5);
\coordinate (c0120) at (0,2,4);
\coordinate (c0201) at (1,2,4);
\coordinate (c0210) at (0,1,3);
\coordinate (c1002) at (3,6,3);
\coordinate (c1011) at (1,4,3);
\coordinate (c1020) at (0,3,2);
\coordinate (c1101) at (1,2,3);
\coordinate (c1110) at (0,1,2);

%% Drawing the vertices
\node[draw,circle,fill=white,inner sep=1.5pt] (n0003) at (c0003) {};
\node[draw,circle,fill=white,inner sep=1.5pt] (n0012) at (c0012) {};
\node[draw,circle,fill=white,inner sep=1.5pt] (n0021) at (c0021) {};
\node[draw,circle,fill=white,inner sep=1.5pt] (n0030) at (c0030) {};

\node[draw,circle,fill=white,inner sep=1.5pt] (n0102) at (c0102) {};
\node[draw,circle,fill=white,inner sep=1.5pt] (n0111) at (c0111) {};
\node[draw,circle,fill=white,inner sep=1.5pt] (n0120) at (c0120) {};

\node[draw,circle,fill=white,inner sep=1.5pt] (n0201) at (c0201) {};
\node[draw,circle,fill=white,inner sep=1.5pt] (n0210) at (c0210) {};

\node[draw,circle,fill=white,inner sep=1.5pt] (n1002) at (c1002) {};
\node[draw,circle,fill=white,inner sep=1.5pt] (n1011) at (c1011) {};
\node[draw,circle,fill=white,inner sep=1.5pt] (n1020) at (c1020) {};

\node[draw,circle,fill=white,inner sep=1.5pt] (n1101) at (c1101) {};
\node[draw,circle,fill=white,inner sep=1.5pt] (n1110) at (c1110) {};

%% Drawing edges in the back
\def\listEdgesBack{1110/1101,1101/1011,1101/0201}
\foreach \x/\y in \listEdgesBack{
    \draw [color=black, line width=1] (n\x) -- (n\y);
}

%% Drawing edges in the front
\def\listEdgesFront{
1110/1020,1110/0210,1020/1011,1020/0030,1011/1002,1011/0021,1002/0003,0210/0201,0210/0120,0201/0111,0120/0111,0120/0030,0111/0102,0111/0021,0102/0012,0030/0021,0021/0012,0012/0003}
\foreach \x/\y in \listEdgesFront{
    \draw [color=black, line width=1] (n\x) -- (n\y);
}

\draw[->, line width=3pt, blue, opacity=0.5] (c0030) to[bend right=20]  (c1101);
\draw[->, line width=3pt, blue, opacity=0.5] (c1101) to[bend right=20]  (c0030);

\end{tikzpicture} \hspace{4cm}
\begin{tikzpicture}%
	[z={(0cm, 4.5cm)},
	y={(-4.5cm, 0cm)},
	x={(2cm, 2cm)},
	scale=0.3,rotate=-10]

%% Coordinates of the vertices:
\coordinate (c0003) at (3,6,6);
\coordinate (c0012) at (2,5,6);
\coordinate (c0021) at (1,4,5);
\coordinate (c0030) at (0,3,4);
\coordinate (c0102) at (2,4,6);
\coordinate (c0111) at (1,3,5);
\coordinate (c0120) at (0,2,4);
\coordinate (c0201) at (1,2,4);
\coordinate (c0210) at (0,1,3);
\coordinate (c1002) at (3,6,3);
\coordinate (c1011) at (1,4,3);
\coordinate (c1020) at (0,3,2);
\coordinate (c1101) at (1,2,3);
\coordinate (c1110) at (0,1,2);

%% Drawing the vertices
\node[draw,circle,fill=white,inner sep=1.5pt] (n0003) at (c0003) {};
\node[draw,circle,fill=white,inner sep=1.5pt] (n0012) at (c0012) {};
\node[draw,circle,fill=white,inner sep=1.5pt] (n0021) at (c0021) {};
\node[draw,circle,fill=white,inner sep=1.5pt] (n0030) at (c0030) {};

\node[draw,circle,fill=white,inner sep=1.5pt] (n0102) at (c0102) {};
\node[draw,circle,fill=white,inner sep=1.5pt] (n0111) at (c0111) {};
\node[draw,circle,fill=white,inner sep=1.5pt] (n0120) at (c0120) {};

\node[draw,circle,fill=white,inner sep=1.5pt] (n0201) at (c0201) {};
\node[draw,circle,fill=white,inner sep=1.5pt] (n0210) at (c0210) {};

\node[draw,circle,fill=white,inner sep=1.5pt] (n1002) at (c1002) {};
\node[draw,circle,fill=white,inner sep=1.5pt] (n1011) at (c1011) {};
\node[draw,circle,fill=white,inner sep=1.5pt] (n1020) at (c1020) {};

\node[draw,circle,fill=white,inner sep=1.5pt] (n1101) at (c1101) {};
\node[draw,circle,fill=white,inner sep=1.5pt] (n1110) at (c1110) {};

%% Drawing edges in the back
\def\listEdgesBack{1110/1101,1101/1011,1101/0201}
\foreach \x/\y in \listEdgesBack{
    \draw [color=black, line width=1] (n\x) -- (n\y);
}

%% Drawing edges in the front
\def\listEdgesFront{
1110/1020,1110/0210,1020/1011,1020/0030,1011/1002,1011/0021,1002/0003,0210/0201,0210/0120,0201/0111,0120/0111,0120/0030,0111/0102,0111/0021,0102/0012,0030/0021,0021/0012,0012/0003}
\foreach \x/\y in \listEdgesFront{
    \draw [color=black, line width=1] (n\x) -- (n\y);
}

\draw[->, line width=3pt, red, opacity=0.5] (c0210) to[bend left=30]  (c0111);
\draw[->, line width=3pt, red, opacity=0.5] (c0111) to[bend right=30]  (c0012);
\draw[->, line width=3pt, red, opacity=0.5] (c0012) to[bend right=30]  (c1002);
\draw[->, line width=3pt, red, opacity=0.5] (c1002) to[bend right=40]  (c0210);

\end{tikzpicture} \hspace{4cm}
\begin{tikzpicture}%
	[z={(0cm, 4.5cm)},
	y={(-4.5cm, 0cm)},
	x={(2cm, 2cm)},
	scale=0.3,rotate=-10]

%% Coordinates of the vertices:
\coordinate (c0003) at (3,6,6);
\coordinate (c0012) at (2,5,6);
\coordinate (c0021) at (1,4,5);
\coordinate (c0030) at (0,3,4);
\coordinate (c0102) at (2,4,6);
\coordinate (c0111) at (1,3,5);
\coordinate (c0120) at (0,2,4);
\coordinate (c0201) at (1,2,4);
\coordinate (c0210) at (0,1,3);
\coordinate (c1002) at (3,6,3);
\coordinate (c1011) at (1,4,3);
\coordinate (c1020) at (0,3,2);
\coordinate (c1101) at (1,2,3);
\coordinate (c1110) at (0,1,2);

%% Drawing the vertices
\node[draw,circle,fill=white,inner sep=1.5pt] (n0003) at (c0003) {};
\node[draw,circle,fill=white,inner sep=1.5pt] (n0012) at (c0012) {};
\node[draw,circle,fill=white,inner sep=1.5pt] (n0021) at (c0021) {};
\node[draw,circle,fill=white,inner sep=1.5pt] (n0030) at (c0030) {};

\node[draw,circle,fill=white,inner sep=1.5pt] (n0102) at (c0102) {};
\node[draw,circle,fill=white,inner sep=1.5pt] (n0111) at (c0111) {};
\node[draw,circle,fill=white,inner sep=1.5pt] (n0120) at (c0120) {};

\node[draw,circle,fill=white,inner sep=1.5pt] (n0201) at (c0201) {};
\node[draw,circle,fill=white,inner sep=1.5pt] (n0210) at (c0210) {};

\node[draw,circle,fill=white,inner sep=1.5pt] (n1002) at (c1002) {};
\node[draw,circle,fill=white,inner sep=1.5pt] (n1011) at (c1011) {};
\node[draw,circle,fill=white,inner sep=1.5pt] (n1020) at (c1020) {};

\node[draw,circle,fill=white,inner sep=1.5pt] (n1101) at (c1101) {};
\node[draw,circle,fill=white,inner sep=1.5pt] (n1110) at (c1110) {};

%% Drawing edges in the back
\def\listEdgesBack{1110/1101,1101/1011,1101/0201}
\foreach \x/\y in \listEdgesBack{
    \draw [color=black, line width=1] (n\x) -- (n\y);
}

%% Drawing edges in the front
\def\listEdgesFront{
1110/1020,1110/0210,1020/1011,1020/0030,1011/1002,1011/0021,1002/0003,0210/0201,0210/0120,0201/0111,0120/0111,0120/0030,0111/0102,0111/0021,0102/0012,0030/0021,0021/0012,0012/0003}
\foreach \x/\y in \listEdgesFront{
    \draw [color=black, line width=1] (n\x) -- (n\y);
}

\draw[->, line width=3pt, green, opacity=0.5] (c1110) to[bend left=20]  (c0021);
\draw[->, line width=3pt, green, opacity=0.5] (c0021) to[bend right=20]  (c0102);
\draw[->, line width=3pt, green, opacity=0.5] (c0102) to[bend right=90]  (c1020);
\draw[->, line width=3pt, green, opacity=0.5] (c1020) to[bend right=40]  (c0201);
\draw[->, line width=3pt, green, opacity=0.5] (c0201) to[bend right=40]  (c0120);
\draw[->, line width=3pt, green, opacity=0.5] (c0120) to[bend left=40]  (c1011);
\draw[->, line width=3pt, green, opacity=0.5] (c1011) to[bend left=40]  (c0003);
\draw[->, line width=3pt, green, opacity=0.5] (c0003) to[bend left=70]  (c1110);

\end{tikzpicture}}
\vspace{3mm}
\scalebox{0.45}{\begin{tikzpicture}%
	[z={(0cm, 4.5cm)},
	y={(-5cm, 0cm)},
	x={(2.2cm, 2.2cm)},
	scale=0.3]

%% Coordinates of the vertices:
\coordinate (c0003) at (3,4,6);
\coordinate (c0012) at (2,4,6);
\coordinate (c0021) at (1,3,5);
\coordinate (c0030) at (0,2,4);
\coordinate (c0102) at (3,2,4);
\coordinate (c0111) at (1,2,4);
\coordinate (c0120) at (0,1,3);
\coordinate (c0201) at (3,0,2);
\coordinate (c0210) at (0,0,2);
\coordinate (c1002) at (3,2,3);
\coordinate (c1011) at (1,2,3);
\coordinate (c1020) at (0,1,2);
\coordinate (c1101) at (3,0,1);
\coordinate (c1110) at (0,0,1);

%% Drawing the vertices
\node[draw,circle,fill=white,inner sep=1.5pt] (n0003) at (c0003) {};
\node[draw,circle,fill=white,inner sep=1.5pt] (n0012) at (c0012) {};
\node[draw,circle,fill=white,inner sep=1.5pt] (n0021) at (c0021) {};
\node[draw,circle,fill=white,inner sep=1.5pt] (n0030) at (c0030) {};

\node[draw,circle,fill=white,inner sep=1.5pt] (n0102) at (c0102) {};
\node[draw,circle,fill=white,inner sep=1.5pt] (n0111) at (c0111) {};
\node[draw,circle,fill=white,inner sep=1.5pt] (n0120) at (c0120) {};

\node[draw,circle,fill=white,inner sep=1.5pt] (n0201) at (c0201) {};
\node[draw,circle,fill=white,inner sep=1.5pt] (n0210) at (c0210) {};

\node[draw,circle,fill=white,inner sep=1.5pt] (n1002) at (c1002) {};
\node[draw,circle,fill=white,inner sep=1.5pt] (n1011) at (c1011) {};
\node[draw,circle,fill=white,inner sep=1.5pt] (n1020) at (c1020) {};

\node[draw,circle,fill=white,inner sep=1.5pt] (n1101) at (c1101) {};
\node[draw,circle,fill=white,inner sep=1.5pt] (n1110) at (c1110) {};

%% Drawing edges in the back
%% List of edges in the back
\def\listEdgesBack{1101/1002,1011/1002,1011/0111,1002/0102}
\foreach \x/\y in \listEdgesBack{
    \draw [color=black, line width=1] (n\x) -- (n\y);
}

%% Drawing edges in the front
%% List of edges in the front
\def\listEdgesFront{
1110/1101,1110/1020,1110/0210,1101/0201,1020/1011,1020/0120,0210/0201,0210/0120,0201/0102,0120/0111,0120/0030,0111/0102,0111/0021,0102/0003,0030/0021,0021/0012,0012/0003}
\foreach \x/\y in \listEdgesFront{
    \draw [color=black, line width=1] (n\x) -- (n\y);
}

\draw[->, line width=3pt, blue, opacity=0.5] (c0210) to[bend left=40]  (c1002);
\draw[->, line width=3pt, blue, opacity=0.5] (c1002) to[bend left=40]  (c0210);

\end{tikzpicture} \hspace{2cm}
\begin{tikzpicture}%
	[z={(0cm, 4.5cm)},
	y={(-5cm, 0cm)},
	x={(2.2cm, 2.2cm)},
	scale=0.3]

%% Coordinates of the vertices:
\coordinate (c0003) at (3,4,6);
\coordinate (c0012) at (2,4,6);
\coordinate (c0021) at (1,3,5);
\coordinate (c0030) at (0,2,4);
\coordinate (c0102) at (3,2,4);
\coordinate (c0111) at (1,2,4);
\coordinate (c0120) at (0,1,3);
\coordinate (c0201) at (3,0,2);
\coordinate (c0210) at (0,0,2);
\coordinate (c1002) at (3,2,3);
\coordinate (c1011) at (1,2,3);
\coordinate (c1020) at (0,1,2);
\coordinate (c1101) at (3,0,1);
\coordinate (c1110) at (0,0,1);

%% Drawing the vertices
\node[draw,circle,fill=white,inner sep=1.5pt] (n0003) at (c0003) {};
\node[draw,circle,fill=white,inner sep=1.5pt] (n0012) at (c0012) {};
\node[draw,circle,fill=white,inner sep=1.5pt] (n0021) at (c0021) {};
\node[draw,circle,fill=white,inner sep=1.5pt] (n0030) at (c0030) {};

\node[draw,circle,fill=white,inner sep=1.5pt] (n0102) at (c0102) {};
\node[draw,circle,fill=white,inner sep=1.5pt] (n0111) at (c0111) {};
\node[draw,circle,fill=white,inner sep=1.5pt] (n0120) at (c0120) {};

\node[draw,circle,fill=white,inner sep=1.5pt] (n0201) at (c0201) {};
\node[draw,circle,fill=white,inner sep=1.5pt] (n0210) at (c0210) {};

\node[draw,circle,fill=white,inner sep=1.5pt] (n1002) at (c1002) {};
\node[draw,circle,fill=white,inner sep=1.5pt] (n1011) at (c1011) {};
\node[draw,circle,fill=white,inner sep=1.5pt] (n1020) at (c1020) {};

\node[draw,circle,fill=white,inner sep=1.5pt] (n1101) at (c1101) {};
\node[draw,circle,fill=white,inner sep=1.5pt] (n1110) at (c1110) {};

%% Drawing edges in the back
%% List of edges in the back
\def\listEdgesBack{1101/1002,1011/1002,1011/0111,1002/0102}
\foreach \x/\y in \listEdgesBack{
    \draw [color=black, line width=1] (n\x) -- (n\y);
}

%% Drawing edges in the front
%% List of edges in the front
\def\listEdgesFront{
1110/1101,1110/1020,1110/0210,1101/0201,1020/1011,1020/0120,0210/0201,0210/0120,0201/0102,0120/0111,0120/0030,0111/0102,0111/0021,0102/0003,0030/0021,0021/0012,0012/0003}
\foreach \x/\y in \listEdgesFront{
    \draw [color=black, line width=1] (n\x) -- (n\y);
}

\draw[->, line width=3pt, red, opacity=0.5] (c0021) to[bend left=40]  (c0012);
\draw[->, line width=3pt, red, opacity=0.5] (c0012) to[bend left=70]  (c1101);
\draw[->, line width=3pt, red, opacity=0.5] (c1101) to[bend left=50]  (c0120);
\draw[->, line width=3pt, red, opacity=0.5] (c0120) to[bend right=40]  (c0021);

\end{tikzpicture} \hspace{2cm}
\begin{tikzpicture}%
	[z={(0cm, 4.5cm)},
	y={(-5cm, 0cm)},
	x={(2.2cm, 2.2cm)},
	scale=0.3]

%% Coordinates of the vertices:
\coordinate (c0003) at (3,4,6);
\coordinate (c0012) at (2,4,6);
\coordinate (c0021) at (1,3,5);
\coordinate (c0030) at (0,2,4);
\coordinate (c0102) at (3,2,4);
\coordinate (c0111) at (1,2,4);
\coordinate (c0120) at (0,1,3);
\coordinate (c0201) at (3,0,2);
\coordinate (c0210) at (0,0,2);
\coordinate (c1002) at (3,2,3);
\coordinate (c1011) at (1,2,3);
\coordinate (c1020) at (0,1,2);
\coordinate (c1101) at (3,0,1);
\coordinate (c1110) at (0,0,1);

%% Drawing the vertices
\node[draw,circle,fill=white,inner sep=1.5pt] (n0003) at (c0003) {};
\node[draw,circle,fill=white,inner sep=1.5pt] (n0012) at (c0012) {};
\node[draw,circle,fill=white,inner sep=1.5pt] (n0021) at (c0021) {};
\node[draw,circle,fill=white,inner sep=1.5pt] (n0030) at (c0030) {};

\node[draw,circle,fill=white,inner sep=1.5pt] (n0102) at (c0102) {};
\node[draw,circle,fill=white,inner sep=1.5pt] (n0111) at (c0111) {};
\node[draw,circle,fill=white,inner sep=1.5pt] (n0120) at (c0120) {};

\node[draw,circle,fill=white,inner sep=1.5pt] (n0201) at (c0201) {};
\node[draw,circle,fill=white,inner sep=1.5pt] (n0210) at (c0210) {};

\node[draw,circle,fill=white,inner sep=1.5pt] (n1002) at (c1002) {};
\node[draw,circle,fill=white,inner sep=1.5pt] (n1011) at (c1011) {};
\node[draw,circle,fill=white,inner sep=1.5pt] (n1020) at (c1020) {};

\node[draw,circle,fill=white,inner sep=1.5pt] (n1101) at (c1101) {};
\node[draw,circle,fill=white,inner sep=1.5pt] (n1110) at (c1110) {};

%% Drawing edges in the back
%% List of edges in the back
\def\listEdgesBack{1101/1002,1011/1002,1011/0111,1002/0102}
\foreach \x/\y in \listEdgesBack{
    \draw [color=black, line width=1] (n\x) -- (n\y);
}

%% Drawing edges in the front
%% List of edges in the front
\def\listEdgesFront{
1110/1101,1110/1020,1110/0210,1101/0201,1020/1011,1020/0120,0210/0201,0210/0120,0201/0102,0120/0111,0120/0030,0111/0102,0111/0021,0102/0003,0030/0021,0021/0012,0012/0003}
\foreach \x/\y in \listEdgesFront{
    \draw [color=black, line width=1] (n\x) -- (n\y);
}

\draw[->, line width=3pt, green, opacity=0.5] (c1110) to[bend right=40]  (c0102);
\draw[->, line width=3pt, green, opacity=0.5] (c0102) to[bend right=40]  (c0030);
\draw[->, line width=3pt, green, opacity=0.5] (c0030) to[bend right=40]  (c1011);
\draw[->, line width=3pt, green, opacity=0.5] (c1011) to[bend left=40]  (c0201);
\draw[->, line width=3pt, green, opacity=0.5] (c0201) to[bend left=40]  (c1020);
\draw[->, line width=3pt, green, opacity=0.5] (c1020) to[bend left=50]  (c0111);
\draw[->, line width=3pt, green, opacity=0.5] (c0111) to[bend right=50]  (c0003);
\draw[->, line width=3pt, green, opacity=0.5] (c0003) to[bend right=40]  (c1110);

\end{tikzpicture}}
\vspace{3mm}
\scalebox{0.45}{\begin{tikzpicture}%
[
scale=0.55]

%% Coordinates of the vertices:

\coordinate (c1110) at (0,0);
\coordinate (c1020) at (0,2);
\coordinate (c0210) at (2,3);
\coordinate (c0120) at (2,5);
\coordinate (c1101) at (-2,3);
\coordinate (c1011) at (-2,5);
\coordinate (c0111) at (0,7);
\coordinate (c0201) at (0,5);
\coordinate (c0030) at (4,7);
\coordinate (c0021) at (2,9);
\coordinate (c0012) at (0,11);
\coordinate (c0003) at (2,13);
\coordinate (c0102) at (-2,9);
\coordinate (c1002) at (-4,7);

%% Drawing the vertices
\node[draw,circle,fill=white,inner sep=1.5pt] (n0003) at (c0003) {};
\node[draw,circle,fill=white,inner sep=1.5pt] (n0012) at (c0012) {};
\node[draw,circle,fill=white,inner sep=1.5pt] (n0021) at (c0021) {};
\node[draw,circle,fill=white,inner sep=1.5pt] (n0030) at (c0030) {};

\node[draw,circle,fill=white,inner sep=1.5pt] (n0102) at (c0102) {};
\node[draw,circle,fill=white,inner sep=1.5pt] (n0111) at (c0111) {};
\node[draw,circle,fill=white,inner sep=1.5pt] (n0120) at (c0120) {};

\node[draw,circle,fill=white,inner sep=1.5pt] (n0201) at (c0201) {};
\node[draw,circle,fill=white,inner sep=1.5pt] (n0210) at (c0210) {};

\node[draw,circle,fill=white,inner sep=1.5pt] (n1002) at (c1002) {};
\node[draw,circle,fill=white,inner sep=1.5pt] (n1011) at (c1011) {};
\node[draw,circle,fill=white,inner sep=1.5pt] (n1020) at (c1020) {};

\node[draw,circle,fill=white,inner sep=1.5pt] (n1101) at (c1101) {};
\node[draw,circle,fill=white,inner sep=1.5pt] (n1110) at (c1110) {};

%% Drawing edges in the back
%% List of edges in the back
\def\listEdgesBack{1101/1011,1101/0201,1011/1002,1011/0111,1002/0102,1020/1011}
\foreach \x/\y in \listEdgesBack{
    \draw [color=black, line width=1] (n\x) -- (n\y);
}

%% Drawing edges in the front
%% List of edges in the front
\def\listEdgesFront{
1110/1101,1110/1020,1110/0210,1020/0120,0210/0201,0210/0120,0201/0111,0120/0111,0120/0030,0111/0102,0111/0021,0102/0012,0030/0021,0021/0012,0012/0003}
\foreach \x/\y in \listEdgesFront{
    \draw [color=black, line width=1] (n\x) -- (n\y);
}

\draw[->, line width=3pt, blue, opacity=0.5] (c1020) to[bend right=40]  (c0201);
\draw[->, line width=3pt, blue, opacity=0.5] (c0201) to[bend right=40]  (c1020);

\end{tikzpicture} \hspace{4cm}
\begin{tikzpicture}%
[
scale=0.55]

%% Coordinates of the vertices:
%\coordinate (c0003) at (3,6,9);
%\coordinate (c0012) at (2,5,8);
%\coordinate (c0021) at (1,4,7);
%\coordinate (c0030) at (0,3,6);
%\coordinate (c0102) at (2,4,7);
%\coordinate (c0111) at (1,3,6);
%\coordinate (c0120) at (0,2,5);
%\coordinate (c0201) at (1,2,5);
%\coordinate (c0210) at (0,1,4);
%\coordinate (c1002) at (2,4,6);
%\coordinate (c1011) at (1,3,5);
%\coordinate (c1020) at (0,2,4);
%\coordinate (c1101) at (1,2,4);
%\coordinate (c1110) at (0,1,3);

\coordinate (c1110) at (0,0);
\coordinate (c1020) at (0,2);
\coordinate (c0210) at (2,3);
\coordinate (c0120) at (2,5);
\coordinate (c1101) at (-2,3);
\coordinate (c1011) at (-2,5);
\coordinate (c0111) at (0,7);
\coordinate (c0201) at (0,5);
\coordinate (c0030) at (4,7);
\coordinate (c0021) at (2,9);
\coordinate (c0012) at (0,11);
\coordinate (c0003) at (2,13);
\coordinate (c0102) at (-2,9);
\coordinate (c1002) at (-4,7);

%% Drawing the vertices
\node[draw,circle,fill=white,inner sep=1.5pt] (n0003) at (c0003) {};
\node[draw,circle,fill=white,inner sep=1.5pt] (n0012) at (c0012) {};
\node[draw,circle,fill=white,inner sep=1.5pt] (n0021) at (c0021) {};
\node[draw,circle,fill=white,inner sep=1.5pt] (n0030) at (c0030) {};

\node[draw,circle,fill=white,inner sep=1.5pt] (n0102) at (c0102) {};
\node[draw,circle,fill=white,inner sep=1.5pt] (n0111) at (c0111) {};
\node[draw,circle,fill=white,inner sep=1.5pt] (n0120) at (c0120) {};

\node[draw,circle,fill=white,inner sep=1.5pt] (n0201) at (c0201) {};
\node[draw,circle,fill=white,inner sep=1.5pt] (n0210) at (c0210) {};

\node[draw,circle,fill=white,inner sep=1.5pt] (n1002) at (c1002) {};
\node[draw,circle,fill=white,inner sep=1.5pt] (n1011) at (c1011) {};
\node[draw,circle,fill=white,inner sep=1.5pt] (n1020) at (c1020) {};

\node[draw,circle,fill=white,inner sep=1.5pt] (n1101) at (c1101) {};
\node[draw,circle,fill=white,inner sep=1.5pt] (n1110) at (c1110) {};

%% Drawing edges in the back
%% List of edges in the back
\def\listEdgesBack{1101/1011,1101/0201,1011/1002,1011/0111,1002/0102,1020/1011}
\foreach \x/\y in \listEdgesBack{
    \draw [color=black, line width=1] (n\x) -- (n\y);
}

%% Drawing edges in the front
%% List of edges in the front
\def\listEdgesFront{
1110/1101,1110/1020,1110/0210,1020/0120,0210/0201,0210/0120,0201/0111,0120/0111,0120/0030,0111/0102,0111/0021,0102/0012,0030/0021,0021/0012,0012/0003}
\foreach \x/\y in \listEdgesFront{
    \draw [color=black, line width=1] (n\x) -- (n\y);
}

\draw[->, line width=3pt, red, opacity=0.5] (c1110) to[bend right=40]  (c0111);
\draw[->, line width=3pt, red, opacity=0.5] (c0111) to[bend left=40]  (c0012);
\draw[->, line width=3pt, red, opacity=0.5] (c0012) to[bend right=40]  (c0003);
\draw[->, line width=3pt, red, opacity=0.5] (c0003) to[bend right=70]  (c1110);

\end{tikzpicture} \hspace{4cm}
\begin{tikzpicture}%
[
scale=0.55]

%% Coordinates of the vertices:
%\coordinate (c0003) at (3,6,9);
%\coordinate (c0012) at (2,5,8);
%\coordinate (c0021) at (1,4,7);
%\coordinate (c0030) at (0,3,6);
%\coordinate (c0102) at (2,4,7);
%\coordinate (c0111) at (1,3,6);
%\coordinate (c0120) at (0,2,5);
%\coordinate (c0201) at (1,2,5);
%\coordinate (c0210) at (0,1,4);
%\coordinate (c1002) at (2,4,6);
%\coordinate (c1011) at (1,3,5);
%\coordinate (c1020) at (0,2,4);
%\coordinate (c1101) at (1,2,4);
%\coordinate (c1110) at (0,1,3);

\coordinate (c1110) at (0,0);
\coordinate (c1020) at (0,2);
\coordinate (c0210) at (2,3);
\coordinate (c0120) at (2,5);
\coordinate (c1101) at (-2,3);
\coordinate (c1011) at (-2,5);
\coordinate (c0111) at (0,7);
\coordinate (c0201) at (0,5);
\coordinate (c0030) at (4,7);
\coordinate (c0021) at (2,9);
\coordinate (c0012) at (0,11);
\coordinate (c0003) at (2,13);
\coordinate (c0102) at (-2,9);
\coordinate (c1002) at (-4,7);

%% Drawing the vertices
\node[draw,circle,fill=white,inner sep=1.5pt] (n0003) at (c0003) {};
\node[draw,circle,fill=white,inner sep=1.5pt] (n0012) at (c0012) {};
\node[draw,circle,fill=white,inner sep=1.5pt] (n0021) at (c0021) {};
\node[draw,circle,fill=white,inner sep=1.5pt] (n0030) at (c0030) {};

\node[draw,circle,fill=white,inner sep=1.5pt] (n0102) at (c0102) {};
\node[draw,circle,fill=white,inner sep=1.5pt] (n0111) at (c0111) {};
\node[draw,circle,fill=white,inner sep=1.5pt] (n0120) at (c0120) {};

\node[draw,circle,fill=white,inner sep=1.5pt] (n0201) at (c0201) {};
\node[draw,circle,fill=white,inner sep=1.5pt] (n0210) at (c0210) {};

\node[draw,circle,fill=white,inner sep=1.5pt] (n1002) at (c1002) {};
\node[draw,circle,fill=white,inner sep=1.5pt] (n1011) at (c1011) {};
\node[draw,circle,fill=white,inner sep=1.5pt] (n1020) at (c1020) {};

\node[draw,circle,fill=white,inner sep=1.5pt] (n1101) at (c1101) {};
\node[draw,circle,fill=white,inner sep=1.5pt] (n1110) at (c1110) {};

%% Drawing edges in the back
%% List of edges in the back
\def\listEdgesBack{1101/1011,1101/0201,1011/1002,1011/0111,1002/0102,1020/1011}
\foreach \x/\y in \listEdgesBack{
    \draw [color=black, line width=1] (n\x) -- (n\y);
}

%% Drawing edges in the front
%% List of edges in the front
\def\listEdgesFront{
1110/1101,1110/1020,1110/0210,1020/0120,0210/0201,0210/0120,0201/0111,0120/0111,0120/0030,0111/0102,0111/0021,0102/0012,0030/0021,0021/0012,0012/0003}
\foreach \x/\y in \listEdgesFront{
    \draw [color=black, line width=1] (n\x) -- (n\y);
}

\draw[->, line width=3pt, green, opacity=0.5] (c1101) to[bend right=40]  (c0120);
\draw[->, line width=3pt, green, opacity=0.5] (c0120) to[bend right=40]  (c0021);
\draw[->, line width=3pt, green, opacity=0.5] (c0021) to[bend right=80]  (c1002);
\draw[->, line width=3pt, green, opacity=0.5] (c1002) to[bend right=40]  (c0210);
\draw[->, line width=3pt, green, opacity=0.5] (c0210) to[bend right=30]  (c1011);
\draw[->, line width=3pt, green, opacity=0.5] (c1011) to[bend left=40]  (c0102);
\draw[->, line width=3pt, green, opacity=0.5] (c0102) to[bend left=20]  (c0030);
\draw[->, line width=3pt, green, opacity=0.5] (c0030) to[bend right=40]  (c1101);

\end{tikzpicture} }
\end{center}
\caption{Rowmotion orbits on all alt~$\nu$-Tamari lattices for $\nu=ENENEN$.}
\label{fig:alt_tamaris_14}
\end{figure}

In \cref{conj:cross_tamari}, we conjecture  more generally that if $L_1$ and $L_2$ are \dfn{cross Tamari lattices}~\cite{vonbell2025framing} associated to two moon polyominoes related via permutation of rows and columns, then rowmotion behaves the same for~$L_1$ and $L_2$. This conjecture is a generalization of \cref{thm:intro_main}, because alt~$\nu$-Tamari lattices are exactly the cross Tamari lattices associated to stack polyominoes, a subset of the moon polyominoes -- in this case, only permutations of columns applied to partition shapes are allowed.

\Cref{thm:intro_main} states that two alt~$\nu$-Tamari lattices for the same lattice path $\nu$ have the same rowmotion orbit structure and the same rowmotion orbit averages for the down-degree statistic. Hence, using this theorem we are able to establish some cyclic sieving and homomesy results for rowmotion of the \dfn{rational Tamari lattices}, thereby settling conjectures from~\cite{defant2024tamari, thomas2019rowmotion}. We do this by transferring to the distributive lattice case, which is easier to analyze.

The paper is structured as follows. In \cref{sec:background}, we review background material on lattices and the rowmotion operation. We also briefly review the history of rowmotion. In \cref{sec:alt}, we define alt~$\nu$-Tamari lattices, discuss their basic properties, and describe the action of rowmotion on them. In \cref{sec:phi}, we prove \cref{thm:intro_main}. We first construct the bijection $\Phi$ between two alt~$\nu$-Tamari lattices, prove that it intertwines rowmotion, and explain how it interacts with the down-degree statistic. In \cref{sec:rational}, we use \cref{thm:intro_main} to establish cyclic sieving and homomesy results for the rational Tamari lattices. Finally, in \cref{sec:future}, we discuss future directions, including the conjectural generalization to cross Tamari lattices.

\begin{remark}
While we were in the final stages of writing this paper, we learned about the independent, simultaneous work of~\cite{eu2026rowmotion}, which also proves \cref{thm:intro_main} in some special cases, i.e., for the families of lattice paths $\nu$ corresponding to hook shapes and two row shapes.
\end{remark}

\subsection*{Acknowledgments}

This paper grew out of the 2025 BIRS workshop on lattice theory. We are especially grateful to Osamu Iyama and Nathan Williams for co-organizing the workshop (along with E. Barnard, C.~Ceballos and C.~Defant), and we thank the other participants too for stimulating conversations. SageMath~\cite{Sage} was an important computational aid for this research. We also thank Yi-Lin Lee for making us aware of~\cite{eu2026rowmotion}. C.~Ceballos and M.~M\"{u}ller were partially supported by the Austrian Science Fund Grants 10.55776/P33278 and 10.55776/I5788. C.~Chenevi\`{e}re was partially supported by the ANR-FWF International Cooperation Project PAGCAP, funded by the ANR Project ANR-21-CE48-0020 and the FWF Project I 5788, and hosted at the LISN, Paris-Saclay University. C.~Defant was supported by a Benjamin Peirce Fellowship at Harvard University. S.~Hopkins was partially supported by Simons Foundation Gift MPS-TSM-00007193. J.~Striker was partially supported by NSF Grants DMS-2247089 and DMS-2554137 and Simons Foundation Gift MP-TSM-00002802.

\subsection*{Statement on AI usage}

We did not use AI tools in our research.

\section{Lattices and rowmotion} \label{sec:background}

\subsection{Definitions} 

We follow standard notation and terminology for posets, as laid out for instance in~\cite[Chapter~3]{stanley2012ec1}. {\bf All posets in this paper are assumed to be finite.}

Recall that a \dfn{lattice} is a poset~$L$ such that any two elements~$x,y\in L$ have a \dfn{meet} (i.e., greatest lower bound), denoted $x\wedge y$, and a \dfn{join} (i.e., least upper bound), denoted $x\vee y$. An element $x$ of a lattice $L$ is \dfn{join-irreducible} (resp.~\dfn{meet-irreducible}) if $x\neq\hat 0$ (resp. $x\neq\hat 1$) and $x$
cannot be written as a nontrivial join (resp.~meet) of elements of $L$. Because of the finiteness assumption on~$L$, this is the same as saying that $x$ covers (resp.~is covered by) exactly one other element. We use~$\mathcal J_L$ and~$\mathcal M_L$ to denote the set of join-irreducible elements and the set of meet-irreducible elements of~$L$, respectively.

A lattice $L$ is \dfn{semidistributive} if for all $x,y,z\in L$, we have the following implications: 
\begin{align*}
x\vee y &= x\vee z \Longrightarrow x\vee(y\wedge z) = x\vee y \\ 
x\wedge y &= x\wedge z\Longrightarrow x\wedge(y\vee z) = x\wedge y.
\end{align*}
Suppose $L$ is a semidistributive lattice, and let $x\lessdot y$ be a cover relation in $L$. It follows from the definition of semidistributivity that $\{z\in L:z\vee x=y\}$ has a minimum element $j_{x,y}$ and that this element is join-irreducible. We view $j_{x,y}$ as a label assigned to the edge $x\lessdot y$ in the Hasse diagram of $L$. Thus, we obtain a \emph{canonical labeling} of the edges of $L$ with the elements of $\mathcal J_L$. For~$x\in L$, we let \[\mathcal D_L(x)=\{j_{w,x}:w\lessdot x\}\quad\text{and}\quad\mathcal U_L(x)=\{j_{x,w}:x\lessdot w\}\] be the sets of labels of edges going down from $x$ and going up from $x$, respectively. It is known that the maps $\mathcal D_L$ and $\mathcal U_L$ are injective on $L$. Moreover, the sets $\{\mathcal D_L(x):x\in L\}$ and $\{\mathcal U_L(x):x\in L\}$ are equal, and they form an abstract simplicial complex known as the \dfn{canonical join complex} of $L$. This implies that there is a unique bijection $\row\colon L\to L$ satisfying 
\[\mathcal U_L(x)=\mathcal D_L(\row(x))\] 
for all $x\in L$. This bijection is \dfn{rowmotion}.\footnote{Some articles define rowmotion to be the inverse of the map we have defined.} Note in particular that rowmotion restricts to a bijection, denoted $\kappa^{-1}$ in Barnard's paper~\cite{barnard2019canonical}, between the meet-irreducible elements~$\mathcal{M}_L$ and join-irreducible elements~$\mathcal{J}_L$. See, e.g.,~\cite{barnard2019canonical} for more details.

Recall that a lattice $L$ is \dfn{distributive} if the join operation distributes over the meet operation, and vice-versa. Of course, if $L$ is distributive, then it is semidistributive. Because of the finiteness assumption, a distributive lattice $L$ is isomorphic to the lattice $J(P)$ of \dfn{order ideals} (i.e., downwards-closed subsets) of a poset~$P$. In fact, this $P$ is the poset of join-irreducible elements of $L$; see, e.g.,~\cite[Theorem~3.4.1]{stanley2012ec1}. In this case, rowmotion has an alternative description as an operator on order ideals. Namely, for any subset~$X\subseteq P$, we can define $\Lambda(X) \coloneqq \{y\in P:y\leq x\text{ for some }x\in X\}$ to be the order ideal generated by $X$. Then for any order ideal $I \in J(P)$, we have $\row(I)=\Lambda(\min(P\setminus I))$, where $\min$ denotes the set of minimal elements. This is the classical definition of rowmotion; see, e.g.,~\cite{striker2012rowmotion}.

The \dfn{length} of a poset $P$, denoted $\ell(P)$, is the maximum length of a chain in $P$, where the length of the chain $x_0 < x_1 < \cdots < x_k$ is $k$. It is straightforward to show that if $L$ is a lattice, then $\ell(L)\leq\min\{|\mathcal J_L|,|\mathcal M_L|\}$. We say that $L$ is \dfn{extremal} if~$\ell(L)=|\mathcal J_L|=|\mathcal M_L|$. Note that all distributive lattices are both semidistributive and extremal. In fact, the lattices we work with in this paper will all be semidistributive and extremal, so the following proposition will be useful.

\begin{proposition} \label{prop:intervals}
Let $L$ be a semidistributive and extremal lattice. Then any interval in $L$ is also a semidistributive and extremal lattice.
\end{proposition}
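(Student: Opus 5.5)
Semidistributivity passes to intervals at once: for $[a,b]\subseteq L$, meets and joins of elements of $[a,b]$ computed in $L$ already lie in $[a,b]$. So $[a,b]$ is a sublattice, and the two semidistributive implications hold verbatim. The substance of the statement is extremality, and I would attack it with the known characterization of semidistributive extremal lattices as \emph{trim} lattices. This is the result of Thomas--Williams: a semidistributive lattice is extremal if and only if it is left modular. Combined with Thomas's theorem that trim lattices (extremal and left modular) are closed under taking intervals, this would finish the proof. If citing is allowed, that is the whole argument. A self-contained proof would follow the steps below.

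First I show that in a semidistributive lattice $L$, extremality is equivalent to the existence of a maximal chain $\hat 0=x_0\lessdot x_1\lessdot\cdots\lessdot x_n=\hat 1$ whose canonical labels $j_{x_{i-1},x_i}$ are pairwise distinct and exhaust $\mathcal J_L$. Dually, the corresponding meet-irreducible labels of this chain exhaust $\mathcal M_L$. Indeed, by the standard fact that canonical labels along any maximal chain are distinct, a chain of length $\ell(L)=|\mathcal J_L|$ must use every join-irreducible exactly once. Conversely, such a chain gives $\ell(L)\ge|\mathcal J_L|$, and the dual labeling gives the statement for $\mathcal M_L$.

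Next, given $a\le b$, I relate the join-irreducibles of $[a,b]$ to labels. Each join-irreducible $y$ of $[a,b]$ covers a unique $y_*\in[a,b]$. Using semidistributivity, the label $j_{y_*,y}\in\mathcal J_L$ determines $y$ as $a\vee j_{y_*,y}$, and the cover $y_*\lessdot y$ is a cover in $L$. This yields an injection $\mathcal J_{[a,b]}\hookrightarrow\mathcal J_L$ whose image consists of labels $j$ with $j\not\le a$ and $j\le b$ (suitably refined). The plan is then to take a maximal chain of $L$ passing through $a$ and $b$, which exists by extremality and the connectivity of maximal chains. Its segment from $a$ to $b$ is a maximal chain of $[a,b]$, and its labels should be exactly the image of $\mathcal J_{[a,b]}$. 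That would give $\ell([a,b])\ge|\mathcal J_{[a,b]}|$, and similarly for $\mathcal M_{[a,b]}$.

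The main obstacle is this last step. A maximal chain of $L$ through $a$ and $b$ need not carry all $|\mathcal J_L|$ labels; only some maximal chains are of length $\ell(L)$, and one must pass through both $a$ and $b$. This is precisely where left modularity is used in Thomas's proof: the left-modular chain can be translated into each interval by $x_i\mapsto (x_i\vee a)\wedge b$. I would try that construction first, proving that consecutive elements are equal or covers and that the distinct labels obtained are exactly the join-irreducibles of $[a,b]$. If this becomes too delicate, I would fall back on citing Thomas--Williams and Thomas as described above.
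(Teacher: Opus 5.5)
Your cited argument is the paper's proof. Semidistributivity passes to intervals because they are sublattices. For extremality, the paper uses that a finite semidistributive lattice is extremal if and only if it is trim (Thomas--Williams, Theorem~1.4), together with the fact that intervals of trim lattices are trim (the paper cites Thomas--Williams, Theorem~3.6; this goes back to Thomas's earlier work).

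Only the implication ``semidistributive and extremal $\Rightarrow$ trim'' is needed, so you can drop the self-contained sketch. As you noticed yourself, its step producing a maximum-length chain through both $a$ and $b$ is not justified as written.
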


\begin{proof}
All sublattices of a semidistributive lattice are semidistributive by the definition of semidistributivity in terms of meets and joins. In particular, any interval of a semidistributive lattice is also semidistributive. By \cite[Theorem~1.4]{thomas2019rowmotion}, a finite semidistributive lattice is extremal if and only if it is \dfn{trim}; moreover, \cite[Theorem~3.6]{thomas2019rowmotion} asserts that any interval of a finite trim lattice is again trim. 
\end{proof}

\begin{remark}
Note crucially however that if $L'$ is an interval of $L$, then $\row \colon L' \to L'$ is \emph{not} just the restriction of $\row \colon L \to L$; indeed, rowmotion on $L'$ is basically unrelated to rowmotion on $L$.
\end{remark}

When the lattice $L$ is both semidistributive and extremal, there is an alternative method for computing rowmotion due to Thomas and Williams~\cite{thomas2019rowmotion}, which we now explain.\footnote{Thomas and Williams in fact studied rowmotion on broader families of lattices (like \dfn{trim lattices}~\cite{thomas2019rowmotion}) and posets (like \dfn{independence posets}~\cite{thomas2019independence}), but we will not need that level of generality.} Let $L$ be a semidistributive lattice. For each join-irreducible element $j\in \mathcal J_L$ and each element $x\in L$, let 
\[\tau_j(x)=\begin{cases}
    y & \text{if } j_{x,y}=j\in\mathcal U_L(x)\text{ or }j_{y,x}=j\in\mathcal D_L(x); \\
    x & \text{otherwise}.
\end{cases}\]
In other words, to obtain $\tau_j(x)$ from $x$, we look for an edge in the Hasse diagram of $L$ incident to~$x$ and labeled by $j$. If such an edge exists, then it is unique and we walk along it; if no such edge exists, we stay at $x$. This defines for each $j \in \mathcal{J}_L$ an involution $\tau_j\colon L\to L$ called a \dfn{toggle}. 

\begin{theorem}[{Thomas and Williams~\cite{thomas2019rowmotion}}]
Let $L$ be a semidistributive and extremal lattice, and let $z_{N+1}\lessdot z_{N}\lessdot \cdots z_2 \lessdot z_1$ be a maximum-length chain of $L$, where $N=\ell(L)=|\mathcal J_L|=|\mathcal M_L|$. Then
\[\row=\tau_{j_N}\circ\cdots\circ\tau_{j_1},\]
where $j_i \coloneqq j_{z_{i+1},z_{i}}$ for $i=1,\ldots,N$.
\end{theorem}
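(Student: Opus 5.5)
The plan is to prove that, for each $i$, the toggle $\tau_{j_i}$ sends $x$ to the unique element $y$ with $\mathcal D_L(y)=\mathcal U_L(x)$, so that the composite is rowmotion. Rather than tracking the toggles one at a time, I would follow a given $x$ along the prescribed sequence. The natural framework is the trim-lattice structure: by \cite[Theorem~1.4]{thomas2019rowmotion}, $L$ is trim. Using the maximum-length chain, index the join-irreducibles $j_1,\dots,j_N$ and the meet-irreducibles $m_1,\dots,m_N$ so that the edge $z_{i+1}\lessdot z_i$ has label $j_i$. Extremality gives the standard bijection $j_i\leftrightarrow m_i$, characterized by $j_i\not\le m_i$ and $j_i\le m_k$ for $k<i$ (or the analogous triangular condition). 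I would first record two facts. (a) The label $j_{x,y}$ of a cover $x\lessdot y$ is the unique $j_i$ with $x\le m_i$ and $y\not\le m_i$. (b) The map $x\mapsto\{i: j_i\le x\}$ together with $x\mapsto\{i:x\le m_i\}$ determines $x$; in an extremal lattice these two sets partition a suitable structure. Concretely, I would use Markowsky's representation of extremal lattices as maximal orthogonal pairs $(J_x,M_x)$.

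Next I would make the toggles concrete in this representation. Define intermediate elements $x^{(0)}=x$ and $x^{(i)}=\tau_{j_i}(x^{(i-1)})$. By induction on $i$, I would show the following invariant: $x^{(i)}$ is the element whose canonical join representation uses exactly the labels in $\mathcal U_L(x)\cap\{j_1,\dots,j_i\}$, together with the remaining down-labels of $x$ among $\{j_{i+1},\dots,j_N\}$. That is, $\mathcal D_L(x^{(i)})\cap\{j_1,\dots,j_i\}=\mathcal U_L(x)\cap\{j_1,\dots,j_i\}$, and meanwhile $\mathcal U_L(x^{(i)})$ agrees with $\mathcal U_L(x)$ on indices $>i$. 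The inductive step has to show that toggling at $j_{i+1}$ moves down exactly when $j_{i+1}\in\mathcal D$, moves up exactly when $j_{i+1}\in\mathcal U_L(x)$, and otherwise stays put. It must also show that this move does not disturb the labels with other indices. The ordering along the maximal chain enters here: the triangularity between the $j$'s and $m$'s ensures that moving along a $j_{i+1}$-edge changes only the membership of index $i+1$ in the relevant sets.

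At $i=N$ the invariant gives $\mathcal D_L(x^{(N)})=\mathcal U_L(x)$, and injectivity of $\mathcal D_L$ yields $x^{(N)}=\row(x)$.

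The main obstacle is the inductive step, specifically the claim that an edge labeled $j_{i+1}$ incident to $x^{(i)}$ exists exactly when predicted. To attack it, I would use \cref{prop:intervals} to pass to intervals. I would also use the fact that in a semidistributive lattice the labels of the edges down from $y$ form a canonical join representation, so that $y=\bigvee\mathcal D_L(y)$. If the direct argument gets messy, my first fallback is induction on $N$. The edge $z_{N+1}\lessdot z_N$ with label $j_N$ lets one compare $L$ with the interval $[\hat0,z_N]$, which is semidistributive and extremal by \cref{prop:intervals} and has length $N-1$ with the truncated chain. I would then relate $\tau_{j_N}$ to the extra label.
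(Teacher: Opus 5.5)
The paper does not prove this statement; it is quoted from Thomas--Williams. Your proposal therefore has to stand on its own, and as written it has a genuine gap.

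The reduction is sound. Set $x^{(i)}=\tau_{j_i}\circ\cdots\circ\tau_{j_1}(x)$ and maintain two invariants:
(I1) $\mathcal D_L(x^{(i)})\cap\{j_1,\dots,j_i\}=\mathcal U_L(x)\cap\{j_1,\dots,j_i\}$;
(I2) $\mathcal U_L(x^{(i)})\cap\{j_{i+1},\dots,j_N\}=\mathcal U_L(x)\cap\{j_{i+1},\dots,j_N\}$.
Then the case $i=N$ together with injectivity of $\mathcal D_L$ finishes the proof.

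However, what happens at the index $i+1$ itself follows at once from (I2) and $\mathcal U_L(w)\cap\mathcal D_L(w)=\emptyset$, so that is not the obstacle. The entire content of the theorem is the locality lemma you only assert: \emph{if $w\lessdot w'$ has label $j_k$, then $\mathcal D_L(w),\mathcal D_L(w')$ agree on $\{j_1,\dots,j_{k-1}\}$ and $\mathcal U_L(w),\mathcal U_L(w')$ agree on $\{j_{k+1},\dots,j_N\}$.} Saying ``triangularity ensures this'' is not an argument. The relations $j_i\le m_k$ $(k<i)$ and $j_i\not\le m_i$ only describe the irreducibles relative to the chain, whereas the lemma concerns canonical labels of arbitrary covers.

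To see that real input is needed, note that $L=[\hat0,z_2]\sqcup[j_1,\hat1]$, since $j_1$ is the least $z$ with $z_2\vee z=\hat1$. No cover inside $[j_1,\hat1]$ is labeled $j_1$. Walking up from any $y\ge j_1$ to $\hat1$, where $j_1\in\mathcal D_L(\hat1)$, the case $i=1$ of your lemma already forces $j_1\in\mathcal D_L(y)$, i.e.\ $y\wedge z_2\lessdot y$ for every $y\ge j_1$. That is a modularity-type property of $z_2$. It holds, for instance, if $z_2$ is left modular: if $y\wedge z_2<v<y$ then $v\not\le z_2$, so $y=(v\vee z_2)\wedge y=v\vee(z_2\wedge y)=v$. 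The lemma needs analogous control at every level of the chain. Nothing in your outline supplies this, and it is exactly where trimness must genuinely be used.

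The auxiliary tools you list would also not work as stated.
\begin{itemize}
\item Fact (a) is false. Take the pentagon $\hat0<a<b<\hat1$, $\hat0<c<\hat1$ with the chain through $a,b$. Then $j_1=c$, $j_2=b$, $j_3=a$ and $m_1=b$, $m_2=a$, $m_3=c$. For the cover $\hat0\lessdot c$ (label $j_1$), both $m_1$ and $m_2$ satisfy $\hat0\le m_i$ and $c\not\le m_i$. What is true is that the label is read off from the \emph{largest} $m$ with $c\wedge m=\hat0$.
\item Your verbal gloss that $x^{(i)}$ keeps the remaining down-labels of $x$ is false already in a $3$-element chain. At the middle element, the first toggle moves to $\hat1$, whose only down-label is $j_1$. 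The displayed invariant, however, is the right one.
\item The fallback is mis-specified. In this indexing $z_N$ is the atom of the chain, so $[\hat0,z_N]$ has length $1$. The interval of length $N-1$ is $[z_N,\hat1]=[j_N,\hat1]$. Using it would require:
  \begin{itemize}
  \item the decomposition $L=[\hat0,m']\sqcup[j_N,\hat1]$, with $m'$ the largest element not above $j_N$;
  \item the fact that an $L$-label $j_i$ inside $[j_N,\hat1]$ becomes $j_N\vee j_i$ there;
  \item a separate treatment of $[\hat0,m']$, which does not contain your chain;
  \item a gluing argument through $\tau_{j_N}$, bearing in mind the paper's remark that rowmotion on an interval is not the restriction of rowmotion on $L$.
  \end{itemize}
\end{itemize}
None of this is carried out, so the proposal is a viable strategy with its key step missing.
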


Because this formula computes rowmotion in a step-by-step process as a composition of toggles, Thomas and Williams say that it computes rowmotion \emph{in slow motion}.

\begin{example}
Consider a distributive lattice $L$, which we identify with the lattice $J(P)$ of order ideals of a poset $P$. For each $p\in P$, let $\Lambda(p)=\{q\in P:q\leq p\}$ be the order ideal generated by~$p$. The map $p\mapsto \Lambda(p)$ is a bijection from $P$ to $\mathcal J_L$. If $u\lessdot v$ is a cover relation in $L$, then there is a unique $r\in P$ such that $v=u\sqcup\{r\}$, and the join-irreducible element $j_{u,v}$ is $\Lambda(r)$. Thus, the toggles~$\tau_p \coloneqq \tau_{\Lambda(p)}\colon L \to L$ for $p \in P$ are given by 
\[\tau_{p}(I)=\begin{cases}
    I\triangle\{p\} & \text{if } I\triangle \{p\} \in J(P); \\
    I & \text{otherwise}, 
\end{cases}\]
where $\triangle$ denotes symmetric difference. And the slow motion description of rowmotion on $L$ is~$\row = \tau_{p_N} \circ \cdots \circ \tau_{p_1}$, where $p_N, \ldots, p_1$ is any linear extension of $P$ \cite{cameron1995orbits}. This ``row-by-row'' way to compute rowmotion is where the name comes from; see~\cite{striker2012rowmotion}.
\end{example}

\begin{example}
\Cref{fig:rowmotion_example} shows a $7$-element lattice $L$ that is both semidistributive and extremal but is not distributive. There are four join-irreducible elements $j_1,j_2,j_3,j_4$, and we have labeled each Hasse diagram edge with its canonical join-irreducible label. The indexing of the join-irreducible elements comes from the maximum-length chain along the left side of the figure: the labels of the edges going up the chain are $j_4,j_3,j_2,j_1$ (in this order). The blue arrows show the action of rowmotion. For example, there is an arrow from the join-irreducible element $j_2$ to the top element $\hat 1$ because $\mathcal D_L(\hat 1)=\mathcal U_L(j_2)=\{j_1,j_4\}$. We could also show that $\row(j_2)=\hat 1$ by computing rowmotion in slow motion. Indeed,
\begin{align*}
\row(j_2)&=(\tau_{j_4}\circ\tau_{j_3}\circ\tau_{j_2}\circ\tau_{j_1})(j_2) \\
 &=(\tau_{j_4}\circ\tau_{j_3}\circ\tau_{j_2})(j_1) \\ 
 &=(\tau_{j_4}\circ\tau_{j_3})(j_1) \\ 
 &=\tau_{j_4}(j_1) =\hat 1. 
\end{align*}
\end{example} 

\begin{figure}[ht]
\begin{center}
\scalebox{0.4}{
\begin{tikzpicture}
    \node[draw,circle,fill=white,label={[font=\Huge,label distance=2mm]right:}] (n1) at (0,0) {};
    \draw (5,5)    node[scale=1,circle,draw,fill=red,anchor=center,label={[font=\Huge, scale=1.2,label distance=2mm]right:$j_2$}] (n2) {};
    %node[right=22pt, text=gray, scale=1.5] {$j_2$}
    \draw (8,8)    node[scale=1,circle,draw,fill=red,anchor=center,label={[font=\Huge, scale=1.2,label distance=2mm]right:$j_1$}] (n8) {};
\draw (-2,2)   node[circle,draw,fill=red,anchor=center,label={[font=\Huge, scale=1.2, label distance=2mm]left:$j_4$}] (n3) {};
    \draw (-2,5)   node[circle,draw,fill=red, anchor=center,label={[font=\Huge,scale=1.2, label distance=2mm]left:$j_3$}] (n4) {};
    \draw (1.5,8.5)node[scale=1,circle,draw,fill=white,anchor=center,label={[font=\Huge,label distance=2mm]right:}] (n5) {};
    \draw (3+1.5,8+3+.5) node[scale=1,circle,draw,fill=white,anchor=center,label={[font=\Huge,label distance=2mm]right:}] (n9) {};

    \draw [color=black, line width=1] (n5) -- node[midway, fill=white, text=red, inner sep=1pt, scale=3] {$j_1$} (n9);
    \draw [color=black, line width=1] (n9) -- node[midway, fill=white, text=red, inner sep=1pt, scale=3] {$j_4$} (n8);
    \draw [color=black, line width=1] (n8) -- node[midway, fill=white, text=red, inner sep=1pt, scale=3] {$j_1$} (n2);

    \draw [color=black, line width=1] (n1) -- node[midway, fill=white, text=red, inner sep=1pt, scale=3] {$j_2$} (n2);
    \draw [color=black, line width=1] (n2) -- node[midway, fill=white, text=red, inner sep=1pt, scale=3] {$j_4$} (n5);
    \draw [color=black, line width=1] (n5) -- node[midway, fill=white, text=red, inner sep=1pt, scale=3] {$j_2$} (n4);
    \draw [color=black, line width=1] (n4) -- node[midway, fill=white, text=red, inner sep=1pt, scale=3] {$j_3$} (n3);
    \draw [color=black, line width=1] (n3) -- node[midway, fill=white, text=red, inner sep=1pt, scale=3] {$j_4$} (n1);

\draw[->, line width=3pt, blue, opacity=0.5] (n1) to[bend left=20]  (n5);
\draw[->, line width=3pt, blue, opacity=0.5] (n5) to[bend left=20]  (n8);
\draw[->, line width=3pt, blue, opacity=0.5] (n8) to[bend left=50]  (n3);
\draw[->, line width=3pt, blue, opacity=0.5] (n3) to[bend left=80]  (n4);
\draw[->, line width=3pt, blue, opacity=0.5] (n4) to[bend left=10]  (n2);
\draw[->, line width=3pt, blue, opacity=0.5] (n2) to[bend right=20]  (n9);
\draw[->, line width=3pt, blue, opacity=0.5] (n9) to[bend left=20]  (n1);
    
\end{tikzpicture}
}
\end{center}
\caption{A semidistributive extremal lattice with edges labeled by join-irreducible elements. The arrows represent rowmotion.}
\label{fig:rowmotion_example}  
\end{figure} 

\begin{example} \label{ex:stanley_tamari}
Let $n \geq 1$. Recall from~\cref{sec:intro} that the \dfn{Stanley lattice}, $\Stan{n}$, is the lattice of Dyck paths of length $2n$ ordered by nesting. It is a distributive lattice on $\cat(n)$-many elements, where $\cat(n) \coloneqq \frac{1}{n+1}\binom{2n}{n}$ is the $n$th \dfn{Catalan number}. Meanwhile, the \dfn{Tamari lattice}, $\Tam{n}$, is another lattice on $\cat(n)$-many elements, whose cover relations can be described in terms of flips in triangulations or rotations of trees. For a precise definition of $\Tam{n}$, see \cref{sec:alt} below. It is known that $\Tam{n}$ is a semidistributive and extremal lattice, so it has a rowmotion operation defined on it, and in fact one that can be computed in slow motion. As mentioned in~\cref{sec:intro}, rowmotion has the same orbit structure on $\Stan{n}$ and $\Tam{n}$; for~$n=3$, this is depicted in~\cref{fig:5-element-lattices}. One way to relate rowmotion on $\Stan{n}$ and $\Tam{n}$ is via another operation on a third Catalan lattice. Namely, the \dfn{noncrossing partition lattice} is the collection of set partitions of~$\{1,2,\ldots,n\}$ whose blocks are noncrossing when the numbers $1,2,\ldots,n$ are arranged clockwise around a circle, partially ordered by refinement. It is known that this lattice is complemented, and one particular choice of complement, whose square is rotation, is the \dfn{Kreweras complement}. It is known that both rowmotion on the Stanley lattice~\cite{armstrong2013uniform}, and rowmotion on the Tamari lattice~\cite{barnard2019canonical}, have the same orbit structure as Kreweras complement of noncrossing partitions. This is also depicted, in the case $n=3$, in \cref{fig:5-element-lattices}.
\end{example}

\begin{figure}[ht]
\begin{center}
\begin{tikzpicture}
	[scale=1.1,
	x={(1cm, 0cm)},
	y={(0cm, 1cm)},
	back/.style={loosely dotted, thin},
	edge/.style={color=black!95!black},
	facet/.style={fill=red!95!black,fill opacity=0.800000},
	vertex/.style={inner sep=1pt,circle,draw=green!25!black,fill=green!75!black,thick,anchor=base}]
	\scriptsize
	
	\coordinate (c120) at (0,0);
	\coordinate (c030) at (-1,1.5);
	\coordinate (c021) at (0,3.5);
	\coordinate (c012) at (0,5);
	\coordinate (c111) at (1,1.5);

	\node (n120) at (c120) {\nuPath{0,1,1,1}{0,1,1,1}{.3}{}{}{}{}};
	\node (n030) at (c030) {\nuPath{0,1,1,1}{0,0,2,1}{.3}{}{}{}{}};
	\node (n012) at (c012) {\nuPath{0,1,1,1}{0,0,0,3}{.3}{}{}{}{}};
    \node (n021) at (c021) {\nuPath{0,1,1,1}{0,0,1,2}{.3}{}{}{}{}};
	\node (n111) at (c111) {\nuPath{0,1,1,1}{0,1,0,2}{.3}{}{}{}{}};
    
	\draw[edge,-] (n120) -- (n030); 
	\draw[edge,-] (n030) -- (n021); 
	\draw[edge,-] (n021) -- (n012); 
	\draw[edge,-] (n120) -- (n111);
	\draw[edge,-] (n111) -- (n021);
	\draw[edge,-] (n120) -- (n030);

    \draw[->, line width=1.5pt, blue, opacity=0.5] (0,.5) to[bend left=20]  (0,3);
    \draw[->, line width=1.5pt, blue, opacity=0.5] (-.5,4) to[bend left=60]  (-.5,5);
    \draw[->, line width=1.5pt, blue, opacity=0.5] (.5,5) to[bend left=60]  (.5,0);
    \draw[->, line width=1.5pt, red, opacity=0.5] (-.5,1.7) to[bend left=40]  (.5,1.7);
    \draw[->, line width=1.5pt, red, opacity=0.5] (.5,1.3) to[bend left=40]  (-.5,1.3);
\end{tikzpicture}
\begin{tikzpicture}
	[scale=1.1,
	x={(1cm, 0cm)},
	y={(0cm, 1cm)},
	back/.style={loosely dotted, thin},
	edge/.style={color=black!95!black},
	facet/.style={fill=red!95!black,fill opacity=0.800000},
	vertex/.style={inner sep=1pt,circle,draw=green!25!black,fill=green!75!black,thick,anchor=base}]
	\scriptsize

	\coordinate (c120) at (0,0);
	\coordinate (c030) at (-1,1.5);
	\coordinate (c021) at (-1,3.5);
	\coordinate (c012) at (0,5);
	\coordinate (c111) at (1,2.5);

	\node (n120) at (c120) {\nuPath{0,1,1,1}{0,1,1,1}{.3}{}{}{}{}};
	\node (n030) at (c030) {\nuPath{0,1,1,1}{0,0,2,1}{.3}{}{}{}{}};
	\node (n012) at (c012) {\nuPath{0,1,1,1}{0,0,0,3}{.3}{}{}{}{}};
    \node (n021) at (c021) {\nuPath{0,1,1,1}{0,0,1,2}{.3}{}{}{}{}};
	\node (n111) at (c111) {\nuPath{0,1,1,1}{0,1,0,2}{.3}{}{}{}{}};

	\draw[edge,-] (n120) -- (n030); 
	\draw[edge,-] (n030) -- (n021); 
	\draw[edge,-] (n021) -- (n012); 
	\draw[edge,-] (n120) -- (n111);
	\draw[edge,-] (n111) -- (n012);

    \draw[->, line width=1.5pt, blue, opacity=0.5] (-1.5,1.5) to[bend left=40]  (-1.5,3);
    \draw[->, line width=1.5pt, blue, opacity=0.5] (-.5,3.8) to[bend left=40]  (1,3);
    \draw[->, line width=1.5pt, blue, opacity=0.5] (1,2) to[bend left=40] (-.5,1.2);
    
    \draw[->, line width=1.5pt, red, opacity=0.5] (-.1,.5) to[bend left=10]  (-.1,4.5);
    \draw[->, line width=1.5pt, red, opacity=0.5] (.1,4.5) to[bend left=10]  (.1,.5);

\end{tikzpicture}
\begin{tikzpicture}

  \newcommand{\ncpartition}[1]{%
    \begin{tikzpicture}[scale=0.5]

      \draw[gray!50, line width=0.8pt] (0,0) circle (1);
      \coordinate (n1) at (90:1);
      \coordinate (n2) at (-30:1);
      \coordinate (n3) at (-150:1);
      
      #1

      \node[circle, fill=black, inner sep=1.5pt, label={[font=\footnotesize, label distance=-3.5pt]above:1}] at (n1) {};
      \node[circle, fill=black, inner sep=1.5pt, label={[font=\footnotesize, label distance=-3.5pt]below right:2}] at (n2) {};
      \node[circle, fill=black, inner sep=1.5pt, label={[font=\footnotesize, label distance=-3.5pt]below left:3}] at (n3) {};
    \end{tikzpicture}%
  }

  \node (T) at (0, 5) {%
    \ncpartition{
      \draw[line width=1pt, fill=gray!40] (n1) -- (n2) -- (n3) -- cycle;
    }
  };

  \node (L) at (-1.5, 2.5) {%
    \ncpartition{
      \draw[line width=1pt] (n1) -- (n2);
    }
  };
  
  \node (M) at (0, 2.5) {%
    \ncpartition{
      \draw[line width=1pt] (n2) -- (n3);
    }
  };
  
  \node (R) at (1.5, 2.5) {%
    \ncpartition{
      \draw[line width=1pt] (n1) -- (n3);
    }
  };

  \node (B) at (0, 0) {
    \ncpartition{}
  };

  \draw[shorten <=5pt, shorten >=5pt] (B) -- (L);
  \draw[shorten <=5pt, shorten >=5pt] (B) -- (M);
  \draw[shorten <=5pt, shorten >=5pt] (B) -- (R);

  \draw[shorten <=5pt, shorten >=5pt] (L) -- (T);
  \draw[shorten <=5pt, shorten >=5pt] (M) -- (T);
  \draw[shorten <=5pt, shorten >=5pt] (R) -- (T);

    \draw[->, line width=1.5pt, red, opacity=0.5] (-.1,.5) to[bend left=30]  (-.1,4.5);
    \draw[->, line width=1.5pt, red, opacity=0.5] (.1,4.5) to[bend left=30]  (.1,.5);
    \draw[->, line width=1.5pt, blue, opacity=0.5] (-1,3) to[bend left=40]  (-.2,3);
    \draw[->, line width=1.5pt, blue, opacity=0.5] (.2,3) to[bend left=40]  (1.2,3);
    \draw[->, line width=1.5pt, blue, opacity=0.5] (1.2,2) to[bend left=40]  (-1.2,1.9);
\end{tikzpicture}
\end{center}
\caption{The Stanley (left), Tamari (middle), and noncrossing partition (right) lattices for $n=3$. The black edges are cover relations. In the Stanley and Tamari lattices, the blue and orange arrows represent rowmotion. In the noncrossing partition lattice, these arrows represent Kreweras complement.}
\label{fig:5-element-lattices}
\end{figure} 

Our goal in the remainder of this paper is to vastly generalize \cref{ex:stanley_tamari} by showing that not just the Stanley and Tamari lattices, but in fact a whole family of ``intermediary'' semidistributive extremal lattices (the alt Tamari lattices), have the same rowmotion behavior. However, we will do this not by comparing rowmotion on these lattices to the Kreweras complement. Rather, we will more directly relate rowmotion on all these lattices by defining an intertwining bijection.

\subsection{History}

Here we briefly review the history of rowmotion on distributive, and other, lattices. This provides context for our work, but nothing from this subsection is used in later sections.

Early work on rowmotion includes the study of periods of antichain operators by Brouwer and Schrijver \cite{brouwer1974period} and the further analysis of antichain orbits by Cameron and Fon-Der-Flaass \cite{cameron1995orbits}, especially for the antichains of the rectangle poset. See~\cite[\S7]{thomas2019rowmotion} for a more detailed history.

A particularly important appearance of rowmotion arose in the theory of root posets. Let $\Phi^+$ be the root poset of a finite Weyl group $W$.  Then the $W$-nonnesting partitions are the antichains of~$\Phi^+$.  These correspond to the order ideals of $\Phi^+$ via the map sending an antichain to the order ideal it generates. Under this identification, Panyushev studied an operator on nonnesting partitions that is the same as rowmotion~\cite{panyushev2009orbits}. Note also that $\Stan{n} = J(\Phi^+)$ when $\Phi=A_{n-1}$. Armstrong, Stump, and Thomas used this perspective to construct a uniform bijection from nonnesting partitions to noncrossing partitions that intertwines Panyushev's operator with the Kreweras complement~\cite{armstrong2013uniform}. Thus, rowmotion supplied the dynamical mechanism behind the first uniform bijection between the nonnesting and noncrossing families of Coxeter--Catalan objects.

The broader significance of distributive lattice rowmotion in dynamical algebraic combinatorics comes from the remarkable regularity it displays acting on the order ideals of many natural families of posets, like root and minuscule posets. In favorable cases, rowmotion has a small and predictable order, exhibits the cyclic sieving phenomenon, and exhibits natural homomesies~\cite{armstrong2013uniform,rush2013orbits,propp2015homomesy,striker2012rowmotion,hopkins2024order}. 

Barnard extended the definition of rowmotion to semidistributive lattices \cite{barnard2019canonical}, and Thomas and Williams extended the definition to trim lattices, which includes lattices that are both semidistributive and extremal \cite{thomas2019rowmotion}. Even more general definitions were formulated by Thomas and Williams for independence posets~\cite{thomas2019independence} and by Defant and Williams for semidistrim lattices~\cite{defant2023semidistrim}. 

Rowmotion, especially rowmotion beyond distributive lattices, also has natural connections with the representation theory of quivers and finite-dimensional algebras. Thomas and Williams~\cite{thomas2019rowmotion} showed that if $A$ is a representation-finite algebra such that $\operatorname{mod} A$ has no cycles, then the torsion pairs of~$A$, ordered by inclusion of torsion classes, form a trim lattice. Hence these torsion classes carry an action of rowmotion, which has been related to the Auslander--Reiten translation~\cite{barnard2021dynamical}. For path algebras of Dynkin quivers, this torsion class lattice construction recovers the simply-laced Cambrian lattices, including $\Tam{n}$ in Type A, placing the rowmotion/Kreweras complement story in a representation-theoretic context. Furthermore, Iyama and Marczinzik~\cite{iyama2022distributive} showed that the grade bijection (again, closely related to the Auslander--Reiten translation) acting on isomorphism classes of simple modules of the incidence algebra of a finite distributive lattice $L$ coincides with rowmotion on $L$.  This flavor of result was recently extended to lattices beyond the distributive case~\cite{klasz2025auslander, defant2025echelonmotion}.

\section{Alt \texorpdfstring{$\nu$}{nu}-Tamari lattices} \label{sec:alt} 

The Tamari lattice is a natural partial order on Catalan objects, classically defined on triangulations of a polygon by flipping diagonals or on binary trees by tree rotation. It can also be defined as a partial order on the set of Dyck paths of a fixed size, or equivalently, on the set of lattice paths living in a staircase partition shape.  In 2017, Pr\'eville-Ratelle and Viennot defined the \dfn{$\nu$-Tamari lattice} as an analogue of the Tamari lattice for paths in any partition shape~\cite{preville2017nutamari}. In~2024, Ceballos and Chenevi\`{e}re defined the \dfn{alt~$\nu$-Tamari lattices} as partial orders on the same sets of paths, but with different cover relations depending on an integer vector $\delta$~\cite{ceballos2024altnu}. Two extreme choices of $\delta$ yield the $\nu$-Tamari lattice and the distributive lattice of $\nu$-Dyck paths ordered by inclusion, which we will refer to as the \dfn{$\nu$-Dyck lattice}. Ceballos and Chenevi\`{e}re also provided an alternative construction of alt~$\nu$-Tamari lattices using special trees living in certain shapes called \dfn{stack polyominoes}. We first review both of these constructions, before describing how rowmotion can be described in terms of the tree construction.

\subsection{Definition on paths}

An integer partition can be graphically represented by its partition shape, also called a \dfn{Ferrers diagram} or \dfn{Young diagram}. Its border forms a lattice path $\nu$ from the southwest corner to the northeast corner of the shape, using unit east and north steps, denoted by~$E$ and $N$ respectively. A lattice path that has the same endpoints as $\nu$ and lies weakly above~$\nu$ is called a \dfn{$\nu$-Dyck path}. Equivalently, $\nu$-Dyck paths are all (maximal) lattice paths that can be drawn inside the partition shape.

Following~\cite{ceballos2024altnu}, if the bounding path~$\nu$ is of the form~$\nu = E^{\nu_0} N E^{\nu_1} N \cdots N E^{\nu_n}$, we encode it as the sequence of nonnegative integers $(\nu_0, \nu_1, \dots, \nu_n)$. Here, $n$ is the total number of north steps in~$\nu$, $\nu_0$ is the number of initial east steps, and each $\nu_i$ denotes the number of east steps immediately following the $i$-th north step of $\nu$. Equivalently, $\nu_i$ is the number of columns of height~$n-i$ in the corresponding partition shape. For example, as illustrated in~\cref{fig:alt_v_path}, if we consider the partition shape associated with the partition $\lambda = (6, 6, 5, 1)$, then $\nu$ is the path $ENEEEENENN$, encoded as the sequence $(1, 4, 1, 0, 0)$.

\begin{figure}[ht]    
\begin{center}
\input{figures/alt_v_path.tex}
\end{center}
\caption{The partition shape for $\lambda = (6,6,5,1)$.}
\label{fig:alt_v_path}
\end{figure}

For both the $\nu$-Tamari and the $\nu$-Dyck lattices, cover relations are defined by local \dfn{rotation} operations on $\nu$-Dyck paths, which exchange the east step of a valley with a portion of the path following it. Ceballos and Chenevi\`{e}re introduced \dfn{$\delta$-rotations}, where a parameter $\delta$ determines which portion of the path is exchanged with the east step; they obtained a family of lattices encompassing both the $\nu$-Tamari and $\nu$-Dyck lattices as extreme cases, which we now review~\cite{ceballos2024altnu}.

A sequence $\delta = (\delta_1, \dots, \delta_n)$ of nonnegative integers is an \dfn{increment vector} with respect to $\nu$ if~$\delta_i \leq \nu_i$ for all $1 \leq i \leq n$. Let $\mu$ be a $\nu$-Dyck path and $p$ a lattice point of $\mu$. We define the~\dfn{$\delta$-altitude} $\operatorname{alt}_\delta(p)$ as follows. The altitude of the starting lattice point of $\mu$ is set to zero. Then, an east step reduces the $\delta$-altitude by $1$, while the $i$-th north step of $\mu$ increases it by $\delta_i$.

For a valley $EN$ of $\mu$, let $p$ be the lattice point between the east and north steps. Let $q$ be the next lattice point on $\mu$ such that $\operatorname{alt}_\delta(q)=\operatorname{alt}_\delta(p)$, and let $\mu[p,q]$ be the subpath of $\mu$ starting at~$p$ and ending at $q$. Define $\mu'$ to be the path obtained from $\mu$ by exchanging $\mu[p,q]$ with the east step~$E$ immediately preceding it. The \dfn{$\delta$-rotation} of $\mu$ at the valley $p$ is then given by $\mu \lessdot_\delta \mu'$. An example is illustrated in~\cref{fig:delta_rotation_path}.

\begin{figure}[ht]    
\begin{center}
\nuPath{1,4,1,0,0}{1,2,0,0,3}{.7}{}{}{}{\draw (8,2) node[scale=1, black] {$\lessdot_\delta$};

\draw [color=red, line width=2] (3,1)--(3,4)--(4,4);
\draw [color=green, line width=2] (2,1)--(3,1);

\draw (0,0) node[scale=0.5,circle,draw,fill=black,anchor=center,  label={[font=\small ,label distance=0mm]above right:$0$}] {};
\draw (1,0) node[scale=0.5,circle,draw,fill=black,anchor=center,  label={[font=\small ,label distance=-1mm]above right:$-1$}] {};
\draw (1,1) node[scale=0.5,circle,draw,fill=black,anchor=center,  label={[font=\small ,label distance=0mm]above right:$1$}] {};
\draw (2,1) node[scale=0.5,circle,draw,fill=black,anchor=center,  label={[font=\small ,label distance=0mm]above right:$0$}] {};
\draw (3,1) node[scale=0.5,circle,draw,fill=black,anchor=center,  label={[font=\small ,label distance=-1mm]above right:$-1$}] {};
\draw (3,2) node[scale=0.5,circle,draw,fill=black,anchor=center,  label={[font=\small ,label distance=0mm]above right:$0$}] {};
\draw (3,3) node[scale=0.5,circle,draw,fill=black,anchor=center,  label={[font=\small ,label distance=0mm]above right:$0$}] {};
\draw (3,4) node[scale=0.5,circle,draw,fill=black,anchor=center,  label={[font=\small ,label distance=0mm]above right:$0$}] {};
\draw (4,4) node[scale=0.5,circle,draw,fill=black,anchor=center,  label={[font=\small ,label distance=-1mm]above right:$-1$}] {};
\draw (5,4) node[scale=0.5,circle,draw,fill=black,anchor=center,  label={[font=\small ,label distance=-1mm]above right:$-2$}] {};
\draw (6,4) node[scale=0.5,circle,draw,fill=black,anchor=center,  label={[font=\small ,label distance=-1mm]above right:$-3$}] {};
\draw (6.5,3.5) node[scale=1, black] {$\mu$};

}
\nuPath{1,4,1,0,0}{1,1,0,0,4}{.7}{}{}{}{
\draw [color=red, line width=2] (2,1)--(2,4)--(3,4);
\draw [color=green, line width=2] (3,4)--(4,4);

\draw (0,0) node[scale=0.5,circle,draw,fill=black,anchor=center,  label={[font=\small,label distance=0mm]above right:$0$}] {};
\draw (1,0) node[scale=0.5,circle,draw,fill=black,anchor=center,  label={[font=\small,label distance=-1mm]above right:$-1$}] {};
\draw (1,1) node[scale=0.5,circle,draw,fill=black,anchor=center,  label={[font=\small,label distance=0mm]above right:$1$}] {};
\draw (2,1) node[scale=0.5,circle,draw,fill=black,anchor=center,  label={[font=\small,label distance=0mm]above right:$0$}] {};
\draw (3,4) node[scale=0.5,circle,draw,fill=black,anchor=center,  label={[font=\small,label distance=0mm]above right:$0$}] {};
\draw (2,2) node[scale=0.5,circle,draw,fill=black,anchor=center,  label={[font=\small,label distance=0mm]above right:$1$}] {};
\draw (2,3) node[scale=0.5,circle,draw,fill=black,anchor=center,  label={[font=\small,label distance=0mm]above right:$1$}] {};
\draw (2,4) node[scale=0.5,circle,draw,fill=black,anchor=center,  label={[font=\small,label distance=0mm]above right:$1$}] {};
\draw (4,4) node[scale=0.5,circle,draw,fill=black,anchor=center,  label={[font=\small,label distance=-1mm]above right:$-1$}] {};
\draw (5,4) node[scale=0.5,circle,draw,fill=black,anchor=center,  label={[font=\small,label distance=-1mm]above right:$-2$}] {};
\draw (6,4) node[scale=0.5,circle,draw,fill=black,anchor=center,  label={[font=\small,label distance=-1mm]above right:$-3$}] {};

\draw (6.5,3.5) node[scale=1, black] {$\mu'$};
}
\end{center}
\caption{An example of a $\delta$-rotation for $\nu=(1,4,1,0,0)$ and $\delta=(2,1,0,0)$, where the green east step is exchanged with the orange subpath. The number next to each lattice point of the path corresponds to its $\delta$-altitude.}
\label{fig:delta_rotation_path}
\end{figure}

The reflexive transitive closure of $\delta$-rotation forms a semidistributive lattice on the set of $\nu$-Dyck paths called the \dfn{alt $\nu$-Tamari lattice} $\altTam{\nu}{\delta}$~\cite{ceballos2024altnu}. We remark that the cover relations in $\altTam{\nu}{\delta}$ are exactly all $\delta$-rotations. The three alt Tamari lattices for the path $\nu = ENEEN$ are depicted in~\cref{fig:altnu_lattices_ENEEN_paths}.

\begin{figure}[ht]
\begin{center}
\begin{tikzpicture}%
	[scale=1.1,
	x={(1cm, 0cm)},
	y={(0cm, 1cm)},
	back/.style={loosely dotted, thin},
	edge/.style={color=black},
	facet/.style={fill=red!95!black,fill opacity=0.800000},
	vertex/.style={inner sep=1pt,circle,draw=green!25!black,fill=green!75!black,thick,anchor=base}]
	\scriptsize
	
	\coordinate (c120) at (0,0);
	\coordinate (c030) at (-1,1);
	\coordinate (c021) at (0,2);
	\coordinate (c012) at (1,3);
	\coordinate (c003) at (2,4);
	
	\coordinate (c111) at (1,1);
	\coordinate (c102) at (2,2);
 
	\node (n120) at (c120) {\nuPath{1,2,0}{1,2,0}{.3}{}{}{}{}};
	% \node (n120) at (c120) {120};
	\node (n030) at (c030) {\nuPath{1,2,0}{0,3,0}{.3}{}{}{}{}};
	\node (n021) at (c021) {\nuPath{1,2,0}{0,2,1}{.3}{}{}{}{}};
	\node (n012) at (c012) {\nuPath{1,2,0}{0,1,2}{.3}{}{}{}{}};
	\node (n003) at (c003) {\nuPath{1,2,0}{0,0,3}{.3}{}{}{}{}};
	
	\node (n111) at (c111) {\nuPath{1,2,0}{1,1,1}{.3}{}{}{}{}};
	\node (n102) at (c102) {\nuPath{1,2,0}{1,0,2}{.3}{}{}{}{}};
	
	\draw[edge,-] (n120) -- (n030); 
	\draw[edge,-] (n030) -- (n021); 
	\draw[edge,-] (n021) -- (n012); 
	\draw[edge,-] (n012) -- (n003); 
	
	\draw[edge,-] (n120) -- (n111);
	\draw[edge,-] (n111) -- (n021);
	\draw[edge,-] (n111) -- (n102);
	
	\draw[edge,-] (n102) -- (n012);
\end{tikzpicture}
\begin{tikzpicture}%
	[scale=1.1,
	x={(1cm, 0cm)},
	y={(0cm, 1cm)},
	back/.style={loosely dotted, thin},
	edge/.style={color=black},
	facet/.style={fill=red!95!black,fill opacity=0.800000},
	vertex/.style={inner sep=1pt,circle,draw=green!25!black,fill=green!75!black,thick,anchor=base}]
	\scriptsize
	
	\coordinate (c120) at (0,0);
	\coordinate (c030) at (-1,1);
	\coordinate (c021) at (0,2);
	\coordinate (c012) at (0,3);
	\coordinate (c003) at (1,4);
	
	\coordinate (c111) at (1,1);
	\coordinate (c102) at (2.5,2.5);
	
	\node (n120) at (c120) {\nuPath{1,2,0}{1,2,0}{.3}{}{}{}{}};
	% \node (n120) at (c120) {120};
	\node (n030) at (c030) {\nuPath{1,2,0}{0,3,0}{.3}{}{}{}{}};
	\node (n021) at (c021) {\nuPath{1,2,0}{0,2,1}{.3}{}{}{}{}};
	\node (n012) at (c012) {\nuPath{1,2,0}{0,1,2}{.3}{}{}{}{}};
	\node (n003) at (c003) {\nuPath{1,2,0}{0,0,3}{.3}{}{}{}{}};
	
	\node (n111) at (c111) {\nuPath{1,2,0}{1,1,1}{.3}{}{}{}{}};
	\node (n102) at (c102) {\nuPath{1,2,0}{1,0,2}{.3}{}{}{}{}};
	
	\draw[edge,-] (n120) -- (n030); 
	\draw[edge,-] (n030) -- (n021); 
	\draw[edge,-] (n021) -- (n012); 
	\draw[edge,-] (n012) -- (n003); 
	
	\draw[edge,-] (n120) -- (n111);
	\draw[edge,-] (n111) -- (n102);
	\draw[edge,-] (n102) -- (n003);
	
	\draw[edge,-] (n111) -- (n021);
\end{tikzpicture}
\begin{tikzpicture}%
	[scale=1.1,
	x={(1cm, 0cm)},
	y={(0cm, 1cm)},
	back/.style={loosely dotted, thin},
	edge/.style={color=black},
	facet/.style={fill=red!95!black,fill opacity=0.800000},
	vertex/.style={inner sep=1pt,circle,draw=green!25!black,fill=green!75!black,thick,anchor=base}]
	\scriptsize
	
	\coordinate (c120) at (0,0);
	\coordinate (c030) at (-1,1);
	\coordinate (c021) at (-1,2);
	\coordinate (c012) at (0,3);
	\coordinate (c003) at (1,4);
	
	\coordinate (c111) at (1.5,1.5);
	\coordinate (c102) at (2.5,2.5);
	
	\node (n120) at (c120) {\nuPath{1,2,0}{1,2,0}{.3}{}{}{}{}};
	% \node (n120) at (c120) {120};
	\node (n030) at (c030) {\nuPath{1,2,0}{0,3,0}{.3}{}{}{}{}};
	\node (n021) at (c021) {\nuPath{1,2,0}{0,2,1}{.3}{}{}{}{}};
	\node (n012) at (c012) {\nuPath{1,2,0}{0,1,2}{.3}{}{}{}{}};
	\node (n003) at (c003) {\nuPath{1,2,0}{0,0,3}{.3}{}{}{}{}};
	
	\node (n111) at (c111) {\nuPath{1,2,0}{1,1,1}{.3}{}{}{}{}};
	\node (n102) at (c102) {\nuPath{1,2,0}{1,0,2}{.3}{}{}{}{}};
	
	\draw[edge,-] (n120) -- (n030); 
	\draw[edge,-] (n030) -- (n021); 
	\draw[edge,-] (n021) -- (n012); 
	\draw[edge,-] (n012) -- (n003); 
	
	\draw[edge,-] (n120) -- (n111);
	\draw[edge,-] (n111) -- (n102);
	\draw[edge,-] (n102) -- (n003);
	
	\draw[edge,-] (n111) -- (n012);
\end{tikzpicture}

	% %%draw text label
	% 	\foreach \a/\b/\c/\d in {#5}{
	% 		\draw[\d](\a,\b) node[scale=.9,anchor=west]{\c};
\end{center}
\caption{Examples of alt $\nu$-Tamari lattices $\altTam{\nu}{\delta}$ for $\nu=ENEEN=(1,2,0)$. Left: the $\nu$-Dyck lattice, for $\delta=(0,0)$. Middle: the lattice for $\delta=(1,0)$. Right: the~$\nu$-Tamari lattice, for $\delta=(2,0)$.}
\label{fig:altnu_lattices_ENEEN_paths}
\end{figure}

\begin{example}
The case of $\altTam{\nu}{\delta}$ where $\delta_i = \nu_i$ for all $1 \leq i \leq n$ recovers the $\nu$-Tamari lattice~$\Tam{\nu}$ of Pr\'eville-Ratelle and Viennot~\cite{preville2017nutamari}. (In turn, for appropriate choices of $\nu$, the $\nu$-Tamari lattices $\Tam{\nu}$ recover the \dfn{$m$-Tamari lattices}~\cite{bergeron2012higher} and \dfn{rational Tamari lattices}; see also~\cref{sec:rational} below.) On the other hand, setting $\delta_i = 0$ for all $i$, $\altTam{\nu}{\delta}$ becomes the distributive lattice of $\nu$-Dyck paths ordered by nesting, sometimes called the $\nu$-Dyck lattice. In particular, when~$\nu=(NE)^n$ is the staircase path, we get the classical Tamari and Stanley lattices, $\Tam{n}$ and $\Stan{n}$, respectively. Moreover, for this staircase $\nu$, the alt~$\nu$-Tamari lattices provide $2^{n-1}$ semidistributive lattices on Catalan many elements interpolating between the Tamari and Stanley lattices; see~\cref{fig:alt_tamaris_14}.
\end{example}

In order to know that rowmotion is defined on the alt~$\nu$-Tamari lattices, we need to observe that they are semidistributive. In fact, as mentioned in \cref{sec:background}, the lattices we work with in this paper will all be both semidistributive and extremal.

\begin{proposition} \label{prop:alt_semi_ext}
Every alt $\nu$-Tamari lattice is a semidistributive and extremal lattice.
\end{proposition}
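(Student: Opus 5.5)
Semidistributivity is already known: Ceballos and Chenevière~\cite{ceballos2024altnu} prove that $\altTam{\nu}{\delta}$ is a semidistributive lattice, so I would simply cite that. All the work is in extremality, $\ell(L)=|\mathcal J_L|=|\mathcal M_L|$. Since $\ell(L)\le\min\{|\mathcal J_L|,|\mathcal M_L|\}$ holds in any lattice, it suffices to exhibit a chain of length at least $|\mathcal J_L|$ and at least $|\mathcal M_L|$.

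\textbf{Counting irreducibles.} Every $\nu$-Dyck path lies in the fixed partition shape, and the natural count is the number $N$ of lattice points of the shape weakly above $\nu$ other than those on the top boundary, i.e.\ the cells strictly between $\nu$ and the top-left path $N^nE^{|\nu|}$. I would characterize join-irreducibles as paths with exactly one valley at which a reverse $\delta$-rotation is possible; equivalently, paths whose set of downward covers has size one. In the tree/stack-polyomino model of~\cite{ceballos2024altnu}, covers correspond to nodes admitting a rotation, and a join-irreducible is determined by one such node together with a forced shape. This gives a parametrization of $\mathcal J_L$ by cells of the region between $\nu$ and the top path, hence $|\mathcal J_L|=N$. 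Dually, or via the known anti-isomorphism/symmetry of the alt-$\nu$-Tamari construction (reversing the path), $|\mathcal M_L|=N$.

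\textbf{A long chain.} I would start at the bottom element $\nu$ and repeatedly rotate at the last valley, or use the chain of paths obtained by adding one cell at a time to the region under the path. In the Dyck case $\delta=0$, every cover adds exactly one cell, so any maximal chain has length $N$. For general $\delta$, I would look for a chain in which each $\delta$-rotation moves the east step past a subpath consisting only of north steps (a single column segment). One such chain: fill the shape column by column from the right, each time rotating the valley whose following subpath up to the next point of equal altitude is as short as possible. The aim is to show that, with a suitable ordering, each step increases the area by exactly one cell, yielding a chain of length $N$.

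\textbf{Main obstacle.} Controlling $\delta$-altitudes so that a chosen rotation moves only a small piece is the hard part. If it fails, I would instead prove $\ell(L)\ge N$ via the area function. Every $\delta$-rotation strictly increases area, so chains are bounded by $N$, and a lower bound needs an explicit construction. As a fallback, I would cite the known result that alt $\nu$-Tamari lattices are realized as cross Tamari/framing lattices~\cite{vonbell2025framing}, which are shown there to be semidistributive and trim, and then use~\cite[Theorem~1.4]{thomas2019rowmotion}: semidistributive and trim implies extremal.
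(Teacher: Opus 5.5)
Your overall strategy is a legitimate alternative to the paper's: take semidistributivity from \cite{ceballos2024altnu}, then prove $\ell(L)\ge|\mathcal J_L|$ with an explicit chain that adds one cell per cover. However, the step everything rests on is left open, and the rules you propose for building the chain do not work.

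Take $\nu=ENENEN$ and $\delta=(1,1,0)$, i.e.\ the classical Tamari lattice, with $6$ boxes. All three of your rules begin with the rotation at $EN_3$: rotating at the last valley, adding the cell of the rightmost column, and choosing the shortest $\mu[p,q]$ (here $\mu[p,q]=N_3$). This gives $EN_1EN_2N_3E$. From this path, every chain to the top contains a rotation that moves $E$ past two or more north steps; for example, at the valley $EN_2$ the $\delta$-altitude returns to its value only after $N_2N_3E$. So these rules produce chains of length at most $4<6$, and a bad early choice cannot be repaired later.

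The rule that works is to rotate always at the \emph{leftmost} valley. Keep the invariant that every north step $N_j$ that can still lie at a valley is followed by at least $\delta_j$ east steps. This holds for $\nu$, where $N_j$ is followed by $\nu_j\ge\delta_j$ east steps. If the first valley is $EN_i$ and $N_i$ is followed by $b\ge\delta_i$ east steps, then $\mu[p,q]=N_iE^{\delta_i}$. The rotation therefore turns $EN_iE^{b}$ into $N_iE^{b+1}$ and adds exactly one cell. The only run of east steps that shrinks is the one right after the initial block $N_1\cdots N_{i-1}$, and these north steps can never again lie at a valley, so the invariant persists. Iterating from $\nu$ to the top path gives a chain whose length is the number of boxes. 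This is exactly the chain used in the proof of \cref{lem:rowmotion_slow_motion}.

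The irreducible counts also need repair.
\begin{itemize}
\item Your parametrization of $\mathcal J_L$ by cells is only sketched. What is actually needed is M\"{u}ller's theorem, quoted in \cref{sec:alt}, that the join-irreducibles of $\altTam{\nu}{\delta}$ correspond to the boxes of $F_{\delta,\nu}$.
\item For $\mathcal M_L$, do not invoke an anti-isomorphism. For general $\delta$ the dual of an alt $\nu$-Tamari lattice need not be one; the paper remarks that only the cross Tamari class is closed under duality. Instead, any finite semidistributive lattice has $|\mathcal M_L|=|\mathcal J_L|$, via the bijection recalled in \cref{sec:background}.
\item Your fallback is not safe either. Framing lattices are in general only claimed to be semidistributive, not trim, and quoting trimness of cross Tamari lattices merely relocates the claim.
\end{itemize}

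For comparison, the paper needs none of this. It uses that $\Tam{n}$ is semidistributive and extremal \cite{markowsky1992primes}, and that each $\altTam{\nu}{\delta}$ is an interval in some $\Tam{n}$ \cite{ceballos2024altnu,preville2017nutamari}. Then \cref{prop:intervals} transfers both properties: intervals of trim lattices are trim, and for semidistributive lattices trim is equivalent to extremal. Once repaired, your route is more hands-on and exhibits a maximum-length chain explicitly, but it still depends on M\"{u}ller's classification of join-irreducibles.
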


\begin{proof}
It is well-known that the classical Tamari lattice $\Tam{n}$ is semidistributive and extremal; see, e.g.,~\cite{markowsky1992primes}. By \cref{prop:intervals}, it suffices then to note that every $\altTam{\nu}{\delta}$ is an interval in some~$\Tam{n}$, which is established in~\cite{ceballos2024altnu} and \cite{preville2017nutamari}.
\end{proof}

\subsection{Definition on trees} \label{sec:alt_nu_trees}

In this subsection, we describe another, equivalent way to think of the elements of alt $\nu$-Tamari lattices, as certain trees. This tree description will be more convenient for describing the action of rowmotion on $\altTam{\nu}{\delta}$, and so in subsequent sections we will most often view the elements of $\altTam{\nu}{\delta}$ as trees, not lattice paths.

A \dfn{polyomino} is a finite union of unit squares in $\mathbb{Z}^2$ that we call \dfn{cells} or \dfn{boxes}. The cells along a vertical line form a \dfn{column} and those along a horizontal line form a~\dfn{row}. A polyomino is \dfn{convex} if each of its rows and columns forms a contiguous block of cells, and it is~\dfn{L-convex} if any two cells can be connected by a path of cells with at most one change of direction, in which case we call it a \dfn{moon polyomino}. Equivalently, a moon polyomino is a convex polyomino whose rows (or columns) are nested. A \dfn{stack polyomino} is a moon polyomino whose largest row is the top row. In particular, a partition shape is a stack polyomino.  If $\nu$ is the bounding path of the partition, we denote the corresponding stack polyomino by $F_\nu$. Note that a polyomino is a stack polyomino if and only if it can be obtained from a partition shape $F_\nu$ by permuting its columns in such a way that the sequence of column lengths is weakly increasing then weakly decreasing. This choice of column permutation is equivalent to the choice of an increment vector $\delta$, where $\delta_i$ is the number of columns of height $n-i$ that remain to the right of the largest columns. We thus denote by $F_{\delta,\nu}$ the stack polyomino obtained from $F_\nu$ by permuting columns according to the increment vector $\delta$, as illustrated in~\cref{fig:altv_path}. Furthermore, let $L_{\delta,\nu}$ denote the set of lattice points inside $F_{\delta,\nu}$.

\begin{figure}[ht]
\begin{center}
%\nuPath{1,4,1,0,0}{1,4,1,0,0}{.7}{}{}{}{}
\altnuShape{1,4,1,0,0}{4,1,0,0}{0,0,0,0,0}{0.7}{}{}{}{
\draw (6,1) node[scale=1, black] {$\delta_1=4$};
\draw (7,2) node[scale=1, black] {$\delta_2=1$};
\draw (7,3) node[scale=1, black] {$\delta_3=0$};
\draw (7,4) node[scale=1, black] {$\delta_4=0$};
}
\hspace{1cm}
\altnuShape{1,4,1,0,0}{2,1,0,0}{0,0,0,0,0}{0.7}{}{}{}{

\draw (6,1) node[scale=1, black] {$\delta'_1=2$};
\draw (7,2) node[scale=1, black] {$\delta'_2=1$};
\draw (7,3) node[scale=1, black] {$\delta'_3=0$};
\draw (7,4) node[scale=1, black] {$\delta'_4=0$};

}

        %\nuPath{3,3,3,0}{3,3,3,0}{.4}{}{}{}{
        %\draw (10,3) node[scale=1, black] {$\nu$};
        %}
        %\nuPath{9,1,1,0}{9,1,1,0}{.4}{}{}{}{
        %\draw (12,3) node[scale=1, black] {$\check{\nu}$};
        %\draw (-1,3) node[scale=1, orange] {$\hat{\nu}$};
        %\draw [color=orange, line width=2, dashed] (0,3)--(2,3)--(2,2)--(4,2)--(4,1)--(6,1)--(6,0)--(9,0);
        %}
\end{center}
\caption{Left: The Ferrers diagram $F_\nu$ for $\nu=(1,4,1,0,0)$ is the stack polyomino~$F_{\delta,\nu}$ for $\delta=(4,1,0,0)$. Right: The stack polyomino $F_{\delta',\nu}$ for the same $\nu$ and~$\delta'=(2,1,0,0)$ is obtained from permuting the columns of $F_\nu$ according to $\delta'$.}
\label{fig:altv_path}
\end{figure}

Given a stack polyomino $F_{\delta,\nu}$, we can define a family of trees living inside $F_{\delta,\nu}$, which are in bijection with $\nu$-Dyck paths~\cite{ceballos2024altnu}. Two integer points $p$ and $q$ living inside $F_{\delta,\nu}$ are \dfn{$\delta$-incompatible} if one is strictly southwest of the other and the smallest rectangle containing $p$ and $q$ lies completely inside~$F_{\delta,\nu}$, and they are \dfn{$\delta$-compatible} otherwise. A \dfn{$(\delta, \nu)$-tree} is a maximal collection of pairwise~$\delta$-compatible points in~$L_{\delta,\nu}$. When connecting each point in a $(\delta,\nu)$-tree $T$ to the next point in $T$ in the north or west direction, if one exists, we obtain a (rooted binary) tree, hence the name. An example can be found on the right of~\cref{fig:right_flush}.

\begin{figure}[ht]
\begin{center}
\input{figures/alt_v_flushing.tex}
\end{center}
\caption{Right-flushing of a $\nu$-Dyck path to a $(\delta,\nu)$-tree $T$ for $\nu=(1,4,1,0,0)$, $\delta=(2,1,0,0)$. Both have $2$, $3$, $1$, $1$, and $4$ vertices from bottom to top.}
\label{fig:right_flush}
\end{figure}

For each $\nu$-Dyck path $\mu$, there is a unique $(\delta,\nu)$-tree $T$ that contains the same number of lattice points in each row as $\mu$. The $(\delta, \nu)$-tree can be constructed by a greedy algorithm that inserts the points row by row, from bottom to top, in the rightmost available position that is compatible with all the previous points. In particular, each point that is not the leftmost in its row forbids all positions above it in its column. We give an example of the flushing map in~\cref{fig:right_flush}. This provides a bijection between $\nu$-Dyck paths and $(\delta,\nu)$-trees, called the \dfn{right-flushing} map, which was defined in~\cite{ceballos2024altnu}, extending the map defined in~\cite{ceballos2020nutamari} between $\nu$-Dyck paths and $\nu$-trees.

\begin{figure}[ht]
\begin{center}
\altnuTree{1,4,1,0,0}{2,1,0,0}{1,2,0,0,3}{0.7}{}{}{}{
\draw (-0.5,4.5) node[scale=1, black] {$T$};
\draw (1.7,4.3) node[scale=1, black] {$p$};
\draw (1.7,1.3) node[scale=1, black] {$q$};
\draw (4.3,1.3) node[scale=1, black] {$r$};
\fill[gray!15] (2,1) rectangle (4,4);
\fill[pattern={Lines[angle=-45, distance=3pt]}, pattern color=red, opacity=1, line width=0.4mm] (3,1) rectangle (4,2);

\draw (7.5,2) node[scale=1, black] {$\lessdot$};
}
\altnuTree{1,4,1,0,0}{2,1,0,0}{1,1,0,0,4}{0.7}{}{}{}{
\draw (-0.5,4.5) node[scale=1, black] {$T'$};
\draw (1.7,4.3) node[scale=1, black] {$p$};
\draw (4.4,4.4) node[scale=1, black] {$q'$};
\draw (4.3,1.3) node[scale=1, black] {$r$};
\fill[gray!15] (2,1) rectangle (4,4);
\fill[pattern={Lines[angle=45, distance=3pt]}, pattern color=blue, opacity=1, line width=0.4mm] (3,1) rectangle (4,2);
}

% SMALL EXAMPLE
%\altnuTree{1,1,1,0}{0,1,0}{0,1,2,0}{0.8}{}{}{}{
%\draw (3.5,3) node[scale=1, black] {$T$};
%\draw (0.7,2.3) node[scale=1, black] {$p$};
%\draw (0.7,1.3) node[scale=1, black] {$q$};
%\draw (2.3,1.3) node[scale=1, black] {$r$};

%\draw (4.5,1.5) node[scale=1, black] {$\lessdot$};
%}
%\altnuTree{1,1,1,0}{0,1,0}{0,0,3,0}{0.8}{}{}{}{
%\draw (3.5,3) node[scale=1, black] {$T'$};
%\draw (0.7,2.3) node[scale=1, black] {$p$};
%\draw (2.4,2.4) node[scale=1, black] {$q'$};
%\draw (2.3,1.3) node[scale=1, black] {$r$};

%}
\end{center}
\caption{A right rotation in a $(\delta,\nu)$-tree $T$ for $\nu=(1,4,1,0,0)$ and $\delta=(2,1,0,0)$. The box $b_{T, T'}$ labeling the cover relation $T \lessdot T'$ is striped orange or blue.}
\label{fig:delta_rotation_tree}
\end{figure}

Under this bijection, $\delta$-rotations of $\nu$-Dyck paths translate to right rotations of binary trees, thus providing an alternative description of the alt $\nu$-Tamari lattice $\altTam{\nu}{\delta}$ on the set of $(\delta,\nu)$-trees. Let $T$ be a $(\delta,\nu)$-tree and $q$ a node of $T$ connected to a node $p$ above it and a node $r$ to its right. Let $q'$ be the fourth (top right) corner of the smallest rectangle containing $p$, $q$, and $r$, and $T'$ be the tree obtained from $T$ by replacing $q$ with $q'$. We say that $T \lessdot T'$ is a \dfn{right rotation}, and the inverse operation is called a \dfn{left rotation}. In this case, we say that $q$ is an \dfn{ascent} of $T$ and that $q'$ is a \dfn{descent} of $T'$. We illustrate the rotation of $(\delta,\nu)$-trees in~\cref{fig:delta_rotation_tree}.

\begin{example}
The three alt~$\nu$-Tamari lattices for $\nu=ENEEN$ in terms of $(\delta,\nu)$-trees are depicted in~\cref{fig:altnu_lattices_ENEEN_trees}; compare with the path-based versions in~\cref{fig:altnu_lattices_ENEEN_paths}.
\end{example}

\begin{figure}[htb]
\begin{center}
\begin{tikzpicture}%
	[scale=1.1,
	x={(1cm, 0cm)},
	y={(0cm, 1cm)},
	back/.style={loosely dotted, thin},
	edge/.style={color=black},
	facet/.style={fill=red!95!black,fill opacity=0.800000},
	vertex/.style={inner sep=1pt,circle,draw=green!25!black,fill=green!75!black,thick,anchor=base}]
	\scriptsize
	
	\coordinate (c120) at (0,0);
	\coordinate (c030) at (-1,1);
	\coordinate (c021) at (0,2);
	\coordinate (c012) at (1,3);
	\coordinate (c003) at (2,4);
	
	\coordinate (c111) at (1,1);
	\coordinate (c102) at (2,2);
 
	\node (n120) at (c120) {\altnuTree{1,2,0}{0,0}{1,2,0}{.3}{}{}{}{}};
	% \node (n120) at (c120) {120};
	\node (n030) at (c030) {\altnuTree{1,2,0}{0,0}{0,3,0}{.3}{}{}{}{}};
	\node (n021) at (c021) {\altnuTree{1,2,0}{0,0}{0,2,1}{.3}{}{}{}{}};
	\node (n012) at (c012) {\altnuTree{1,2,0}{0,0}{0,1,2}{.3}{}{}{}{}};
	\node (n003) at (c003) {\altnuTree{1,2,0}{0,0}{0,0,3}{.3}{}{}{}{}};
	
	\node (n111) at (c111) {\altnuTree{1,2,0}{0,0}{1,1,1}{.3}{}{}{}{}};
	\node (n102) at (c102) {\altnuTree{1,2,0}{0,0}{1,0,2}{.3}{}{}{}{}};
	
	\draw[edge,-] (n120) -- (n030); 
	\draw[edge,-] (n030) -- (n021); 
	\draw[edge,-] (n021) -- (n012); 
	\draw[edge,-] (n012) -- (n003); 
	
	\draw[edge,-] (n120) -- (n111);
	\draw[edge,-] (n111) -- (n021);
	\draw[edge,-] (n111) -- (n102);
	
	\draw[edge,-] (n102) -- (n012);
\end{tikzpicture}
\begin{tikzpicture}%
	[scale=1.1,
	x={(1cm, 0cm)},
	y={(0cm, 1cm)},
	back/.style={loosely dotted, thin},
	edge/.style={color=black},
	facet/.style={fill=red!95!black,fill opacity=0.800000},
	vertex/.style={inner sep=1pt,circle,draw=green!25!black,fill=green!75!black,thick,anchor=base}]
	\scriptsize
	
	\coordinate (c120) at (0,0);
	\coordinate (c030) at (-1,1);
	\coordinate (c021) at (0,2);
	\coordinate (c012) at (0,3);
	\coordinate (c003) at (1,4);
	
	\coordinate (c111) at (1,1);
	\coordinate (c102) at (2.5,2.5);
	
	\node (n120) at (c120) {\altnuTree{1,2,0}{1,0}{1,2,0}{.3}{}{}{}{}};
	% \node (n120) at (c120) {120};
	\node (n030) at (c030) {\altnuTree{1,2,0}{1,0}{0,3,0}{.3}{}{}{}{}};
	\node (n021) at (c021) {\altnuTree{1,2,0}{1,0}{0,2,1}{.3}{}{}{}{}};
	\node (n012) at (c012) {\altnuTree{1,2,0}{1,0}{0,1,2}{.3}{}{}{}{}};
	\node (n003) at (c003) {\altnuTree{1,2,0}{1,0}{0,0,3}{.3}{}{}{}{}};
	
	\node (n111) at (c111) {\altnuTree{1,2,0}{1,0}{1,1,1}{.3}{}{}{}{}};
	\node (n102) at (c102) {\altnuTree{1,2,0}{1,0}{1,0,2}{.3}{}{}{}{}};
	
	\draw[edge,-] (n120) -- (n030); 
	\draw[edge,-] (n030) -- (n021); 
	\draw[edge,-] (n021) -- (n012); 
	\draw[edge,-] (n012) -- (n003); 
	
	\draw[edge,-] (n120) -- (n111);
	\draw[edge,-] (n111) -- (n102);
	\draw[edge,-] (n102) -- (n003);
	
	\draw[edge,-] (n111) -- (n021);
\end{tikzpicture}
\begin{tikzpicture}%
	[scale=1.1,
	x={(1cm, 0cm)},
	y={(0cm, 1cm)},
	back/.style={loosely dotted, thin},
	edge/.style={color=black},
	facet/.style={fill=red!95!black,fill opacity=0.800000},
	vertex/.style={inner sep=1pt,circle,draw=green!25!black,fill=green!75!black,thick,anchor=base}]
	\scriptsize
	
	\coordinate (c120) at (0,0);
	\coordinate (c030) at (-1,1);
	\coordinate (c021) at (-1,2);
	\coordinate (c012) at (0,3);
	\coordinate (c003) at (1,4);
	
	\coordinate (c111) at (1.5,1.5);
	\coordinate (c102) at (2.5,2.5);
	
	\node (n120) at (c120) {\altnuTree{1,2,0}{2,0}{1,2,0}{.3}{}{}{}{}};
	% \node (n120) at (c120) {120};
	\node (n030) at (c030) {\altnuTree{1,2,0}{2,0}{0,3,0}{.3}{}{}{}{}};
	\node (n021) at (c021) {\altnuTree{1,2,0}{2,0}{0,2,1}{.3}{}{}{}{}};
	\node (n012) at (c012) {\altnuTree{1,2,0}{2,0}{0,1,2}{.3}{}{}{}{}};
	\node (n003) at (c003) {\altnuTree{1,2,0}{2,0}{0,0,3}{.3}{}{}{}{}};
	
	\node (n111) at (c111) {\altnuTree{1,2,0}{2,0}{1,1,1}{.3}{}{}{}{}};
	\node (n102) at (c102) {\altnuTree{1,2,0}{2,0}{1,0,2}{.3}{}{}{}{}};
	
	\draw[edge,-] (n120) -- (n030); 
	\draw[edge,-] (n030) -- (n021); 
	\draw[edge,-] (n021) -- (n012); 
	\draw[edge,-] (n012) -- (n003); 
	
	\draw[edge,-] (n120) -- (n111);
	\draw[edge,-] (n111) -- (n102);
	\draw[edge,-] (n102) -- (n003);
	
	\draw[edge,-] (n111) -- (n012);
\end{tikzpicture}

	% %%draw text label
	% 	\foreach \a/\b/\c/\d in {#5}{
	% 		\draw[\d](\a,\b) node[scale=.9,anchor=west]{\c};
\end{center}
\caption{Examples of alt~$\nu$-Tamari lattices $\altTam{\nu}{\delta}$ for $\nu=ENEEN=(1,2,0)$. Left: the $\nu$-Dyck lattice, for $\delta=(0,0)$. Middle: the lattice for $\delta=(1,0)$. Right: the $\nu$-Tamari lattice, for $\delta=(2,0)$.}
\label{fig:altnu_lattices_ENEEN_trees}
\end{figure}

\subsection{The edge labeling and rowmotion of alt \texorpdfstring{$\nu$}{nu}-Tamari lattices}

Using the description of alt~$\nu$-Tamari lattices in terms of $(\delta,\nu)$-trees, M\"{u}ller introduced in~\cite{mueller2026combinatorial} a labeling of the cover relations~$T\lessdot T'$ by certain boxes $b_{T,T'}$ of the stack polyomino $F_{\delta,\nu}$. On the other hand, since alt~$\nu$-Tamari lattices are semidistributive, their cover relations admit a canonical labeling by join-irreducible elements, as described in~\cref{sec:background}. More precisely, each cover relation~$T\lessdot T'$ is labeled by a join-irreducible element $j_{T,T'}$. As shown in~\cite{mueller2026combinatorial}, these two labelings coincide. We use this box labeling to obtain a natural description of rowmotion on alt~$\nu$-Tamari lattices.

Reusing the notation from~\Cref{sec:alt_nu_trees}, let $T \lessdot T'$ be a cover relation in the alt~$\nu$-Tamari lattice~$\altTam{\nu}{\delta}$ described in terms of trees, and let $q$ and $q'$ be the corresponding ascent of $T$ and descent of~$T'$ that are exchanged. The nodes $q$ and $q'$ are the bottom-left and top-right corners of a rectangle whose bottom-right corner $r$ is also a node of both $T$ and $T'$. We label the cover relation~$T \lessdot T'$ with the box $b_{T,T'}$ of $F_{\delta,\nu}$ whose bottom-right corner is $r$, as illustrated in~\cref{fig:delta_rotation_tree}, where the rectangle is shaded gray and the box $b_{T,T'}$ is striped orange. We call $b_{T,T'}$ an \dfn{ascent box} of $T$ and a \dfn{descent box} of $T'$.

A box $b$ is an ascent box of $T$ if and only if its bottom-right corner $r$ is the second point of its row, from left to right. Similarly, $b$ is a descent box of $T'$ if and only if its bottom-right corner $r$ is the second point of its column, from top to bottom; equivalently, $r$ is the leftmost point of its row, and the parent of $r$ is not the leftmost point of its row. We identify the position of a box with the position of its bottom-right corner. In particular, a $(\delta,\nu)$-tree cannot have two ascent boxes in the same row or in the same column, and the same holds for descent boxes.

The join-irreducible elements of~$\altTam{\nu}{\delta}$ can be naturally identified with the boxes of the shape~$F_{\delta,\nu}$. Indeed, a $(\delta,\nu)$-tree is join-irreducible in~$\altTam{\nu}{\delta}$ if and only if it has exactly one descent. The corresponding box in $F_{\delta,\nu}$ is then its unique descent box. This correspondence leads to the following result.

\begin{theorem}[{M\"{u}ller~\cite[Lemma 2.10 and Corollary 2.18]{mueller2026combinatorial}}]
The join-irreducible elements in the alt $\nu$-Tamari lattice $\altTam{\nu}{\delta}$ are in bijective correspondence with the boxes of the shape $F_{\delta, \nu}$. Under this bijection, the join-irreducible element $j_{T,T'}$ associated with a cover relation $T\lessdot T'$ is mapped to the box $b_{T, T'}$. 
\end{theorem}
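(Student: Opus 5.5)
Here is how I would prove the statement, quoted from M\"uller: join-irreducibles of $\altTam{\nu}{\delta}$ correspond bijectively to boxes of $F_{\delta,\nu}$, and $j_{T,T'}\mapsto b_{T,T'}$.

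\emph{Step 1: the box map on join-irreducibles.} A tree $J$ is join-irreducible exactly when it covers a unique element, i.e.\ has exactly one descent. Send $J$ to that unique descent box. For surjectivity, fix a box $b$ with bottom-right corner $r$ and build a tree $J_b$. Place $r$ together with the point $q'$ directly above $r$ at the top of its column, and the point $p$ to the left of $r$ at the leftmost position of its row. Then complete this to a maximal compatible set greedily, always choosing points that create no further descents. The convenient normalisation is to start from the minimum tree $\hat 0$ (the right-flushing of the lowest path $\nu$, which has no descents) and perform the single right rotation that creates descent box $b$. For injectivity, suppose two trees share their unique descent box $b$. Then each is the join (the minimal element) of the trees having $b$ as a descent, and I would show this set has a unique minimal element by an exchange argument on compatible point sets. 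Counting gives a check: $|\mathcal J_L|=\ell(L)$ by \cref{prop:alt_semi_ext}, and a maximal chain from $\hat 0$ to $\hat 1$ should use each box exactly once. Indeed, along a chain the number of points to the left of and above each box changes monotonically, so each box can label at most one cover in a chain.

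\emph{Step 2: the label identity.} Let $T\lessdot T'$ be the rotation exchanging $q$ and $q'$ at the box $b$. I claim the canonical label satisfies $j_{T,T'}=J_b$. First I would show $T\vee J_b=T'$. For $T'\ge J_b$, note that $J_b$ is the minimal tree containing the local configuration $\{p,r,q'\}$ around $b$ with $q'$ a descent. For $T\not\ge J_b$, I would introduce an order-preserving ``box statistic'' $\phi_b(T)\in\{0,1\}$ that records whether the point above $r$ is top-of-column. Then $\phi_b(T)=0$ while $\phi_b(J_b)=1$, so $T\not\ge J_b$, and hence $T\vee J_b=T'$ because $T\lessdot T'$. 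Second, for minimality, any $z$ with $T\vee z=T'$ must satisfy $\phi_b(z)=1$. I would show that $\phi_b(z)=1$ forces $z\ge J_b$, which makes $J_b$ the minimum of $\{z: z\vee T=T'\}$.

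\emph{Main obstacle.} The hard part is defining $\phi_b$ and proving it is monotone, i.e.\ that $\{T:\phi_b(T)=1\}$ is an up-set equal to $\{T: T\ge J_b\}$. I would approach this through the path model. Translate $\phi_b$ into the statement that the $\nu$-Dyck path passes weakly above some specific point, adjusted by a $\delta$-altitude condition. Then check directly that each $\delta$-rotation only moves the path weakly upward with respect to that condition. This direct check against the rotation rule is where the stack-polyomino geometry (the column permutation by $\delta$) has to be handled case by case.
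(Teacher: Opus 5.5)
The paper does not prove this statement; it imports it from M\"uller. Your proposal therefore has to stand on its own, and as written it has genuine gaps exactly where the content lies.

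\textbf{Step 1.} The greedy completion that ``creates no further descents'' is never justified. The concrete normalisation you offer instead is false: a single right rotation from $\hat 0$ produces only atoms, and join-irreducibles need not be atoms. The ascent boxes of $\hat 0$ lie in distinct rows and columns, so there are few atoms. For $\nu=ENEEN=(1,2,0)$, with paths encoded as in \cref{fig:altnu_lattices_ENEEN_paths}, there are four boxes. But $\hat 0=(1,2,0)$ has only the upper covers $(0,3,0)$ and $(1,1,1)$, so the join-irreducibles $(0,2,1)$ and $(1,0,2)$ of $\Tam{\nu}$ are never reached. Likewise, only maximum-length chains use every box: $(1,2,0)\lessdot(1,1,1)\lessdot(1,0,2)\lessdot(0,0,3)$ is a maximal chain of length $3$ in $\Tam{\nu}$.

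Surjectivity is better obtained by counting. Every $\delta$-rotation raises the area under the path by at least one, and the chain in the proof of \cref{lem:rowmotion_slow_motion} raises it by exactly one per step. Hence $\ell(L)$ equals the number of boxes, and extremality (\cref{prop:alt_semi_ext}) gives $|\mathcal J_L|=\ell(L)$, so injectivity suffices. However, your injectivity argument assumes that a join-irreducible with descent box $b$ is the minimum of all trees with descent box $b$, which is essentially Step~2. Once Step~2 is known, injectivity is immediate, since the canonical label of $J_*\lessdot J$ is $J$ itself.

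\textbf{Step 2 is the real gap.} You intend $\{\phi_b=1\}=\{T:T\ge J_b\}$, so the sentence ``every $z$ with $T\vee z=T'$ has $\phi_b(z)=1$'' is literally the minimality statement $z\ge J_b$. Monotonicity of $\phi_b$ gives nothing here; it only yields $\phi_b(z)\le\phi_b(T')$.

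If this implication is to come from properties of $\phi_b$ alone, rather than from the specific cover, it amounts to $\phi_b(x\vee y)=\max(\phi_b(x),\phi_b(y))$ for all $x,y$. That says $J_b$ is join-prime, which is false in general: in $\Tam{\nu}$ for $\nu=(1,2,0)$, we have $(0,3,0)\vee(1,0,2)=(0,0,3)\ge(0,2,1)$, although neither joinand is $\ge(0,2,1)$. So the argument must use that $T\lessdot T'$ is the rotation at $b$.

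Similarly, ``$J_b$ is the minimal tree containing the local configuration $\{p,r,q'\}$'' does not give $J_b\le T'$, because that configuration is not determined by $b$. In \cref{fig:delta_rotation_tree}, $q'$ sits three units above $r$. You need $J_b\le T'$ for every tree having descent box $b$. The literal $\phi_b$ is not even monotone: in that same figure, $r$ is an ascent node of $T'$, and rotating it removes $r$ from the tree.

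\textbf{A workable target.} Use the characterization valid in any finite join-semidistributive lattice: $j_{x,y}$ is the unique $j\in\mathcal J_L$ with $j\le y$, $j\not\le x$ and $j_*\le x$. It then suffices to describe $J_b$ and its lower cover $(J_b)_*$ explicitly. You would then prove $J_b\le T'$, $J_b\not\le T$ and $(J_b)_*\le T$ for every rotation $T\lessdot T'$ at $b$, using an explicit comparison criterion for $\altTam{\nu}{\delta}$. That is the substance of M\"uller's result, and the proposal does not supply it.
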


\begin{example}
For $\nu=(1,1,1,0)$ and $\delta=(1,1,0)$, we displayed in~\cref{fig:ex_ji_trees} the six join-irreducible $(\delta,\nu)$-trees, together with the corresponding box of the shape $F_{\delta, \nu}$.
\end{example}

\begin{figure}[ht]
\begin{center}
\hfill
        \altnuTree{1,1,1,0}{1,1,0}{0,2,1,0}{0.4}{}{}{}{
\fill[pattern={Lines[angle=45, distance=3pt]}, pattern color=blue, opacity=1, line width=0.3mm](0,0) rectangle (1,1);

        } \hfill
        \altnuTree{1,1,1,0}{1,1,0}{0,1,2,0}{0.4}{}{}{}{
\fill[pattern={Lines[angle=45, distance=3pt]}, pattern color=blue, opacity=1, line width=0.3mm](0,1) rectangle (1,2);

        } \hfill
        \altnuTree{1,1,1,0}{1,1,0}{1,0,2,0}{0.4}{}{}{}{
\fill[pattern={Lines[angle=45, distance=3pt]}, pattern color=blue, opacity=1, line width=0.3mm](1,1) rectangle (2,2);

        }    \hfill     
        \altnuTree{1,1,1,0}{1,1,0}{0,1,1,1}{0.4}{}{}{}{
\fill[pattern={Lines[angle=45, distance=3pt]}, pattern color=blue, opacity=1, line width=0.3mm](0,2) rectangle (1,3);

        }     \hfill    
        \altnuTree{1,1,1,0}{1,1,0}{1,0,1,1}{0.4}{}{}{}{
\fill[pattern={Lines[angle=45, distance=3pt]}, pattern color=blue, opacity=1, line width=0.3mm](1,2) rectangle (2,3);

        }     \hfill    
        \altnuTree{1,1,1,0}{1,1,0}{1,1,0,1}{0.4}{}{}{}{
\fill[pattern={Lines[angle=45, distance=3pt]}, pattern color=blue, opacity=1, line width=0.3mm](2,2) rectangle (3,3);

        } \hfill
\end{center}
\caption{The join-irreducible $(\delta,\nu)$-trees for $\nu=(1,1,1,0)$, $\delta =(1,1,0)$.}
\label{fig:ex_ji_trees}
\end{figure}

Recall from~\cref{sec:background} that, for a semidistributive lattice $L$, rowmotion is defined as the operator that sends an element $x \in L$ with upper labels set~$\mathcal U(x)$ to the unique element $y \in L$ with lower labels set $\mathcal D(y)=\mathcal U(x)$. In the case of alt~$\nu$-Tamari lattices, the upper label set $\mathcal U(T)$ (resp.~lower label set $\mathcal D(T)$) of a $(\delta,\nu)$-tree~$T$ is identified with the sets of ascent (resp.~descent) boxes of $T$. This gives our first description of rowmotion on $(\delta,\nu)$-trees, as pictured in~\cref{fig:rowmotion_via_trees,fig:rowmotion_orbits_ENEEN}.

\begin{figure}[ht]
\begin{center}
   \altnuTree{3,3,3,2,1,1,0}{2,2,0,0,0,0}{2,1,0,0,2,3,5}{0.4}{}{}{}{
\fill[pattern={Lines[angle=-45, distance=3pt]}, pattern color=red, opacity=1, line width=0.4mm] (5,5) rectangle (6,6);

\fill[pattern={Lines[angle=-45, distance=3pt]}, pattern color=red, opacity=1, line width=0.4mm] (11,4) rectangle (12,5);

\fill[pattern={Lines[angle=-45, distance=3pt]}, pattern color=red, opacity=1, line width=0.4mm] (10,1) rectangle (11,2);

\fill[pattern={Lines[angle=-45, distance=3pt]}, pattern color=red, opacity=1, line width=0.4mm] (7,0) rectangle (8,1);

        \draw[->] (14,3.5) -- (16,3.5) node[midway, above] {Row};
        }
        \altnuTree{3,3,3,2,1,1,0}{2,2,0,0,0,0}{1,0,1,0,0,5,6}{0.4}{}{}{}{
\fill[pattern={Lines[angle=45, distance=3pt]}, pattern color=blue, opacity=1, line width=0.3mm](5,5) rectangle (6,6);

\fill[pattern={Lines[angle=45, distance=3pt]}, pattern color=blue, opacity=1, line width=0.3mm](11,4) rectangle (12,5);

\fill[pattern={Lines[angle=45, distance=3pt]}, pattern color=blue, opacity=1, line width=0.3mm](10,1) rectangle (11,2);

\fill[pattern={Lines[angle=45, distance=3pt]}, pattern color=blue, opacity=1, line width=0.3mm](7,0) rectangle (8,1);

        } 
\end{center}
\caption{Rowmotion via $(\delta,\nu)$-trees. The ascent boxes of the tree on the left are the descent boxes of the tree on the right.}
\label{fig:rowmotion_via_trees}
\end{figure}

\begin{figure}[ht]
\begin{center}
\begin{tikzpicture}
[node distance=2cm,
    >={Stealth},
]

\node (f1) [inner sep=4pt] {
    \altnuTree{1,2,0}{2,0}{1,2,0}{.3}{}{}{}{
        \fill[pattern={Lines[angle=-45, distance=2pt]}, pattern color=red, opacity=1, line width=0.3mm] (0,0) rectangle +(1,1);
        \fill[pattern={Lines[angle=-45, distance=2pt]}, pattern color=red, opacity=1, line width=0.3mm] (1,1) rectangle +(1,1);
    }};
\node (f2) [right of=f1] {
    \altnuTree{1,2,0}{2,0}{0,1,2}{.3}{}{}{}{
        \fill[pattern={Lines[angle=45, distance=2pt]}, pattern color=blue, opacity=1, line width=0.3mm] (0,0) rectangle +(1,1);
        \fill[pattern={Lines[angle=45, distance=2pt]}, pattern color=blue, opacity=1, line width=0.3mm] (1,1) rectangle +(1,1);
        
        \fill[pattern={Lines[angle=-45, distance=2pt]}, pattern color=red, opacity=1, line width=0.3mm] (2,1) rectangle +(1,1);
    }};
\node (f3) [right of=f2] {
    \altnuTree{1,2,0}{2,0}{1,0,2}{.3}{}{}{}{
        \fill[pattern={Lines[angle=45, distance=2pt]}, pattern color=blue, opacity=1, line width=0.3mm] (2,1) rectangle +(1,1);
        
        \fill[pattern={Lines[angle=-45, distance=2pt]}, pattern color=red, opacity=1, line width=0.3mm] (0,0) rectangle +(1,1);
    }};
\node (f4) [right of=f3] {
    \altnuTree{1,2,0}{2,0}{0,3,0}{.3}{}{}{}{
        \fill[pattern={Lines[angle=45, distance=2pt]}, pattern color=blue, opacity=1, line width=0.3mm] (0,0) rectangle +(1,1);
        
        \fill[pattern={Lines[angle=-45, distance=2pt]}, pattern color=red, opacity=1, line width=0.3mm] (0,1) rectangle +(1,1);
    }};
\node (f5) [right of=f4] {
    \altnuTree{1,2,0}{2,0}{0,2,1}{.3}{}{}{}{
        \fill[pattern={Lines[angle=45, distance=2pt]}, pattern color=blue, opacity=1, line width=0.3mm] (0,1) rectangle +(1,1);
        
        \fill[pattern={Lines[angle=-45, distance=2pt]}, pattern color=red, opacity=1, line width=0.3mm] (1,1) rectangle +(1,1);
    }};
\node (f6) [right of=f5] {
    \altnuTree{1,2,0}{2,0}{1,1,1}{.3}{}{}{}{
        \fill[pattern={Lines[angle=45, distance=2pt]}, pattern color=blue, opacity=1, line width=0.3mm] (1,1) rectangle +(1,1);
        
        \fill[pattern={Lines[angle=-45, distance=2pt]}, pattern color=red, opacity=1, line width=0.3mm] (2,1) rectangle +(1,1);
        \fill[pattern={Lines[angle=-45, distance=2pt]}, pattern color=red, opacity=1, line width=0.3mm] (0,0) rectangle +(1,1);
    }};
\node (f7) [right of=f6] {
    \altnuTree{1,2,0}{2,0}{0,0,3}{.3}{}{}{}{
        \fill[pattern={Lines[angle=45, distance=2pt]}, pattern color=blue, opacity=1, line width=0.3mm] (2,1) rectangle +(1,1);
        \fill[pattern={Lines[angle=45, distance=2pt]}, pattern color=blue, opacity=1, line width=0.3mm] (0,0) rectangle +(1,1);
    }};

\draw[->] (f1) -- (f2);
\draw[->] (f2) -- (f3);
\draw[->] (f3) -- (f4);
\draw[->] (f4) -- (f5);
\draw[->] (f5) -- (f6);
\draw[->] (f6) -- (f7);

\node[right=3mm of f7] {$\circlearrowright$};
% \draw[->, bend left=8] (f7.south) to (f1.south);
\end{tikzpicture}
\begin{tikzpicture}
[node distance=2cm,
    >={Stealth},
]

\node (f1) [inner sep=4pt] {
    \altnuTree{1,2,0}{1,0}{1,1,1}{.3}{}{}{}{
        \fill[pattern={Lines[angle=45, distance=2pt]}, pattern color=blue, opacity=1, line width=0.3mm] (0,1) rectangle +(1,1);
        
        \fill[pattern={Lines[angle=-45, distance=2pt]}, pattern color=red, opacity=1, line width=0.3mm] (1,0) rectangle +(1,1);
        \fill[pattern={Lines[angle=-45, distance=2pt]}, pattern color=red, opacity=1, line width=0.3mm] (2,1) rectangle +(1,1);
    }};
\node (f2) [right of=f1] {
    \altnuTree{1,2,0}{1,0}{0,0,3}{.3}{}{}{}{
        \fill[pattern={Lines[angle=45, distance=2pt]}, pattern color=blue, opacity=1, line width=0.3mm] (1,0) rectangle +(1,1);
        \fill[pattern={Lines[angle=45, distance=2pt]}, pattern color=blue, opacity=1, line width=0.3mm] (2,1) rectangle +(1,1);
    }};
\node (f3) [right of=f2] {
    \altnuTree{1,2,0}{1,0}{1,2,0}{.3}{}{}{}{
        \fill[pattern={Lines[angle=-45, distance=2pt]}, pattern color=red, opacity=1, line width=0.3mm] (1,0) rectangle +(1,1);
        \fill[pattern={Lines[angle=-45, distance=2pt]}, pattern color=red, opacity=1, line width=0.3mm] (0,1) rectangle +(1,1);
    }};
\node (f4) [right of=f3] {
    \altnuTree{1,2,0}{1,0}{0,2,1}{.3}{}{}{}{
        \fill[pattern={Lines[angle=45, distance=2pt]}, pattern color=blue, opacity=1, line width=0.3mm] (1,0) rectangle +(1,1);
        \fill[pattern={Lines[angle=45, distance=2pt]}, pattern color=blue, opacity=1, line width=0.3mm] (0,1) rectangle +(1,1);
        
        \fill[pattern={Lines[angle=-45, distance=2pt]}, pattern color=red, opacity=1, line width=0.3mm] (1,1) rectangle +(1,1);
    }};
\node (f5) [right of=f4] {
    \altnuTree{1,2,0}{1,0}{0,1,2}{.3}{}{}{}{
        \fill[pattern={Lines[angle=45, distance=2pt]}, pattern color=blue, opacity=1, line width=0.3mm] (1,1) rectangle +(1,1);
        
        \fill[pattern={Lines[angle=-45, distance=2pt]}, pattern color=red, opacity=1, line width=0.3mm] (2,1) rectangle +(1,1);
    }};
\node (f6) [right of=f5] {
    \altnuTree{1,2,0}{1,0}{1,0,2}{.3}{}{}{}{
        \fill[pattern={Lines[angle=45, distance=2pt]}, pattern color=blue, opacity=1, line width=0.3mm] (2,1) rectangle +(1,1);
        
        \fill[pattern={Lines[angle=-45, distance=2pt]}, pattern color=red, opacity=1, line width=0.3mm] (1,0) rectangle +(1,1);
    }};
\node (f7) [right of=f6] {
    \altnuTree{1,2,0}{1,0}{0,3,0}{.3}{}{}{}{
        \fill[pattern={Lines[angle=45, distance=2pt]}, pattern color=blue, opacity=1, line width=0.3mm] (1,0) rectangle +(1,1);
        
        \fill[pattern={Lines[angle=-45, distance=2pt]}, pattern color=red, opacity=1, line width=0.3mm] (0,1) rectangle +(1,1);
    }};

\draw[->] (f1) -- (f2);
\draw[->] (f2) -- (f3);
\draw[->] (f3) -- (f4);
\draw[->] (f4) -- (f5);
\draw[->] (f5) -- (f6);
\draw[->] (f6) -- (f7);

\node[right=3mm of f7] {$\circlearrowright$};
% \draw[->, bend left=8] (f7.south) to (f1.south);
\end{tikzpicture}
\begin{tikzpicture}
[node distance=2cm,
    >={Stealth},
]

\node (f1) [inner sep=4pt] {
    \altnuTree{1,2,0}{0,0}{1,0,2}{0.3}{}{}{}{
        \fill[pattern={Lines[angle=45, distance=2pt]}, pattern color=blue, opacity=1, line width=0.3mm] (1,1) rectangle (2,2);
        
        \fill[pattern={Lines[angle=-45, distance=2pt]}, pattern color=red, opacity=1, line width=0.3mm] (2,0) rectangle (3,1);
    }};
\node (f2) [right of=f1] {
    \altnuTree{1,2,0}{0,0}{0,3,0}{.3}{}{}{}{
        \fill[pattern={Lines[angle=45, distance=2pt]}, pattern color=blue, opacity=1, line width=0.3mm] (2,0) rectangle (3,1);
        
        \fill[pattern={Lines[angle=-45, distance=2pt]}, pattern color=red, opacity=1, line width=0.3mm] (0,1) rectangle (1,2);
    }};
\node (f3) [right of=f2] {
    \altnuTree{1,2,0}{0,0}{1,1,1}{.3}{}{}{}{
        \fill[pattern={Lines[angle=45, distance=2pt]}, pattern color=blue, opacity=1, line width=0.3mm] (0,1) rectangle (1,2);
        
        \fill[pattern={Lines[angle=-45, distance=2pt]}, pattern color=red, opacity=1, line width=0.3mm] (1,1) rectangle (2,2);
        \fill[pattern={Lines[angle=-45, distance=2pt]}, pattern color=red, opacity=1, line width=0.3mm] (2,0) rectangle (3,1);
    }};
\node (f4) [right of=f3] {
    \altnuTree{1,2,0}{0,0}{0,1,2}{.3}{}{}{}{
        \fill[pattern={Lines[angle=45, distance=2pt]}, pattern color=blue, opacity=1, line width=0.3mm] (1,1) rectangle (2,2);
        \fill[pattern={Lines[angle=45, distance=2pt]}, pattern color=blue, opacity=1, line width=0.3mm] (2,0) rectangle (3,1);
        
        \fill[pattern={Lines[angle=-45, distance=2pt]}, pattern color=red, opacity=1, line width=0.3mm] (2,1) rectangle (3,2);
    }};
\node (f5) [right of=f4] {
    \altnuTree{1,2,0}{0,0}{0,0,3}{.3}{}{}{}{
        \fill[pattern={Lines[angle=45, distance=2pt]}, pattern color=blue, opacity=1, line width=0.3mm] (2,1) rectangle (3,2);
    }};
\node (f6) [right of=f5] {
    \altnuTree{1,2,0}{0,0}{1,2,0}{.3}{}{}{}{
        \fill[pattern={Lines[angle=-45, distance=2pt]}, pattern color=red, opacity=1, line width=0.3mm] (2,0) rectangle (3,1);
        \fill[pattern={Lines[angle=-45, distance=2pt]}, pattern color=red, opacity=1, line width=0.3mm] (0,1) rectangle (1,2);
    }};
\node (f7) [right of=f6] {
    \altnuTree{1,2,0}{0,0}{0,2,1}{.3}{}{}{}{
        \fill[pattern={Lines[angle=45, distance=2pt]}, pattern color=blue, opacity=1, line width=0.3mm] (2,0) rectangle (3,1);
        \fill[pattern={Lines[angle=45, distance=2pt]}, pattern color=blue, opacity=1, line width=0.3mm] (0,1) rectangle (1,2);
        
        \fill[pattern={Lines[angle=-45, distance=2pt]}, pattern color=red, opacity=1, line width=0.3mm] (1,1) rectangle (2,2);
    }};

\draw[->] (f1) -- (f2);
\draw[->] (f2) -- (f3);
\draw[->] (f3) -- (f4);
\draw[->] (f4) -- (f5);
\draw[->] (f5) -- (f6);
\draw[->] (f6) -- (f7);

\node[right=3mm of f7] {$\circlearrowright$};
% \draw[->, bend left=8] (f7.south) to (f1.south);
\end{tikzpicture}
\end{center}
\caption{Rowmotion has exactly one orbit of size seven for all the alt~$\nu$-Tamari lattices in~\cref{fig:altnu_lattices_ENEEN_trees}. For each tree, ascent boxes are striped orange while descent boxes are striped blue.}
\label{fig:rowmotion_orbits_ENEEN}
\end{figure}

As we showed in \cref{prop:alt_semi_ext}, the alt~$\nu$-Tamari lattices are not just semidistributive but also extremal. Hence, as discussed in~\cref{sec:background}, we can alternatively compute rowmotion \emph{in slow motion}~\cite{thomas2019rowmotion}. In other words, rowmotion can be expressed as a sequence of toggles performed in a prescribed order, as we now describe.

If $b$ is a box in $F_{\delta, \nu}$ and $T$ is a $(\delta,\nu)$-tree, then the toggle $\tau_b(T)$ is obtained from $T$ by applying the corresponding rotation, when $b$ is an ascent or descent box of $T$. Otherwise $\tau_b(T) = T$. Since we sometimes involve several increment vectors at the same time, we will refer to the toggles in~$\altTam{\nu}{\delta}$ as \dfn{$\delta$-toggles} to avoid confusion. 

\begin{lemma} \label{lem:rowmotion_slow_motion}
Label the boxes of~$F_{\delta,\nu}$ with the numbers $1,2,\dots,\ell$ from top to bottom and right to left, i.e., in ``Arabic/Hebrew reading order.'' Then we have that rowmotion on the alt $\nu$-Tamari lattice $\altTam{\nu}{\delta}$ is the following composition of $\delta$-toggles: 
\[ \row(T)=\tau_\ell \circ \dots \circ \tau_2 \circ \tau_1(T). \]
\end{lemma}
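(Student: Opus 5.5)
Since $\altTam{\nu}{\delta}$ is semidistributive and extremal (\cref{prop:alt_semi_ext}), the theorem of Thomas and Williams applies. It gives $\row=\tau_{j_N}\circ\cdots\circ\tau_{j_1}$ for any maximum-length chain $z_{N+1}\lessdot\cdots\lessdot z_1$, where $j_i=j_{z_{i+1},z_i}$. By M\"uller's theorem, join-irreducibles correspond to boxes and $j_{T,T'}=b_{T,T'}$. Hence $N$ equals the number $\ell$ of boxes of $F_{\delta,\nu}$, and the $\delta$-toggles $\tau_b$ are exactly the Thomas--Williams toggles. It therefore suffices to exhibit one maximal chain from $\hat 0$ to $\hat 1$ of length $\ell$ in which the box labels, read from the top of the chain downward, are the boxes in Arabic/Hebrew reading order. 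Equivalently, read upward from $\hat 0$, the labels should be bottom to top and left to right within each row.

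The construction works row by row. The bottom tree $\hat 0$ is the right-flushing of $\nu$ itself, and $\hat 1$ is the right-flushing of the path $N^nE^{|\nu|}$. I would start from $\hat 0$ and process the rows of $F_{\delta,\nu}$ from bottom to top. Within each row, I would perform right rotations whose ascent boxes sweep left to right, one box at a time. The mechanism is the ascent characterization: $b$ is an ascent box of $T$ iff its bottom-right corner is the second point of its row of $T$. So I would track an invariant. At the moment we want to rotate at box $b$ in row $k$, the tree should contain the left end of that row together with the bottom-right corner of $b$ as the second point. The right rotation replaces the leftmost point $q$ by $q'$, which lies higher up. The next box to the right then becomes the new ``second point'' configuration.

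It is cleaner to phrase this on paths. Rotating at the lowest valley repeatedly, each $\delta$-rotation moves one east step up, and the tree-rotation box is in the row of that valley. So I would verify by induction on the number of rotations performed that the tree after $m$ steps is an explicit tree $T_m$. Its rows below the current row are collapsed to a single point at the left boundary of $F_{\delta,\nu}$. Its current row has its first few points removed, and the rows above are unchanged from $\hat 0$. For $T_m$, I would check directly from the compatibility/rectangle description that the unique relevant ascent box is the next box in reading order. The number of steps is automatically $\ell$, because each box is used exactly once, and a chain of length $\ell=\ell(L)$ is maximal.

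I expect the main obstacle to be confirming that the right rotation really produces the predicted next tree $T_{m+1}$. The issue is the stack polyomino shape, where columns of $F_{\delta,\nu}$ to the right of the tallest columns are shorter. The point $q'$ must land in the correct row, namely the next row where the rectangle fits inside $F_{\delta,\nu}$, and the remaining points must stay pairwise $\delta$-compatible. I would handle this by a case split on whether the box lies left or right of the maximal columns. I would also check small examples, such as the chains visible in \cref{fig:altnu_lattices_ENEEN_trees}, to guard against the reading direction being reversed. If the reading direction does turn out reversed, the fallback is to apply the same argument to the dual chain from $\hat 1$ using left rotations.
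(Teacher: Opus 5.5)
Your reduction and your chain are the same as in the paper's proof. Via Thomas--Williams and M\"uller's identification of labels with boxes, it suffices to exhibit one length-$\ell$ chain whose labels, read upward from $\hat 0$, sweep the rows bottom to top and each row left to right. Like the paper, you take the chain obtained from $\nu$ by always rotating at the leftmost valley.

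The paper simply asserts that this chain's labels come out in the right order. The step you add to justify it is where the proposal goes wrong, because the explicit invariant you state for $T_m$ is false in the tree picture:
\begin{itemize}
\item A fully processed row is a single node at its \emph{right} end, not at the left boundary, since flushing puts a lone node in the rightmost available position.
\item The row directly above the current one is not as in $\hat 0$, since every rotation inserts $q'$ into it.
\item The current row is not $\hat 0$'s row with its first nodes deleted, since it was enlarged while the row below was processed.
\end{itemize}
For example, take $\nu=(1,2,1,0)$ and $\delta=(1,0,0)$. The first rotation turns the bottom row $\{(2,0),(3,0)\}$ into $\{(3,0)\}$ and inserts $(3,1)$ into the height-one row $\{(1,1),(2,1),(4,1)\}$. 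The next rotation inserts $(2,2)$ at height two. An induction built on your description would not close.

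The correct invariant is simpler and needs no case split. Your informal picture of the second node sweeping right is the right idea; what makes the boxes consecutive is that row $k$ is \emph{full} when its turn comes.
\begin{itemize}
\item Let $[L_k,R_k]$ be the horizontal extent of the $k$-th row of boxes. Since the rows of a stack polyomino are nested, the lattice points of $F_{\delta,\nu}$ at height $k$ are exactly $[L_k,R_k]\times\{k\}$. There are $\nu_0+\dots+\nu_k+1$ of them.
\item When the leftmost valley first reaches height $k$, the path is $N^kE^{\nu_0+\dots+\nu_k}NE^{\nu_{k+1}}\cdots$. Since flushing preserves row counts, row $k$ of the tree is all of $[L_k,R_k]\times\{k\}$.
\item Each subsequent rotation moves one lattice point from height $k$ to height $k+1$. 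A right rotation at an ascent $q$ lowers only the count of $q$'s row, and each row has a unique ascent, namely its leftmost node. So on trees this is the rotation at the ascent of row $k$: it deletes that row's leftmost node and creates $q'$ at height $k+1$ in the column of $r$.
\end{itemize}
In particular, your worry about $q'$ landing in some farther row does not arise. After $j$ rotations, row $k$ is $\{L_k+j,\dots,R_k\}\times\{k\}$ and its second node is $(L_k+j+1,k)$. So the label is the $(j+1)$-st box from the left in row $k$. After $R_k-L_k$ rotations the row is a single node and the process moves up. This gives exactly the required order, and $\sum_k (R_k-L_k)=\ell$ (the paper's area count) gives the length.
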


\begin{proof}
It suffices to see that there is a chain of maximal length whose sequence of labels reads $\ell, \ell - 1, \dots, 1$ from bottom to top.
    
Recall that the smallest $(\delta, \nu)$-tree corresponds to the path $\nu$ under the flushing bijection. Each north step of $\nu$ is followed by $\nu_i$ east steps, and we have $\delta_i \leq \nu_i$. Let $k$ be the index of the first non-initial north step of $\nu$, that is, the leftmost north step of $\nu$ involved in a valley. The rotation of $\nu$ at this leftmost valley will be equivalent to changing this valley into a peak. In this case, the area under the path has increased by exactly $1$.

Moreover, for each $i \geq k$, the $i$-th north step is still followed by at least $\delta_i$ east steps. Hence, we can further proceed to rotate at the leftmost valley of the resulting path, and we obtain a maximal chain from the bottom element $\nu$ to the top element of the lattice, where each cover relation increases the area by $1$. Thus, this is a chain of length $\ell$, hence of maximal length. Finally, under the flushing bijection, the sequence of box labels along this chain is precisely $\ell, \ell - 1, \dots, 1$.
\end{proof}

\begin{figure}[ht]
\begin{center}
\begin{tikzpicture}
\def\h {1.8}

\node (n0) at (0,0) {
\altnuTree{1,2,1,0}{1,0,0}{1,0,2,1}{0.36}{}{}{}{
\node at (3.5,2.5) {\footnotesize 1};
\node at (2.5,2.5) {\footnotesize 2};
\node at (1.5,2.5) {\footnotesize 3};
\node at (0.5,2.5) {\footnotesize 4};
\node at (3.5,1.5) {\footnotesize 5};
\node at (2.5,1.5) {\footnotesize 6};
\node at (1.5,1.5) {\footnotesize 7};
\node at (2.5,0.5) {\footnotesize 8};
}
};

\node (n1) at (1*\h,0) {
\altnuTree{1,2,1,0}{1,0,0}{1,0,2,1}{0.36}{}{}{}{
\fill[orange] (3,2) rectangle (4,3);
\node at (3.5,2.5) {\footnotesize 1};
% \node at (2.5,2.5) {\footnotesize 2};
% \node at (1.5,2.5) {\footnotesize 3};
% \node at (0.5,2.5) {\footnotesize 4};
% \node at (3.5,1.5) {\footnotesize 5};
% \node at (2.5,1.5) {\footnotesize 6};
% \node at (1.5,1.5) {\footnotesize 7};
% \node at (2.5,0.5) {\footnotesize 8};
}
};

\node (n2) at (2*\h,0) {
\altnuTree{1,2,1,0}{1,0,0}{1,0,2,1}{0.36}{}{}{}{
\fill[orange] (2,2) rectangle (3,3);
% \node at (3.5,2.5) {\footnotesize 1};
\node at (2.5,2.5) {\footnotesize 2};
% \node at (1.5,2.5) {\footnotesize 3};
% \node at (0.5,2.5) {\footnotesize 4};
% \node at (3.5,1.5) {\footnotesize 5};
% \node at (2.5,1.5) {\footnotesize 6};
% \node at (1.5,1.5) {\footnotesize 7};
% \node at (2.5,0.5) {\footnotesize 8};
}
};

\node (n3) at (3*\h,0) {
\altnuTree{1,2,1,0}{1,0,0}{1,0,1,2}{0.36}{}{}{}{
\fill[orange] (1,2) rectangle (2,3);
% \node at (3.5,2.5) {\footnotesize 1};
% \node at (2.5,2.5) {\footnotesize 2};
\node at (1.5,2.5) {\footnotesize 3};
% \node at (0.5,2.5) {\footnotesize 4};
% \node at (3.5,1.5) {\footnotesize 5};
% \node at (2.5,1.5) {\footnotesize 6};
% \node at (1.5,1.5) {\footnotesize 7};
% \node at (2.5,0.5) {\footnotesize 8};
}
};

\node (n4) at (4*\h,0) {
\altnuTree{1,2,1,0}{1,0,0}{1,0,1,2}{0.36}{}{}{}{
\fill[orange] (0,2) rectangle (1,3);
% \node at (3.5,2.5) {\footnotesize 1};
% \node at (2.5,2.5) {\footnotesize 2};
% \node at (1.5,2.5) {\footnotesize 3};
\node at (0.5,2.5) {\footnotesize 4};
% \node at (3.5,1.5) {\footnotesize 5};
% \node at (2.5,1.5) {\footnotesize 6};
% \node at (1.5,1.5) {\footnotesize 7};
% \node at (2.5,0.5) {\footnotesize 8};
}
};

\node (n5) at (5*\h,0) {
\altnuTree{1,2,1,0}{1,0,0}{1,1,0,2}{0.36}{}{}{}{
\fill[orange] (3,1) rectangle (4,2);
% \node at (3.5,2.5) {\footnotesize 1};
% \node at (2.5,2.5) {\footnotesize 2};
% \node at (1.5,2.5) {\footnotesize 3};
% \node at (0.5,2.5) {\footnotesize 4};
\node at (3.5,1.5) {\footnotesize 5};
% \node at (2.5,1.5) {\footnotesize 6};
% \node at (1.5,1.5) {\footnotesize 7};
% \node at (2.5,0.5) {\footnotesize 8};
}
};

\node (n6) at (6*\h,0) {
\altnuTree{1,2,1,0}{1,0,0}{1,1,0,2}{0.36}{}{}{}{
\fill[orange] (2,1) rectangle (3,2);
% \node at (3.5,2.5) {\footnotesize 1};
% \node at (2.5,2.5) {\footnotesize 2};
% \node at (1.5,2.5) {\footnotesize 3};
% \node at (0.5,2.5) {\footnotesize 4};
% \node at (3.5,1.5) {\footnotesize 5};
\node at (2.5,1.5) {\footnotesize 6};
% \node at (1.5,1.5) {\footnotesize 7};
% \node at (2.5,0.5) {\footnotesize 8};
}
};

\node (n7) at (7*\h,0) {
\altnuTree{1,2,1,0}{1,0,0}{1,1,0,2}{0.36}{}{}{}{
\fill[orange] (1,1) rectangle (2,2);
% \node at (3.5,2.5) {\footnotesize 1};
% \node at (2.5,2.5) {\footnotesize 2};
% \node at (1.5,2.5) {\footnotesize 3};
% \node at (0.5,2.5) {\footnotesize 4};
% \node at (3.5,1.5) {\footnotesize 5};
% \node at (2.5,1.5) {\footnotesize 6};
\node at (1.5,1.5) {\footnotesize 7};
% \node at (2.5,0.5) {\footnotesize 8};
}
};

\node (n8) at (8*\h,0) {
\altnuTree{1,2,1,0}{1,0,0}{0,2,0,2}{0.36}{}{}{}{
\fill[orange] (2,0) rectangle (3,1);
% \node at (3.5,2.5) {\footnotesize 1};
% \node at (2.5,2.5) {\footnotesize 2};
% \node at (1.5,2.5) {\footnotesize 3};
% \node at (0.5,2.5) {\footnotesize 4};
% \node at (3.5,1.5) {\footnotesize 5};
% \node at (2.5,1.5) {\footnotesize 6};
% \node at (1.5,1.5) {\footnotesize 7};
\node at (2.5,0.5) {\footnotesize 8};
}
};

\node at (0,-1) {\footnotesize $T$};
\node at (1*\h,-1) {\footnotesize $\tau_1(T)$};
\node at (2*\h,-1) {\footnotesize $\tau_2\circ \tau_1(T)$};
\node at (3*\h,-1) {\footnotesize $\dots$};
\node at (8*\h,-1) {\footnotesize $\row(T)$};

%\node at (7*\h,-1) {\footnotesize $\tau^{\operatorname{up}}(T)$};

\end{tikzpicture}

    
\end{center}
\caption{Rowmotion in slow motion: $\row(T)=\tau_8 \circ \dots \circ \tau_1 (T)$. Note that $T$ has ascent boxes $3$ and $8$, which are exactly the descent boxes of the last tree.}
\label{fig:slowmotion}
\end{figure}

An example of rowmotion computed in slow motion is shown in \cref{fig:slowmotion}.

\section{The rowmotion-intertwining bijection} \label{sec:phi}

The purpose of this section is to prove that, for a fixed path $\nu$, all alt~$\nu$-Tamari lattices $\altTam{\nu}{\delta}$ have identical rowmotion dynamics (\cref{thm:intro_main}). We construct a bijection from $\altTam{\nu}{\delta}$ to~$\altTam{\nu}{\widetilde \delta}$ where $\widetilde \delta$ is obtained from $\delta$ by decreasing its last nonzero entry by one. We show that our bijection commutes with rowmotion. Iterating this procedure eventually yields the distributive~$\nu$-Dyck lattice $\altTam{\nu}{\delta^{\min}}$, where $\delta^{\min}\coloneqq (0,\dots,0)$. Consequently, every alt $\nu$-Tamari lattice has the same rowmotion dynamics as $\altTam{\nu}{\delta^{\min}}$.

\subsection{The map \texorpdfstring{$\varphi$}{phi}}

Let $\delta=(\delta_1,\dots,\delta_n)\neq (0,\dots,0)$ be a fixed increment vector with respect to $\nu$, and let $\widetilde \delta$ be the increment vector obtained by decreasing the last nonzero entry of $\delta$ by one. Since we will work with toggle and rowmotion maps defined on both $\altTam{\nu}{\delta}$ and $\altTam{\nu}{\widetilde \delta}$, we will often use tildes to denote maps defined on the latter. For example, we denote rowmotion on this lattice by $\widetilde \row\colon \altTam{\nu}{\widetilde \delta} \to \altTam{\nu}{\widetilde \delta}$. Our goal is to construct a bijection $\varphi\colon \altTam{\nu}{\delta} \to \altTam{\nu}{\widetilde \delta}$ such that the following diagram commutes:
\begin{center}
\begin{tikzcd}
\altTam{\nu}{\delta} \arrow[r, "\varphi"] \arrow[d, "\row"'] & \altTam{\nu}{\widetilde \delta} \arrow[d, "\widetilde \row"] \\
\altTam{\nu}{\delta} \arrow[r, "\varphi"'] & \altTam{\nu}{\widetilde \delta}
\end{tikzcd}    
\end{center}
Equivalently, we wish to show that $\widetilde \row \circ \varphi = \varphi \circ \row$. Although the map $\varphi$ depends on $\delta$, we suppress this dependence in the notation throughout this section.

\begin{remark}
Our description of $\varphi$ is purely combinatorial. Nevertheless, we would like to point out that its discovery was motivated by the underlying geometric structure of the alt $\nu$-Tamari lattice. Its Hasse diagram is the edge graph of a polytopal complex induced by an arrangement of tropical hyperplanes~\cite{ceballos2024canonical}. When $\delta=(\delta_1,\dots,\delta_n)$ with $\delta_{n-1}\neq 0$, decreasing the last nonzero entry~$\delta_{n-1}$ by one corresponds to ``moving'' one of the hyperplanes in the arrangement. The map $\varphi$ is then obtained by relabeling the vertices accordingly. Two examples are illustrated in \cref{fig:tropical_varphi,fig:tropical_varphi2}, where the node labels represent rowmotion orbits (cf.~\cref{fig:alt_tamaris_14}). This yields the combinatorial description of~$\varphi$ in this particular case, which we subsequently extended to the general setting.
\end{remark}

\begin{figure}[ht]
\begin{center}
\scalebox{0.4}{%
\begin{tikzpicture}
% 1. Nur die Koordinaten definieren
\coordinate (n1) at (0,0);
\coordinate (n2) at (5,5);
\coordinate (n22) at (7,7);
\coordinate (n222) at (9,9);
    
\coordinate (n3) at (-2,2);
\coordinate (n4) at (-2,5);
\coordinate (n5) at (2-.5,8+.5);

\coordinate (n55) at (4-.5,10+.5);
\coordinate (n555) at (6-.5,12+.5);
    
\coordinate (n6) at (2-4,8+4);
\coordinate (n66) at (4-4,10+4);
\coordinate (n666) at (6-4,12+4);

\coordinate (n7) at (-2-2.5,5+4.5);
\coordinate (n8) at (-2-2.5,2+2.5);
\coordinate (n9) at (-2-2.5,5+2.5);

% 2. Stile definieren
\tikzset{
  extended line/.style={
    color=black, 
    line width=1pt,
    preaction={
      draw,
      gray,
      dashed,
      line width=0.6pt,
      shorten <= -1cm,
      shorten >= -1cm
    }
  },
  red extended line/.style={
    color=red, 
    line width=3pt,
    preaction={
      draw,
      red,
      dashed,
      line width=1pt,
      shorten <= -1cm,
      shorten >= -1cm
    }
  }
}

% 3. Linien zeichnen
\draw [extended line] (n5) -- (n4);
\draw [extended line] (n5) -- (n6);
\draw [extended line] (n6) -- (n7);
\draw [extended line] (n7) -- (n9);
\draw [extended line] (n4) -- (n9);
\draw [extended line] (n5) -- (n55);
\draw [extended line] (n6) -- (n66);
\draw [red extended line, preaction={draw, red, dashed}] (n66) -- (n55);

% 4. Knoten mit etwas größeren Kreisen
\node [draw, circle, fill=white, inner sep=2.5pt, label={[label distance=4mm]below:{\huge $a_1$}}] at (n4) {};
\node [draw, circle, fill=white, inner sep=2.5pt, label={[label distance=4mm]above:{\huge $a_2$}}] at (n6) {};
\node [draw, circle, fill=white, inner sep=2.5pt, label={[label distance=4mm]right:{\huge $a_{3}$}}] at (n55) {};
\node [draw, circle, fill=white, inner sep=2.5pt, label={[label distance=4mm]left:{\huge $a_4$}}] at (n9) {};
\node [draw, circle, fill=white, inner sep=2.5pt, label={[label distance=4mm]left:{\huge $a_5$}}] at (n7) {};
\node [draw, circle, fill=white, inner sep=2.5pt, label={[label distance=4mm]right:{\huge $a_6$}}] at (n5) {};
\node [draw, circle, fill=white, inner sep=2.5pt, label={[label distance=4mm]above:{\huge $a_7$}}] at (n66) {};

\end{tikzpicture}\hspace{5cm}
\begin{tikzpicture}
% 1. Nur die Koordinaten definieren
\coordinate (n1) at (0,0);
\coordinate (n2) at (5,5);
\coordinate (n22) at (7,7);
\coordinate (n222) at (-1,-1);
    
\coordinate (n3) at (-2,2);
\coordinate (n4) at (-2,5);
\coordinate (n5) at (2-.5,8+.5);

\coordinate (n55) at (4-.5,10+.5);
\coordinate (n555) at (-2,0);
    
\coordinate (n6) at (2-4,8+4);
\coordinate (n66) at (4-4,10+4);
\coordinate (n666) at (-4.5,2.5);

\coordinate (n7) at (-2-2.5,5+4.5);
\coordinate (n8) at (-2-2.5,2+2.5);
\coordinate (n9) at (-2-2.5,5+2.5);

% 2. Stile definieren
\tikzset{
  extended line/.style={
    color=black, 
    line width=1pt,
    preaction={
      draw,
      gray,
      dashed,
      line width=0.6pt,
      shorten <= -1cm,
      shorten >= -1cm
    }
  },
  red extended line/.style={
    color=red, 
    line width=3pt,
    preaction={
      draw,
      red,
      dashed,
      line width=1pt,
      shorten <= -1cm,
      shorten >= -1cm
    }
  }
}

% 3. Linien zeichnen
\draw [extended line] (n5) -- (n4);
\draw [extended line] (n4) -- (n3);
\draw [extended line] (n5) -- (n6);
\draw [extended line] (n6) -- (n7);
\draw [extended line] (n7) -- (n9);
\draw [extended line] (n9) -- (n8);
\draw [red extended line] (n8) -- (n3);
\draw [extended line] (n4) -- (n9);

% 4. Knoten mit etwas größeren Kreisen
\node [draw, circle, fill=white, inner sep=2.5pt, label={[label distance=4mm]right:{\huge $a_1$}}] at (n4) {};
\node [draw, circle, fill=white, inner sep=2.5pt, label={[label distance=4mm]above:{\huge $a_2$}}] at (n6) {};
\node [draw, circle, fill=white, inner sep=2.5pt, label={[label distance=4mm]below:{\huge $a_3$}}] at (n3) {};
\node [draw, circle, fill=white, inner sep=2.5pt, label={[label distance=4mm]left:{\huge $a_4$}}] at (n9) {};
\node [draw, circle, fill=white, inner sep=2.5pt, label={[label distance=4mm]left:{\huge $a_5$}}] at (n7) {};
\node [draw, circle, fill=white, inner sep=2.5pt, label={[label distance=4mm]right:{\huge $a_6$}}] at (n5) {};
\node [draw, circle, fill=white, inner sep=2.5pt, label={[label distance=4mm]left:{\huge $a_7$}}] at (n8) {};

\end{tikzpicture}
}
\end{center}
\caption{Geometric intuition behind the map $\varphi$ in the case $\delta_{n-1} \neq 0$. Alt $\nu$-Tamari lattice for $\nu=(1,2,0)$ with $\delta=(2,0)$ (left) and $\delta=(1,0)$ (right).}
\label{fig:tropical_varphi}
\end{figure}

\begin{figure}[ht]
\begin{center}
\scalebox{0.6}{%
\begin{tikzpicture}%
	[z={(0cm, 4.5cm)},
	y={(-4.5cm, 0cm)},
	x={(2.2cm, 2.2cm)},
	scale=0.3,rotate=-10]

%% Coordinates of the vertices:
\coordinate (c0003) at (3,4,3);
\coordinate (c0012) at (2,4,3);
\coordinate (c0021) at (1,3,3);
\coordinate (c0030) at (0,2,2);
\coordinate (c0102) at (3,2,3);
\coordinate (c0111) at (1,2,3);
\coordinate (c0120) at (0,1,2);
\coordinate (c0201) at (3,0,1);
\coordinate (c0210) at (0,0,1);
\coordinate (c1002) at (3,4,0);
\coordinate (c1011) at (2,4,0);
\coordinate (c1020) at (0,2,0);
\coordinate (c1101) at (3,0,0);
\coordinate (c1110) at (0,0,0);

%% Drawing the vertices with labels
\node[draw,circle,fill=white,inner sep=2.5pt, label={[label distance=0mm]above:{$a_2$}}] (n0003) at (c0003) {};
\node[draw,circle,fill=white,inner sep=2.5pt, label={[label distance=0mm]left:{$c_4$}}] (n0012) at (c0012) {};
\node[draw,circle,fill=white,inner sep=2.5pt, label={[label distance=0mm]left:{$b_2$}}] (n0021) at (c0021) {};
\node[draw,circle,fill=white,inner sep=2.5pt, label={[label distance=0mm]left:{$c_6$}}] (n0030) at (c0030) {};
\node[draw,circle,fill=white,inner sep=2.5pt, label={[label distance=0mm]right:{$c_2$}}] (n0102) at (c0102) {};
\node[draw,circle,fill=white,inner sep=2.5pt, label={[label distance=0mm]above:{$c_7$}}] (n0111) at (c0111) {};
\node[draw,circle,fill=white,inner sep=2.5pt, label={[label distance=0mm]right:{$b_1$}}] (n0120) at (c0120) {};
\node[draw,circle,fill=white,inner sep=2.5pt, label={[label distance=0mm]right:{$b_4$}}] (n0201) at (c0201) {};
\node[draw,circle,fill=white,inner sep=2.5pt, label={[label distance=0mm]left:{$c_1$}}] (n0210) at (c0210) {};
\node[draw,circle,fill=white,inner sep=2.5pt, label={[label distance=0mm]left:{$c_8$}}] (n1002) at (c1002) {};
\node[draw,circle,fill=white,inner sep=2.5pt, label={[label distance=0mm]left:{$b_3$}}] (n1011) at (c1011) {};
\node[draw,circle,fill=white,inner sep=2.5pt, label={[label distance=0mm]left:{$c_3$}}] (n1020) at (c1020) {};
\node[draw,circle,fill=white,inner sep=2.5pt, label={[label distance=0mm]right:{$c_5$}}] (n1101) at (c1101) {};
\node[draw,circle,fill=white,inner sep=2.5pt, label={[label distance=0mm]below:{$a_1$}}] (n1110) at (c1110) {};

%% Drawing edges in the back (individual lines)
\draw [color=red, line width=2] (n1002) -- (n0003);
\draw [color=black, line width=1] (n1011) -- (n1002);
\draw [color=red, line width=2] (n1101) -- (n1002);

%% Drawing edges in the front (individual lines)
\draw [color=black, line width=1] (n0012) -- (n0003);
\draw [color=black, line width=1] (n0021) -- (n0012);
\draw [color=black, line width=1] (n0111) -- (n0021);
\draw [color=black, line width=1] (n0111) -- (n0102);
\draw [color=red, line width=2] (n0102) -- (n0003);
\draw [color=black, line width=1] (n1110) -- (n1020);
\draw [color=black, line width=1] (n1020) -- (n1011);
\draw [color=black, line width=1] (n1110) -- (n1101);
\draw [color=black, line width=1] (n1110) -- (n0210);
\draw [color=black, line width=1] (n0210) -- (n0120);
\draw [color=black, line width=1] (n0120) -- (n0030);
\draw [color=black, line width=1] (n0030) -- (n0021);
\draw [color=black, line width=1] (n0120) -- (n0111);
\draw [color=black, line width=1] (n1020) -- (n0030);
\draw [color=black, line width=1] (n1011) -- (n0012);
\draw [color=red, line width=2] (n1101) -- (n0201);
\draw [color=red, line width=2] (n0201) -- (n0102);
\draw [color=black, line width=1] (n0210) -- (n0201);

\end{tikzpicture}
\hspace{4cm}
\begin{tikzpicture}%
	[z={(0cm, 4.5cm)},
	y={(-4.5cm, 0cm)},
	x={(2cm, 2cm)},
	scale=0.3,rotate=-10]

%% Coordinates of the vertices:
\coordinate (c0003) at (3,6,6);
\coordinate (c0012) at (2,5,6);
\coordinate (c0021) at (1,4,5);
\coordinate (c0030) at (0,3,4);
\coordinate (c0102) at (2,4,6);
\coordinate (c0111) at (1,3,5);
\coordinate (c0120) at (0,2,4);
\coordinate (c0201) at (1,2,4);
\coordinate (c0210) at (0,1,3);
\coordinate (c1002) at (3,6,3);
\coordinate (c1011) at (1,4,3);
\coordinate (c1020) at (0,3,2);
\coordinate (c1101) at (1,2,3);
\coordinate (c1110) at (0,1,2);

%% Drawing the vertices with labels
\node[draw,circle,fill=white,inner sep=2.5pt, label={[label distance=0mm]above:{$c_4$}}] (n0003) at (c0003) {};
\node[draw,circle,fill=white,inner sep=2.5pt, label={[label distance=0mm]above:{$b_2$}}] (n0012) at (c0012) {};
\node[draw,circle,fill=white,inner sep=2.5pt, label={[label distance=0mm]left:{$c_6$}}] (n0021) at (c0021) {};
\node[draw,circle,fill=white,inner sep=2.5pt, label={[label distance=0mm]left:{$a_2$}}] (n0030) at (c0030) {};
\node[draw,circle,fill=white,inner sep=2.5pt, label={[label distance=0mm]right:{$c_7$}}] (n0102) at (c0102) {};
\node[draw,circle,fill=white,inner sep=2.5pt, label={[label distance=0mm]right:{$b_1$}}] (n0111) at (c0111) {};
\node[draw,circle,fill=white,inner sep=2.5pt, label={[label distance=0mm]right:{$c_2$}}] (n0120) at (c0120) {};
\node[draw,circle,fill=white,inner sep=2.5pt, label={[label distance=0mm]right:{$c_1$}}] (n0201) at (c0201) {};
\node[draw,circle,fill=white,inner sep=2.5pt, label={[label distance=0mm]right:{$b_4$}}] (n0210) at (c0210) {};
\node[draw,circle,fill=white,inner sep=2.5pt, label={[label distance=0mm]left:{$b_3$}}] (n1002) at (c1002) {};
\node[draw,circle,fill=white,inner sep=2.5pt, label={[label distance=0mm]left:{$c_3$}}] (n1011) at (c1011) {};
\node[draw,circle,fill=white,inner sep=2.5pt, label={[label distance=0mm]left:{$c_8$}}] (n1020) at (c1020) {};
\node[draw,circle,fill=white,inner sep=2.5pt, label={[label distance=0mm]left:{$a_1$}}] (n1101) at (c1101) {};
\node[draw,circle,fill=white,inner sep=2.5pt, label={[label distance=0mm]below:{$c_5$}}] (n1110) at (c1110) {};

%% Drawing edges in the back (individual lines)
\draw [color=black, line width=1] (n1110) -- (n1101);
\draw [color=black, line width=1] (n1101) -- (n1011);
\draw [color=black, line width=1] (n1101) -- (n0201);

%% Drawing edges in the front (individual lines)
\draw [color=red, line width=2] (n1110) -- (n1020);
\draw [color=red, line width=2] (n1110) -- (n0210);
\draw [color=black, line width=1] (n1020) -- (n1011);
\draw [color=red, line width=2] (n1020) -- (n0030);
\draw [color=black, line width=1] (n1011) -- (n1002);
\draw [color=black, line width=1] (n1011) -- (n0021);
\draw [color=black, line width=1] (n1002) -- (n0003);
\draw [color=black, line width=1] (n0210) -- (n0201);
\draw [color=red, line width=2] (n0210) -- (n0120);
\draw [color=black, line width=1] (n0201) -- (n0111);
\draw [color=black, line width=1] (n0120) -- (n0111);
\draw [color=red, line width=2] (n0120) -- (n0030);
\draw [color=black, line width=1] (n0111) -- (n0102);
\draw [color=black, line width=1] (n0111) -- (n0021);
\draw [color=black, line width=1] (n0102) -- (n0012);
\draw [color=black, line width=1] (n0030) -- (n0021);
\draw [color=black, line width=1] (n0021) -- (n0012);
\draw [color=black, line width=1] (n0012) -- (n0003);

\end{tikzpicture}
}
\end{center}
\caption{Geometric intuition behind the map $\varphi$ in the case $\delta_{n-1} \neq 0$. Alt $\nu$-Tamari lattice for $\nu=(1,1,1,0)$ with $\delta=(1,1,0)$ (left) and $\delta=(1,0,0)$ (right).}
\label{fig:tropical_varphi2}
\end{figure}

\begin{remark}
In the case of a staircase lattice path $\nu$, it is also possible to think of the map $\varphi$ as a ``piece'' of the map $D$ (or rather its inverse) from Striker--Williams~\cite[Theorem 5.4]{striker2012rowmotion} that conjugates rowmotion to promotion.
\end{remark}

To define $\varphi$, we combine a slow motion modification of rowmotion with a flushing bijection. More precisely, consider the stack polyominoes $F=F_{\delta,\nu}$ and $\widetilde F = F_{\widetilde \delta,\nu}$; see \cref{fig:two_stack_shapes} for an example. Let $\delta_k$ be the last nonzero entry of $\delta$. We denote by $F\up$ and $F\down$ the subshapes of $F$ consisting of the boxes above and below height $k$, respectively. Define $\widetilde F\up$ and $\widetilde F\down$ analogously using the same height $k$.

Note that~$\widetilde F\up=F\up$ while~$\widetilde F\down$ is obtained by translating~$F\down$ one step to the right. In other words, the shape $\widetilde F$ is obtained from $F$ by moving the lower part $F\down$ one step to the right.  We also define the vertical part $F\ver$ to consist of the boxes of $F$ in the same columns as those in~$F\down$, including those in $F\down$. Define $\widetilde F\ver$ similarly. Note that, up to translating one step to the right, the two vertical shapes coincide, $F\ver = \widetilde F\ver$.

\begin{figure}[ht]
\begin{center}
\begin{tikzpicture}
    
\node at (0,0) {
% \altnuShapegray{1,2,1,0}{1,0,0}{}{0.7}{}{}{}{
\altnuShapegraybonus{1,2,1,0}{1,0,0}{}{0.7}{}{}{}{}{
\draw (5,1) node[scale=1, black] {$\delta_1=1$};
\draw (5,2) node[scale=1, black] {$\delta_2=0$};
\draw (5,3) node[scale=1, black] {$\delta_3=0$};
\draw[very thick, red] (1,1) -- (4,1);
\draw (-1,1.5) node {$F$};
\node at (6,2) {};
}    
};

\node at (8,0) {
\altnuShapegraybonus{1,2,1,0}{0,0,0}{}{0.7}{}{}{}{}{
% \altnuShapegray{1,2,1,0}{0,0,0}{}{0.7}{}{}{}{
\draw (5,1) node[scale=1, black] {$\widetilde \delta_1=0$};
\draw (5,2) node[scale=1, black] {$\widetilde \delta_2=0$};
\draw (5,3) node[scale=1, black] {$\widetilde \delta_3=0$};
\draw[very thick, red] (1,1) -- (4,1);
\draw (-1,1.5) node {$\widetilde F$};
\node at (6,2) {};
}
};

\node at (0,-3) {
\altnuShapegraybonus{1,2,1,0}{1,0,0}{}{0.7}{}{}{}{}{
% \altnuShapegray{1,2,1,0}{1,0,0}{}{0.7}{}{}{}{
\filldraw[fill=blue!60,draw=blue!80!black] (1.1,1.1) -- (3.9,1.1) -- (3.9,2.9) -- (0.1,2.9) -- (0.1,2.1) -- (1.1,2.1) -- cycle;
\filldraw[fill=red!70,draw=red!90!black] (2.1,0.1) rectangle (2.9,0.9);
\filldraw[fill=purple!70,draw=purple!90!black, opacity=0.6] (2.05,0.05) rectangle (2.95,2.95);
\node[blue] at (-1,2) {$F^{\operatorname{up}}$};
\node[red] at (-1,0.5) {$F^{\operatorname{down}}$};
\node[purple] at (2.5,-0.5) {$F^{\operatorname{vert}}$};
\node at (6,2) {};
}
};

\node at (8,-3) {
\altnuShapegraybonus{1,2,1,0}{0,0,0}{}{0.7}{}{}{}{}{
% \altnuShapegray{1,2,1,0}{0,0,0}{}{0.7}{}{}{}{
\filldraw[fill=blue!60,draw=blue!80!black] (1.1,1.1) -- (3.9,1.1) -- (3.9,2.9) -- (0.1,2.9) -- (0.1,2.1) -- (1.1,2.1) -- cycle;
\filldraw[fill=red!70,draw=red!90!black] (3.1,0.1) rectangle (3.9,0.9);
\filldraw[fill=purple!70,draw=purple!90!black, opacity=0.6] (3.05,0.05) rectangle (3.95,2.95);
\node[blue] at (-1,2) {$\widetilde F^{\operatorname{up}}$};
\node[red] at (-1,0.5) {$\widetilde F^{\operatorname{down}}$};
\node[purple] at (3.5,-0.5) {$\widetilde F^{\operatorname{vert}}$};
\node at (6,2) {};
}
};

\end{tikzpicture}

    
\end{center}
\caption{The stack polyominoes $F=F_{\delta,\nu}$ and $\widetilde F=F_{\widetilde \delta,\nu}$, together with their upper, lower, and vertical parts. Here $\nu=(1,2,1,0)$, $\delta=(1,0,0)$, and $\widetilde \delta=(0,0,0)$.}
\label{fig:two_stack_shapes}
\end{figure}

Recall from \cref{lem:rowmotion_slow_motion} that we can compute rowmotion in slow motion as follows. Let $T$ be a~$(\delta,\nu)$-tree. Label the boxes of~$F_{\delta,\nu}$ with the numbers $1,2,\dots,\ell$ from top to bottom and right to left. Denote by $\tau_i$ the $\delta$-toggle corresponding to box $i$. Then 
$\row(T)=\tau_\ell \circ \dots \circ \tau_2 \circ \tau_1(T)$.
An example, which corresponds to $F=F_{\delta,\nu}$ from \cref{fig:two_stack_shapes}, is shown in \cref{fig:slowmotion}.

Now let $\tau\up$ and $\tau\down$ denote the compositions of the $\delta$-toggles corresponding to the boxes in~$F\up$ and~$F\down$, respectively, taken in top-to-bottom, right-to-left order. Define $\widetilde \tau\up$ and $\widetilde \tau\down$ analogously using $\widetilde \delta$-toggles. Continuing our running example, the upper part $F\up$ consists of the boxes $1,\dots,7$, while the lower part $F\down$ consists of the box $8$, as shown in \cref{fig:two_stack_shapes}. Hence, in this example, $\tau\down=\tau_8$ and $\tau\up= \tau_7 \circ \dots \circ \tau_1$. Notice in general, $\row = \tau\down \circ \tau\up$.

We now define $\varphi\colon \altTam{\nu}{\delta} \to \altTam{\nu}{\widetilde \delta}$ by $\varphi \coloneqq \flush \circ \tau\up$, where $\flush\colon \altTam{\nu}{\delta} \to \altTam{\nu}{\widetilde \delta}$ is the (unique) bijection that preserves the number of nodes at each height, i.e., what was called ``right-flushing'' in \cref{sec:alt}. Note that, since $\varphi$ is the composition of two bijections, it is a bijection. Continuing our running example, an instance of this $\varphi$ is shown in \cref{fig:example_map_varphi}.\footnote{In this and subsequent figures in this section, we show boxes corresponding to upper-covers (respectively, lower-covers) as shaded in orange (resp.,~blue), which will be relevant when we discuss up/down-degrees in \cref{sec:ddeg}.}

\begin{figure}[ht]
\begin{center}
\begin{tikzpicture}[>=stealth]

  % === 1. TOP LEFT CORNER ===
  \node (TL) at (0,0) {%
    \altnuTree{1,2,1,0}{1,0,0}{1,0,2,1}{0.4}{}{}{}{
      \fill[pattern={Lines[angle=-45, distance=2pt]}, pattern color=red, opacity=1, line width=0.3mm] (2,0) rectangle +(1,1);
      \fill[pattern={Lines[angle=-45, distance=2pt]}, pattern color=red, opacity=1, line width=0.3mm] (1,2) rectangle +(1,1);
    }%
  };

  % === 2. TOP MIDDLE ===
  \node (TM) at (4,0) {%
    \altnuTree{1,2,1,0}{1,0,0}{1,1,0,2}{0.4}{}{}{}{
      \fill[pattern={Lines[angle=45, distance=2pt]}, pattern color=blue, opacity=1, line width=0.3mm] (1,2) rectangle +(1,1);
        
      \fill[pattern={Lines[angle=-45, distance=2pt]}, pattern color=red, opacity=1, line width=0.3mm] (2,0) rectangle +(1,1);
    }%
  };

  % === 3. TOP RIGHT CORNER ===
  \node (TR) at (8,0) {%
    \altnuTree{1,2,1,0}{0,0,0}{1,1,0,2}{0.4}{}{}{}{
      \fill[pattern={Lines[angle=45, distance=2pt]}, pattern color=blue, opacity=1, line width=0.3mm] (1,2) rectangle +(1,1);
        
      \fill[pattern={Lines[angle=-45, distance=2pt]}, pattern color=red, opacity=1, line width=0.3mm] (3,0) rectangle +(1,1);
    }%
  };

\draw[->, line width=1pt]
  (TL) to[bend left=20]
  node[midway, above] {$\varphi$}
  (TR);
\draw[->, line width=1pt]
  (TL) to
  node[midway, above] {$\tau^{\operatorname{up}}$}
  (TM);
\draw[->, line width=1pt]
  (TM) to
  node[midway, above] {$\flush$}
  (TR);

\end{tikzpicture}
\end{center}
\caption{An example of the map $\varphi$.}
\label{fig:example_map_varphi}
\end{figure}

The following theorem, which shows that $\varphi$ intertwines rowmotion, is the main result of this section.

\begin{theorem} \label{thm:row_varphi_commute}
We have $\widetilde \row \circ \varphi = \varphi \circ \row$.
\end{theorem}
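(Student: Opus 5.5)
We reduce the identity to a single intertwining relation between the lower toggles and the flushing bijection. Write $\row=\tau\down\circ\tau\up$ and $\widetilde\row=\widetilde\tau\down\circ\widetilde\tau\up$, both instances of \cref{lem:rowmotion_slow_motion}. The reading order puts every box of $F\up$ before every box of $F\down$, and the same holds for $\widetilde F$. The claim $\widetilde\row\circ\varphi=\varphi\circ\row$ then reads
\[
\widetilde\tau\down\circ\widetilde\tau\up\circ\flush\circ\tau\up \;=\; \flush\circ\tau\up\circ\tau\down\circ\tau\up .
\]
It therefore suffices to prove the single identity
\[
\widetilde\tau\down\circ\widetilde\tau\up\circ\flush \;=\; \flush\circ\tau\up\circ\tau\down,
\]
as maps $\altTam{\nu}{\delta}\to\altTam{\nu}{\widetilde\delta}$. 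Composing this identity on the right with $\tau\up$ gives the claim.

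The plan is to show the following two relations:
\[
\text{(A)}\ \ \flush\circ\tau\up=\widetilde\tau\up\circ\flush, \qquad \text{(B)}\ \ \flush\circ\tau\down=\widetilde\tau\down\circ\flush \quad \text{in a suitably modified form}.
\]
Relation (A) is the easier half. Since $F\up=\widetilde F\up$, the nodes of a $(\delta,\nu)$-tree at heights $>k$ should be positioned identically to those of its flushed $(\widetilde\delta,\nu)$-tree. The reason is that right-flushing is greedy from bottom to top, and the rows at and above $k$ see the same available columns once we account for the translated block. Meanwhile, nodes at heights $\le k$ are merely shifted inside $F\ver$. A $\delta$-toggle at a box of $F\up$ moves one node within rows $\ge k$ and preserves row counts. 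Hence it should commute with flushing, provided the ascent/descent box criterion (second point of its row / second point of its column) is unchanged. I would prove this by a direct comparison of the two trees row by row.

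The real difficulty is that (A) and (B) cannot both hold naively. Otherwise $\varphi$ would be just $\flush$, which does not intertwine rowmotion (the example in \cref{fig:example_map_varphi} shows $\tau\up$ genuinely matters). So the true content is a commutation statement mixing the two parts. I would instead aim to prove directly
\[
\widetilde\tau\down\circ\widetilde\tau\up\circ\flush=\flush\circ\tau\up\circ\tau\down .
\]
The proof would analyze how a tree changes in the region $F\ver$. The key observation to establish is that toggles in $F\down$ interact only with nodes in the columns of $F\ver$ and the rows $\le k$. Moving $F\down$ one step right exchanges the role of the column immediately left of $F\down$ with the column through $F\ver$. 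Concretely, I would decompose the tree into its part in rows $\le k$ and its part above. Then I would track, for a tree $T$, the sequence of rotations performed by $\tau\up\circ\tau\down$ versus $\widetilde\tau\down\circ\widetilde\tau\up$ on $\flush(T)$. The goal is to show they produce the same row-count vector. Row counts determine the tree by uniqueness of flushing, so it suffices to compare these vectors.

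The main obstacle is this bookkeeping: a rotation in the lower rows at row $k$ can create or destroy an ascent in row $k+1$, changing which upper toggles fire. The order of up/down toggles is reversed between the two sides, so one must show the net row-count changes agree. I would attack this by first treating the case where $F\down$ is a single row (the last nonzero $\delta_k$ concerns one column), establishing a local case analysis on whether the relevant node at height $k$ is leftmost, second, or further in its row. Then I would induct over the rows of $F\down$, using that toggles in different columns of $F\ver$ commute.
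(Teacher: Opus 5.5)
Your reduction is correct, and it is the paper's strategy in disguise. Once (A) holds, your target identity $\widetilde\tau\down\circ\widetilde\tau\up\circ\flush=\flush\circ\tau\up\circ\tau\down$ is literally $\widetilde\tau\down\circ\varphi=\varphi\circ\tau\down$. So the correct ``modified (B)'' conjugates the lower toggles by $\varphi$ rather than by $\flush$; this is \cref{prop:varphi_tau_down_commute}.

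Your argument for (A), however, does not work. Take the running example $\nu=(1,2,1,0)$, $\delta=(1,0,0)$, $k=1$, with $F\subseteq[0,4]\times[0,3]$ as in \cref{fig:slowmotion}.
\begin{itemize}
\item The tree $T$ of that figure has nodes with $x$-coordinates $\{2,3\},\{4\},\{1,2,4\},\{0,1\}$ at heights $0,1,2,3$. Its image $\flush(T)$ has $\{3,4\},\{3\},\{1,2,3\},\{0,1\}$, so nodes above height $k$ do move.
\item Single upper toggles do not commute with $\flush$. Box $5=[3,4]\times[1,2]$ is a descent box of $T$ (corner $(4,1)$), so $\tau_5(T)$ has row counts $(2,2,2,2)$. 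But $(4,1)\notin\flush(T)$, so $\widetilde\tau_5$ fixes $\flush(T)$, which has row counts $(2,1,3,2)$.
\end{itemize}
Relation (A) holds only for the full composition $\tau\up$. The paper obtains it globally (\cref{lem:varphi_widetilde_varphi}). It views the nodes in $F\up$, ignoring forbidden columns, as a northwest path on which $\tau\up$ acts as distributive rowmotion (valleys become peaks). Meanwhile $\flush$ only changes which columns the nodes sit in, not this path.

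For the hard half you give only a bookkeeping plan, and the tools it invokes are false.
\begin{itemize}
\item Toggles at boxes in different columns do not commute in general. If a row contains nodes $q,r,r'$ from left to right, toggling the box with corner $r$ rotates $q$ out of the row and makes the box with corner $r'$ an ascent box. Toggling the box with corner $r'$ first does nothing, so the two orders differ.
\item Lower toggles are not confined to rows $\le k$. In the example above, $\tau\down=\tau_8$ applied to $T$ (as on the right side of your identity) rotates $q=(2,0)$, $r=(3,0)$, $p=(2,2)$ and creates the node $(3,2)$, above height $k$.
\end{itemize}
This interaction across height $k$ is the whole difficulty. The missing idea is an explicit invariant relating $T$ and $\varphi(T)$ on $F\ver$ (\cref{lem:varphi_map}). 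They coincide there, except that the horizontal segments of the upper path inside $F\ver$, at heights $h_1<\dots<h_\ell$, are lowered to heights $h_0+1,\dots,h_{\ell-1}+1$ in the same columns.

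A lower rotation of $T$ whose rectangle reaches height $h_i$ then corresponds to the rotation of $\varphi(T)$ with the same bottom reaching height $h_{i-1}+1$. Hence this dictionary survives every toggle of $\tau\down$ and $\widetilde\tau\down$ (\cref{lem:taudown_varphi}). Applying \cref{lem:varphi_map} once more to $\tau\down(T)$ shows that $\varphi\circ\tau\down(T)$ and $\widetilde\tau\down\circ\varphi(T)$ agree on $F\ver$. Outside $F\ver$ no lower toggle acts, so the two sides agree there as well.

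Without an invariant of this kind, comparing row-count vectors gives you no way to reconcile the opposite orders in which the two sides apply the upper and lower toggles.
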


Continuing our running example, we depict an instance of \cref{thm:row_varphi_commute} in~\Cref{fig:varphi_row_commute}.

\begin{figure}[ht]
\begin{center}
\begin{tikzpicture}[>=stealth]

  % === 1. TOP LEFT CORNER ===
  \node (TL) at (0,6) {%
    \altnuTree{1,2,1,0}{1,0,0}{1,0,2,1}{0.4}{}{}{}{
      \fill[pattern={Lines[angle=-45, distance=2pt]}, pattern color=red, opacity=1, line width=0.3mm] (2,0) rectangle +(1,1);
      \fill[pattern={Lines[angle=-45, distance=2pt]}, pattern color=red, opacity=1, line width=0.3mm] (1,2) rectangle +(1,1);
    }%
  };

  % === 2. MIDDLE LEFT ===
  \node (ML) at (2,3) {%
    \altnuTree{1,2,1,0}{1,0,0}{1,1,0,2}{0.4}{}{}{}{
      \fill[pattern={Lines[angle=45, distance=2pt]}, pattern color=blue, opacity=1, line width=0.3mm] (1,2) rectangle +(1,1);
        
      \fill[pattern={Lines[angle=-45, distance=2pt]}, pattern color=red, opacity=1, line width=0.3mm] (2,0) rectangle +(1,1);
    }%
  };

  % === 4. BOTTOM LEFT CORNER ===
  \node (BL) at (0,0) {%
    \altnuTree{1,2,1,0}{1,0,0}{0,2,0,2}{0.4}{}{}{}{
      \fill[pattern={Lines[angle=45, distance=2pt]}, pattern color=blue, opacity=1, line width=0.3mm] (1,2) rectangle +(1,1);
      \fill[pattern={Lines[angle=45, distance=2pt]}, pattern color=blue, opacity=1, line width=0.3mm] (2,0) rectangle +(1,1);
    }%
  };

  % === 3. TOP RIGHT CORNER ===
  \node (TR) at (10,6) {%
    \altnuTree{1,2,1,0}{0,0,0}{1,1,0,2}{0.4}{}{}{}{
      \fill[pattern={Lines[angle=45, distance=2pt]}, pattern color=blue, opacity=1, line width=0.3mm] (1,2) rectangle +(1,1);
        
      \fill[pattern={Lines[angle=-45, distance=2pt]}, pattern color=red, opacity=1, line width=0.3mm] (2,0) rectangle +(1,1);
    }%
  };

  % === 3. MIDDLE RIGHT ===
  \node (MR) at (8,3) {%
    \altnuTree{1,2,1,0}{0,0,0}{1,0,3,0}{0.4}{}{}{}{
      % \fill[pattern={Lines[angle=45, distance=2pt]}, pattern color=blue, opacity=1, line width=0.3mm] (2,1) rectangle +(1,1);

      % \fill[pattern={Lines[angle=-45, distance=2pt]}, pattern color=red, opacity=1, line width=0.3mm] (3,0) rectangle +(1,1);
    }%
  };

  % === 6. BOTTOM RIGHT CORNER ===
  \node (BR) at (10,0) {%
    \altnuTree{1,2,1,0}{0,0,0}{0,1,3,0}{0.4}{}{}{}{
      % \fill[pattern={Lines[angle=45, distance=2pt]}, pattern color=blue, opacity=1, line width=0.3mm] (3,0) rectangle +(1,1);
      % \fill[pattern={Lines[angle=45, distance=2pt]}, pattern color=blue, opacity=1, line width=0.3mm] (2,1) rectangle +(1,1);
    }%
  };

\draw[->, line width=1pt]
  (TL) to
  node[midway, above] {$\varphi$}
  (TR);
\draw[->, line width=1pt, dotted]
  (TL) to
  node[midway, above right] {$\tau^{\operatorname{up}}$}
  (ML.north);
\draw[->, line width=1pt] 
  (BL) to 
  node[midway, above right] {$\varphi$}
  node[midway, below] {\footnotesize ($= \varphi$ by ~\Cref{prop:varphi_tau_down_commute})} 
  (BR);
\draw[->, line width=1pt] 
  (TR) to 
  node[midway, right] {$\widetilde \row$} 
  (BR);
\draw[->, line width=1pt] 
  (TL) to 
  node[midway, left] {$\row$} 
  (BL);

\draw[->, line width=1pt, dashed] 
  (ML.south) to 
  node[midway, below right] {$\tau^{\operatorname{down}}$} 
  (BL);

\draw[->, line width=1pt, dotted]
  (TR) to
  node[midway, above left] {$\widetilde\tau^{\operatorname{up}}$}
  (MR.north);
\draw[->, line width=1pt, dashed] 
  (MR.south) to 
  node[midway, below left] {$\widetilde\tau^{\operatorname{down}}$} 
  (BR);  

\draw[->, line width=1pt]
  (ML) to
  node[midway, above] {$\varphi$}
  node[midway, below] {\footnotesize ($= \varphi$ by ~\Cref{prop:varphi_tau_up_commute})}
  (MR);
\end{tikzpicture}
\end{center}
\caption{An illustration of \cref{thm:row_varphi_commute}: $\widetilde \row \circ \varphi = \varphi\circ \row$.}
\label{fig:varphi_row_commute}
\end{figure}

\subsection{Proof of \texorpdfstring{\cref{thm:row_varphi_commute}}{Theorem 4.3}}

Our strategy is to prove \cref{thm:row_varphi_commute} by establishing the following two commutation relations: $\widetilde{\tau}\up \circ \varphi = \varphi \circ \tau\up$ and $\widetilde{\tau}\down \circ \varphi = \varphi \circ \tau\down$, which are stated in \cref{prop:varphi_tau_up_commute,prop:varphi_tau_down_commute}, respectively.

As a first step, we show that the maps $\tau\up$ and $\flush$ commute. This yields an alternative description of $\varphi$. See~\cref{fig:flush_tau_up_commute} for an example.

\begin{figure}[ht]
\begin{center}
\begin{tikzpicture}[>=stealth]

  % === 1. TOP LEFT CORNER ===
  \node (TL) at (0,3) {%
    \altnuTree{1,2,1,0}{1,0,0}{1,0,2,1}{0.4}{}{}{}{
      % \fill[pattern={Lines[angle=-45, distance=2pt]}, pattern color=red, opacity=1, line width=0.3mm] (2,0) rectangle +(1,1);
      \fill[pattern={Lines[angle=-45, distance=2pt]}, pattern color=red, opacity=1, line width=0.3mm] (1,2) rectangle +(1,1);
    }%
  };

  % === 2. MIDDLE LEFT ===
  \node (BL) at (0,0) {%
    \altnuTree{1,2,1,0}{1,0,0}{1,1,0,2}{0.4}{}{}{}{
      \fill[pattern={Lines[angle=45, distance=2pt]}, pattern color=blue, opacity=1, line width=0.3mm] (1,2) rectangle +(1,1);
        
      % \fill[pattern={Lines[angle=-45, distance=2pt]}, pattern color=red, opacity=1, line width=0.3mm] (2,0) rectangle +(1,1);
    }%
  };

  % === 3. TOP RIGHT CORNER ===
  \node (TR) at (6,3) {%
    \altnuTree{1,2,1,0}{0,0,0}{1,0,2,1}{0.4}{}{}{}{
      \fill[pattern={Lines[angle=-45, distance=2pt]}, pattern color=red, opacity=1, line width=0.3mm] (1,2) rectangle +(1,1);
    }%
  };

  % === 4. MIDDLE RIGHT ===
  \node (BR) at (6,0) {%
    \altnuTree{1,2,1,0}{0,0,0}{1,1,0,2}{0.4}{}{}{}{
     \fill[pattern={Lines[angle=45, distance=2pt]}, pattern color=blue, opacity=1, line width=0.3mm] (1,2) rectangle +(1,1);
    }%
  };

\draw[->, line width=1pt]
  (TL) to
  node[midway, above] {$\flush$}
  (TR);
\draw[->, line width=1pt]
  (TL) to
  node[midway, right] {$\tau^{\operatorname{up}}$}
  (BL);

\draw[->, line width=1pt]
  (TR) to
  node[midway, left] {$\widetilde\tau^{\operatorname{up}}$}
  (BR);

\draw[->, line width=1pt]
  (BL) to
  node[midway, above] {$\flush$}
  (BR);
\end{tikzpicture}
\end{center}
\caption{An example of \cref{lem:varphi_widetilde_varphi}: $\varphi=\flush \circ \tau\up=\widetilde{\tau}\up \circ \flush$.}
\label{fig:flush_tau_up_commute}
\end{figure}

\begin{lemma} \label{lem:varphi_widetilde_varphi}
We have $ \varphi = \flush \circ \tau\up = \widetilde{\tau}\up \circ \flush$.
\end{lemma}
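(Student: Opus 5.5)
We need to show $\flush\circ\tau\up=\widetilde\tau\up\circ\flush$. The plan is to compare a $(\delta,\nu)$-tree $T$ with $\widetilde T=\flush(T)$ row by row, and to check that each individual toggle in $F\up=\widetilde F\up$ acts compatibly. It then suffices to prove a single-toggle statement: for every box $b\in F\up$ and every $(\delta,\nu)$-tree $T$, we have $\flush(\tau_b(T))=\widetilde\tau_b(\flush(T))$. Composing these over the boxes of $F\up$, in the common top-to-bottom, right-to-left order, gives the lemma.

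\textbf{Step 1: structure of $T$ versus $\flush(T)$.}
Both trees have the same number of nodes in each row. The greedy right-flushing algorithm inserts nodes row by row from the bottom, in the rightmost compatible positions. The only difference between $F$ and $\widetilde F$ lies below height $k$, where $F\down$ is shifted one unit to the right. I will show the following two facts by tracing the greedy algorithm:
\begin{itemize}
\item For rows at height $\geq k$, the forbidden positions contributed by nodes below differ only through the columns of $F\ver$, and these columns are merely translated. As a result, $T$ and $\flush(T)$ agree on all nodes at heights strictly above $k$.
\item In row $k$ the two trees differ in a controlled way: nodes lying over columns of $F\ver$ are shifted by one.
\end{itemize}

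\textbf{Step 2: ascent and descent boxes in $F\up$ coincide.}
Recall that a box is an ascent box when its bottom-right corner is the second node of its row, and a descent box when its bottom-right corner is the second node of its column. Using the agreement from Step 1, I will check that $b\in F\up$ is an ascent (resp.\ descent) box of $T$ exactly when it is one of $\flush(T)$. The boxes in the bottom row of $F\up$ need a separate check: their bottom-right corners lie on row $k$, and column data can involve nodes of $F\down$. Here I will use the row-based characterization for ascents, and for descents the equivalent one: the corner is leftmost in its row and its parent is not leftmost in its row. The parent sits above, so it is unaffected.

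\textbf{Step 3: rotations at such a box commute with flushing.}
If $b$ is an ascent box, the rotation replaces the node $q$ by $q'$ inside a rectangle whose corners lie at heights $\geq k$. This changes the row counts in the same way in both lattices, namely one node moves from the row of $q$ to the row of $q'$. Since $\flush$ is determined by row counts, we get $\flush(\tau_b T)=\widetilde\tau_b(\flush T)$. The descent case is symmetric. If $b$ is neither, both sides are fixed.

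The main obstacle is Step 2 at the boundary row $k$. Flushing genuinely moves nodes in row $k$, and I must verify that the "second node of its row/column" conditions for boxes of $F\up$ are unaffected. If a direct argument is awkward, I would instead prove Step 3 in path language. Via the flushing bijection, a toggle at a box of $F\up$ corresponds to a $\delta$-rotation and a $\widetilde\delta$-rotation that use identical altitude data above height $k$, since $\delta_i=\widetilde\delta_i$ for $i>k$. The rotations could then be compared directly on the common $\nu$-Dyck path.
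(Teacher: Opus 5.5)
\textbf{There is a genuine gap: the box-by-box identity fails.} Your argument rests on the identity $\flush(\tau_b(T))=\widetilde\tau_b(\flush(T))$ for every box $b\in F\up$. This identity is false, and so are the claims in Steps~1 and~2 that are meant to support it.

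Flushing also moves nodes strictly above height $k$. Every non-leftmost node below height $k$ blocks its column at all greater heights. Since $F\down$ is translated one step to the right in $\widetilde F$, the blocked columns are translated as well, so the greedy algorithm puts upper nodes into different columns.

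Concretely, use the paper's running example $\nu=(1,2,1,0)$, $\delta=(1,0,0)$, $\widetilde\delta=(0,0,0)$, $k=1$. Let $T$ be the tree whose nodes at heights $0,1,2,3$ have $x$-coordinates $\{2,3\}$, $\{4\}$, $\{1,2,4\}$, $\{0,1\}$.
\begin{itemize}
\item Then $\flush(T)$ has $x$-coordinates $\{3,4\}$, $\{3\}$, $\{1,2,3\}$, $\{0,1\}$. The node $(4,2)$ at height $2>k$ has become $(3,2)$, because $(3,0)$ blocks column $3$ in $T$ while $(4,0)$ blocks column $4$ in $\flush(T)$.
\item Take $b=[3,4]\times[1,2]\in F\up$. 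Its corner $(4,1)$ is second from the top in column $4$ of $T$, so $b$ is a descent box of $T$. The tree $\tau_b(T)$ has $x$-coordinates $\{2,3\},\{2,4\},\{1,2\},\{0,1\}$.
\item But $(4,1)\notin\flush(T)$, so $b$ is neither an ascent nor a descent box of $\flush(T)$. Hence $\widetilde\tau_b(\flush(T))=\flush(T)\neq\flush(\tau_b(T))$.
\end{itemize}
What holds on this $T$ is $\flush\circ\tau_b=\widetilde\tau_{b'}\circ\flush$ with the shifted box $b'=[2,3]\times[1,2]$. In the paper's slow-motion example, the nontrivial toggles of $\tau\up$ are at boxes $3$ and $5$ for $T$, but at boxes $3$ and $6$ for $\flush(T)$.

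Your path-language fallback has the same defect. The upper rotations are the same operation $EN\mapsto NE$ in both lattices, but which valley a given box labels depends on the lattice, so $\tau_b$ and $\widetilde\tau_b$ are different operations on paths. The discrepancy is also not confined to row $k$, as your obstacle paragraph suggests; it propagates through every row of $F\up$.

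\textbf{The missing idea.} The upper parts of $T$ and $\flush(T)$ agree only after a $T$-dependent, order-preserving relabeling $\sigma$ of the unblocked columns. The blocked columns are determined by the nodes below height $k$, which no toggle of $\tau\up$ ever moves. So $\sigma$ is the same for every tree along the sequence of toggles making up $\tau\up$.

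After deleting the blocked columns, the upper nodes form the same northwest path in both shapes. This is exactly the paper's proof: it views $\tau\up$ as distributive rowmotion on that compressed path, and observes that $\flush$ changes only the columns the nodes occupy, not the path.

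If you want to keep a toggle-by-toggle argument, the correct statement has three parts.
\begin{itemize}
\item $\flush\circ\tau_b=\widetilde\tau_{\sigma(b)}\circ\flush$ for boxes whose bottom-right corner lies in an unblocked column. Your row-count argument from Step~3 proves this once the matched box $\sigma(b)$ is used.
\item Boxes whose bottom-right corner lies in a blocked column toggle trivially throughout, on both sides.
\item Since $\sigma$ preserves rows and is increasing, the nontrivial toggles occur in the same order in both reading-order compositions.
\end{itemize}
Together these give the lemma for the composite $\tau\up$, which is all that is true.
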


\begin{proof}
Let $\varphi \coloneqq \flush \circ \tau\up$ and $\widetilde \varphi \coloneqq \widetilde \tau\up \circ \flush$. Let $T$ be a $(\delta,\nu)$-tree. Observe that the nodes of $T$ lying strictly below the line separating $F\down$ and $F\up$ are unaffected by both $\flush$ and $\tau\up$. Thus, it suffices to compare the actions of $\varphi$ and $\widetilde{\varphi}$ on the nodes contained in $F\up$.

Since $F\up$ terminates on the right with a full vertical step, the restriction of $T$ to $F\up$ behaves as in the classical $\nu$-Dyck lattice setting, except that certain columns are forbidden. More precisely, for every node of $T$ in the lower part, the column directly above it must be empty unless the node is the leftmost node in its row. Ignoring these forbidden columns, the nodes of $T$ contained in $F\up$ form a northwest lattice path from the lower-right corner to the upper-left corner.

Under this identification, the action of $\tau\up$ agrees with rowmotion on a $\nu$-Dyck lattice. In this case, rowmotion exchanges valleys and peaks: a northwest path has a valley at $(i,j)$ if and only if its image under rowmotion has a peak at $(i+1,j+1)$.

On the other hand, the map $\flush$ keeps track of the forbidden columns and moves each node to the first available position to its left. This operation does not change the underlying northwest path; it only changes the columns in which the nodes are placed. Consequently, $\widetilde{\tau}\up(\flush(T))$ is obtained from $\tau\up(T)$ by shifting each node to the first available position to its left (equivalently, applying $\flush$). This is exactly the action of $\varphi$, and therefore $\varphi(T)=\widetilde{\varphi}(T)$.
\end{proof}

\begin{proposition} \label{prop:varphi_tau_up_commute}
We have $\widetilde{\tau}\up \circ \varphi = \varphi \circ \tau\up$.
\end{proposition}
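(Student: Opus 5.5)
The plan is to obtain the proposition as a purely formal consequence of \cref{lem:varphi_widetilde_varphi}, which gives two expressions for the same map:
\[
\varphi \;=\; \flush \circ \tau\up \;=\; \widetilde{\tau}\up \circ \flush .
\]
The idea is to use one expression for the outer copy of $\varphi$ and the other for the inner one. No new combinatorics on $(\delta,\nu)$-trees should be needed.

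Start from the right-hand side $\varphi \circ \tau\up$. Rewrite $\varphi$ using the second expression in \cref{lem:varphi_widetilde_varphi}, namely $\varphi = \widetilde{\tau}\up \circ \flush$, to get
\[
\varphi \circ \tau\up = \widetilde{\tau}\up \circ \flush \circ \tau\up .
\]
Then regroup by associativity of composition. The last two factors, $\flush \circ \tau\up$, are exactly the first expression for $\varphi$, by definition. This yields $\widetilde{\tau}\up \circ \varphi$, which is the claimed identity.

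I do not expect a genuine obstacle here, since all the combinatorial content has already been absorbed into \cref{lem:varphi_widetilde_varphi}. The only thing to check is that domains and codomains match. Here $\tau\up$ is a self-map of $\altTam{\nu}{\delta}$, $\flush$ maps $\altTam{\nu}{\delta}$ to $\altTam{\nu}{\widetilde\delta}$, and $\widetilde{\tau}\up$ is a self-map of $\altTam{\nu}{\widetilde\delta}$. So both sides are maps $\altTam{\nu}{\delta}\to\altTam{\nu}{\widetilde\delta}$, and each composition above is well defined.

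The real difficulty lies in the commutation $\flush\circ\tau\up=\widetilde\tau\up\circ\flush$ of the lemma, which is assumed here. In particular, the argument does not require $\tau\up$ to be an involution, which it generally is not.
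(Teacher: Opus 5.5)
Your argument is correct and is essentially the paper's proof. The paper writes $\widetilde{\tau}\up \circ \varphi = \widetilde{\tau}\up \circ \flush \circ \tau\up = \flush \circ \tau\up \circ \tau\up = \varphi \circ \tau\up$, which is your chain of equalities read in the opposite direction, with \cref{lem:varphi_widetilde_varphi} supplying all of the combinatorial content.
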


\begin{proof}
By \cref{lem:varphi_widetilde_varphi}, we have $\widetilde{\tau}\up \circ \flush = \flush \circ \tau\up$. Therefore,
\begin{align*}
\widetilde{\tau}\up \circ \varphi
&= \widetilde{\tau}\up \circ (\flush \circ \tau\up) \\
&= (\widetilde{\tau}\up \circ \flush) \circ \tau\up \\
&= (\flush \circ \tau\up) \circ \tau\up \\
&= \varphi \circ \tau\up. \qedhere
\end{align*}
\end{proof}

As explained in the proof of~\cref{lem:varphi_widetilde_varphi}, the nodes of $T$ contained in $F\up$ determine a lattice path from the lower-right corner to the upper-left corner, with some forbidden gaps along its horizontal steps. Let $h_1,h_2,\dots,h_\ell$ be the heights of the horizontal segments of this path contained in $F\ver$, listed from bottom to top; see \cref{fig:varphi_map}. For convenience, define $h_0$ as follows. If the path has a horizontal segment immediately to the right of $F\ver$, then $h_0$ is the height of that segment. Otherwise, let $h_0\coloneqq h-1$, where $h$ denotes the height of the horizontal line separating~$F\up$ and~$F\down$.

\begin{figure}[ht]
\begin{center}
% \altnuShapegray{4,2,1,4,5,2,1,0,3,1,1,0,0,0,0,0,0,0,0,0,0,0}{2,1,3,4,2,1,0,3,1,1,0,0,0,0,0,0,0,0,0,0,0}{}{0.38}{}{}{}{
\altnuShapegraybonus{4,2,1,4,5,2,1,0,3,1,1,0,0,0,0,0,0,0,0,0,0,0}{2,1,3,4,2,1,0,3,1,1,0,0,0,0,0,0,0,0,0,0,0}{}{0.38}{}{}{}{}{

\draw (-1.1,19.5) node[scale=1, black] {$T$};
\draw (-3,18) node[scale=1, blue] {$\varphi(T)=\widetilde{T}$};

\node (h1) at (28,14) {$h_1$};
\node (h2) at (28,17) {$h_2$};
\node (h3) at (28,19) {$h_3$};
% \draw (28,14) node[scale=1, black] {$h_1$};
% \draw (28,17) node[scale=1, black] {$h_2$};
% \draw (28,19) node[scale=1, black] {$h_3$};
\draw[\gray, dashed] (h1) +(-1, 0) -- +(-5,0);
\draw[\gray, dashed] (h2) +(-1, 0) -- +(-5,0);
\draw[\gray, dashed] (h3) +(-1, 0) -- +(-5,0);

\draw[skyblue,thick] (0,4) -- (0,21);
    \draw[skyblue,thick] (24,10) -- (24,21);
    \fill[skyblue, fill opacity=0.2] (0,4) -- (0,21) -- (24,21) -- (24,10) -- (23,10) -- (23,9) -- (22,9) -- (22,8) -- (19,8) -- (19,6) -- (18,6) -- (18,5) -- (16,5) -- (16,4) -- (12,4) -- (12,3) -- (9,3) -- (9,2) -- (8,2) -- (8,1) -- (6,1) -- (6,0) -- (2,0) -- (2,3) -- (1,3) -- (1,4) -- (0,4);

    \draw[skyblue,dashed, thick] (24,11) -- (0,11);

    \fill[pattern={Lines[angle=-45, distance=2pt]}, pattern color=red, opacity=1, line width=0.3mm] (10,17) rectangle +(1,1);
    \fill[pattern={Lines[angle=-45, distance=2pt]}, pattern color=red, opacity=1, line width=0.3mm] (14,14) rectangle +(1,1);
    
    \fill[pattern={Lines[angle=-45, distance=2pt]}, pattern color=red, opacity=1, line width=0.3mm] (26,12) rectangle +(1,1);

    \draw [color=black, line width=2] (0,19)--(10,19)--(10,17)--(14,17)--(14,14)--(24,14);
    \draw [color=black, dashed, line width=2] (24,14)--(26,14)--(26,12)--(27,12);

    \draw [color=blue, line width=2] (0,18)--(10,18)--(10,15)--(14,15)--(14,13)--(24,13);
    \draw [color=blue, dashed, line width=2] (24,13)--(26,13);

    \fill[pattern={Lines[angle=-45, distance=6pt]}, pattern color=red, opacity=1, line width=0.3mm] (0,4) -- (0,11) -- (24,11) -- (24,10) -- (23,10) -- (23,9) -- (22,9) -- (22,8) -- (19,8) -- (19,6) -- (18,6) -- (18,5) -- (16,5) -- (16,4) -- (12,4) -- (12,3) -- (9,3) -- (9,2) -- (8,2) -- (8,1) -- (6,1) -- (6,0) -- (2,0) -- (2,3) -- (1,3) -- (1,4) -- (0,4);

\node[blue] at (12,22) {$F^\text{vert}$};
\node[red] at (-2,7) {$F^\text{down}$};    
    }
\end{center}
\caption{Illustration of the restriction of $\varphi$ to the vertical part (\cref{lem:varphi_map}).}
\label{fig:varphi_map}
\end{figure}

Our next goal is to prove \cref{prop:varphi_tau_down_commute}, which states that the maps $\varphi$ and $\tau\down$ commute. The proof is more involved than that of \cref{prop:varphi_tau_up_commute}. Since $\tau\down$ only affects nodes in the vertical part, it suffices to analyze the action of the two compositions on the vertical part. The next lemma provides an explicit description of the action of $\varphi$ on the vertical part; see \cref{fig:varphi_map} for an illustration. An explicit example is shown in~\cref{fig:vertical_part_vertical_shift}.

\begin{figure}[ht]
\begin{center}
\begin{tikzpicture}[>=stealth]

  % === 1. TOP LEFT CORNER ===
  \node (T) at (0,9) {%
    \altnuTree{4,2,1,4,5,2,1,0,3,1,1,6,0,1,0,0,0,0,0,0,0,0}{2,1,3,4,2,1,0,3,1,1,5,0,0,0,0,0,0,0,0,0,0}{1,1,2,1,0,1,1,0,1,0,2,1,2,0,7,0,0,3,0,8,0,0}{0.38}{2/19/blue,3/19/blue,4/19/blue,5/19/blue,6/19/blue,7/19/blue,12/19/blue,12/17/red,13/17/red,15/17/red,16/17/red,16/14/purple,17/14/purple,18/14/purple,19/14/purple,22/14/purple,23/14/purple}{}{}{
    \node at (-1,0) {};
    \draw[skyblue,thick] (2,4) -- (2,21);
    \draw[skyblue,thick] (26,10) -- (26,21);
    \fill[skyblue, fill opacity=0.2] (2,4) -- (2,21) -- (26,21) -- (26,10) -- (25,10) -- (25,9) -- (24,9) -- (24,8) -- (21,8) -- (21,6) -- (20,6) -- (20,5) -- (18,5) -- (18,4) -- (14,4) -- (14,3) -- (11,3) -- (11,2) -- (10,2) -- (10,1) -- (8,1) -- (8,0) -- (4,0) -- (4,3) -- (3,3) -- (3,4) -- (2,4);        \draw[dashed,blue, ultra thick] (2,18) -- (12,18);
    \draw[dashed,red, ultra thick] (12,15) -- (16,15);
    \draw[dashed,purple,ultra thick] (16,13) -- (28,13);
    }%
  };

  % === 2. MIDDLE LEFT ===
  \node (B) at (0,0) {%
    \altnuTree{4,2,1,4,5,2,1,0,3,1,1,6,0,1,0,0,0,0,0,0,0,0}{2,1,3,4,2,1,0,3,1,1,4,0,0,0,0,0,0,0,0,0,0}{1,1,2,1,0,1,1,0,1,0,2,0,2,7,0,3,0,0,8,0,1,0}{0.38}{3/18/blue,4/18/blue,5/18/blue,6/18/blue,7/18/blue,8/18/blue,13/18/blue,13/15/red,14/15/red,16/15/red,17/15/red,17/13/purple,18/13/purple,19/13/purple,20/13/purple,23/13/purple,24/13/purple}{}{}{
    \node at (32,0) {};
    \draw[skyblue,thick] (3,4) -- (3,21);
    \draw[skyblue,thick] (27,10) -- (27,21);
    \fill[skyblue, fill opacity=0.2] (3,4) -- (3,21) -- (27,21) -- (27,10) -- (26,10) -- (26,9) -- (25,9) -- (25,8) -- (22,8) -- (22,6) -- (21,6) -- (21,5) -- (19,5) -- (19,4) -- (15,4) -- (15,3) -- (12,3) -- (12,2) -- (11,2) -- (11,1) -- (9,1) -- (9,0) -- (5,0) -- (5,3) -- (4,3) -- (4,4) -- (3,4);
    }%
  };

\draw[->, line width=1pt]
  (T) to
  node[midway, right] {$\varphi$}
  (B);

\end{tikzpicture}
\end{center}
\caption{An example of \cref{lem:varphi_map}. Here $T$ and $\widetilde T=\varphi(T)$ coincide on the vertical part, up to moving the horizontal lines in the upper part downwards.}
\label{fig:vertical_part_vertical_shift}
\end{figure}

\begin{lemma}[Restriction of $\varphi$ to the vertical part] \label{lem:varphi_map} 
Let $T$ be a $(\delta,\nu)$-tree and let $\widetilde T \coloneqq \varphi(T)$. Then~$T$ and $\widetilde T$ coincide on the vertical part $F\ver=\widetilde F\ver$, except for the nodes belonging to the upper part. More precisely, for each $i\in\{1,\dots,\ell\}$ and every node of $T$ at height $h_i$, there is a node at height $h_{i-1}+1$ in $\widetilde T$ within the same column.
\end{lemma}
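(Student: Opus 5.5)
Since $\varphi = \widetilde\tau\up\circ\flush$ by \cref{lem:varphi_widetilde_varphi}, and $\flush$ does not move nodes of the lower part, the plan is to compute $\varphi(T)$ in two stages and track only what happens inside $F\ver$. First I would describe $\flush(T)$ restricted to the vertical part. Because $\widetilde F\ver$ is $F\ver$ translated one step to the right, and the nodes of $T$ in $F\down$ keep their row counts, the greedy right-flushing algorithm (inserting rows bottom to top in the rightmost compatible position) places the lower nodes so that the configuration of $T$ on $F\ver$ is reproduced verbatim on $\widetilde F\ver$. The key observation is that the forbidden columns, i.e.\ the columns above non-leftmost lower nodes, are the same columns relative to the vertical part. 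For the upper part, as in the proof of \cref{lem:varphi_widetilde_varphi}, $\flush$ does not change the underlying northwest path; it only slides nodes left past the forbidden columns. So the horizontal segments at heights $h_1,\dots,h_\ell$ occupy the same relative columns of $\widetilde F\ver$ as before, and the segment at height $h_0$ sits immediately to the right of $\widetilde F\ver$. If there is no such segment, the path enters at the corner at height $h-1$.

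Second, I would compute $\widetilde\tau\up$ on this flushed tree. As noted in the proof of \cref{lem:varphi_widetilde_varphi}, $\widetilde\tau\up$ acts on the underlying northwest path, ignoring forbidden columns, as rowmotion on a $\nu$-Dyck lattice with rows and columns interchanged. That rowmotion sends valleys to peaks shifted diagonally by $(1,1)$. Rewriting this for horizontal segments gives the following rule. The horizontal segment whose columns lie between the vertical steps bounding the $i$-th segment is moved to height one above the preceding segment. So a run of nodes at height $h_i$ ends up at height $h_{i-1}+1$, in the same columns. I would verify this by writing the path as an alternating sequence of horizontal runs and vertical drops. I would then check directly that the known description of distributive rowmotion (lower the path's corners) moves each horizontal run to lie just above the previous run's height, keeping its column span. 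The special convention $h_0 = h-1$ handles the case where the first run inside $F\ver$ is preceded by the boundary with $F\down$, so that it lands at height $h$, the bottom row of $F\up$.

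Finally I combine the two stages. The lower nodes in $F\ver$ are untouched by $\widetilde\tau\up$, so they coincide in $T$ and $\widetilde T$. Each node of $T$ at height $h_i$ in a column of $F\ver$ corresponds to a node at height $h_{i-1}+1$ in the same column of $\widetilde T$, where columns are identified via the translation $F\ver=\widetilde F\ver$.

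I expect the main obstacle to be the second stage at the boundary columns. This means making the rowmotion-on-paths rule precise when forbidden gaps lie inside a horizontal run. It also means confirming that the run at height $h_0$, which lies just right of $F\ver$ after flushing, correctly anchors the first run inside $F\ver$. I would first settle this by a careful case check on whether the path has a horizontal step immediately right of $F\ver$, using the definition of $h_0$.
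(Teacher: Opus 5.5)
Your two stages each contain an off-by-one error in the horizontal direction. The conclusion comes out right only because the two errors cancel.

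\textbf{Stage~1.} $\flush$ does \emph{not} reproduce the upper configuration of $T$ on $\widetilde F\ver$. Ignoring forbidden columns, $\flush$ leaves the northwest path unchanged, and in absolute coordinates the upper nodes stay put. The only exception is that a node immediately to the right of a block of forbidden columns slides left across that block. But $\widetilde F\ver$ is $F\ver$ moved one step to the right, so relative to the vertical part the path is displaced one (non-forbidden) column to the \emph{left}.

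For instance, take $T$ and $\flush(T)$ in \cref{fig:flush_tau_up_commute}, with coordinates as drawn. The only upper node of $T$ on $F\ver$ (lattice columns $2,3$) is $(2,2)$. By contrast, $\flush(T)$ has upper nodes $(3,1)$ and $(3,2)$ on $\widetilde F\ver$ (lattice columns $3,4$). The node $(3,1)$ is the node $(4,1)$ of $T$, which lay \emph{outside} $F\ver$ and slid left past the forbidden column.

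\textbf{Stage~2.} The rule you plan to check directly is also false. Distributive rowmotion places the new peaks at the old valleys translated by $(1,1)$. So the run that sat at height $h_i$ is replaced by a run at height $h_{i-1}+1$ whose column span is the old span shifted one column to the \emph{right}, up to truncation at the ends of the path. It does not keep its column span, so the verification you propose would fail.

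\textbf{What is missing.} The missing ingredient is exactly this horizontal shift and its cancellation, which is the content of the paper's proof. The paper uses the order $\varphi=\flush\circ\tau\up$: a valley at height $h_i$ becomes a peak one unit up and one unit right, and the subsequent left shift coming from $\flush$, relative to the vertical part, returns it to the same column.

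Your order $\varphi=\widetilde\tau\up\circ\flush$ can work too. To make it work, record that after $\flush$ the path sits one column to the left relative to $\widetilde F\ver$, and that $\widetilde\tau\up$ then moves each run one column to the right. With that bookkeeping, a node at height $h_i$ lands at height $h_{i-1}+1$ in the same column. As written, however, both of your intermediate claims are incorrect.
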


\begin{proof}
The map $\varphi=\flush \circ \tau\up$ is obtained by applying rowmotion to the northwest path determined by the nodes of $T$ in $F\up$, and then shifting each resulting node to the first available non-forbidden position to its left.

Under rowmotion, a valley of the path at height $h_i$ is transformed into a peak one unit above and one unit to the right. Therefore, the subsequent left shift places the corresponding node in the same column and at height $h_{i}+1$. In~\cref{fig:varphi_map}, the resulting path with peaks at these positions is shown in blue. Consequently, for every peak at height $h_i$ in $T$ there is a peak at height $h_{i-1}+1$ in $\widetilde T$ within the same column. This property also holds for the positions of the nodes on the horizontal lines of the restriction of $T$ and $\widetilde T$ to the vertical part. The other nodes (coming from north steps) are completely determined.  
\end{proof}

The following lemma compares $T'\coloneqq \tau\down(T)$ and $\widetilde T' \coloneqq \widetilde{\tau}\down(\widetilde T)$.

\begin{lemma} \label{lem:taudown_varphi}
Let $T$ be a $(\delta,\nu)$-tree and let $\widetilde T\coloneqq \varphi(T)$. Set $T'\coloneqq \tau\down(T)$ and $ \widetilde T' \coloneqq \widetilde{\tau}\down(\widetilde T)$. Then $T'$ and $\widetilde T'$ coincide on the vertical part $F\ver =\widetilde F\ver$, except for the nodes belonging to the upper part. More precisely, for each $i\in\{1,\dots,\ell\}$ and every node of $T'$ at height $h_i$, there is a node at height $h_{i-1}+1$ in $\widetilde T'$ within the same column.
\end{lemma}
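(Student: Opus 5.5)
The plan is to show that $\tau\down$ and $\widetilde\tau\down$ act ``in the same way'' on the vertical parts of $T$ and $\widetilde T$, using \cref{lem:varphi_map} as the starting correspondence. The key observation is that the boxes of $F\down$ and $\widetilde F\down$ correspond bijectively under the unit right translation, and both are traversed in the same top-to-bottom, right-to-left order. Moreover, whether a box $b$ of $F\down$ is an ascent or descent box of a tree depends only on the nodes in the columns of $F\ver$ through $b$, together with the position of the relevant neighbouring nodes. Ascents and descents are detected by the second point of a row or column.

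I would therefore set up an invariant and propagate it through the individual toggles. Write $\tau\down=\tau_{\ell}\circ\dots\circ\tau_{m}$ and $\widetilde\tau\down=\widetilde\tau_{\ell}\circ\dots\circ\widetilde\tau_{m}$ for the boxes of $F\down$ and their translates. For $T_j$ and $\widetilde T_j$ the partial compositions, the invariant $(\ast)$ is the following: $T_j$ and $\widetilde T_j$ agree on $F\ver$ below height $h$, and the nodes at height $h_i$ in $T_j$ sit in the same columns as the nodes at height $h_{i-1}+1$ in $\widetilde T_j$, for $i=1,\dots,\ell$. \Cref{lem:varphi_map} gives $(\ast)$ for $j=0$, and the lemma is $(\ast)$ for the last $j$.

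For the inductive step, take a box $b\in F\down$ with translate $\widetilde b$. First I would check that $b$ is an ascent (resp. descent) box of $T_j$ if and only if $\widetilde b$ is an ascent (resp. descent) box of $\widetilde T_j$. This uses the row/column characterizations stated before \cref{lem:rowmotion_slow_motion}. The subtle case is when the rotation rectangle for $b$ extends upward into $F\up$. There the node $p$ above $q$ lies on one of the horizontal lines, at height $h_i$ in $T_j$ and at height $h_{i-1}+1$ in $\widetilde T_j$. I need the column of $p$ to be the same in both, and (because columns above non-leftmost nodes are forbidden) that nothing else intervenes. Second, I would check that performing the rotation preserves $(\ast)$. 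The swap $q\leftrightarrow q'$ occurs in the same column pair in both trees. If $q'$ lands on an upper horizontal line, the height change is consistent with the $h_i\mapsto h_{i-1}+1$ relabelling. This is because $q'$ takes the height of $p$, so its height is $h_i$ in one tree and $h_{i-1}+1$ in the other.

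I expect the main obstacle to be the boundary behaviour. This includes the convention for $h_0$ (the segment immediately right of $F\ver$, or $h-1$) and the rightmost column of $\widetilde F\ver$, which is new after translation. I must verify that a rotation whose rectangle reaches the horizontal segment at $h_0$, or the row just below height $h$, behaves identically in the two trees. I would handle this by a separate case analysis on whether $p$ lies in $F\down$, on the line at height $h$, or on some $h_i$ with $i\ge1$, treating $i=1$ (which uses $h_0$) explicitly.
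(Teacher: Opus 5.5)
Your proposal is correct and takes essentially the same route as the paper. Both argue by induction over the individual toggles of $\tau\down$ and $\widetilde\tau\down$, with \cref{lem:varphi_map} as the base case, and both rest on the same key point: a rotation rectangle whose top lies at height $h_i$ in one tree corresponds to a rectangle with the same bottom and top at height $h_{i-1}+1$ in the other. Your explicit check that a box of $F\down$ is an ascent (resp.\ descent) box of $T_j$ exactly when its translate is one of $\widetilde T_j$, and your separate handling of the boundary cases involving $h_0$, supply details the paper leaves implicit when it declares the identity and non-upper-part cases ``clearly'' preserved.
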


\begin{proof}
For simplicity, let ($\star$) denote the property stated in the lemma, relating a pair of trees~$(S,\widetilde S)$. By \cref{lem:varphi_map}, the pair $(T,\widetilde T)$ satisfies ($\star$). Since $\tau\down$ and $\widetilde{\tau}\down$ are compositions of toggles associated with boxes in the lower part, it suffices to show that ($\star$) is preserved by a single toggle.

If a toggle is the identity, then property ($\star$) is clearly preserved. If not, we consider two cases. If a toggle does not involve any node in the upper part, then it acts identically on both trees and~($\star$) is clearly preserved.

Otherwise, the toggle exchanges the lower-left and upper-right corners of a rectangle $R$ whose bottom lies strictly in the lower part and whose top lies at height $h_i$ in the upper part, for some $i$. The corresponding toggle on $\widetilde T$ exchanges the corners of a rectangle $\widetilde R$ whose bottom coincides with that of $R$ and whose top lies at height $h_{i-1}+1$. Since ($\star$) precisely states that nodes at height $h_i$ in one tree correspond to nodes at height $h_{i-1}+1$ in the other, the effect of the toggle is compatible with the correspondence. Hence, if $(T,\widetilde T)$ satisfies ($\star$), then so does $(\tau(T),\widetilde{\tau}(\widetilde T))$.

It follows by induction over the sequence of toggles defining $\tau\down$ and $\widetilde{\tau}\down$ that $(T',\widetilde T')$ satisfies property~($\star$).
\end{proof}

\begin{lemma} \label{lem:varphi_Tprime}
Let $T$ be a $(\delta,\nu)$-tree and let $\widetilde T \coloneqq \varphi(T)$. Set $T' \coloneqq \tau\down(T)$ and $\widetilde T' \coloneqq \widetilde{\tau}\down(\widetilde T)$. Then $\varphi(T')$ and $\widetilde T'$ coincide on the vertical part $F\ver=\widetilde F\ver$.
\end{lemma}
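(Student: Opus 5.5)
We aim to show that $\varphi(T')$ and $\widetilde T'$ agree on the vertical part by comparing both with $T'$ through the description of $\varphi$ on the vertical part given in \cref{lem:varphi_map}. Apply \cref{lem:varphi_map} to the tree $T'$ (instead of $T$). Let $h'_0,h'_1,\dots,h'_{\ell'}$ denote the heights attached to $T'$ in the same way as $h_0,\dots,h_\ell$ were attached to $T$. \Cref{lem:varphi_map} then says that $\varphi(T')$ agrees with $T'$ on the lower part of $F\ver$. Moreover, every node of $T'$ at height $h'_i$ inside $F\ver$ has a counterpart in $\varphi(T')$, in the same column, at height $h'_{i-1}+1$.

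On the other side, \cref{lem:taudown_varphi} says that $\widetilde T'$ agrees with $T'$ on the lower part of $F\ver$. It also says that every node of $T'$ at height $h_i$ has a counterpart in $\widetilde T'$, in the same column, at height $h_{i-1}+1$. So the lower parts coincide, and the whole statement reduces to one claim: $T'$ and $T$ have the same upper-part horizontal data. That is, $\ell'=\ell$ and $h'_i=h_i$ for all $0\le i\le\ell$.

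This claim is the heart of the proof. The key point is that $\tau\down$ consists only of toggles at boxes of $F\down$. Such a toggle rotates a rectangle whose bottom-right corner lies in the lower part. If the top of that rectangle reaches into the upper part, it only moves a node along a horizontal segment already at some height $h_i$: a node at the top-right corner appears or disappears, while the leftmost node of that row stays put. I will check the following from the definition of right rotation and the ascent/descent box characterizations:
\begin{itemize}
\item the set of rows of $F\up$ containing at least two nodes in $F\ver$ is unchanged;
\item the horizontal segment to the right of $F\ver$, which defines $h_0$, is untouched, since its boxes are not in the columns of $F\down$.
\end{itemize}
Then the northwest path in $F\up$ is the same for $T$ and $T'$, up to which columns carry nodes. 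Hence its horizontal heights inside $F\ver$, and the value $h_0$, are unchanged.

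With $h'_i=h_i$ established, both $\varphi(T')$ and $\widetilde T'$ are obtained from $T'$ by the same rule: keep the lower part, and move each node at height $h_i$ to height $h_{i-1}+1$ in the same column. The nodes coming from north steps are determined by these (as in the last sentence of the proof of \cref{lem:varphi_map}), so the two trees coincide on $F\ver$.

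I expect the main obstacle to be the invariance $h'_i=h_i$. The delicate cases are a toggle whose rectangle top sits on the lowest segment $h_1$, and the degenerate situation $h_0=h-1$, where no segment lies to the right of $F\ver$. If the invariance fails there, my fallback is to run the induction of \cref{lem:taudown_varphi} directly on the pair $(T',\varphi(T'))$. The idea would be to show that $\varphi$ conjugates each $\delta$-toggle in $F\down$ to the corresponding $\widetilde\delta$-toggle. That would also yield \cref{prop:varphi_tau_down_commute} directly.
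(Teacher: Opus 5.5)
Your proposal is correct and follows the paper's argument: you combine \cref{lem:taudown_varphi} for $(T',\widetilde T')$ with \cref{lem:varphi_map} applied to $T'$. Your explicit check that the heights $h_0,\dots,h_\ell$ computed from $T'$ equal those of $T$ supplies a step the paper uses without stating it; the reason is that a toggle in $F\down$ only creates or deletes the top-right corner $q'$ of its rectangle, and it does so on a horizontal run at that height which already crosses $F\ver$. One caution: phrase the invariant as the set of heights of horizontal runs of the upper path crossing $F\ver$ (counting forbidden columns as crossed), not as the set of rows with at least two nodes in $F\ver$, because the latter can change --- in the running example $\tau_8$ replaces the node $(2,0)$ by $(3,2)$, so row $2$ goes from one to two nodes in the columns of $F\ver$ while $h_1=2$ is unchanged.
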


\begin{proof}
By~\cref{lem:taudown_varphi}, the pair $(T',\widetilde T')$ satisfies the same displacement property as $(T,\widetilde T)$: for each~$i\in\{1,\dots,\ell\}$, every node of $T'$ at height $h_i$ corresponds to a node of $\widetilde T'$ at height $h_{i-1}+1$ in the same column. Applying \cref{lem:varphi_map} to $T'$ therefore moves every node of $T'$ at height $h_i$ precisely to height $h_{i-1}+1$. Hence the image $\varphi(T')$ coincides with $\widetilde T'$ on $F\ver=\widetilde F\ver$.
\end{proof}

\begin{lemma} \label{lem:varphi_taudown_commute_verticalpart}
Let $T$ be a $(\delta,\nu)$-tree. Then $\widetilde{\tau}\down \circ \varphi(T)$ and $ \varphi \circ \tau\down(T)$ coincide on the vertical part $F\ver = \widetilde F\ver$.
\end{lemma}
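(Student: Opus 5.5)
This follows almost directly from the previous lemma, once we unwind the notation. Set $\widetilde T\coloneqq\varphi(T)$, $T'\coloneqq\tau\down(T)$ and $\widetilde T'\coloneqq\widetilde{\tau}\down(\widetilde T)$. The two trees to be compared are
\[\widetilde{\tau}\down\circ\varphi(T)=\widetilde{\tau}\down(\widetilde T)=\widetilde T' \qquad\text{and}\qquad \varphi\circ\tau\down(T)=\varphi(T').\]
\cref{lem:varphi_Tprime} asserts precisely that $\varphi(T')$ and $\widetilde T'$ coincide on $F\ver=\widetilde F\ver$, which is the claim.

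The only point needing care is consistency of the heights $h_0,h_1,\dots,h_\ell$. In \cref{lem:taudown_varphi} and \cref{lem:varphi_Tprime} they are defined from $T$, but applying \cref{lem:varphi_map} to $T'$ uses the heights read off from $T'$. I would check that these agree.

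By definition, $\tau\down$ is a composition of toggles at boxes of $F\down$. Such a toggle exchanges the lower-left and upper-right corners of a rectangle whose bottom edge lies strictly in the lower part. Consider the case where the rectangle's top lies at a height $h_i$ of the upper part. The moved node is the one at the upper-right corner, and it is moved to the lower-left corner, or conversely. Its column changes, but the horizontal segment at height $h_i$ survives, since the bottom-right corner $r$ and the parent remain.

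Hence the northwest path determined by the nodes in $F\up$, and thus its horizontal heights $h_i$, are the same for $T$ and $T'$. Only the column positions of the nodes on those segments within $F\ver$ change. Likewise, $h_0$ is determined by the segment just right of $F\ver$ or by the fixed separating height $h$, so it is also unchanged.

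This consistency check is the step I expect to be the (mild) main obstacle. Once it is verified, the displacement property $(\star)$ for $(T',\widetilde T')$ from \cref{lem:taudown_varphi} matches exactly the output of \cref{lem:varphi_map} applied to $T'$. Nodes of $\varphi(T')$ and $\widetilde T'$ then occupy the same columns at heights $h_{i-1}+1$, and the remaining nodes in $F\ver$ are forced by the north steps, so the two trees agree on the vertical part.
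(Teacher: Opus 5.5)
Your proof is the same as the paper's: once you set $\widetilde T=\varphi(T)$, $T'=\tau\down(T)$ and $\widetilde T'=\widetilde{\tau}\down(\widetilde T)$, the two trees to compare are $\widetilde T'$ and $\varphi(T')$, and the paper's proof simply cites \cref{lem:varphi_Tprime}, exactly as you do. Your extra check that the toggles in $\tau\down$ leave the heights $h_0,\dots,h_\ell$ unchanged is not needed for this lemma (it is an ingredient used tacitly inside the proof of \cref{lem:varphi_Tprime}), but it is a reasonable sanity check and is correct in substance, since the upper-left corner of each rotated rectangle stays at its height $h_i$.
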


\begin{proof}
Let $\widetilde T\coloneqq \varphi(T)$, $T'\coloneqq \tau\down(T)$, and $\widetilde T'\coloneqq\widetilde{\tau}\down(\widetilde T)$. By \cref{lem:varphi_Tprime}, the trees $\varphi(T')=\varphi\circ\tau\down(T)$ and $\widetilde T' = \widetilde{\tau}\down \circ \varphi(T)$ coincide on the vertical part $F\ver=\widetilde F\ver$.
\end{proof}

\begin{proposition} \label{prop:varphi_tau_down_commute}
We have $\widetilde\tau\down \circ \varphi = \varphi \circ \tau\down$.
\end{proposition}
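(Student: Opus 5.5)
The plan is to reduce the equality $\widetilde\tau\down \circ \varphi = \varphi \circ \tau\down$ to a comparison of the two trees region by region. By \cref{lem:varphi_taudown_commute_verticalpart}, for every $(\delta,\nu)$-tree $T$ the trees $\widetilde\tau\down(\varphi(T))$ and $\varphi(\tau\down(T))$ already coincide on the vertical part $F\ver=\widetilde F\ver$. So it remains to show that they also agree off $F\ver$, i.e.\ on the boxes of $\widetilde F\down$ lying outside the vertical columns (there are none, since $\widetilde F\down\subseteq\widetilde F\ver$) and on the part of $\widetilde F\up$ outside $\widetilde F\ver$. The approach is then to treat these two remaining regions separately and conclude equality of the whole trees, since a $(\delta,\nu)$-tree is determined by its nodes.

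First, nodes strictly below the separating line at height $h$: these all lie in the columns of $F\down$, hence in $F\ver$, and are covered by \cref{lem:varphi_taudown_commute_verticalpart}. Second, nodes of $\widetilde F\up$ outside the vertical part. Here I would argue that neither $\tau\down$ nor $\widetilde\tau\down$ moves any node outside $F\ver$: a toggle at a box $b$ of $F\down$ rotates the corners $q,q'$ of a rectangle whose bottom-right corner is the bottom-right corner of $b$, so $q'$ lies in the column of $b$'s right edge, and $q$ in the column of its left edge; both columns bound boxes of $F\down$, hence lie in (the closure of) $F\ver$. Consequently, outside $F\ver$ we have $\tau\down(T)=T$ and $\widetilde\tau\down(\widetilde T)=\widetilde T$. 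Then $\varphi(\tau\down(T))$ outside $F\ver$ equals $\flush(\tau\up(\tau\down T))$ there, and I would use the description of $\varphi$ from \cref{lem:varphi_widetilde_varphi} and \cref{lem:varphi_map}: outside $F\ver$, the action of $\varphi$ on the northwest path in $F\up$ only depends on the path's horizontal segments outside $F\ver$ together with the heights $h_0$ and $h_\ell$ where it enters and leaves $F\ver$. Since $\tau\down$ changes only nodes at the heights $h_i$ inside $F\ver$ (by \cref{lem:taudown_varphi}, rectangles touching the upper part have tops at the heights $h_i$, which are preserved), these boundary data are unchanged, so $\varphi(\tau\down T)$ and $\varphi(T)=\widetilde\tau\down(\varphi(T))$ agree outside $F\ver$.

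Finally, a counting check closes the argument: both sides are $(\widetilde\delta,\nu)$-trees, and having shown they agree on $F\ver$ and on its complement, they agree everywhere. Alternatively, one can note that both sides have the same number of nodes at each height (because $\tau\down$ and $\widetilde\tau\down$ move nodes compatibly with the height correspondence of \cref{lem:taudown_varphi}) and use that $\flush$ is determined by row counts.

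The main obstacle I expect is the second step: justifying rigorously that the boundary data of the path where it crosses the columns of $F\ver$ (in particular $h_0$ and the topmost height $h_\ell$, and whether the path has a horizontal segment just right of $F\ver$) are not altered by $\tau\down$, so that $\tau\up$ followed by $\flush$ acts identically outside $F\ver$ on $T$ and $\tau\down(T)$. I would first try to verify this by examining a single lower toggle whose rectangle reaches height $h_i$ and checking that it merely slides a node within a column of $F\ver$ without changing which heights carry horizontal segments.
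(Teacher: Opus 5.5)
Your proposal follows the paper's proof of this proposition step for step: agreement on the vertical part is taken from \cref{lem:varphi_taudown_commute_verticalpart}, and agreement elsewhere rests on two facts. The first is that lower toggles only move nodes inside the closed strip of $F\ver$. The second is that $\varphi$ acts outside this strip independently of whether $\tau\down$ was applied first; the paper asserts this without further argument, and you rightly single it out as the step needing care. One small correction: $q$ is the bottom-left corner of the rotation rectangle, which can be wider than the box $b$, so $q$ need not lie on the left edge of $b$; your conclusion is unaffected, since $q$ sits at the height of the bottom edge of $b$, strictly below height $k$, and every lattice point of $F$ below height $k$ lies in the closed strip spanned by the columns of $F\down$.
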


\begin{proof}
Let $T$ be a $(\delta,\nu)$-tree. By \cref{lem:varphi_taudown_commute_verticalpart}, the trees $\widetilde{\tau}\down \circ \varphi(T)$ and $\varphi \circ \tau\down(T)$ coincide on the vertical part. It therefore remains to compare the nodes outside the vertical part. Since~$\tau\down$ and~$\widetilde{\tau}\down$ are compositions of toggles supported in the lower part, neither map affects any node outside the vertical part. Moreover, the action of $\varphi$ outside the vertical part is independent of whether~$\tau\down$ has been applied beforehand. Consequently, the two compositions also coincide outside the vertical part. We conclude that $\widetilde{\tau}\down \circ \varphi(T) =
\varphi \circ \tau\down(T)$, which proves the proposition.
\end{proof}

We are now ready to prove \cref{thm:row_varphi_commute}.

\begin{proof}[Proof of \cref{thm:row_varphi_commute}]
We have
\begin{align*}
\widetilde{\row} \circ \varphi &= (\widetilde{\tau}\down \circ \widetilde{\tau}\up) \circ \varphi \\
&= \widetilde{\tau}\down \circ (\widetilde{\tau}\up \circ \varphi) \\
&= \widetilde{\tau}\down \circ (\varphi \circ \tau\up) \qquad\text{by \cref{prop:varphi_tau_up_commute}} \\
&= (\widetilde{\tau}\down \circ \varphi) \circ \tau\up \\
&= (\varphi \circ \tau\down) \circ \tau\up \qquad\text{by \cref{prop:varphi_tau_down_commute}} \\
&= \varphi \circ (\tau\down \circ \tau\up) \\
&= \varphi \circ \row. \qedhere
\end{align*}
\end{proof}

\begin{remark}
Note that the flush bijection, by itself, does \emph{not} intertwine rowmotion, since  
\[\widetilde{\row} \circ \flush = \widetilde{\tau}\down \circ \widetilde{\tau}\up \circ \flush = \flush \circ \tau\up \circ \tau\down \neq \flush \circ \tau\down \circ \tau\up = \flush \circ \row.\]
Here we are using the fact that $\tau\up$ and $\tau\down$ do not commute in general.
\end{remark}

\subsection{The up- and down-degree statistics} \label{sec:ddeg}

In this subsection, we show that the bijection $\varphi$ has the desired behavior with respect to the up- and down-degree statistics. More precisely, let~$\udeg\colon L \to \mathbb{N}$ and $\ddeg\colon L \to \mathbb{N}$ denote the \dfn{up-degree} and \dfn{down-degree} statistics on a lattice~$L$. Thus, $\udeg(x)$ is the number of elements of $L$ that cover $x$, while $\ddeg(x)$ is the number of elements of $L$ covered by $x$. Recalling the language of \cref{sec:alt}, for a $(\delta,\nu)$-tree $T$ in an alt~$\nu$-Tamari lattice $\altTam{\nu}{\delta}$, we have that $\udeg(T)$ is the number of ascent boxes of $T$ and $\ddeg(T)$ is the number of descent boxes. The main result of this subsection is the following.

\begin{theorem} \label{thm:varphi_udeg}
The bijection $\varphi\colon \altTam{\nu}{\delta} \to \altTam{\nu}{\widetilde{\delta}}$ preserves orbit averages of the up-degree statistic. More precisely,
\[ \sum_{y \in \{\row^j(x)\colon j\ge 0\}} \udeg(y) = \sum_{y \in \{\widetilde{\row}^j(\varphi(x))\colon j\ge 0\}} \udeg(y) \]
for every $x \in \altTam{\nu}{\delta}$.
\end{theorem}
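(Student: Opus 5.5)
The plan is to split the up-degree of a tree according to where its ascent boxes lie. For a $(\delta,\nu)$-tree $T$ write
\[\udeg(T)=\udeg\up(T)+\udeg\down(T),\]
where $\udeg\up(T)$ counts the ascent boxes of $T$ in $F\up$ and $\udeg\down(T)$ counts those in $F\down$. Define $\widetilde\udeg\up$ and $\widetilde\udeg\down$ on $(\widetilde\delta,\nu)$-trees in the same way, using $\widetilde F\up$ and $\widetilde F\down$. By \cref{thm:row_varphi_commute}, $\varphi$ maps the $\row$-orbit of $x$ bijectively onto the $\widetilde\row$-orbit of $\varphi(x)$. So it suffices to prove the two orbit-sum identities separately:
\[\sum_{y}\udeg\up(y)=\sum_y\widetilde\udeg\up(\varphi(y)),\qquad \sum_{y}\udeg\down(y)=\sum_y\widetilde\udeg\down(\varphi(y)),\]
with $y$ ranging over the orbit of $x$. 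I do not expect either identity to hold pointwise. The goal is to show that each one holds after a shift along the orbit, which is invisible after summing.

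For the upper part I would use the description from the proof of \cref{lem:varphi_widetilde_varphi}. On $F\up$, the map $\tau\up$ acts as $\nu$-Dyck rowmotion on the northwest path cut out by $T$, so it turns valleys into peaks. Hence the ascent boxes of $T$ in $F\up$ are exactly the descent boxes of $\tau\up(T)$ in $F\up$. Since $\flush$ does not change the underlying path, these are also the descent boxes of $\varphi(T)$ in $\widetilde F\up$, after identifying $F\up=\widetilde F\up$ and moving columns to the first available position.

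Next I need to relate descents to ascents. The descent boxes of $\varphi(T)$ are the ascent boxes of $\widetilde\row^{-1}(\varphi(T))=\varphi(\row^{-1}(T))$. I would check that the descent/ascent property of a box lying in $\widetilde F\up$ is decided within $\widetilde F\up$ alone. This should follow from the characterizations of ascent and descent boxes via the second point of a row or column, because the boundary of $F\up$ is a full horizontal line. Granting this, $\udeg\up(T)=\widetilde\udeg\up(\varphi(\row^{-1}T))$, and summing over the orbit gives the first identity.

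For the lower part, ascent boxes in $F\down$ live in the vertical part $F\ver$ together with rows strictly below height $k$. Outside $F\ver$, nothing in the lower part is moved by $\varphi$. Inside $F\ver$, \cref{lem:varphi_map} says that $T$ and $\varphi(T)$ agree, except that nodes at height $h_i$ are moved to height $h_{i-1}+1$ in the same column. An ascent box in $F\down$ is the lower-right box of a rectangle with corners $q,r$ at the bottom and $p$ on top. The top corner $p$ either lies in the lower part, and then is unchanged, or lies at some height $h_i$, and then is moved to $h_{i-1}+1$ in the same column. So the rectangle with the same bottom still exists in $\varphi(T)$. This gives a bijection between ascent boxes of $T$ in $F\down$ and ascent boxes of $\varphi(T)$ in $\widetilde F\down$, translated one column to the right, in the same way that the toggles were matched in the proof of \cref{lem:taudown_varphi}. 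Then $\udeg\down$ is preserved pointwise, which gives the second identity.

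The step I expect to be the main obstacle is the boundary bookkeeping. Ascent boxes in the columns of $F\ver$ but inside $F\up$ are affected both by the flush and by the height shift $h_i\mapsto h_{i-1}+1$. The convention for $h_0$ also has to be handled: when no horizontal segment lies to the right of $F\ver$, $h_0=h-1$. I would first test on the running example (\cref{fig:varphi_row_commute}) whether the cleanest split assigns these boxes to the ``up'' count with the shift by $\row^{-1}$, or to the ``down'' count pointwise. If neither works, I would instead prove the combined identity
\[\udeg(T)+\udeg(\row^{-1}T)=\widetilde\udeg(\varphi T)+\widetilde\udeg(\varphi\row^{-1}T),\]
which also yields the orbit-sum equality.
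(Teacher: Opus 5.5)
Your proposal is correct, and its skeleton is the paper's. You split $\udeg=\udeg\down+\udeg\up$, show the lower part is preserved pointwise, and show the upper part is preserved after a one-step shift along the orbit. Your identity $\udeg\up(T)=\udeg\up(\varphi(\row^{-1}T))$ is exactly the paper's $\udeg\up(\varphi(T))=\udeg\up(\row(T))$ with $T$ replaced by $\row^{-1}T$.

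\textbf{The upper identity.} This is where you take a different route.
\begin{itemize}
\item \emph{Your route} goes through descents. Ascent boxes of $T$ in $F\up$ become descent boxes of $\tau\up(T)$ by the valley/peak exchange. $\flush$ preserves these. Then $\mathcal D(\widetilde\row(y))=\mathcal U(y)$ together with \cref{thm:row_varphi_commute} turns descents of $\varphi(T)$ into ascents of $\varphi(\row^{-1}T)$.
\item \emph{The paper's route} never mentions descents. It writes $\varphi(T)=\flush\circ(\tau\down)^{-1}(\row(T))$ and uses that $\udeg\up$ is invariant under $\flush$ and under $\tau\down$.
\end{itemize}
The paper's route is shorter. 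An ascent box in $F\up$ is just the second node of a non-top row at height $\geq k$, so $\udeg\up$ depends only on row counts, and invariance under $\flush$ is immediate. What remains is to check that toggles in $F\down$ do not change which upper rows contain at least two nodes.

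Your route avoids that $\tau\down$ check, but needs that $\flush$ preserves the number of descent boxes in $F\up$. That statement is column-sensitive, not a row-count statement. ``$\flush$ does not change the underlying path'' is not quite enough. You should argue that the forbidden columns are entire columns of $F\up$, and that the leftmost available position in each row of $F\up$ is the same before and after flushing. Only then do descents of the compressed path correspond to genuine descent boxes on both sides.

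The locality check you propose for boxes of $\widetilde F\up$ is unnecessary. $\mathcal D(\widetilde\row(y))=\mathcal U(y)$ is an equality of sets of boxes, so you can simply intersect both sides with $\widetilde F\up$.

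\textbf{The lower identity.} \cref{lem:varphi_map} is more than you need here. An ascent box in $F\down$ is the second node of a row strictly below height $k$. Neither $\tau\up$ nor $\flush$ touches those rows, apart from the one-column translation. This gives $\udeg\down(\varphi(T))=\udeg\down(T)$ directly.

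\textbf{Your boundary worry.} The ``boundary bookkeeping'' you anticipate does not arise. Boxes of $F\ver$ lying in $F\up$ are simply upper boxes and are covered by the shifted identity, so the fallback combined identity is not needed.
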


We first prove the following easy corollary.
\begin{corollary} \label{cor:varphi_ddeg}
The bijection $\varphi\colon \altTam{\nu}{\delta} \to \altTam{\nu}{\widetilde{\delta}}$ preserves orbit averages of the down-degree statistic. More precisely,
\[ \sum_{y \in \{\row^j(x)\colon j\ge 0\}} \ddeg(y) = \sum_{y \in \{\widetilde{\row}^j(\varphi(x))\colon j\ge 0\}} \ddeg(y) \]
for every $x \in \altTam{\nu}{\delta}$.
\end{corollary}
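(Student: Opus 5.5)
The plan is to derive the corollary from \cref{thm:varphi_udeg} using the defining property of rowmotion, $\mathcal U_L(x)=\mathcal D_L(\row(x))$. For a $(\delta,\nu)$-tree $T$, $\udeg(T)=|\mathcal U(T)|$ is its number of ascent boxes and $\ddeg(\row(T))=|\mathcal D(\row(T))|$ is the number of descent boxes of $\row(T)$. Since these two box sets are equal, we get
\[ \ddeg(\row(y))=\udeg(y) \]
for every $y$ in any semidistributive lattice, and in particular in both $\altTam{\nu}{\delta}$ and $\altTam{\nu}{\widetilde\delta}$.

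Next, sum this identity over a rowmotion orbit. Let $\mathcal O=\{\row^j(x)\colon j\ge 0\}$. Rowmotion is a bijection of $\mathcal O$ onto itself, so reindexing gives
\[ \sum_{y\in\mathcal O}\ddeg(y)=\sum_{y\in\mathcal O}\ddeg(\row(y))=\sum_{y\in\mathcal O}\udeg(y). \]
The same argument applies to the orbit $\widetilde{\mathcal O}=\{\widetilde\row^j(\varphi(x))\colon j\ge 0\}$ in $\altTam{\nu}{\widetilde\delta}$.

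Finally, chain the equalities. By the above, $\sum_{\mathcal O}\ddeg=\sum_{\mathcal O}\udeg$. By \cref{thm:varphi_udeg}, this equals $\sum_{\widetilde{\mathcal O}}\udeg$. By the above again, that equals $\sum_{\widetilde{\mathcal O}}\ddeg$. The orbit $\widetilde{\mathcal O}$ is exactly $\varphi(\mathcal O)$, by \cref{thm:row_varphi_commute}, though this fact is not needed here.

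There is no real obstacle. The only point to check is that the orbit is finite and closed under $\row$, so that the reindexing is valid; this holds because $\row$ is a permutation of a finite set.
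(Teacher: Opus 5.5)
Your proof is correct and is essentially the paper's argument: the paper uses the same identity, written as $\ddeg(y)=\udeg(\row^{-1}(y))$ and coming from $\mathcal U_L(y)=\mathcal D_L(\row(y))$, to turn down-degree orbit sums into up-degree orbit sums on both sides, and then applies \cref{thm:varphi_udeg}. You are right that the fact that $\varphi$ maps the $\row$-orbit onto the $\widetilde{\row}$-orbit is not needed for this step.
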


\begin{proof}
By \cref{thm:varphi_udeg}, it suffices to use the identity $\ddeg(y)=\udeg(\row^{-1}(y))$, which holds for every element $y$. Indeed,
\begin{align*}
\sum_{y \in \{\row^j(x)\colon j\ge 0\}} \ddeg(y) &= \sum_{y \in \{\row^j(x)\colon j\ge 0\}} \udeg(\row^{-1}(y)) \\
&= \sum_{y \in \{\row^j(x)\colon j\ge 0\}} \udeg(y),
\end{align*}
since the orbit of $y$ coincides with the orbit of $\row^{-1}(y)$. Applying \cref{thm:varphi_udeg}, we obtain
\begin{align*}
\sum_{y \in \{\row^j(x)\colon j\ge 0\}} \udeg(y) &= \sum_{y \in \{\widetilde{\row}^j(\varphi(x))\colon j\ge 0\}} \udeg(y) \\
&= \sum_{y \in \{\widetilde{\row}^j(\varphi(x))\colon j\ge 0\}} \udeg(\widetilde{\row}^{-1}(y)) \\
&= \sum_{y \in \{\widetilde{\row}^j(\varphi(x))\colon j\ge 0\}} \ddeg(y),
\end{align*}
where the last equality follows from the analogous identity $\ddeg(y)= \udeg(\widetilde{\row}^{-1}(y))$ on $\altTam{\nu}{\widetilde{\delta}}$. This proves the claim.
\end{proof}

To prove \cref{thm:varphi_udeg}, we decompose the up-degree statistic into two contributions, corresponding to the upper and lower parts of the stack polyomino. Let $T\in \altTam{\nu}{\delta}$ be a $(\delta,\nu)$-tree, and let $F=F_{\delta,\nu}$ be the associated stack polyomino. Recall the decomposition of $F$ into its upper and lower parts, $F\up$ and $F\down$. Recall that the up-degree $\udeg(T)$ is the number of ascent boxes of~$T$, which label the upper covers of $T$. The \dfn{upper up-degree} $\udeg\up(T)$ is defined as the number of ascent boxes of $T$ lying in $F\up$, while the \dfn{lower up-degree} $\udeg\down(T)$ is the number of ascent boxes of $T$ lying in $F\down$. Hence
\[\udeg(T) = \udeg\down(T) + \udeg\up(T).\]
Applying the analogous definitions to $(\widetilde{\delta},\nu)$-trees and the decomposition of $\widetilde F$ into $\widetilde F\up$ and $\widetilde F\down$, we obtain
\[\udeg(\varphi(T)) = \udeg\down(\varphi(T)) + \udeg\up(\varphi(T)).\]

\begin{lemma} \label{lem:udeg_invariance}
The following statements hold for every $(\delta,\nu)$-tree $T$.
\begin{enumerate}
\item \label{item:udeg_invariance_down} The maps $\flush$ and $\tau\up$ preserve the lower up-degree:
\[\udeg\down(T) = \udeg\down(\flush(T)),\]
and
\[\udeg\down(T) = \udeg\down(\tau\up(T)).\]
\item \label{item:udeg_invariance_up} The maps $\flush$ and $\tau\down$ preserve the upper up-degree:
\[\udeg\up(T) = \udeg\up(\flush(T)),\]
and
\[\udeg\up(T) = \udeg\up(\tau\down(T)).\]
\end{enumerate}
\end{lemma}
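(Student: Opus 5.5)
We prove the four identities by locating ascent boxes through the characterization from \cref{sec:alt}: a box $b$ (identified with its bottom-right corner $r$) is an ascent box of $T$ exactly when $r$ is the second node of its row, counted from the left. So $\udeg\down(T)$ is the number of rows strictly below height $k$ with at least two nodes of $T$. It is not quite this count for the row at height $k$ itself, so that row needs separate care; see below. Likewise, $\udeg\up(T)$ counts the rows at height $\geq k$ with at least two nodes, restricted to those whose second node lies in $F\up$.

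For the $\flush$ statements, recall that $\flush$ preserves the number of nodes at each height. Thus in every row, having at least two nodes is invariant. It remains to check that the second node of a row stays in the same part (upper or lower). Rows lying entirely in the lower part, or entirely in the upper part, stay there: $\widetilde F\up=F\up$, and $\widetilde F\down$ is a translate of $F\down$. This already gives both $\flush$ identities, once we fix the convention that a box belongs to $F\up$ or $F\down$ according to the row of its bottom-right corner.

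For $\tau\up$ and lower up-degree, observe that $\tau\up$ is a composition of toggles at boxes of $F\up$. Each such toggle exchanges $q$ and $q'$ at the bottom-left and top-right corners of a rectangle whose bottom-right corner $r$ lies at height $\geq k$. Hence it moves only nodes at heights $\geq k$, and never changes the number of nodes per row in rows below height $k$. The node configuration in those rows is also unchanged, so the lower ascent boxes (second nodes of rows below height $k$) are identical.

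Symmetrically, a toggle at a box of $F\down$ has its bottom-right corner $r$ below height $k$. The moved nodes $q,q'$ may reach into the upper part, since $q'$ can lie at some height $h_i$, as in \cref{lem:taudown_varphi}. This is the main obstacle: $\tau\down$ can change node positions in upper rows. The plan is to argue that such a toggle replaces $q'$ in a row of the upper part by another node in the \emph{same row and column structure relevant to the second position}. Concretely, in the upper rows the node moved is the leftmost-in-column top-right corner $q'$, lying directly above a lower-part node. It is neither the first nor the second node of its row in the counting sense, or else it is swapped with a node of the same rank.

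If this direct approach fails, the fallback is to use the description of the vertical part from \cref{lem:varphi_map,lem:taudown_varphi}. On $F\ver$, the rows at heights $h_i$ keep their node count and their leftmost nodes under lower toggles. Only nodes lying in forbidden columns above lower-part nodes shift. Therefore the second node of each upper row, which is determined by the path of nodes in $F\up$ with its peaks and valleys, stays in the same place.
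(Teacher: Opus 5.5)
Your arguments for $\flush$ and $\tau\up$ are correct and match the paper's. Ascent boxes correspond to non-top rows with at least two nodes. Whether such a box lies in $F\up$ or $F\down$ depends only on the height of its row; in particular the row at height $k$ needs no separate care, since its ascent box lies in $F\up$. $\flush$ preserves row counts, and toggles at boxes of $F\up$ only move nodes at heights $\geq k$.

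The $\tau\down$ part, however, contains no actual argument, and the claims you offer there are false. A toggle at a box of $F\down$ whose bottom-right corner is at height $y<k$ does not replace a node by another node of the same row. It moves one node between row $y$ and the row $y'$ of $q'$ (the height of the parent $p$ of $q$), and $y'$ may be $\geq k$. So upper rows do change their node counts and their second nodes. Concretely, take the running example ($\nu=(1,2,1,0)$, $\delta=(1,0,0)$, $k=1$). Here $\tau\down=\tau_8$ sends $\tau\up(T)$, with row counts $(2,2,1,3)$ from bottom to top, to $\row(T)$, with row counts $(1,3,1,3)$ (see \cref{fig:varphi_row_commute}). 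The node $(2,0)$ moves to $q'=(3,1)$, and the row at height $1$ goes from $\{(2,1),(4,1)\}$ to $\{(2,1),(3,1),(4,1)\}$. So $q'$ is exactly the new second node, and the upper ascent box in that row moves. This refutes your claim that the moved node is ``neither the first nor the second node''. It also refutes your fallback that upper rows keep their node counts and second nodes; \cref{lem:varphi_map,lem:taudown_varphi} compare $T$ with $\varphi(T)$ and give nothing of that sort.

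What you need is only that the \emph{number} of non-top upper rows with at least two nodes is unchanged. Equivalently: whenever $y'\geq k$, row $y'$ has at least two nodes both before and after the toggle. The paper's short proof notes that the node added or removed in an upper row is never the leftmost of its row. Combined with this fact, the set of ascent nodes in upper rows is unchanged, even though the ascent boxes themselves may shift.

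One way to prove the fact is to pass to $\nu$-Dyck paths via right-flushing, which preserves row counts. There the toggle becomes a $\delta$-rotation that moves one east step from height $y$ to the height $y'$ where the exchanged subpath ends, and that subpath begins with the $(y+1)$-st north step.
\begin{itemize}
\item If $\delta_{y+1}=0$, the subpath is just this north step, so $y'=y+1$. Since $\delta_k\neq 0$, this forces $y'<k$.
\item If $\delta_{y+1}>0$, the $\delta$-altitude lies strictly above its initial value after this step. North steps never lower it, so it can only return through an east step. Hence the subpath ends with an east step at height $y'$, and row $y'$ contains at least one east step before the rotation and at least two after.
\end{itemize}
Toggling down at a descent box of $F\down$ is the inverse move, so the same computation applies.
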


\begin{proof}
The nodes of $T$ that can be increasingly flipped are precisely the leftmost nodes in rows containing at least two nodes, excluding the top row. Since $\flush$ preserves the number of nodes at each height, it preserves the number of rows whose leftmost node can be increasingly flipped. Therefore, $\flush$ preserves both $\udeg\up$ and $\udeg\down$.

Next, $\tau\up$ is a composition of toggles corresponding to boxes in $F\up$. Such toggles do not affect the configuration of nodes strictly below the separation line between $F\up$ and $F\down$. Therefore, $\udeg\down$ is preserved under $\tau\up$.

Similarly, $\tau\down$ is a composition of toggles corresponding to boxes in $F\down$. It therefore suffices to show that each such toggle preserves $\udeg\up$. A toggle in $F\down$ either acts trivially on $F\up$ or adds or removes a node that is not the leftmost node of its row. Consequently, it does not affect the set of increasingly flippable nodes in $F\up$, and hence preserves $\udeg\up$.
\end{proof}

\begin{proposition} \label{prop:udeg_split}
For every $(\delta,\nu)$-tree $T$, the map $\varphi$ preserves the lower up-degree, while the upper up-degree is transformed according to rowmotion:
\[\udeg\down(\varphi(T)) = \udeg\down(T); \qquad \udeg\up(\varphi(T)) = \udeg\up(\row(T)).\]
\end{proposition}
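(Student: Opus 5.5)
The plan is to unwind the definition $\varphi = \flush\circ\tau\up$ and use \cref{lem:udeg_invariance}, which says that $\flush$ and $\tau\up$ preserve $\udeg\down$, and that $\flush$ and $\tau\down$ preserve $\udeg\up$. Both identities should then come from this lemma together with the factorization $\row=\tau\down\circ\tau\up$.

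For the first identity, apply \cref{lem:udeg_invariance}\eqref{item:udeg_invariance_down} twice:
\[\udeg\down(\varphi(T))=\udeg\down(\flush(\tau\up(T)))=\udeg\down(\tau\up(T))=\udeg\down(T).\]
Here we first use invariance under $\flush$, applied to the $(\delta,\nu)$-tree $\tau\up(T)$, and then invariance under $\tau\up$.

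For the second identity, write $\row(T)=\tau\down(\tau\up(T))$. By \cref{lem:udeg_invariance}\eqref{item:udeg_invariance_up}, applied to the tree $\tau\up(T)$,
\[\udeg\up(\row(T))=\udeg\up(\tau\down(\tau\up(T)))=\udeg\up(\tau\up(T)).\]
The same part of the lemma, now using invariance under $\flush$, gives
\[\udeg\up(\tau\up(T))=\udeg\up(\flush(\tau\up(T)))=\udeg\up(\varphi(T)).\]
Chaining the two displays gives the claim.

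The main subtlety is that the two sides of each flush-invariance statement are measured in different shapes. On the left, $\udeg\down$ and $\udeg\up$ count ascent boxes in $F\down$ and $F\up$; on the right, they count ascent boxes in $\widetilde F\down$ and $\widetilde F\up$. I would check that \cref{lem:udeg_invariance} is meant in exactly this sense. Its proof supports that reading: ascents are characterized row by row as the leftmost node of a non-top row containing at least two nodes. The row counts are preserved by $\flush$, and the split between upper and lower rows happens at the same height $k$ in both shapes. Beyond confirming this, no further work seems needed; the proposition is a short consequence of the lemma and the factorization of rowmotion.
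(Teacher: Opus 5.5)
Your proposal is correct and is essentially the paper's proof: both identities come from \cref{lem:udeg_invariance} together with the factorization $\row=\tau\down\circ\tau\up$. The only difference is cosmetic. The paper writes $\varphi(T)=\flush\circ(\tau\down)^{-1}(\row(T))$ and invokes invariance under $(\tau\down)^{-1}$ and $\flush$, whereas you apply $\tau\down$-invariance directly to the tree $\tau\up(T)$, which avoids mentioning the inverse.
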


\begin{proof}
By definition, $\varphi(T) = \flush(\tau\up(T))$. Both $\tau\up$ and $\flush$ preserve the lower up-degree by~\cref{lem:udeg_invariance}\eqref{item:udeg_invariance_down}. Hence,
\[\udeg\down(\varphi(T)) = \udeg\down(T).\]
Moreover, using that rowmotion decomposes as $\row = \tau\down \circ \tau\up$, we obtain
\begin{align*}
\varphi(T) &= \flush(\tau\up(T)) \\
&= \flush \circ (\tau\down)^{-1} \bigl(\tau\down \circ \tau\up(T)\bigr) \\
&= \flush \circ (\tau\down)^{-1}(\row(T)).
\end{align*}
Both $(\tau\down)^{-1}$ and $\flush$ preserve the upper up-degree by \cref{lem:udeg_invariance}\eqref{item:udeg_invariance_up}. Therefore,
\[\udeg\up(\varphi(T)) = \udeg\up(\row(T)),\]
as desired.
\end{proof}

\begin{proof}[Proof of \cref{thm:varphi_udeg}]
Since $\varphi$ intertwines rowmotion by \cref{thm:row_varphi_commute}, we have
\[\sum_{y \in \{\widetilde{\row}^j(\varphi(x))\colon j\ge 0\}} \udeg(y) = \sum_{y \in \{\row^j(x)\colon j\ge 0\}} \udeg(\varphi(y)).\]
Moreover, 
\begin{align*}
\sum_{y \in \{\row^j(x)\colon j\ge 0\}} \udeg(\varphi(y)) &= \sum_{y \in \{\row^j(x)\colon j\ge 0\}} \udeg\down(\varphi(y)) + \sum_{y \in \{\row^j(x)\colon j\ge 0\}} \udeg\up(\varphi(y)) \\
&= \sum_{y \in \{\row^j(x)\colon j\ge 0\}} \udeg\down(y) + \sum_{y \in \{\row^j(x)\colon j\ge 0\}} \udeg\up(\row(y)) \\
&= \sum_{y \in \{\row^j(x)\colon j\ge 0\}} \udeg\down(y) + \sum_{y \in \{\row^j(x)\colon j\ge 0\}} \udeg\up(y) \\
&= \sum_{y \in \{\row^j(x)\colon j\ge 0\}} \udeg(y),
\end{align*}
where in the second equality we used \cref{prop:udeg_split}, and in the third equality we reindexed the orbit of rowmotion for the second term. Combining the two displayed identities yields
\[\sum_{y \in \{\widetilde{\row}^j(\varphi(x))\colon j\ge 0\}} \udeg(y) = \sum_{y \in \{\row^j(x)\colon j\ge 0\}} \udeg(y), \]
as required.
\end{proof}

\begin{remark}
A careful analysis of our proof of \cref{cor:varphi_ddeg} shows that $\varphi$ preserves not just the rowmotion orbit sums of the down-degree statistic, but also the rowmotion orbit sums of the refined statistics where we keep track of the number of descent boxes in each row separately.
\end{remark}

\subsection{Completing the proof of \texorpdfstring{\cref{thm:intro_main}}{the main theorem}}

To complete the proof of our main result, \cref{thm:intro_main}, we simply compose these $\varphi$ as needed to connect any two alt $\nu$-Tamari lattices.

\begin{proof}[Proof of \cref{thm:intro_main}]
Let $L_1$ and $L_2$ be two alt $\nu$-Tamari lattices; say $L_1=\altTam{\nu}{\delta_1}$ and $L_2=\altTam{\nu}{\delta_2}$. Let $\delta_{\min} \coloneqq (0,0,\ldots,0)$, so that $\altTam{\nu}{\delta_{\min}}$ is the distributive $\nu$-Dyck lattice. Define a sequence $\delta_1=\delta^1,\delta^2,\ldots,\delta^m=\delta_{\min}$ of increment vectors so that $\delta^{i+1}$ is obtained from $\delta^{i}$ by decrementing by one the last nonzero entry, and let $\varphi_i\colon \altTam{\nu}{\delta^i}\to\altTam{\nu}{\delta^{i+1}}$ be the corresponding $\varphi$ from above. Then define $\Phi_1\colon\altTam{\nu}{\delta_1}\to\altTam{\nu}{\delta_{\min}}$ as the composition $\Phi_1\coloneqq \varphi_{m-1}\circ \cdots \circ \varphi_1$. Similarly, define $\Phi_2\colon\altTam{\nu}{\delta_2}\to\altTam{\nu}{\delta_{\min}}$. Finally, let $\Phi\colon L_1=\altTam{\nu}{\delta_1} \to \altTam{\nu}{\delta_2} = L_2$ be $\Phi\coloneqq (\Phi_2)^{-1}\circ\Phi_1$. Each of the constituent $\phi$'s (and their inverses) intertwines rowmotion by~\cref{thm:row_varphi_commute} and has the required interaction with the down-degree statistic by~\cref{cor:varphi_ddeg}. So $\Phi$ does as well, proving the theorem.
\end{proof}

\begin{remark}
The proof we have given of \cref{thm:intro_main} requires us to compose many of the $\varphi$'s (and their inverses) to define the bijection $\Phi$ between two alt $\nu$-Tamari lattices $L_1$ and $L_2$. It could be useful to have a simpler description of $\Phi$. We believe this is sometimes possible. For example, suppose that $\widetilde{\delta}$ is obtained from $\delta$ by decrementing \emph{some} nonzero entry by one, but not necessarily the last nonzero entry. Then we believe that basically the same definition of $\varphi\colon \altTam{\nu}{\delta}\to\altTam{\nu}{\widetilde\delta}$ as $\varphi\coloneqq \flush \circ \tau\up$ should work, as long as one carefully and correctly defines the upper-part $F\up$. The reason we preferred to consider only the case where $\widetilde{\delta}$ is obtained from $\delta$ by decrementing the \emph{last} nonzero entry is because then the $\delta$-toggles corresponding to boxes in $F\up$ behave essentially as distributive lattice toggles, which are easier to analyze. Nevertheless, the more general $\varphi$'s we sketched in this remark could be useful when considering the extension to the cross Tamari case (see \cref{sec:cross}).
\end{remark}

\section{Rational Tamari lattices} \label{sec:rational}

For a generic choice of lattice path~$\nu$, the alt~$\nu$-Tamari lattices are not particularly well behaved. For example, while there is a determinantal formula for the number of $\nu$-Dyck paths for generic~$\nu$, there is no simple product formula. Additionally, rowmotion tends to behave in a rather disorderly fashion on generic alt~$\nu$-Tamari lattices. For certain special families of lattice paths~$\nu$, however, rowmotion exhibits a remarkably orderly behavior. We study these special families of alt~$\nu$-Tamari lattices in this section.

\begin{figure}[ht]
\begin{center}
\begin{tikzpicture}[scale = 0.6]
\draw[step=1cm, dashed] (0,0) grid (15,4);
\draw[red,very thick] (0,0) -- (15,4);
\draw[blue,line width=3pt] (0,0) -- ++(0,1) -- ++(3,0) -- ++(0,1) -- ++(4,0) -- ++(0,1) -- ++(4,0) -- ++(0,1) -- ++(4,0); 
\end{tikzpicture}
\end{center}
\caption{For $(a,b)=(4,15)$, the rational Dyck paths are those above the path~$\nu$ drawn in blue above.}
\label{fig:rat_path_ex}
\end{figure}
 
Fix coprime positive integers $a < b$, and let $\nu$ be the lowest northeast path that connects the lower-left and upper-right corners of an $a\times b$ rectangle and stays weakly above the diagonal line connecting these two corners. 
For example, if~$(a,b)=(4,15)$, then~$\nu$ is the lattice path $\nu = NE^3NE^4NE^4NE^4$ corresponding to the partition shape $\lambda=(11,7,3,0)$, as shown in~\cref{fig:rat_path_ex}. We call the corresponding set of $\nu$-Dyck paths the set of \dfn{(a,b)-rational Dyck paths}, or just the set of rational Dyck paths when $(a,b)$ is clear. The corresponding (alt) $\nu$-Tamari lattices are called the \dfn{rational (alt) Tamari lattices}. It is well-known (see e.g.~\cite{bizley1954longtitle}) that the number of $(a,b)$-rational Dyck paths is given by the corresponding \dfn{rational Catalan number}
\[\cat(a,b) \coloneqq \frac{1}{a+b}\binom{a+b}{a}.\]
Notice that when~$(a,b)=(n,n+1)$ for an integer $n\geq 1$, we recover the classical Catalan numbers~$\cat(n) = \frac{1}{n+1}\binom{2n}{n}$ and the classical set of Dyck paths of semilength $n$. (See \cite{stanley2015catalan} for over 200 combinatorial interpretations of the Catalan numbers.) More generally, when~$(a,b)=(n,nm+1)$ for integers $n,m\geq 1$, the numbers $\cat(n,nm+1)=\frac{1}{mn+1}\binom{(m+1)n}{n}$ are called \dfn{Fuss--Catalan numbers} and these (alt) Tamari lattices are the \dfn{(alt) $m$-Tamari lattices}~\cite{bergeron2012higher}. See~\cite[Chapter~1]{stump2025cataland} for a discussion of the history and significance of Fuss (and rational) generalizations of Catalan objects in the context of Coxeter--Catalan combinatorics.

We will show how rowmotion behaves well on rational alt Tamari lattices, building on previous work in~\cite{thomas2019rowmotion, defant2024tamari} and addressing some conjectures posed there. The key strategy is to use \cref{thm:intro_main} to transfer to the distributive case, which is the easiest to analyze combinatorially.

\subsection{Cyclic sieving}

We first focus on the order and orbit structure of rowmotion acting on rational alt Tamari lattices. The concept of cyclic sieving~\cite{reiner2004cyclic, sagan2011cyclic} gives a compact way to record the orbit structure of a cyclic group action on a combinatorial set. If $X$ is a finite set acted upon by a cyclic group $C=\langle c \rangle\simeq \mathbb{Z}/N\mathbb{Z}$ and $f\in \mathbb{N}[q]$ is a polynomial with nonnegative integer coefficients, then we say that the triple $(X,C,f)$ exhibits \dfn{cyclic sieving} if for all $k\geq 0$, the number of $x\in X$ fixed by $c^k$ is $\#X^{c^k}=f(\zeta^{k})$ where $\zeta=e^{2\pi i/N}$ is a primitive $N$-th root of unity. Note that in the case where the sieving polynomial $f(q)$ has a product formula as a rational function in~$q$, this implies in particular that each symmetry class of $X$ under the $C$ action is enumerated by a product formula.

Let $[n]_q=1+q+\cdots+q^{n-1} \coloneqq \frac{1-q^n}{1-q}$ denote the standard \dfn{$q$-number}, $[n]_q! \coloneqq [n]_q\cdot[n-1]_q\cdots[1]_q$ the \dfn{$q$-factorial}, and $\binom{n}{k}_q \coloneqq \frac{[n]_q!}{[k]_q![n-k]_q!}$ the \dfn{$q$-binomial}. For coprime integers $a<b$, it is well-known (see, e.g.,~\cite[\S4]{armstrong2016rational}) that the \dfn{rational $q$-Catalan number}
\[\cat_q(a,b) \coloneqq \frac{1}{[a+b]_q}\binom{a+b}{b}_q\]
is a polynomial in $q$ with nonnegative integer coefficients.

\begin{theorem}[{See~\cite[Conjecture 5.15]{defant2024tamari} and \cite[\S7.6]{thomas2019rowmotion}}]
For any coprime integers $a<b$, the order of rowmotion acting on any $(a,b)$ rational alt Tamari lattice $L$ divides $a+b-1$. Moreover, the triple $(L,\mathbb{Z}/(a+b-1)\mathbb{Z},\cat_q(a,b))$ exhibits cyclic sieving, where the generator of $\mathbb{Z}/(a+b-1)\mathbb{Z}$ acts on $L$ as $\row$.
\end{theorem}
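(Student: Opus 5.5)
By \cref{thm:intro_main}, every $(a,b)$ rational alt Tamari lattice $L$ has a rowmotion-equivariant bijection $\Phi$ onto the distributive $\nu$-Dyck lattice $\altTam{\nu}{\delta^{\min}}$. Equivariance means the fixed-point counts $\#L^{\row^k}$ are the same for both lattices, for every $k$. The same holds for the order of rowmotion. So the plan is to prove both assertions for the distributive lattice $J(P)$ only. Here $P$ is the poset of boxes of the Ferrers shape $F_\nu$ of the $(a,b)$-rational path, ordered so that order ideals correspond to $\nu$-Dyck paths ordered by nesting.

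First I identify $P$ explicitly. The boxes of $F_\nu$ are the lattice cells strictly between $\nu$ and the top-left corner of the $a\times b$ rectangle. Equivalently, they are the cells lying above the diagonal of slope $a/b$. Under the classical correspondence, these cells form the rational "root poset" $P_{a,b}$: the set of cells $(i,j)$ with $ai+bj$ below a threshold (or in complementary coordinates, the positive integers not in the numerical semigroup $\langle a,b\rangle$), ordered by the componentwise order in the grid. It is known (Armstrong--Loehr--Warrington, and \cite[\S7.6]{thomas2019rowmotion}) that the order ideals of this poset are the $(a,b)$-Dyck paths. Rowmotion on $J(P_{a,b})$ has been studied as an action of $\mathbb{Z}/(a+b-1)$ and is conjugate to rotation-type actions on rational noncrossing objects. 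The plan is to cite or reprove the following fact: rowmotion on $J(P_{a,b})$ is conjugate, via a Striker--Williams style bijection (rowmotion $\sim$ promotion), to a cyclic rotation action of order dividing $a+b-1$ on a model counted by $\cat(a,b)$.

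For the rotation model, I would use the rational noncrossing partitions or the $(a,b)$-cores / $\langle a,b\rangle$-semimodules picture. There the cyclic sieving with $\cat_q(a,b)$ is the standard Reiner--Stanton--White computation. The computation compares $\cat_q(a,b)$ at $(a+b-1)$-th roots of unity with the number of rotation-invariant objects, which have a product formula of rational-Catalan type. Alternatively, one can transfer the existing result of \cite{thomas2019rowmotion}: if it establishes the $\nu$-Tamari case $\delta=\nu$ via Kreweras-type complements on rational noncrossing partitions (Bodnar--Rhoades), then \cref{thm:intro_main} spreads that result to all $\delta$ directly. I would check which of the two endpoints already has a clean reference and transfer from there.

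The main obstacle is the distributive (or Tamari) base case itself. Showing that $\row^{a+b-1}=\mathrm{id}$ on $J(P_{a,b})$ and that fixed points match $\cat_q(a,b)$ evaluations requires the conjugation to a rotation. My first attempt would use the toggle-group argument with a rowmotion-to-promotion conjugation, since $P_{a,b}$ embeds in a product of two chains with a "rank" function $ai+bj$. This would realize promotion as rotation of a word of length $a+b-1$, for instance via the associated rational parking/laser picture. If that fails, I would fall back to the known Bodnar--Rhoades cyclic sieving for rational Kreweras complement together with its known identification with Tamari rowmotion.
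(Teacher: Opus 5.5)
Your strategy is the one the paper uses: transfer via \cref{thm:intro_main} to the distributive $\nu$-Dyck lattice, conjugate rowmotion to promotion by Striker--Williams \cite{striker2012rowmotion}, use Armstrong--Rhoades--Williams \cite[\S5]{armstrong2013rational} to identify promotion of rational Dyck paths with rotation of $(a,b)$-homogeneous noncrossing partitions of $[a+b-1]$, and conclude with the Bodnar--Rhoades cyclic sieving theorem \cite[Theorem~5.4]{bodnar2016rational} for that rotation, which is not a routine Reiner--Stanton--White computation. When you resolve your ``main obstacle,'' commit to exactly this homogeneous model---the rational $\mathrm{NC}(a,b)$ on $[b-1]$ only carries rotation of order dividing $b-1$, and already for $(a,b)=(3,4)$ its orbit sizes $1,1,3$ differ from rowmotion's $3,2$---and drop the Tamari-endpoint fallback, since no identification of rational Tamari rowmotion with a Kreweras-type map was available: that is precisely the conjecture of \cite{defant2024tamari,thomas2019rowmotion} being proved here.
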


\begin{proof}
By \cref{thm:intro_main}, it is enough to prove this in the distributive case, i.e., for the lattice of~$(a,b)$-rational Dyck paths ordered by nesting. The proof of the desired cyclic sieving result for this distributive lattice is basically explained in~\cite[\S7.5]{thomas2019rowmotion}, but let us give more details here.

As explained in \cite[\S5.1]{armstrong2013rational}, the set of $(a,b)$-rational Dyck paths is in bijection with a certain collection of set partitions of $[a+b-1]=\{1,2,\ldots,a+b-1\}$ called \dfn{$(a,b)$-homogeneous noncrossing set partitions}. Let us review this construction. Let $D$ be an $(a,b)$-rational Dyck path. Label its steps $1,2,\ldots,a+b$ in order from the lower-left to upper-right corners of the $a\times b$ rectangle. Slightly above each north step, draw a diagonal line with the same slope as the line connecting the lower-left and upper-right corners of the $a\times b$ rectangle. This divides the labels $1,2,\ldots,a+b-1$ into a noncrossing set partition $\mu$ with exactly $a$ blocks (we ignore the label $a+b$). See \cref{fig:rational_dyck_path} for an example in the case $(a,b)=(5,8)$. In~\cite[\S5.1]{armstrong2013rational} it is proved that this map $D\mapsto \mu$ is injective, and the image is called the set of $(a,b)$-homogeneous noncrossing set partitions. (Note that, in the classical Catalan case $(a,b)=(n,n+1)$, the $(a,b)$-homogeneous noncrossing set partitions are exactly the \dfn{noncrossing perfect matchings} of $[2n]$.)

Moreover, in~\cite[\S5.2]{armstrong2013rational}, Armstrong, Rhoades, and Williams show that \dfn{promotion} of the rational Dyck path $D$ corresponds to \dfn{rotation} of the homogeneous noncrossing partition $\mu$. Here promotion of Dyck paths is a certain ``left-to-right'' composition of the distributive lattice toggles. The by now well-understood technique of Striker and Williams~\cite{striker2012rowmotion} allows one to show that this ``left-to-right'' composition of toggles (i.e., promotion) is conjugate, in the group generated by toggles, to the ``top-to-bottom'' composition of toggles (i.e., rowmotion). Hence, rowmotion in the distributive lattice of rational Dyck paths is in equivariant bijection with rotation of homogeneous noncrossing partitions. This implies that the order of rowmotion divides $a+b-1$.

Finally, we conclude by noting that Bodnar and Rhoades~\cite[Theorem 5.4]{bodnar2016rational} proved that rotation of $(a,b)$ homogeneous noncrossing partitions exhibits cyclic sieving with the sieving polynomial being the rational $q$-Catalan number $\cat_q(a,b)$. This completes the proof.
\end{proof}

\begin{figure}[ht]
\begin{center}
\scalebox{0.8}{\begin{tikzpicture}
    \draw [color=black, line width=2] (0,0)--(0,2)--(1,2)--(1,3)--(3,3)--(3,4)--(6,4)--(6,5)--(8,5);
    
    \filldraw[line width=1, black, fill=gray!30] 
    (0,2) to[bend right=30] (1,2) 
         to[bend right=10] (3,3) 
         to[bend left=50] cycle;

    \filldraw[line width=1, black, fill=gray!30] 
    (0,1) to[bend right=20] (5,4) 
         to[bend right=30] (6,4) 
         to[bend left=20] cycle;
         
    \draw [color=gray, line width=1, dashed] (0,0)--(8,5);
    \draw [color=red, line width=1, dashed] (1,2)--(2.6,3);
    \draw [color=red, line width=1, dashed] (3,3)--(4.6,4);
    \draw [color=red, line width=1, dashed] (0,1)--(4.8,4);
    \draw [color=red, line width=1, dashed] (6,4)--(7.6,5);

    %\draw[color=black, line width=1] (0,2) to[bend right=40]  (1,2);
    \draw[color=black, line width=1] (1,3) to[bend right=40]  (2,3);
    \draw[color=black, line width=1] (3,4) to[bend right=40]  (4,4);
    %\draw[color=gray, line width=1, dashed] (5,4) to[bend right=40]  (6,4);
    \draw[color=black, line width=1] (6,5) to[bend right=40]  (7,5);

\draw (0,0) node [scale=0.8, circle, draw, fill=black, anchor=center] {};
    \draw (0,1) node [scale=0.5, circle, draw, fill=black, anchor=center, label=north west:{1}] {};
    \draw (0,2) node [scale=0.5, circle, draw, fill=black, anchor=center, label=north west:{2}] {};
    \draw (1,2) node [scale=0.5, circle, draw, fill=black, anchor=center, label=north west:{3}] {};
    \draw (1,3) node [scale=0.5, circle, draw, fill=black, anchor=center, label=north west:{4}] {};
    \draw (2,3) node [scale=0.5, circle, draw, fill=black, anchor=center, label=north west:{5}] {};
    \draw (3,3) node [scale=0.5, circle, draw, fill=black, anchor=center, label=north west:{6}] {};
    \draw (3,4) node [scale=0.5, circle, draw, fill=black, anchor=center, label=north west:{7}] {};
    \draw (4,4) node [scale=0.5, circle, draw, fill=black, anchor=center, label=north west:{8}] {};
    \draw (5,4) node [scale=0.5, circle, draw, fill=black, anchor=center, label=north west:{9}] {};
    \draw (6,4) node [scale=0.5, circle, draw, fill=black, anchor=center, label=north west:{10}] {};
    \draw (6,5) node [scale=0.5, circle, draw, fill=black, anchor=center, label=north west:{11}] {};
    \draw (7,5) node [scale=0.5, circle, draw, fill=black, anchor=center, label=north west:{12}] {};
    \draw (8,5) node [scale=0.8, circle, draw, fill=black, anchor=center] {};

\end{tikzpicture}
\newcommand{\ncpartition}[1]{%
  \begin{tikzpicture}[scale=2.5]
    \draw[gray!50, line width=0.8pt] (0,0) circle (1);

    \foreach \i in {1,...,12} {
      \coordinate (n\i) at ({90 - (\i - 1) * 30}:1);
    }

    #1

    \foreach \i in {1,...,12} {
      \node[
        circle, 
        fill=black, 
        inner sep=1.8pt, 
        label={[font=\normalsize, label distance=-2pt]{90 - (\i - 1) * 30}:\i}
      ] at (n\i) {};
    }
  \end{tikzpicture}%
}
\ncpartition{

  \filldraw[line width=1.5, black, fill=gray!30] 
    (n1) to[bend left=30] (n9) 
         to[bend right=10] (n10) 
         to[bend right=20] cycle;

  \filldraw[line width=1.5, black, fill=gray!30] 
    (n2) to[bend right=10] (n3) 
         to[bend right=30] (n6) 
         to[bend left=20] cycle;
         
\draw[line width=1.5, black] (n4) -- (n5);
  \draw[line width=1.5, black] (n7) -- (n8);
  \draw[line width=1.5, black] (n11) -- (n12);
}}
\end{center}
\caption{An example of the bijection $D\mapsto\mu$ from rational Dyck paths to homogeneous noncrossing partitions, in the case $(a,b)=(5,8)$.} 
\label{fig:rational_dyck_path}
\end{figure}

\subsection{Homomesy}

Next, we study how rowmotion of rational Tamari lattices interacts with the down-degree statistic. If $X$ is a finite set, $C$ is a cyclic group acting on $X$, and $f\colon X \to \mathbb{R}$ is some statistic on $X$, then we say that $f$ is \dfn{homomesic} with respect to the action of $C$ on $X$ if the average of $f$ on every $C$-orbit is the same~\cite{propp2015homomesy}. 

\begin{theorem}[{cf.~\cite[Theorem 5.14]{defant2024tamari}}] \label{thm:homomesy}
For any $n,m\geq 1$, and for any alt $m$-Tamari lattice $L$, the down-degree statistic $\ddeg$ is homomesic with respect to the action of rowmotion on $L$, with average $\frac{m(n-1)}{m+1}$.
\end{theorem}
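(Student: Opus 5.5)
By \cref{thm:intro_main}, the rowmotion orbit sums of $\ddeg$ agree between any two alt $\nu$-Tamari lattices for a fixed $\nu$, via an orbit-preserving bijection. Homomesy of $\ddeg$, with a given average, is therefore a property shared by all alt $m$-Tamari lattices for fixed $(n,m)$. So we reduce to a single convenient lattice in the family. There are two natural choices.

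The first choice is the $m$-Tamari lattice itself, i.e.\ $\delta=\nu$. Here \cite[Theorem~5.14]{defant2024tamari} (as the ``cf.'' suggests) already establishes homomesy of $\ddeg$ under rowmotion with average $\frac{m(n-1)}{m+1}$. If that result is stated for the $m$-Tamari lattice, the theorem follows immediately by transport along $\Phi$. I would first check that the rowmotion conventions match; the inverse convention in the footnote is harmless, since orbits are the same sets.

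As a self-contained alternative, or in case the cited result covers only a special case, I would work in the distributive $\nu$-Dyck lattice with $\nu=(NE^m)^n$ (up to the rational conventions of \cref{sec:rational}). This lattice is $J(P)$ for the poset $P$ of boxes of the Fuss staircase. There $\ddeg(I)$ is the number of maximal elements of $I$, equivalently the number of valleys or peaks of the path. I would use the equivariant bijection from the cyclic sieving proof: rowmotion is conjugate, inside the toggle group, to promotion, and promotion corresponds to rotation of $(n,nm+1)$-homogeneous noncrossing partitions of $[(m+1)n]$ with $n$ blocks. The key step is to express $\ddeg$ of the rowmotion orbit representative in terms of the partition. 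A candidate formula counts blocks, or the steps $i$ whose block structure has a prescribed local form, for example $i$ and $i+1$ lying in the same block. Such a count is a sum of indicator functions $\mathbf 1_{E_i}$ indexed by positions $i$. Under rotation, the orbit average of $\sum_i \mathbf 1_{E_i}$ equals $\frac{1}{N}\sum_i\#\{\mu: E_0\}$ uniformly across orbits. This gives homomesy, and the constant is computed from the rotation-invariant total count (Fuss enumeration) and checked against $\frac{m(n-1)}{m+1}$, e.g.\ for $n=3$, $m=1$, where \cref{fig:alt_tamaris_14} gives average $2/3$.

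The main obstacle is that the Striker--Williams conjugation between rowmotion and promotion does not carry $\ddeg$ pointwise. So one must instead find a statistic on promotion orbits, or on noncrossing partitions, whose orbit sums match those of $\ddeg$ under rowmotion. I would attack this with the standard toggleability decomposition. Write $\ddeg=\sum_{p\in P}T_p^{-}$, where $T_p^-(I)=1$ if $p$ is removable from $I$. Then show that along rowmotion orbits each $T_p^-$ equals, on average, the corresponding addable indicator $T_p^+$. Summing over the boxes of the staircase poset, a fibered counting argument over diagonals (file rows of slope $1/m$) should produce the constant $\frac{m(n-1)}{m+1}$. If this becomes too technical, I would fall back on the first route and cite \cite{defant2024tamari} directly.
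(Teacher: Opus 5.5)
Your first route is the paper's own proof: \cite[Theorem~5.14]{defant2024tamari} gives the homomesy for the $m$-Tamari lattice itself, and the bijection $\Phi$ of \cref{thm:intro_main} sends rowmotion orbits to rowmotion orbits of the same size and the same $\ddeg$-sum, so the average $\frac{m(n-1)}{m+1}$ carries over to every alt $m$-Tamari lattice (the paper additionally reduces, as in your alternative, to the distributive lattice, and finishes there by citing \cite[Corollary~3.11]{chan2017expected}, since $(m(n-1),\ldots,m)$ is a balanced shape). Two slips in your optional material do not affect that proof: \cref{fig:alt_tamaris_14} is the case $n=4$, $m=1$, where the average is $3/2$ (for $n=3$, $m=1$ it is $1$, not $2/3$); and in your toggleability sketch the pointwise identity $T_p^-(\row(I))=T_p^+(I)$ only shows that $\ddeg$ and $\udeg$ have equal orbit averages, whereas producing a constant needs an identity $\ddeg = c+\sum_{p} a_p\,(T_p^+-T_p^-)$, which is exactly what balancedness of the shape supplies.
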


\begin{proof}
As indicated, this is an extension of \cite[Theorem 5.14]{defant2024tamari}, which addressed the case of the~$m$-Tamari lattice. By \cref{thm:intro_main}, this case implies all the other cases. However, here we supply an alternative proof, where we reduce instead to the distributive case, i.e., the lattice of $(n,nm+1)$ rational Dyck paths ordered by nesting. In this case, \cite[Corollary 3.11]{chan2017expected} implies the desired rowmotion homomesy result, because these rational Dyck paths are exactly those inside the partition shape $\lambda=(m(n-1),m(n-2),\ldots,m)$, which is a \dfn{balanced} shape in the sense of that paper.
\end{proof}

\begin{conjecture}[{Cf.~\cite[Conjecture 5.16]{defant2024tamari}}] \label{conj:homomesy}
For any coprime integers $a<b$ and any $(a,b)$-rational alt Tamari lattice $L$, the down-degree statistic $\ddeg$ is homomesic with respect to the action of rowmotion on $L$, with average $\frac{(a-1)(b-1)}{a+b-1}$.
\end{conjecture}

\begin{remark}
The proof we gave of \cref{thm:homomesy} does not work for \cref{conj:homomesy}, because, outside of the Fuss setting, the partition shapes corresponding to rational Dyck paths are not balanced. Moreover, the technique of~\cite{chan2017expected} (which was further developed in \cite{defant2023homomesy}), in which one writes the down-degree statistic as a (constant plus a) linear combination of \dfn{toggleability statistics}~\cite{striker2015tog}, cannot possibly work in the general rational setting, because at this level of generality the down-degree statistic is \emph{not} a (constant plus a) linear combination of toggleability statistics.
\end{remark}

\begin{remark}
It is well-known that the Hasse diagram of the classical Tamari lattice is the one-skeleton of a simple convex polytope, namely, the associahedron. In particular, this means that the Hasse diagram of the classical Tamari lattice is regular. Hence, for any element $x\in \Tam{n+1}$, we will have that $\ddeg(\row(x))+\ddeg(x)=n$, since the up-degree of an element is the down-degree of its image under rowmotion. This yields a very quick proof of down-degree homomesy for rowmotion of the classical Tamari lattice. But this argument does not extend to the rational Tamari lattices, which do not have regular Hasse diagrams.
\end{remark}

\section{Future directions} \label{sec:future}

\subsection{Cross Tamari lattices} \label{sec:cross}

The definition of the alt~$\nu$-Tamari lattices can be extended naturally to depend on moon polyomino shapes~\cite{jonsson2005generalized,rubey2012maximal} to give the family of \dfn{cross Tamari lattices}. For a formal definition of cross Tamari lattices in terms of trees, we refer to~\cite[Section 2.4]{vonbell2025framing}. Note, as mentioned in \cref{sec:alt}, that the alt~$\nu$-Tamari lattices are exactly the cross Tamari lattices associated to stack polyominoes. All cross Tamari lattices are semidistributive (and extremal), and hence carry an action of rowmotion.

One might hope that the methods developed in this paper extend directly to cross Tamari lattices as well. However, in general, the (naive extension of the) rowmotion-intertwining bijection $\varphi$ from~\cref{sec:phi} does not intertwine rowmotion for cross Tamari lattices associated with arbitrary moon polyominoes. A counterexample, demonstrating the failure of~\cref{lem:varphi_widetilde_varphi,prop:varphi_tau_up_commute,prop:varphi_tau_down_commute} in this setting, is illustrated in~\cref{fig:example_cross_no_commute}. Nevertheless, we conjecture that our main result, \cref{thm:intro_main}, extends directly to cross Tamari lattices.

\begin{conjecture} \label{conj:cross_tamari}
Let $L_1$ and $L_2$ be two cross Tamari lattices associated to moon polyominoes that are related by a sequence of row and column permutations. Then rowmotion behaves the same for~$L_1$ and $L_2$. That is, there is a bijection $\Phi\colon L_1 \to L_2$ such that:
\begin{itemize}
\item $\Phi( \row (x) ) = \row (\Phi (x))$ for all $x \in L_1$;
\item $\sum_{y \in \{\row^j(x)\colon j\geq 0\}} \ddeg(y) = \sum_{y \in \{\row^j(x)\colon j\geq 0\}} \ddeg(\Phi(y))$ for all $x \in L_1$.
\end{itemize} 
\end{conjecture}

\begin{figure}[ht]
\begin{center}
\begin{tikzpicture}%[scale=1.5]

  \node (L3) at (0,-6) {\begin{tikzpicture}[scale=0.4]

    \draw [color=gray, dashed, line width=0.5] (1,0)--(2,0);
    \draw [color=gray, dashed, line width=0.5] (0,1)--(3,1);
    \draw [color=gray, dashed, line width=0.5] (0,2)--(3,2);
    \draw [color=gray, dashed, line width=0.5] (1,3)--(2,3);
    \draw [color=gray, dashed, line width=0.5] (0,1)--(0,2);
    \draw [color=gray, dashed, line width=0.5] (1,0)--(1,3);
    \draw [color=gray, dashed, line width=0.5] (2,0)--(2,3);
    \draw [color=gray, dashed, line width=0.5] (3,1)--(3,2);
    
    \draw (2,2) node [scale=0.35, circle, draw,fill=black,anchor=center]{};
    \draw (2,0) node [scale=0.35, circle, draw,fill=black,anchor=center]{};
    \draw (3,2) node [scale=0.35, circle, draw,fill=black,anchor=center]{};
    \draw (2,3) node [scale=0.35, circle, draw,fill=black,anchor=center]{};
    \draw (3,1) node [scale=0.35, circle, draw,fill=black,anchor=center]{};
    \draw (0,2) node [scale=0.35, circle, draw,fill=black,anchor=center]{};
    \draw (1,3) node [scale=0.35, circle, draw,fill=black,anchor=center]{};

    \draw [color=black, line width=1] (1,3)--(2,3)--(2,0);
    \draw [color=black, line width=1] (0,2)--(3,2)--(3,1);

    %\node[anchor=south west,scale=1.2] at (4.5,3) {\color{black}{$T'$}};
    \end{tikzpicture}};

    \node (L2) at (0,-3) {\begin{tikzpicture}[scale=0.4]
\draw [color=gray, dashed, line width=0.5] (1,0)--(2,0);
    \draw [color=gray, dashed, line width=0.5] (0,1)--(3,1);
    \draw [color=gray, dashed, line width=0.5] (0,2)--(3,2);
    \draw [color=gray, dashed, line width=0.5] (1,3)--(2,3);
    \draw [color=gray, dashed, line width=0.5] (0,1)--(0,2);
    \draw [color=gray, dashed, line width=0.5] (1,0)--(1,3);
    \draw [color=gray, dashed, line width=0.5] (2,0)--(2,3);
    \draw [color=gray, dashed, line width=0.5] (3,1)--(3,2);

    \draw (2,2) node [scale=0.35, circle, draw,fill=black,anchor=center]{};
    \draw (2,0) node [scale=0.35, circle, draw,fill=black,anchor=center]{};
    \draw (1,2) node [scale=0.35, circle, draw,fill=black,anchor=center]{};
    \draw (2,1) node [scale=0.35, circle, draw,fill=black,anchor=center]{};
    \draw (3,1) node [scale=0.35, circle, draw,fill=black,anchor=center]{};
    \draw (0,2) node [scale=0.35, circle, draw,fill=black,anchor=center]{};
    \draw (1,3) node [scale=0.35, circle, draw,fill=black,anchor=center]{};

    \draw [color=black, line width=1] (1,3)--(1,2)--(2,2)--(2,0);
    \draw [color=black, line width=1] (0,2)--(1,2);
    \draw [color=black, line width=1] (2,1)--(3,1);

    \end{tikzpicture}};

\node (L1) at (0,0) {\begin{tikzpicture}[scale=0.4]
\draw [color=gray, dashed, line width=0.5] (1,0)--(2,0);
    \draw [color=gray, dashed, line width=0.5] (0,1)--(3,1);
    \draw [color=gray, dashed, line width=0.5] (0,2)--(3,2);
    \draw [color=gray, dashed, line width=0.5] (1,3)--(2,3);
    \draw [color=gray, dashed, line width=0.5] (0,1)--(0,2);
    \draw [color=gray, dashed, line width=0.5] (1,0)--(1,3);
    \draw [color=gray, dashed, line width=0.5] (2,0)--(2,3);
    \draw [color=gray, dashed, line width=0.5] (3,1)--(3,2);

    \draw (2,2) node [scale=0.35, circle, draw,fill=black,anchor=center]{};
    \draw (2,0) node [scale=0.35, circle, draw,fill=black,anchor=center]{};
    \draw (1,2) node [scale=0.35, circle, draw,fill=black,anchor=center]{};
    \draw (2,1) node [scale=0.35, circle, draw,fill=black,anchor=center]{};
    \draw (3,1) node [scale=0.35, circle, draw,fill=black,anchor=center]{};
    \draw (0,2) node [scale=0.35, circle, draw,fill=black,anchor=center]{};
    \draw (1,3) node [scale=0.35, circle, draw,fill=black,anchor=center]{};

    \draw [color=black, line width=1] (1,3)--(1,2)--(2,2)--(2,0);
    \draw [color=black, line width=1] (0,2)--(1,2);
    \draw [color=black, line width=1] (2,1)--(3,1);
    \end{tikzpicture}};

    \node (R1) at (4,0) {\begin{tikzpicture}[scale=0.4]
\draw [color=gray, dashed, line width=0.5] (2,0)--(3,0);
    \draw [color=gray, dashed, line width=0.5] (0,1)--(3,1);
    \draw [color=gray, dashed, line width=0.5] (0,2)--(3,2);
    \draw [color=gray, dashed, line width=0.5] (2,3)--(3,3);
    \draw [color=gray, dashed, line width=0.5] (0,1)--(0,2);
    \draw [color=gray, dashed, line width=0.5] (1,1)--(1,2);
    \draw [color=gray, dashed, line width=0.5] (2,0)--(2,3);
    \draw [color=gray, dashed, line width=0.5] (3,0)--(3,3);

    \draw (2,2) node [scale=0.35, circle, draw,fill=black,anchor=center]{};
    \draw (3,0) node [scale=0.35, circle, draw,fill=black,anchor=center]{};
    \draw (1,2) node [scale=0.35, circle, draw,fill=black,anchor=center]{};
    \draw (2,1) node [scale=0.35, circle, draw,fill=black,anchor=center]{};
    \draw (3,1) node [scale=0.35, circle, draw,fill=black,anchor=center]{};
    \draw (0,2) node [scale=0.35, circle, draw,fill=black,anchor=center]{};
    \draw (2,3) node [scale=0.35, circle, draw,fill=black,anchor=center]{};
    
    \draw [color=black, line width=1] (0,2)--(2,2)--(2,1)--(3,1)--(3,0);
    \draw [color=black, line width=1] (2,2)--(2,3);

    \end{tikzpicture}};

        \node (R2) at (4,-3) {\begin{tikzpicture}[scale=0.4]
\draw [color=gray, dashed, line width=0.5] (2,0)--(3,0);
    \draw [color=gray, dashed, line width=0.5] (0,1)--(3,1);
    \draw [color=gray, dashed, line width=0.5] (0,2)--(3,2);
    \draw [color=gray, dashed, line width=0.5] (2,3)--(3,3);
    \draw [color=gray, dashed, line width=0.5] (0,1)--(0,2);
    \draw [color=gray, dashed, line width=0.5] (1,1)--(1,2);
    \draw [color=gray, dashed, line width=0.5] (2,0)--(2,3);
    \draw [color=gray, dashed, line width=0.5] (3,0)--(3,3);

    \draw (2,2) node [scale=0.35, circle, draw,fill=black,anchor=center]{};
    \draw (3,0) node [scale=0.35, circle, draw,fill=black,anchor=center]{};
    \draw (1,2) node [scale=0.35, circle, draw,fill=black,anchor=center]{};
    \draw (3,2) node [scale=0.35, circle, draw,fill=black,anchor=center]{};
    \draw (3,1) node [scale=0.35, circle, draw,fill=black,anchor=center]{};
    \draw (0,2) node [scale=0.35, circle, draw,fill=black,anchor=center]{};
    \draw (2,3) node [scale=0.35, circle, draw,fill=black,anchor=center]{};

    \draw [color=black, line width=1] (0,2)--(3,2)--(3,0);
    \draw [color=black, line width=1] (2,2)--(2,3);
    \end{tikzpicture}};

        \node (R3) at (4,-6) {\begin{tikzpicture}[scale=0.4]
\draw [color=gray, dashed, line width=0.5] (2,0)--(3,0);
    \draw [color=gray, dashed, line width=0.5] (0,1)--(3,1);
    \draw [color=gray, dashed, line width=0.5] (0,2)--(3,2);
    \draw [color=gray, dashed, line width=0.5] (2,3)--(3,3);
    \draw [color=gray, dashed, line width=0.5] (0,1)--(0,2);
    \draw [color=gray, dashed, line width=0.5] (1,1)--(1,2);
    \draw [color=gray, dashed, line width=0.5] (2,0)--(2,3);
    \draw [color=gray, dashed, line width=0.5] (3,0)--(3,3);

    \draw (2,2) node [scale=0.35, circle, draw,fill=black,anchor=center]{};
    \draw (3,0) node [scale=0.35, circle, draw,fill=black,anchor=center]{};
    \draw (1,2) node [scale=0.35, circle, draw,fill=black,anchor=center]{};
    \draw (3,2) node [scale=0.35, circle, draw,fill=black,anchor=center]{};
    \draw (3,1) node [scale=0.35, circle, draw,fill=black,anchor=center]{};
    \draw (0,2) node [scale=0.35, circle, draw,fill=black,anchor=center]{};
    \draw (2,3) node [scale=0.35, circle, draw,fill=black,anchor=center]{};

    \draw [color=black, line width=1] (0,2)--(3,2)--(3,0);
    \draw [color=black, line width=1] (2,2)--(2,3);
    
    \end{tikzpicture}};

\draw[->, line width=1pt] (L1) -- (L2) node[midway, left] {$\tau^{down}$};
\draw[->, line width=1pt] (L2) -- (L3) node[midway, left] {$\tau^{up}$};

\draw[->, line width=1pt] (R1) -- (R2) node[midway, left] {$\widetilde{\tau}^{up}$};
\draw[->, line width=1pt] (R2) -- (R3) node[midway, left] {$\widetilde{\tau}^{down}$};

\draw[->, line width=1pt] (L1) -- (R1) node[midway, above] {$\text{flush}$};
\draw[->, line width=1pt, gray,dashed] (L3) -- (R3) node[midway, above] {$\cancel{\text{flush}}$};

\end{tikzpicture}
\end{center}
\caption{This diagram does not commute for cross Tamari lattices.}
\label{fig:example_cross_no_commute}
\end{figure}

\begin{figure}[ht]
\begin{center}
\input{figures/cross_lattice.tex}
\end{center}
\caption{Rowmotion orbits for all cross Tamari lattices corresponding to moon polyominoes that are permutations of the Young diagram $(3,1,1)$.}
\label{fig:example_cross}
\end{figure}

\begin{example}
For moon polyominoes that are row-and-column permutations of the Young diagram shape $(3,1,1)$, the associated cross Tamari lattices and their rowmotion orbits are depicted in~\cref{fig:example_cross}. As illustrated, these lattices all possess an identical distribution of orbit sizes and the same sums of down-degrees for their corresponding orbits.
\end{example}

\begin{remark}
Unlike the class of alt~$\nu$-Tamari lattices, the class of cross Tamari lattices is closed under duality: the dual of the cross Tamari lattice corresponding to a given moon polyomino is the one corresponding to its $180^\circ$ rotation. It is clear that a (say, semidistributive) lattice $L$ and its dual $L^*$ have the same behavior of rowmotion. For this reason, the cross Tamari setting would be a more conceptually satisfying context in which to establish invariance of rowmotion results.
\end{remark}

\subsection{Limits to the invariance of rowmotion}

While we conjecture that our main theorem can be extended to cross Tamari lattices, we also know that it cannot be further extended to other natural generalizations of the cross Tamari lattices, including the chute move lattices~\cite{rubey2012maximal,axelrodfreed2025chute,billey2025lattice} and the framing lattices~\cite{vonbell2025framing}.

The \dfn{chute move lattices}~\cite{rubey2012maximal,axelrodfreed2025chute,billey2025lattice} depend on a choice of moon polyomino shape and positive integer parameter $r$. The elements of a chute move lattice are maximal 0-1-fillings of the moon polyomino such that at most $r$ of the $1$ entries are mutually incompatible, i.e., form a chain of boxes strictly increasing in the northeast direction whose smallest containing rectangle lies entirely in the polyomino. The covering relation in a chute move lattice is analogous to the covering relation in a cross Tamari lattice, and in fact the cross Tamari lattices are exactly the chute move lattices with parameter $r=1$. All chute move lattices are semidistributive lattices, hence carry an action of rowmotion. Extending the invariance of rowmotion for cross Tamari lattices conjectured in~\cref{conj:cross_tamari}, one could hope that, for each fixed $r$, chute move lattices associated to moon polyominoes related by permutations of rows and columns have the same behavior of rowmotion. (They always have the same number of elements~\cite{rubey2011increasing,rubey2012maximal}, so this is a reasonable hope.) But this is not the case for $r>1$.

Indeed, letting $F$ be the Young diagram for the staircase partition $(n,n-1,\ldots,1)$, the elements of the chute move lattice associated to $F$ can be interpreted as \dfn{$r$-triangulations} of the $(n+1)$-gon, i.e., maximal collections of diagonals in this polygon such that at most~$r$ diagonals cross mutually. On the other hand, reflecting $F$ across a vertical line, we obtain a shape $F'$ whose corresponding chute move lattice has elements that can be interpreted as \dfn{$r$-fans of nested Dyck paths} of semilength~$(n+1)-2r$. See, e.g., \cite[Fig.~2]{rubey2012maximal} for this comparison. Already for~$r=2$ and~$n=6$, the orbit structures of rowmotion on these two $14$-element chute move lattices differ. The orbit sizes for $2$-triangulations of the $7$-gon are $3,5,6$, whereas the orbit sizes for pairs of nested Dyck paths of semilength $3$ are $2,4,8$; see~\cref{fig:chute}.

\begin{figure}
\begin{center}
\scalebox{0.5}{%
\begin{tikzpicture}[
    baseline=(n0.center),
    latticeNode/.style={draw, rounded corners, fill=gray!10, inner sep=5pt, font=\small\ttfamily}
]

    \node[latticeNode] (n0) at (0,0) {$(1,4), (1,5), (2,5), (2,6)$};
    \node[latticeNode] (n1) at (-2,2) {$(1,4), (1,5) (2,6), (3,6)$};
    \node[latticeNode] (n2) at (3,3) {$(1,4), (1,5), (2,5), (4,7)$};
    \node[latticeNode] (n3) at (-6.5,4) {$(1,5), (2,5), (2,6), (3,6)$};
    \node[latticeNode] (n4) at (-2,4) {$(1,4), (1,5), (3,6), (3,7)$};
    \node[latticeNode] (n5) at (5,5) {$(1,4), (2,5), (2,6), (4,7)$};
    \node[latticeNode] (n6) at (-6.5,6) {$(1,5), (2,5), (3,6), (3,7)$};
    \node[latticeNode] (n7) at (0,6) {$(1,4), (1,5), (3,7), (4,7)$};
    \node[latticeNode] (n8) at (5,7) {$(1,4), (2,6), (3,6), (4,7)$};
    \node[latticeNode] (n9) at (-6.5,8) {$(2,5), (2,6), (3,6), (3,7)$};
    \node[latticeNode] (n10) at (-2,8) {$(1,5), (2,5), (3,7), (4,7)$};
    \node[latticeNode] (n11) at (3,9) {$(1,4), (3,6), (3,7), (4,7)$};
    \node[latticeNode] (n12) at (-2,10) {$(2,5), (2,6), (3,7), (4,7)$};
    \node[latticeNode] (n13) at (0,12) {$(2,6), (3,6), (3,7), (4,7)$};

    % Black straight edges
    \draw [color=black, line width=1] (n0) -- (n1);
    \draw [color=black, line width=1] (n0) -- (n2);
    \draw [color=black, line width=1] (n2) -- (n7);
    \draw [color=black, line width=1] (n2) -- (n5);
    \draw [color=black, line width=1] (n1) -- (n3);
    \draw [color=black, line width=1] (n1) -- (n4);
    \draw [color=black, line width=1] (n3) -- (n6);
    \draw [color=black, line width=1] (n4) -- (n6);
    \draw [color=black, line width=1] (n4) -- (n7);
    \draw [color=black, line width=1] (n5) -- (n8);
    \draw [color=black, line width=1] (n8) -- (n11);
    \draw [color=black, line width=1] (n11) -- (n13);
    \draw [color=black, line width=1] (n12) -- (n13);
    \draw [color=black, line width=1] (n7) -- (n10);
    \draw [color=black, line width=1] (n10) -- (n12);
    \draw [color=black, line width=1] (n9) -- (n12);
    \draw [color=black, line width=1] (n6) -- (n9);
    \draw [color=black, line width=1] (n6) -- (n10);
    \draw [color=black, line width=1] (n7) -- (n11);

    % Curved colored arrows
    \draw[->, line width=3pt, blue, opacity=0.5] (n0) to[bend left=20]  (n7);
    \draw[->, line width=3pt, blue, opacity=0.5] (n7) to[bend right=20]  (n13);
    \draw[->, line width=3pt, blue, opacity=0.5] (n13) to[bend left=20]  (n0);
    
    \draw[->, line width=3pt, green, opacity=0.5] (n1) to[bend left=20]  (n6);
    \draw[->, line width=3pt, green, opacity=0.5] (n6) to[bend left=20]  (n12);
    \draw[->, line width=3pt, green, opacity=0.5] (n12) to[bend right=30]  (n5);
    \draw[->, line width=3pt, green, opacity=0.5] (n5) to[bend right=40]  (n8);
    \draw[->, line width=3pt, green, opacity=0.5] (n8) to[bend right=40]  (n1);

    \draw[->, line width=3pt, red, opacity=0.5] (n2) to[bend right=20]  (n11);
    \draw[->, line width=3pt, red, opacity=0.5] (n11) to[bend right=20]  (n3);
    \draw[->, line width=3pt, red, opacity=0.5] (n3) to[bend right=20]  (n4);
    \draw[->, line width=3pt, red, opacity=0.5] (n4) to[bend right=5]  (n10);
    \draw[->, line width=3pt, red, opacity=0.5] (n10) to[bend right=20]  (n9);
    \draw[->, line width=3pt, red, opacity=0.5] (n9) to[bend right=20]  (n2);

\end{tikzpicture}%
\hspace{3cm}%
\begin{tikzpicture}[
    baseline=(AA.center),
    scale=1.2,
    yscale=1.1,
    node distance=1.5cm and 2cm,
    latticeNode/.style={draw, rounded corners, fill=gray!10, inner sep=5pt, font=\small\ttfamily},
    edgeStyle/.style={thick}
]
    % Rank 0 (Bottom)
    \node[latticeNode] (AA) at (0, 0) {(UDUDUD, UDUDUD)};
    
    % Rank 1
    \node[latticeNode] (AB) at (-2.5, 1.5) {(UDUDUD, UDUUDD)};
    \node[latticeNode] (AC) at (2.5, 1.5) {(UDUDUD, UUDDUD)};
    
    % Rank 2
    \node[latticeNode] (BB) at (-5, 3) {(UDUUDD, UDUUDD)};
    \node[latticeNode] (AD) at (0, 3) {(UDUDUD, UUDUDD)};
    \node[latticeNode] (CC) at (5, 3) {(UUDDUD, UUDDUD)};
    
    % Rank 3
    \node[latticeNode] (BD) at (-3, 4.5) {(UDUUDD, UUDUDD)};
    \node[latticeNode] (AE) at (0, 4.5) {(UDUDUD, UUUDDD)};
    \node[latticeNode] (CD) at (3, 4.5) {(UUDDUD, UUDUDD)};
    
    % Rank 4
    \node[latticeNode] (BE) at (-3, 6) {(UDUUDD, UUUDDD)};
    \node[latticeNode] (DD) at (0, 6) {(UUDUDD, UUDUDD)};
    \node[latticeNode] (CE) at (3, 6) {(UUDDUD, UUUDDD)};
    
    % Rank 5
    \node[latticeNode] (DE) at (0, 7.5) {(UUDUDD, UUUDDD)};
    
    % Rank 6 (Top)
    \node[latticeNode] (EE) at (0, 9) {(UUUDDD, UUUDDD)};
    
    % Edges (Bottom to Top)
    \begin{scope}[every path/.style={edgeStyle}]
        % Rank 0 to Rank 1
        \draw (AA) -- (AB);
        \draw (AA) -- (AC);
        
        % Rank 1 to Rank 2
        \draw (AB) -- (BB);
        \draw (AB) -- (AD);
        \draw (AC) -- (CC);
        \draw (AC) -- (AD);
        
        % Rank 2 to Rank 3
        \draw (BB) -- (BD);
        \draw (CC) -- (CD);
        \draw (AD) -- (BD);
        \draw (AD) -- (CD);
        \draw (AD) -- (AE);
        
        % Rank 3 to Rank 4
        \draw (BD) -- (DD);
        \draw (BD) -- (BE);
        \draw (CD) -- (DD);
        \draw (CD) -- (CE);
        \draw (AE) -- (BE);
        \draw (AE) -- (CE);
        
        % Rank 4 to Rank 5
        \draw (DD) -- (DE);
        \draw (BE) -- (DE);
        \draw (CE) -- (DE);
        
        % Rank 5 to Rank 6
        \draw (DE) -- (EE);
    \end{scope}

    \draw[->, line width=3pt, blue, opacity=0.5] (AE) to[bend left=20]  (DD);
    \draw[->, line width=3pt, blue, opacity=0.5] (DD) to[bend left=20]  (AE);
    
    \draw[->, line width=3pt, red, opacity=0.5] (AA) to[bend left=20]  (AD);
    \draw[->, line width=3pt, red, opacity=0.5] (AD) to[bend right=20]  (DE);
    \draw[->, line width=3pt, red, opacity=0.5] (DE) to[bend left=20]  (EE);
    \draw[->, line width=3pt, red, opacity=0.5] (EE) to[bend left=30]  (AA);

    \draw[->, line width=3pt, green, opacity=0.5] (AB) to[bend left=30]  (BD);
    \draw[->, line width=3pt, green, opacity=0.5] (BD) to[bend left=30]  (CE);
    \draw[->, line width=3pt, green, opacity=0.5] (CE) to[bend left=30]  (BB);
    \draw[->, line width=3pt, green, opacity=0.5] (BB) to[bend left=30]  (AC);
    \draw[->, line width=3pt, green, opacity=0.5] (AC) to[bend right=30]  (CD);
    \draw[->, line width=3pt, green, opacity=0.5] (CD) to[bend left=30]  (BE);
    \draw[->, line width=3pt, green, opacity=0.5] (BE) to[bend left=90]  (CC);
    \draw[->, line width=3pt, green, opacity=0.5] (CC) to[bend left=90]  (AB);

\end{tikzpicture}%
}
\end{center}
\caption{Two chute move lattices with different rowmotion behavior. Left: $2$-triangulations of the~$7$-gon (indexed by their 2-relevant diagonals). Right: pairs of nested Dyck paths of semilength $3$ (indexed by their sequence of Up and Down steps).}
\label{fig:chute}
\end{figure}

Meanwhile, \dfn{framing lattices}~\cite{vonbell2025framing} are certain lattices that depend on the choice of a directed acyclic graph and a \dfn{framing}, which is a local ordering of the edges incident to each vertex. The Hasse diagram of a framing lattice is dual to the corresponding \dfn{framing triangulation} of the \dfn{flow polytope} associated to the graph. The definition of framing lattice is somewhat involved: the elements are \dfn{maximal coherent collections of routes} in the framed graph, and the cover relations can be thought of as a kind of rotation on these. See~\cite{vonbell2025framing} for precise definitions and details.

In~\cite[\S2.4]{vonbell2025framing}, von Bell and Ceballos show that cross Tamari lattices are framing lattices associated to certain graphs and framings. Moreover, they show that the cross Tamari lattices associated to moon polyominoes that are permutations of one another are framing lattices whose associated graphs are the same, but with different framings. All framing lattices are semidistributive, hence carry an action of rowmotion. Extending the invariance of rowmotion for cross Tamari lattices conjectured in~\cref{conj:cross_tamari}, one could hope that, in general, two framing lattices associated to the same graph, but with different framings, have the same behavior of rowmotion. (They always have the same number of elements~\cite{vonbell2025framing}, so this is a reasonable hope.) But this is not the case.

\begin{figure}
\begin{center}
\begin{tabular}{c @{\hspace{2cm}} c} % Adjust @{\hspace{4cm}} to change overall space between left and right columns
    % --- FIRST ROW ---
    \scalebox{0.6}{%
    \begin{tikzpicture}[yscale=2, baseline=(current bounding box.center)]
        \foreach \x/\y in {0/0, 2/0, 4/0, 6/0, 8/0, 10/0, 12/0} {
            \node[scale=.5, circle, draw, fill=black, anchor=center] at (\x,\y) {};
        }

        \draw[->, line width=2pt, black, shorten <=6pt, shorten >=6pt] (0,0) -- (2,0);
        \draw[->, line width=2pt, black, shorten <=6pt, shorten >=6pt] (2,0) -- (4,0);
        \draw[->, line width=2pt, black, shorten <=6pt, shorten >=6pt] (4,0) -- (6,0);
        \draw[->, line width=2pt, black, shorten <=6pt, shorten >=6pt] (6,0) -- (8,0);
        \draw[->, line width=2pt, black, shorten <=6pt, shorten >=6pt] (8,0) -- (10,0);
        \draw[->, line width=2pt, black, shorten <=6pt, shorten >=6pt] (10,0) -- (12,0);

        \draw[->, line width=2pt, black, shorten <=6pt, shorten >=7pt] (0,0) to[bend left=60] (4,0);
        \draw[->, line width=2pt, black, shorten <=6pt, shorten >=7pt] (4,0) to[bend left=60] (8,0);
        \draw[->, line width=2pt, black, shorten <=6pt, shorten >=7pt] (8,0) to[bend left=60] (12,0);

        \draw[->, line width=2pt, black, shorten <=6pt, shorten >=7pt] (0,0) to[bend left=70] (6,0);
        \draw[->, line width=2pt, black, shorten <=6pt, shorten >=15pt] (0,0) to[bend left=80] (8,0);

        \node[color=red, scale=1.5] at (.4,-.2) {$1$};
        \node[color=red, scale=1.5] at (1.4,-.2) {$1$};
        \node[color=red, scale=1.5] at (2.4,-.2) {$1$};
        \node[color=red, scale=1.5] at (3.4,-.2) {$2$};
        \node[color=red, scale=1.5] at (4.4,-.2) {$1$};
        \node[color=red, scale=1.5] at (5.4,-.2) {$2$};
        \node[color=red, scale=1.5] at (6.4,-.2) {$1$};
        \node[color=red, scale=1.5] at (7.4,-.2) {$3$};
        \node[color=red, scale=1.5] at (8.4,-.2) {$1$};
        \node[color=red, scale=1.5] at (9.4,-.2) {$1$};
        \node[color=red, scale=1.5] at (10.4,-.2) {$1$};
        \node[color=red, scale=1.5] at (11.4,-.2) {$2$};
        \node[color=red, scale=1.5] at (.9,.6) {$2$};
        \node[color=red, scale=1.5] at (3,.6) {$1$};
        \node[color=red, scale=1.5] at (4.8,.6) {$2$};
        \node[color=red, scale=1.5] at (11.1,.6) {$1$};
        \node[color=red, scale=1.5] at (8.9,.6) {$2$};
        \node[color=red, scale=1.5] at (1.7,1.3) {$3$};
        \node[color=red, scale=1.5] at (4.4,1.3) {$1$};
        \node[color=red, scale=1.5] at (2.3,2) {$4$};

        \node[color=red, scale=1.5] at (5.6,2) {$1$};
        \node[color=red, scale=1.5] at (7,.6) {$2$};
    \end{tikzpicture}%
    } 
    & 
    \scalebox{0.4}{%
    \begin{tikzpicture}[baseline=(current bounding box.center)]
        \node[draw,circle,fill=white] (n0) at (0,0) {};
        \node[draw,circle,fill=white] (n6) at (2,2) {};
        \node[draw,circle,fill=white] (n5) at (2,4) {};
        \node[draw,circle,fill=white] (n4) at (2,6) {};
        \node[draw,circle,fill=white] (n2) at (2,8) {};
        \node[draw,circle,fill=white] (n3) at (-2,4) {};
        \node[draw,circle,fill=white] (n7) at (-2,6) {};
        \node[draw,circle,fill=white] (n8) at (-2,8) {};
        \node[draw,circle,fill=white] (n9) at (0,10) {};
        \node[draw,circle,fill=white] (n1) at (0,12) {};

        \draw (n0) -- (n6);
        \draw (n6) -- (n5);
        \draw (n5) -- (n4);
        \draw (n4) -- (n2);
        \draw (n2) -- (n9);
        \draw (n9) -- (n1);
        \draw (n0) -- (n3);
        \draw (n3) -- (n7);
        \draw (n7) -- (n8);
        \draw (n8) -- (n9);
        \draw (n5) -- (n7);
        \draw (n4) -- (n8);

        \draw[->, line width=3pt, blue, opacity=0.5] (n0) to[bend left=90]  (n7);
        \draw[->, line width=3pt, blue, opacity=0.5] (n7) to[bend left=10]  (n4);
        \draw[->, line width=3pt, blue, opacity=0.5] (n4) to[bend right=80] (n9);
        \draw[->, line width=3pt, blue, opacity=0.5] (n9) to[bend left=80]  (n1);
        \draw[->, line width=3pt, blue, opacity=0.5] (n1) to[bend left=110] (n0);

        \draw[->, line width=3pt, green, opacity=0.5] (n6) to[bend right=30] (n5);
        \draw[->, line width=3pt, green, opacity=0.5] (n5) to[bend left=30]  (n8);
        \draw[->, line width=3pt, green, opacity=0.5] (n8) to[bend left=30]  (n2);
        \draw[->, line width=3pt, green, opacity=0.5] (n2) to[bend left=30]  (n3);
        \draw[->, line width=3pt, green, opacity=0.5] (n3) to[bend right=30] (n6);
    \end{tikzpicture}%
    } \\[1cm] 
    \scalebox{0.6}{%
    \begin{tikzpicture}[yscale=2, baseline=(current bounding box.center)]
        \foreach \x/\y in {0/0, 2/0, 4/0, 6/0, 8/0, 10/0, 12/0} {
            \node[scale=.5, circle, draw, fill=black, anchor=center] at (\x,\y) {};
        }

        \draw[->, line width=2pt, black, shorten <=6pt, shorten >=6pt] (0,0) -- (2,0);
        \draw[->, line width=2pt, black, shorten <=6pt, shorten >=6pt] (2,0) -- (4,0);
        \draw[->, line width=2pt, black, shorten <=6pt, shorten >=6pt] (4,0) -- (6,0);
        \draw[->, line width=2pt, black, shorten <=6pt, shorten >=6pt] (6,0) -- (8,0);
        \draw[->, line width=2pt, black, shorten <=6pt, shorten >=6pt] (8,0) -- (10,0);
        \draw[->, line width=2pt, black, shorten <=6pt, shorten >=6pt] (10,0) -- (12,0);

        \draw[->, line width=2pt, black, shorten <=6pt, shorten >=7pt] (0,0) to[bend left=60] (4,0);
        \draw[->, line width=2pt, black, shorten <=6pt, shorten >=7pt] (4,0) to[bend left=60] (8,0);
        \draw[->, line width=2pt, black, shorten <=6pt, shorten >=7pt] (8,0) to[bend left=60] (12,0);

        \draw[->, line width=2pt, black, shorten <=6pt, shorten >=7pt] (0,0) to[bend left=70] (6,0);
        \draw[->, line width=2pt, black, shorten <=6pt, shorten >=15pt] (0,0) to[bend left=80] (8,0);

        \node[color=red, scale=1.5] at (.4,-.2) {$1$};
        \node[color=red, scale=1.5] at (1.4,-.2) {$1$};
        \node[color=red, scale=1.5] at (2.4,-.2) {$1$};
        \node[color=red, scale=1.5] at (3.4,-.2) {$2$};
        \node[color=red, scale=1.5] at (4.4,-.2) {$1$};
        \node[color=red, scale=1.5] at (5.4,-.2) {$2$};
        \node[color=red, scale=1.5] at (6.4,-.2) {$1$};
        \node[color=red, scale=1.5] at (7.4,-.2) {$3$};
        \node[color=red, scale=1.5] at (8.4,-.2) {$1$};
        \node[color=red, scale=1.5] at (9.4,-.2) {$1$};
        \node[color=red, scale=1.5] at (10.4,-.2) {$1$};
        \node[color=red, scale=1.5] at (11.4,-.2) {$2$};

        \node[color=red, scale=1.5] at (.9,.6) {$2$};
        \node[color=red, scale=1.5] at (3,.6) {$1$};
        \node[color=red, scale=1.5] at (4.8,.6) {$2$};
        \node[color=red, scale=1.5] at (7,.6) {$1$};
        \node[color=red, scale=1.5] at (11.1,.6) {$1$};
        \node[color=red, scale=1.5] at (8.9,.6) {$2$};

        \node[color=red, scale=1.5] at (1.7,1.3) {$3$};
        \node[color=red, scale=1.5] at (4.4,1.3) {$1$};

        \node[color=red, scale=1.5] at (2.3,2) {$4$};
        \node[color=red, scale=1.5] at (5.6,2) {$2$};
    \end{tikzpicture}%
    } 
    & 
    \hspace*{.8cm}
    \scalebox{0.4}{
    \begin{tikzpicture}[baseline=(current bounding box.center)]
        \node[draw,circle,fill=white] (n0) at (0,0) {};
        \node[draw,circle,fill=white] (n6) at (0,2) {};
        \node[draw,circle,fill=white] (n5) at (2,4) {};
        \node[draw,circle,fill=white] (n4) at (2,6) {};
        \node[draw,circle,fill=white] (n2) at (2,8) {};
        \node[draw,circle,fill=white] (n3) at (-2,4) {};
        \node[draw,circle,fill=white] (n7) at (-2,6) {};
        \node[draw,circle,fill=white] (n8) at (-2,8) {};
        \node[draw,circle,fill=white] (n9) at (0,10) {};
        \node[draw,circle,fill=white] (n1) at (0,12) {};
        
        \draw (n0) -- (n6);
        \draw (n6) -- (n5);
        \draw (n5) -- (n4);
        \draw (n4) -- (n2);
        \draw (n2) -- (n9);
        \draw (n9) -- (n1);
        \draw (n6) -- (n3);
        \draw (n3) -- (n7);
        \draw (n7) -- (n8);
        \draw (n8) -- (n9);
        \draw (n5) -- (n7);
        \draw (n4) -- (n8);
        
        \draw[->, line width=3pt, blue, opacity=0.5] (n0) to[bend left=70]  (n6);
        \draw[->, line width=3pt, blue, opacity=0.5] (n6) to[bend right=20] (n7);
        \draw[->, line width=3pt, blue, opacity=0.5] (n7) to[bend left=20]  (n4);
        \draw[->, line width=3pt, blue, opacity=0.5] (n4) to[bend right=90] (n9);
        \draw[->, line width=3pt, blue, opacity=0.5] (n9) to[bend left=90]  (n1);
        \draw[->, line width=3pt, blue, opacity=0.5] (n1) to[bend left=90]  (n0);

        \draw[->, line width=3pt, green, opacity=0.5] (n5) to[bend left=20] (n8);
        \draw[->, line width=3pt, green, opacity=0.5] (n8) to[bend left=20] (n2);
        \draw[->, line width=3pt, green, opacity=0.5] (n2) to[bend left=20] (n3);
        \draw[->, line width=3pt, green, opacity=0.5] (n3) to[bend right=20] (n5);
    \end{tikzpicture}%
    }
\end{tabular}
\end{center}
\caption{A directed graph that produces two framing lattices with different rowmotion behavior.}
\label{fig:framing_lattices}
\end{figure}

Indeed, consider the directed graph depicted in \cref{fig:framing_lattices}. Two different framings of this graph produce the two different framing lattices depicted there. Like all framing lattices, these lattices are semidistributive (and in fact, they are both extremal -- and the one on the bottom is distributive). However, they have different behavior of rowmotion: the lattice on the top has rowmotion orbits of size~$5$ and~$5$; the one on the bottom has orbits of size~$6$ and~$4$.

\begin{remark}
For a semidistributive lattice $L$, we recall that its (undirected) \dfn{Galois graph} can be defined as the complement graph of the one skeleton of its canonical join complex;  see~\cite{barnard2019canonical, thomas2019rowmotion, thomas2019independence}. In the case where $L=J(P)$ is distributive, then its Galois graph is the \dfn{comparability graph} of $P$. As mentioned in \cref{sec:intro}, in~\cite[Proposition~4.10]{hopkins2022minuscule} it was shown that if $P$ and $Q$ have isomorphic comparability graphs, then $J(P)$ and $J(Q)$ have the same rowmotion behavior. As an extension of this, one might optimistically hope that if two semidistributive lattices $L$ and $L'$ have isomorphic Galois graphs, then they have the same rowmotion behavior.\footnote{But note that already $\Stan{4}$ and $\Tam{4}$ have \emph{non}-isomorphic Galois graphs; so this optimistic hope is anyway orthogonal to our main interests in this paper.} (Again, semidistributive lattices with isomorphic Galois graphs have the same number of elements, so this is a reasonable hope.) However, this is not true: for example, the two semidistributive lattices in \cref{fig:framing_lattices} have isomorphic Galois graphs, but different rowmotion behavior.
\end{remark}

\subsection{Combinatorial models for rowmotion}

As established in~\cite{armstrong2013uniform}, rowmotion on the Stanley lattice is in equivariant bijection with the Kreweras complement of noncrossing partitions. Therefore, we could ask for a combinatorial model for rowmotion on alt~$\nu$-Tamari lattices akin to Kreweras complement of something like noncrossing partitions. Since the lattice of noncrossing partitions is the \dfn{core label order} of the Tamari lattice~\cite{reading2011noncrossing,muhle2019core}, an approach for a combinatorial model for rowmotion on alt~$\nu$-Tamari lattices is to consider the core label order of the $\nu$-Tamari lattice.

In \cite{ceballos2026snoncrossing}, Ceballos and M\"uller introduce a $\nu$-analogue of noncrossing partitions, called $s$-noncrossing partitions, where $s=(s_1,\dots,s_n)$ is a sequence of positive integers. The poset of $s$-noncrossing partitions is isomorphic to the core label order of the $\nu$-Tamari lattice for $\nu=\nu(s)=NE^{s_1} \dots NE^{s_n}$. Therefore, we consider how rowmotion acting on the $\nu$-Tamari lattice behaves in terms of $s$-noncrossing partitions. 

\begin{definition}[\cite{ceballos2026snoncrossing}]
For $\nu=\nu(s)=NE^{s_1} \dots NE^{s_n}$, label the east steps of $F_\nu$ from $1$ to~$|s| \coloneqq (s_1+\dots + s_n)$ and call any index following a north step a first column index. In \cref{fig:descent_tree_coloring} the first column indices are marked by a circle. An~\dfn{$s$-noncrossing partition} is a noncrossing partition~$\alpha = \{\alpha_1, \dots, \alpha_r\}$ of $\{1,\dots,|s|\}$ such that for every block $\alpha_i = \{a_1 < a_2 < \cdots < a_{|\alpha_i|}\}$, the elements $a_2, \dots, a_{|\alpha_i|}$ are all first column indices, while no restriction is placed on the minimal element $a_1$.
\end{definition}

In~\cite{ceballos2026snoncrossing}, the authors define a bijection between $\nu$-trees and $s$-noncrossing partitions as follows. Let $T$ be a $\nu$-tree. Removing all nondescent nodes from $T$, different than the root, splits the graph of $T$ into disconnected components $\widetilde{T_1},...,\widetilde{T_l}$. Let $T_i$ be the closure of~$\widetilde{T_i}$ (include the nodes of the boundary of $\widetilde{T_i}$). The $s$-noncrossing partition associated to~$T$ has one block for each subtree~$T_i$, consisting of the rightmost column indices of its horizontal edges. This is illustrated in~\cref{fig:descent_tree_coloring}.

\begin{figure}[ht]
\begin{center}
    \begin{tikzpicture}[scale=0.5]
    \draw [color=gray, line width=1] (0,-1)--(0,0)--(3,0)--(3,1)--(7,1)--(7,2)--(10,2)--(10,3)--(13,3)--(13,4)--(15,4)--(15,5)--(18,5)--(18,6)--(19,6)--(19,7)--(20,7);
    \draw [color=gray, dotted, line width=1] (0,0)--(0,6)--(18,6);

    \draw [color=gray, dotted, line width=1] (3,1)--(3,7);
    \draw [color=gray, dotted, line width=1] (7,2)--(7,7);
    \draw [color=gray, dotted, line width=1] (10,3)--(10,7);
    \draw [color=gray, dotted, line width=1] (13,4)--(13,7);
    \draw [color=gray, dotted, line width=1] (15,5)--(15,7);

    \node[anchor=south west,scale=0.8, circle, draw] (n1) at (0,7.3) {\color{black}{$1$}};
    \node[anchor=south west,scale=0.8] (n2) at (1,7.3) {\color{black}{$2$}};
    \node[anchor=south west,scale=0.8] (n3) at (2,7.3) {\color{black}{$3$}};
    \node[anchor=south west,scale=0.8, circle, draw] (n4) at (3,7.3) {\color{black}{$4$}};
    \node[anchor=south west,scale=0.8] (n5) at (4,7.3) {\color{black}{$5$}};
    \node[anchor=south west,scale=0.8] (n6) at (5,7.3) {\color{black}{$6$}};
    \node[anchor=south west,scale=0.8] (n7) at (6,7.3) {\color{black}{$7$}};
    \node[anchor=south west,scale=0.8, circle, draw] (n8) at (7,7.3) {\color{black}{$8$}};
    \node[anchor=south west,scale=0.8] (n9) at (8,7.3) {\color{black}{$9$}};
    \node[anchor=south west,scale=0.8] (n10) at (9,7.3) {\color{black}{$10$}};
    \node[anchor=south west,scale=0.7, circle, draw] (n11) at (10.2,7.3) {\color{black}{$11$}};
    \node[anchor=south west,scale=0.8] (n12) at (11.3,7.3) {\color{black}{$12$}};
    \node[anchor=south west,scale=0.8] (n13) at (12,7.3) {\color{black}{$13$}};
    \node[anchor=south west,scale=0.7, circle, draw] (n14) at (13.2,7.3) {\color{black}{$14$}};
    \node[anchor=south west,scale=0.8] (n15) at (14,7.3) {\color{black}{$15$}};
    \node[anchor=south west,scale=0.7, circle, draw] (n16) at (15.2,7.3) {\color{black}{$16$}};
    \node[anchor=south west,scale=0.8] (n17) at (16,7.3) {\color{black}{$17$}};
    \node[anchor=south west,scale=0.8] (n18) at (17,7.3) {\color{black}{$18$}};
    \node[anchor=south west,scale=0.7, circle, draw] (n19) at (18.1,7.3) {\color{black}{$19$}};
    \node[anchor=south west,scale=0.7, circle, draw] (n20) at (19.3,7.3) {\color{black}{$20$}};

    \node[circle, fill=black, inner sep=1.5pt] (n28) at (20,7) {};
    \node[circle, fill=black, inner sep=1.5pt] (n0) at (0,7) {};
    \node[circle, fill=black, inner sep=1.5pt] (n1) at (0,-1) {};
    \node[circle, fill=black, inner sep=1.5pt] (n2) at (0,6) {};
    \node[circle, fill=black, inner sep=1.5pt] (n3) at (16,6) {};
    \node[circle, fill=black, inner sep=1.5pt] (n4) at (16,5) {};
    \node[circle, fill=black, inner sep=1.5pt] (n5) at (17,5) {};
    \node[circle, fill=black, inner sep=1.5pt] (n6) at (18,5) {};
    \node[circle, fill=black, inner sep=1.5pt] (n7) at (19,6) {};
    \node[circle, fill=black, inner sep=1.5pt] (n8) at (15,4) {};
    \node[circle, fill=black, inner sep=1.5pt] (n9) at (14,4) {};
    \node[circle, fill=black, inner sep=1.5pt] (n10) at (13,3) {};
    \node[circle, fill=black, inner sep=1.5pt] (n11) at (12,3) {};
    \node[circle, fill=black, inner sep=1.5pt] (n12) at (11,3) {};
    \node[circle, fill=black, inner sep=1.5pt] (n13) at (10,2) {};
    \node[circle, fill=black, inner sep=1.5pt] (n14) at (9,3) {};
    \node[circle, fill=black, inner sep=1.5pt] (n15) at (9,2) {};
    \node[circle, fill=black, inner sep=1.5pt] (n16) at (8,3) {};
    \node[circle, fill=black, inner sep=1.5pt] (n17) at (8,4) {};
    \node[circle, fill=black, inner sep=1.5pt] (n18) at (1,6) {};
    \node[circle, fill=black, inner sep=1.5pt] (n19) at (2,6) {};
    \node[circle, fill=black, inner sep=1.5pt] (n20) at (4,6) {};
    \node[circle, fill=black, inner sep=1.5pt] (n21) at (4,4) {};
    \node[circle, fill=black, inner sep=1.5pt] (n22) at (5,4) {};
    \node[circle, fill=black, inner sep=1.5pt] (n23) at (6,1) {};
    \node[circle, fill=black, inner sep=1.5pt] (n24) at (7,1) {};
    \node[circle, fill=black, inner sep=1.5pt] (n25) at (6,4) {};
    \node[circle, fill=black, inner sep=1.5pt] (n26) at (2,0) {};
    \node[circle, fill=black, inner sep=1.5pt] (n27) at (3,0) {};

    \draw [color=gray, line width=2] (n28)--(n0)--(n1);
    \draw [color=brown, line width=2] (n1)--(n2)--(n18);
    \draw [color=blue, line width=2] (n18)--(n7);
    \draw [color=blue, line width=2] (n3)--(n4);
    \draw [color=blue, line width=2] (n19)--(n26);
    \draw [color=blue, line width=2] (n20)--(n21);
    \draw [color=black, line width=2] (n21)--(n22);
    \draw [color=red, line width=2] (n22)--(n9);
    \draw [color=red, line width=2] (n23)--(n25);
    \draw [color=red, line width=2] (n17)--(n16);

    \draw [color=green, line width=2] (n16)--(n12);
    \draw [color=green, line width=2] (n14)--(n15);

    \draw [color=black, line width=2] (2,0)--(3,0);
    \draw [color=black, line width=2] (6,1)--(7,1);
    \draw [color=black, line width=2] (9,2)--(10,2);
    \draw [color=black, line width=2] (11,3)--(12,3);
    \draw [color=black, line width=2] (12,3)--(13,3);
    \draw [color=black, line width=2] (14,4)--(15,4);
    \draw [color=black, line width=2] (16,5)--(17,5);
    \draw [color=black, line width=2] (17,5)--(18,5);

    %\node[anchor=south west,scale=0.8] at (14,4) {\color{gray0}{$15$}};
    %\node[anchor=south west,scale=0.8] at (5,4) {\color{red}{$6$}};
    %\node[anchor=south west,scale=0.8] at (6,4) {\color{red}{$6$}};
    %\node[anchor=south west,scale=0.8] at (8,4) {\color{red}{$6$}};
    %\node[anchor=south west,scale=0.8] at (4,4) {\color{col5}{$5$}};
    %\node[anchor=south west,scale=0.8] at (12,3) {\color{col9}{$13$}};
    %\node[anchor=south west,scale=0.8] at (11,3) {\color{col8}{$12$}};
    %\node[anchor=south west,scale=0.8] at (9,3) {\color{green}{$9$}};
    %\node[anchor=south west,scale=0.8] at (8,3) {\color{green}{$9$}};
    %\node[anchor=south west,scale=0.8] at (8,2) {\color{green}{$9$}};
    %\node[anchor=south west,scale=0.8] at (9,2) {\color{col7}{$10$}};
    %\node[anchor=south west,scale=0.8] at (6,1) {\color{col6}{$7$}};
    %\node[anchor=south west,scale=0.8] at (2,0) {\color{col5}{$3$}};

        \node[circle, fill=black, inner sep=1.5pt] (n1) at (0,-1) {};
    \node[circle, fill=black, inner sep=1.5pt] (n2) at (0,6) {};
    \node[circle, fill=black, inner sep=1.5pt] (n3) at (16,6) {};
    \node[circle, fill=black, inner sep=1.5pt] (n4) at (16,5) {};
    \node[circle, fill=black, inner sep=1.5pt] (n5) at (17,5) {};
    \node[circle, fill=black, inner sep=1.5pt] (n6) at (18,5) {};
    \node[circle, fill=black, inner sep=1.5pt] (n7) at (19,6) {};
    \node[circle, fill=black, inner sep=1.5pt] (n8) at (15,4) {};
    \node[circle, fill=black, inner sep=1.5pt] (n9) at (14,4) {};
    \node[circle, fill=black, inner sep=1.5pt] (n10) at (13,3) {};
    \node[circle, fill=black, inner sep=1.5pt] (n11) at (12,3) {};
    \node[circle, fill=black, inner sep=1.5pt] (n12) at (11,3) {};
    \node[circle, fill=black, inner sep=1.5pt] (n13) at (10,2) {};
    \node[circle, fill=black, inner sep=1.5pt] (n14) at (9,3) {};
    \node[circle, fill=black, inner sep=1.5pt] (n15) at (9,2) {};
    \node[circle, fill=black, inner sep=1.5pt] (n16) at (8,3) {};
    \node[circle, fill=black, inner sep=1.5pt] (n17) at (8,4) {};
    \node[circle, fill=black, inner sep=1.5pt] (n18) at (1,6) {};
    \node[circle, fill=black, inner sep=1.5pt] (n19) at (2,6) {};
    \node[circle, fill=black, inner sep=1.5pt] (n20) at (4,6) {};
    \node[circle, fill=black, inner sep=1.5pt] (n21) at (4,4) {};
    \node[circle, fill=black, inner sep=1.5pt] (n22) at (5,4) {};
    \node[circle, fill=black, inner sep=1.5pt] (n23) at (6,1) {};
    \node[circle, fill=black, inner sep=1.5pt] (n24) at (7,1) {};
    \node[circle, fill=black, inner sep=1.5pt] (n25) at (6,4) {};
    \node[circle, fill=black, inner sep=1.5pt] (n26) at (2,0) {};
    \node[circle, fill=black, inner sep=1.5pt] (n27) at (3,0) {};

    \draw [color=blue, line width=2] (1.4,8.2)--(1.4,10)--(18.6,10) -- (18.6,8.2);
    \draw [color=blue, line width=2] (3.4,8.2)--(3.4,10);
    \draw [color=blue, line width=2] (15.6,8.2)--(15.6,10);

    \draw [color=red, line width=2] (5.4,8.2)--(5.4,9.5)--(13.5,9.5) -- (13.5,8.2);
    \draw [color=red, line width=2] (7.4,8.2)--(7.4,9.5);

    \draw [color=green, line width=2] (8.4,8.2)--(8.4,9)--(10.5,9) -- (10.5,8.2);

    \end{tikzpicture}
\end{center}
\caption{A $\nu$-tree and its $s$-noncrossing partition via the descent tree coloring.}
\label{fig:descent_tree_coloring}
\end{figure}

\begin{example}
Several examples for the behavior of $s$-noncrossing partitions under the action of rowmotion on the~$\nu(s)$-Tamari lattice are depicted in~\cref{fig:s_ncp_row1,fig:s_ncp_row2,fig:s_ncp_ex}. Here, rowmotion shares a similar structure with rotation, aside from an additional rule when starting a new block.
\end{example}

\begin{figure}[ht]
\begin{center}
\begin{tikzpicture}
[node distance=5cm,
    >={Stealth},
]

\node (f1) [inner sep=4pt] {
    \altnuTree{3,4,4,1}{4,4,1}{1,2,1,8}{.3}{}{}{}{
    \node[anchor=south west,scale=.4, circle, draw] (n1) at (0.1,3.4) {\color{black}{$1$}}; 
    \node [anchor=south west,scale=.5] at (1.1,3.3) {$2$};
    \node [anchor=south west,scale=.5] at (2.1,3.3) {$3$};
    \node[anchor=south west,scale=.4, circle, draw] (n4) at (3.2,3.4) {\color{black}{$4$}};  
    \node [anchor=south west,scale=.5] at (4.1,3.3) {$5$};
    \node [anchor=south west,scale=.5] at (5.1,3.3) {$6$};
    \node [anchor=south west,scale=.5] at (6.1,3.3) {$7$};
    \node[anchor=south west,scale=.4, circle, draw] (n4) at (7.2,3.4) {\color{black}{$8$}};

    \node [anchor=south west,scale=.5] at (8.1,3.3) {$9$};
    \node [anchor=south west,scale=.5] at (9 -.1,3.3) {$10$};
    \node [anchor=south west,scale=.5] at (10 -.1,3.3) {$11$};

    \node[anchor=south west,scale=.35, circle, draw] (n4) at (11+.1,3.4) {\color{black}{$12$}};
    
        \draw[line width=1pt] (1.5,4.2)--(1.5,5)--(3.5,5)--(3.5,4.2); 
        \draw[line width=1pt] (4.5,4.2)--(4.5,5)--(7.5,5)--(7.5,4.2); 
        \draw[line width=1pt] (9.5,4.2)--(9.5,5)--(11.5,5)--(11.5,4.2); 
    
        %\draw[line width=1pt, red] (2,0)--(2,3)--(3.5,5)--(3.5,4.2); 

    }};

\node (f2) [right of=f1] {
    \altnuTree{3,4,4,1}{4,4,1}{0,1,0,11}{.3}{}{}{}{
    \node[anchor=south west,scale=.4, circle, draw] (n1) at (0.1,3.4) {\color{black}{$1$}}; 
    \node [anchor=south west,scale=.5] at (1.1,3.3) {$2$};
    \node [anchor=south west,scale=.5] at (2.1,3.3) {$3$};
    \node[anchor=south west,scale=.4, circle, draw] (n4) at (3.2,3.4) {\color{black}{$4$}};  
    \node [anchor=south west,scale=.5] at (4.1,3.3) {$5$};
    \node [anchor=south west,scale=.5] at (5.1,3.3) {$6$};
    \node [anchor=south west,scale=.5] at (6.1,3.3) {$7$};
    \node[anchor=south west,scale=.4, circle, draw] (n4) at (7.2,3.4) {\color{black}{$8$}};

    \node [anchor=south west,scale=.5] at (8.1,3.3) {$9$};
    \node [anchor=south west,scale=.5] at (9 -.1,3.3) {$10$};
    \node [anchor=south west,scale=.5] at (10 -.1,3.3) {$11$};

    \node[anchor=south west,scale=.35, circle, draw] (n4) at (11+.1,3.4) {\color{black}{$12$}};
    
            \draw[line width=1pt] (2.5,4.2)--(2.5,5)--(3.5,5)--(3.5,4.2); 
        \draw[line width=1pt] (5.5,4.2)--(5.5,5)--(7.5,5)--(7.5,4.2); 
        \draw[line width=1pt] (10.5,4.2)--(10.5,5)--(11.5,5)--(11.5,4.2);

    }};

\draw[->] (f1) -- (f2) node[midway,above] {$\row$};

\end{tikzpicture}
\end{center}
\caption{The block $\{a,b\}$, where $a$ is not a first column index, becomes $\{a+1,b\}$.}
\label{fig:s_ncp_row1}
\end{figure}

\begin{figure}[ht]
\begin{center}
\begin{tikzpicture}
[node distance=5cm,
    >={Stealth},
]

\node (f1) [inner sep=4pt] {
    \altnuTree{3,4,4,1}{4,4,1}{1,3,7,1}{.3}{}{}{}{
    \node[anchor=south west,scale=.4, circle, draw] (n1) at (0.1,3.4) {\color{black}{$1$}}; 
    \node [anchor=south west,scale=.5] at (1.1,3.3) {$2$};
    \node [anchor=south west,scale=.5] at (2.1,3.3) {$3$};
    \node[anchor=south west,scale=.4, circle, draw] (n4) at (3.2,3.4) {\color{black}{$4$}};  
    \node [anchor=south west,scale=.5] at (4.1,3.3) {$5$};
    \node [anchor=south west,scale=.5] at (5.1,3.3) {$6$};
    \node [anchor=south west,scale=.5] at (6.1,3.3) {$7$};
    \node[anchor=south west,scale=.4, circle, draw] (n4) at (7.2,3.4) {\color{black}{$8$}};

    \node [anchor=south west,scale=.5] at (8.1,3.3) {$9$};
    \node [anchor=south west,scale=.5] at (9 -.1,3.3) {$10$};
    \node [anchor=south west,scale=.5] at (10 -.1,3.3) {$11$};

    \node[anchor=south west,scale=.35, circle, draw] (n4) at (11+.1,3.4) {\color{black}{$12$}};
    
        \draw[line width=1pt] (1.5,4.2)--(1.5,5)--(3.5,5)--(3.5,4.2); 
        \draw[line width=1pt] (3.5,5)--(7.5,5)--(7.5,4.2);

        %\draw[line width=1pt, red] (2,0)--(2,3)--(3.5,5)--(3.5,4.2); 

    }};

\node (f2) [right of=f1] {
    \altnuTree{3,4,4,1}{4,4,1}{0,2,8,2}{.3}{}{}{}{
    \node[anchor=south west,scale=.4, circle, draw] (n1) at (0.1,3.4) {\color{black}{$1$}}; 
    \node [anchor=south west,scale=.5] at (1.1,3.3) {$2$};
    \node [anchor=south west,scale=.5] at (2.1,3.3) {$3$};
    \node[anchor=south west,scale=.4, circle, draw] (n4) at (3.2,3.4) {\color{black}{$4$}};  
    \node [anchor=south west,scale=.5] at (4.1,3.3) {$5$};
    \node [anchor=south west,scale=.5] at (5.1,3.3) {$6$};
    \node [anchor=south west,scale=.5] at (6.1,3.3) {$7$};
    \node[anchor=south west,scale=.4, circle, draw] (n4) at (7.2,3.4) {\color{black}{$8$}};

    \node [anchor=south west,scale=.5] at (8.1,3.3) {$9$};
    \node [anchor=south west,scale=.5] at (9 -.1,3.3) {$10$};
    \node [anchor=south west,scale=.5] at (10 -.1,3.3) {$11$};

    \node[anchor=south west,scale=.35, circle, draw] (n4) at (11+.1,3.4) {\color{black}{$12$}};
    
            \draw[line width=1pt] (2.5,4.2)--(2.5,5)--(3.5,5)--(3.5,4.2); 
        \draw[line width=1pt] (4.5,4.2)--(4.5,5)--(7.5,5)--(7.5,4.2); 
        \draw[line width=1pt] (0.5,4.2)--(0.5,5.5)--(11.5,5.5)--(11.5,4.2);

    }};

\draw[->] (f1) -- (f2) node[midway,above] {$\row$};

\end{tikzpicture}
\end{center}
\caption{The block $\{2, 4, 8\}$ can be treated as two separate blocks, $\{2, 4\}$ and~$\{4, 8\}$. In both blocks, the smaller element is increased by one, resulting in $\{3, 4\}$ and $\{5, 8\}$. Furthermore, a new block $\{1, 12\}$ is created.}
\label{fig:s_ncp_row2}
\end{figure}

\begin{figure}[ht]
\begin{center}
\begin{tikzpicture}
[node distance=2cm,
    >={Stealth},
]

\node (f1) [inner sep=4pt] {
    \altnuTree{1,2,1}{2,1}{1,2,1}{.3}{}{}{}{
    \node[anchor=south west,scale=.4, circle, draw] (n1) at (0.1,2.4) {\color{black}{$1$}}; 
    \node[anchor=south west,scale=.4, circle, draw] (n2) at (1.2,2.4) {\color{black}{$2$}}; 
    \node [anchor=south west,scale=.5] at (2.1,2.3) {$3$};
    \node[anchor=south west,scale=.4, circle, draw] (n4) at (3.2,2.4) {\color{black}{$4$}};  
    \draw[line width=1pt, color=white] (2.5,3.2)--(2.5,4)--(3.5,4)--(3.5,4); 
    }};

\node (f2) [right of=f1] {
    \altnuTree{1,2,1}{2,1}{0,1,3}{.3}{}{}{}{
    \node[anchor=south west,scale=.4, circle, draw] (n1) at (0.1,2.4) {\color{black}{$1$}}; 
    \node[anchor=south west,scale=.4, circle, draw] (n2) at (1.2,2.4) {\color{black}{$2$}}; 
    \node [anchor=south west,scale=.5] at (2.1,2.3) {$3$};
    \node[anchor=south west,scale=.4, circle, draw] (n4) at (3.2,2.4) {\color{black}{$4$}};
    \draw[line width=1pt] (1.5,3.2)--(1.5,4)--(3.5,4)--(3.5,3.2); 
    \draw[line width=1pt] (1.5,4)--(0.5,4)--(0.5,3.2); 
    
    }};
\node (f3) [right of=f2] {
    \altnuTree{1,2,1}{2,1}{1,0,3}{.3}{}{}{}{
    \node[anchor=south west,scale=.4, circle, draw] (n1) at (0.1,2.4) {\color{black}{$1$}}; 
    \node[anchor=south west,scale=.4, circle, draw] (n2) at (1.2,2.4) {\color{black}{$2$}}; 
    \node [anchor=south west,scale=.5] at (2.1,2.3) {$3$};
    \node[anchor=south west,scale=.4, circle, draw] (n4) at (3.2,2.4) {\color{black}{$4$}};
    \draw[line width=1pt] (2.5,3.2)--(2.5,4)--(3.5,4)--(3.5,3.2); 
    
    }};
\node (f4) [right of=f3] {
    \altnuTree{1,2,1}{2,1}{0,3,1}{.3}{}{}{}{
    \node[anchor=south west,scale=.4, circle, draw] (n1) at (0.1,2.4) {\color{black}{$1$}}; 
    \node[anchor=south west,scale=.4, circle, draw] (n2) at (1.2,2.4) {\color{black}{$2$}}; 
    \node [anchor=south west,scale=.5] at (2.1,2.3) {$3$};
    \node[anchor=south west,scale=.4, circle, draw] (n4) at (3.2,2.4) {\color{black}{$4$}};
    \draw[line width=1pt] (0.5,3.2)--(0.5,4)--(1.5,4)--(1.5,3.2); 
    
    }};
\node (f5) [right of=f4] {
    \altnuTree{1,2,1}{2,1}{0,2,2}{.3}{}{}{}{
    \node[anchor=south west,scale=.4, circle, draw] (n1) at (0.1,2.4) {\color{black}{$1$}}; 
    \node[anchor=south west,scale=.4, circle, draw] (n2) at (1.2,2.4) {\color{black}{$2$}}; 
    \node [anchor=south west,scale=.5] at (2.1,2.3) {$3$};
    \node[anchor=south west,scale=.4, circle, draw] (n4) at (3.2,2.4) {\color{black}{$4$}};
    \draw[line width=1pt] (0.5,3.2)--(0.5,4)--(3.5,4)--(3.5,3.2); 
    
    }};
\node (f6) [right of=f5] {
    \altnuTree{1,2,1}{2,1}{1,1,2}{.3}{}{}{}{
    \node[anchor=south west,scale=.4, circle, draw] (n1) at (0.1,2.4) {\color{black}{$1$}}; 
    \node[anchor=south west,scale=.4, circle, draw] (n2) at (1.2,2.4) {\color{black}{$2$}}; 
    \node [anchor=south west,scale=.5] at (2.1,2.3) {$3$};
    \node[anchor=south west,scale=.4, circle, draw] (n4) at (3.2,2.4) {\color{black}{$4$}};
    \draw[line width=1pt] (1.5,3.2)--(1.5,4)--(3.5,4)--(3.5,3.2); 
    
    }};
\node (f7) [right of=f6] {
    \altnuTree{1,2,1}{2,1}{0,0,4}{.3}{}{}{}{
    \node[anchor=south west,scale=.4, circle, draw] (n1) at (0.1,2.4) {\color{black}{$1$}}; 
    \node[anchor=south west,scale=.4, circle, draw] (n2) at (1.2,2.4) {\color{black}{$2$}}; 
    \node [anchor=south west,scale=.5] at (2.1,2.3) {$3$};
    \node[anchor=south west,scale=.4, circle, draw] (n4) at (3.2,2.4) {\color{black}{$4$}};
    \draw[line width=1pt] (0.5,3.2)--(0.5,4)--(1.5,4)--(1.5,3.2); 

    \draw[line width=1pt] (2.5,3.2)--(2.5,4)--(3.5,4)--(3.5,3.2); 
    
    }};

\draw[->] (f1) -- (f2);
\draw[->] (f2) -- (f3);
\draw[->] (f3) -- (f4);
\draw[->] (f4) -- (f5);
\draw[->] (f5) -- (f6);
\draw[->] (f6) -- (f7);

\node[right=3mm of f7] {$\circlearrowright$};
% \draw[->, bend left=8] (f7.south) to (f1.south);
\end{tikzpicture}
\end{center}
\caption{The $s$-noncrossing partitions for the rowmotion orbit in~\cref{fig:rowmotion_orbits_ENEEN}.}
\label{fig:s_ncp_ex}
\end{figure}

\begin{question}
A complete formulation for a $\nu$-analogue of the Kreweras complement is currently unknown. Can a general rule be explicitly defined?
\end{question}

\subsection{Connections with RSK}

As we have mentioned several times, Armstrong, Stump, and Thomas~\cite{armstrong2013uniform} showed that rowmotion of the Stanley lattice is in equivariant bijection with Kreweras complement of noncrossing partitions, and Striker and Williams~\cite{striker2012rowmotion} expressed this bijection as a certain conjugating map in the toggle group. Adenbaum and Elizalde~\cite{adenbaum2023rowmotion} provided a different interpretation of the bijection from~\cite{armstrong2013uniform} that takes rowmotion to Kreweras complement; they showed that it is essentially the \dfn{Robinson--Schensted--Knuth correspondence} restricted to 321-avoiding permutations. It would be interesting, therefore, to try to connect rowmotion for the other alt~$\nu$-Tamari lattices with RSK or its variants. We believe a promising approach might be to show that the \dfn{scrambled RSK} of Dauvergne~\cite{dauvergne2022hidden} and Garver, Patrias, and Thomas~\cite{garver2023minuscule} intertwines rowmotion on alt~$\nu$-Tamari lattices and Kreweras complement of noncrossing partitions for $\nu$ a staircase shape. For other shapes $\nu$, the connection to RSK is less clear.

\subsection{Piecewise-linear extension}

Around ten years ago, Einstein and Propp~\cite{einstein2014piecewise,einstein2021combinatorial} introduced a \emph{piecewise-linear (PL)} extension of rowmotion for distributive lattices, which has subsequently received a lot of research attention. Invariance of rowmotion of distributive lattices in this {piecewise-linear} setting  has also, to some degree, been considered before. For example, in~\cite[Conjecture~4.38]{hopkins2022minuscule} it is conjectured that if $P$ and $Q$ are posets with isomorphic comparability graphs, then $J(P)$ and $J(Q)$ should have the same PL rowmotion behavior. And in~\cite{johnson2023trapezoid}, Johnson and Liu show that if $P$ is a rectangle and $Q$ is the corresponding trapezoid, then indeed $J(P)$ and $J(Q)$ have the same PL rowmotion behavior. 

More recently, building on~\cite{thomas2019rowmotion,thomas2019independence}, Williams~\cite{williams2020personal} has defined a piecewise-linear extension of ``beyond distributive lattice'' rowmotion. We believe that alt~$\nu$-Tamari lattices exhibit invariance of {PL rowmotion} as well. We now briefly review the definition of PL rowmotion in the case of a semidistributive and extremal lattice~$L$.

Given any finite graph $G$ with vertex set $V$, we let $\mathcal{P}_G$ denote the (convex) polytope in $\mathbb{R}^V$ defined by $x_v \geq 0$ for all $v\in V$ and $\sum_{v \in C} x_v \leq 1$ for all (maximal) cliques $C$. (This polytope goes under various names in the literature; in~\cite{johnson2025piecewise} it is called the \dfn{clique constraint stable set polytope}. Note that the lattice points in $\mathcal{P}_G$ are exactly the indicator functions of \dfn{independent sets}, also known as \dfn{stable sets}, of the graph $G$.) Given a vertex $v \in V$, the \dfn{piecewise-linear toggle} at $v$ is defined to be the (continuous, piecewise-linear) involution $\tau_v\colon\mathcal{P}_G\to\mathcal{P}_G$ that does not change the coordinates~$x_u$ for $u \neq v$, and replaces the coordinate $x_v$ by $1-\max_C\sum_{u\in C}x_u$, where the maximum is over all (maximal) cliques $C$ containing $v$. Now let $L$ be a semidistributive and extremal lattice. Recall that its (undirected) \dfn{Galois graph} $G_L$ can be defined as the complement graph of the one-skeleton of its canonical join complex; see~\cite{barnard2019canonical, thomas2019rowmotion, thomas2019independence}. In particular, recall that the vertex set of $G_L$ is $\mathcal{J}_L$, the set of join-irreducible elements of $L$. For such an $L$, \dfn{piecewise-linear rowmotion} is the operator~$\row \colon \mathcal{P}_{G_L} \to \mathcal{P}_{G_L}$ defined as the composition of all the (PL) toggles $\tau_j$ for $j \in \mathcal{J}_L$ in the order corresponding to any maximal-length chain of $L$. 

In the case where $L=J(P)$ is the distributive lattice of order ideals of a poset $P$, its Galois graph~$G_L$ is the \dfn{comparability graph} of $P$, and the corresponding polytope $\mathcal{P}_{G_L}$ is the \dfn{chain polytope}~\cite{stanley1986two} of $P$. Hence, the PL rowmotion we have defined here is the ``antichain version'' of rowmotion, as studied by Joseph~\cite{joseph2019antichain}. Also note that when $L$ is distributive, this polytope $\mathcal{P}_{G_L}$ is actually a \emph{lattice} polytope; but for other semidistributive, extremal $L$, it will just be a rational polytope in general.

Computational evidence strongly suggests that the ``invariance of rowmotion'' behavior for alt~$\nu$-Tamari lattices should extend to the piecewise-linear setting.

\begin{conjecture} \label{conj:pl}
Fix a lattice path $\nu$, and let $L_1$ and $L_2$ be two alt~$\nu$-Tamari lattices. Then piecewise-linear rowmotion behaves the same for $L_1$ and $L_2$. That is, there is a continuous, piecewise-linear bijection $\Phi\colon \mathcal{P}_{G_{L_1}} \to \mathcal{P}_{G_{L_2}}$, which restricts to a bijection $\Phi\colon \mathcal{P}_{G_{L_1}} \cap (\frac{1}{m}\mathbb{Z})^{\mathcal{J}_{L_1}} \to \mathcal{P}_{G_{L_2}} \cap (\frac{1}{m}\mathbb{Z})^{\mathcal{J}_{L_2}}$ for each integer $m \geq 1$, such that:
\begin{itemize}
\item $\Phi(\row(x)) = \row(\Phi(x))$ for all $x\in \mathcal{P}_{G_{L_1}}$;
\item $\lim_{n\to\infty} \frac{1}{n} \sum_{i=0}^{n-1} \operatorname{sum}(\row^i(x))=\lim_{n\to\infty} \frac{1}{n} \sum_{i=0}^{n-1} \operatorname{sum}(\row^i(\Phi(x)))$ for all $x\in \mathcal{P}_{G_{L_1}}$,
\end{itemize}
where $\operatorname{sum}(x)$ is the sum-of-coordinates (the piecewise-linear analogue of the down-degree statistic).
\end{conjecture}

For an arbitrary distributive lattice $L$, piecewise-linear rowmotion will not have finite order. But in~\cite{grinberg2015birational}, Grinberg and Roby showed that PL rowmotion \emph{does} have finite order for some very special distributive lattices $L$, including, in particular, the Stanley lattice. Hence, a corollary of \cref{conj:pl} would be that for the classical Tamari lattice, PL rowmotion has finite order, a fact that was experimentally observed and conjectured previously by Williams~\cite{williams2020personal}.

For a semidistributive, extremal lattice $L$, and for each fixed integer $m \geq 1$, by restricting to the rational points in $\mathcal{P}_{G_L}$ with denominator dividing $m$, one obtains an action of PL rowmotion on a \emph{finite} set (with the case $m=1$ recovering combinatorial rowmotion on the lattice $L$ itself). For some very special distributive lattices $L$, there are known or conjectured cyclic sieving results for this PL action; see~\cite{hopkins2020cyclic,hopkins2024order}. In particular, for the Stanley lattice it is conjectured that the so-called~\dfn{$q$-multi Catalan number} should be a CSP polynomial for this action, and this is known for some small values of $m$; for $m=1$, it is the result of Armstrong--Stump--Thomas~\cite{armstrong2013uniform}, and for $m=2$ it was recently proved by Hopkins--Kim--Pfannerer~\cite{hopkins2026cyclic}. Likewise, there are known homomesy results for PL rowmotion of certain special distributive lattices, including the Stanley lattice; see, e.g.,~\cite{defant2023homomesy} and its references. Hence, \cref{conj:pl} would transfer these CSP and homomesy results for PL rowmotion from the Stanley lattice to the Tamari lattice.

\Cref{conj:pl} can be naturally extended in various ways as well. For example, we believe it should extend directly to the cross Tamari lattices associated to two moon polyominoes related by permutations of rows and columns. Also, there are even further ``lifts'' of rowmotion beyond piecewise-linear rowmotion, such as birational rowmotion. We believe that~\cref{conj:pl} should extend to birational rowmotion as well. But we have not extensively tested this.

\begin{remark}
\Cref{conj:pl} seems closely related to work of Johnson and Liu~\cite{johnson2025piecewise}, studying 0-1-fillings of moon polyominoes~\cite{rubey2011increasing} and piecewise-linear extensions. Specifically, they construct continuous, piecewise-linear bijections between clique constraint stable set polytopes associated to moon polyominoes related by permutations of rows and columns. However, their polytopes are not exactly the same as the ones we attach to alt~$\nu$-Tamari lattices: the distinction being the difference between 0-1-fillings that avoid having a $1$ \emph{strictly} northeast of another $1$, versus avoid having a $1$ \emph{weakly} northeast of another $1$. So the relationship of \cref{conj:pl} to their work remains unclear. It would certainly be worth exploring whether the techniques from~\cite{johnson2025piecewise} (and~\cite{johnson2024birational}) could be used to resolve \cref{conj:pl}. We note that Adenbaum and Williams are actively pursuing \cref{conj:pl} and related piecewise-linear problems.
\end{remark}

\subsection{Other types} 

It would also be interesting to extend this work to other Lie types. For example, Armstrong, Stump, and Thomas~\cite{armstrong2013uniform} showed that rowmotion of the (distributive lattice of) order ideals of the root poset of \emph{any} finite crystallographic root system $\Phi$ is in equivariant bijection with Kreweras complement of the corresponding noncrossing partition lattice, which is a certain interval in the absolute order of the corresponding Weyl group. Meanwhile, it is also known that (semidistributive) rowmotion of the Type B Tamari lattice is in equivariant bijection with Kreweras complement of Type B noncrossing partitions; see, e.g.,~\cite[\S1.2]{thomas2019rowmotion}. But, as far as we know, there is no definition yet in the literature of ``alt'' Tamari lattices, interpolating between the Tamari lattice and the distributive lattice associated to the root poset, outside of Type A.

\bibliographystyle{plain}
\bibliography{main}

@article {adenbaum2023rowmotion,
    AUTHOR = {Adenbaum, Ben and Elizalde, Sergi},
     TITLE = {Rowmotion on 321-avoiding permutations},
   JOURNAL = {Electron. J. Combin.},
  FJOURNAL = {Electronic Journal of Combinatorics},
    VOLUME = {30},
      YEAR = {2023},
    NUMBER = {3},
     PAGES = {Paper No. 3.5, 26},
      ISSN = {1077-8926},
   MRCLASS = {05E18 (05A05 05A19 06A07)},
  MRNUMBER = {4614539},
MRREVIEWER = {Jia\ Huang},
       DOI = {10.37236/11792},
       URL = {https://doi.org/10.37236/11792},
}

@article {armstrong2013rational,
    AUTHOR = {Armstrong, Drew and Rhoades, Brendon and Williams, Nathan},
     TITLE = {Rational associahedra and noncrossing partitions},
   JOURNAL = {Electron. J. Combin.},
  FJOURNAL = {Electronic Journal of Combinatorics},
    VOLUME = {20},
      YEAR = {2013},
    NUMBER = {3},
     PAGES = {Paper 54, 27},
      ISSN = {1077-8926},
   MRCLASS = {05E45 (05A19)},
  MRNUMBER = {3118962},
MRREVIEWER = {Ragnar\ Freij},
       DOI = {10.37236/3432},
       URL = {https://doi.org/10.37236/3432},
}

@article {armstrong2013uniform,
    AUTHOR = {Armstrong, Drew and Stump, Christian and Thomas, Hugh},
     TITLE = {A uniform bijection between nonnesting and noncrossing
              partitions},
   JOURNAL = {Trans. Amer. Math. Soc.},
  FJOURNAL = {Transactions of the American Mathematical Society},
    VOLUME = {365},
      YEAR = {2013},
    NUMBER = {8},
     PAGES = {4121--4151},
      ISSN = {0002-9947,1088-6850},
   MRCLASS = {05A05 (05A19 20F55)},
  MRNUMBER = {3055691},
MRREVIEWER = {Alessandro\ Conflitti},
       DOI = {10.1090/S0002-9947-2013-05729-7},
       URL = {https://doi.org/10.1090/S0002-9947-2013-05729-7},
}

@article {armstrong2016rational,
    AUTHOR = {Armstrong, Drew and Loehr, Nicholas A. and Warrington, Gregory S.},
     TITLE = {Rational parking functions and {C}atalan numbers},
   JOURNAL = {Ann. Comb.},
  FJOURNAL = {Annals of Combinatorics},
    VOLUME = {20},
      YEAR = {2016},
    NUMBER = {1},
     PAGES = {21--58},
      ISSN = {0218-0006,0219-3094},
   MRCLASS = {05E10 (05A30 05E05 05E18)},
  MRNUMBER = {3461934},
MRREVIEWER = {Thomas\ Ernst},
       DOI = {10.1007/s00026-015-0293-6},
       URL = {https://doi.org/10.1007/s00026-015-0293-6},
}

@misc{axelrodfreed2025chute,
    AUTHOR={Ilani Axelrod-Freed and Colin Defant and Hanna Mularczyk and Son Nguyen and Katherine Tung},
    TITLE={Chute Move Posets are Lattices},
    YEAR={2025},
    HOWPUBLISHED={\arxiv{2507.13214}}
}

@article {barnard2019canonical,
    AUTHOR = {Barnard, Emily},
     TITLE = {The canonical join complex},
   JOURNAL = {Electron. J. Combin.},
  FJOURNAL = {Electronic Journal of Combinatorics},
    VOLUME = {26},
      YEAR = {2019},
    NUMBER = {1},
     PAGES = {Paper No. 1.24, 25},
      ISSN = {1077-8926},
   MRCLASS = {05E45 (05E10 06A07)},
  MRNUMBER = {3919619},
MRREVIEWER = {Konrad\ P.\ Pi\'oro},
       DOI = {10.37236/7866},
       URL = {https://doi.org/10.37236/7866},
}

@article {barnard2021dynamical,
    AUTHOR = {Barnard, Emily and Todorov, Gordana and Zhu, Shijie},
     TITLE = {Dynamical combinatorics and torsion classes},
   JOURNAL = {J. Pure Appl. Algebra},
  FJOURNAL = {Journal of Pure and Applied Algebra},
    VOLUME = {225},
      YEAR = {2021},
    NUMBER = {9},
     PAGES = {Paper No. 106642, 25},
      ISSN = {0022-4049,1873-1376},
   MRCLASS = {16G10 (05E10 06A07 16S90)},
  MRNUMBER = {4195887},
MRREVIEWER = {Xin\ Ma},
       DOI = {10.1016/j.jpaa.2020.106642},
       URL = {https://doi.org/10.1016/j.jpaa.2020.106642},
}

@misc{vonbell2025framing,
    AUTHOR={Bell, Matias von and Ceballos, Cesar},
    TITLE={Framing Lattices and Flow Polytopes},
    YEAR={2025},
    HOWPUBLISHED={\arxiv{2512.20575}}
}

@article{bergeron2012higher,
    AUTHOR = {Bergeron, Fran\c{c}ois and Pr\'{e}ville-Ratelle, Louis-Fran\c{c}ois},
     TITLE = {Higher trivariate diagonal harmonics via generalized {T}amari
              posets},
   JOURNAL = {J. Comb.},
  FJOURNAL = {Journal of Combinatorics},
    VOLUME = {3},
      YEAR = {2012},
    NUMBER = {3},
     PAGES = {317--341},
      ISSN = {2156-3527,2150-959X},
   MRCLASS = {05E10 (05A19)},
  MRNUMBER = {3029440},
MRREVIEWER = {Anthony\ A.\ Mendes},
       DOI = {10.4310/JOC.2012.v3.n3.a4},
       URL = {https://doi.org/10.4310/JOC.2012.v3.n3.a4},
}

@misc{billey2025lattice,
    AUTHOR={Sara C. Billey and Connor McCausland and Clare Minnerath},
    TITLE={A Proof of {R}ubey's Lattice Conjecture},
    YEAR={2025},
    HOWPUBLISHED={\arxiv{2507.18852}},
    NOTE={Forthcoming, \emph{Comb. Theory}}
}

@article {bizley1954longtitle,
    AUTHOR = {Bizley, M. T. L.},
     TITLE = {Derivation of a new formula for the number of minimal lattice
              paths from {$(0,0)$} to {$(km,kn)$} having just {$t$} contacts
              with the line {$my=nx$} and having no points above this line;
              and a proof of {G}rossman's formula for the number of paths
              which may touch but do not rise above this line},
   JOURNAL = {J. Inst. Actuar.},
  FJOURNAL = {Journal of the Institute of Actuaries},
    VOLUME = {80},
      YEAR = {1954},
     PAGES = {55--62},
      ISSN = {0020-2681,2058-1009},
   MRCLASS = {09.0X},
  MRNUMBER = {61567},
MRREVIEWER = {John\ Riordan},
}

@article {bodnar2016rational,
    AUTHOR = {Bodnar, Michelle and Rhoades, Brendon},
     TITLE = {Cyclic sieving and rational {C}atalan theory},
   JOURNAL = {Electron. J. Combin.},
  FJOURNAL = {Electronic Journal of Combinatorics},
    VOLUME = {23},
      YEAR = {2016},
    NUMBER = {2},
     PAGES = {Paper 2.4, 38},
      ISSN = {1077-8926},
   MRCLASS = {05A18 (05A19 05E18)},
  MRNUMBER = {3512626},
       DOI = {10.37236/5681},
       URL = {https://doi.org/10.37236/5681},
}

@book {brouwer1974period,
    AUTHOR = {Brouwer, A. E. and Schrijver, A.},
     TITLE = {On the period of an operator, defined on antichains},
      NOTE = {Mathematisch Centrum Afdeling Zuivere Wiskunde ZW 24/74},
 PUBLISHER = {Mathematisch Centrum, Amsterdam},
      YEAR = {1974},
     PAGES = {i+13}
}

@article {cameron1995orbits,
    AUTHOR = {Cameron, P. J. and Fon-Der-Flaass, D. G.},
     TITLE = {Orbits of antichains revisited},
   JOURNAL = {European J. Combin.},
  FJOURNAL = {European Journal of Combinatorics},
    VOLUME = {16},
      YEAR = {1995},
    NUMBER = {6},
     PAGES = {545--554}
}

@misc{ceballos2024canonical,  
  author = {Cesar Ceballos},
  title = {A canonical realization of the alt $\nu$-associahedron},
  year = {2024},
  note = {\arxiv{2401.17204v1}}
}

@article {ceballos2024altnu,
    AUTHOR = {Ceballos, Cesar and Chenevi\`ere, Cl\'ement},
     TITLE = {On linear intervals in the alt {$\nu$}-{T}amari lattices},
   JOURNAL = {Comb. Theory},
  FJOURNAL = {Combinatorial Theory},
    VOLUME = {4},
      YEAR = {2024},
    NUMBER = {2},
     PAGES = {Paper No. 18, 31},
      ISSN = {2766-1334},
   MRCLASS = {06A07 (05A19 06B05)},
  MRNUMBER = {4807157},
MRREVIEWER = {Joel\ Berman},
}

@misc{ceballos2026snoncrossing,
  author = {Ceballos, Cesar and Matthias M\"{u}ller},
  title  = {$s$-noncrossing partitions},
  note   = {Work in progress},
  year   = {2026}
}

@article {ceballos2020nutamari,
    AUTHOR = {Ceballos, Cesar and Padrol, Arnau and Sarmiento, Camilo},
     TITLE = {The {$\nu$}-{T}amari lattice via {$\nu$}-trees,
              {$\nu$}-bracket vectors, and subword complexes},
   JOURNAL = {Electron. J. Combin.},
  FJOURNAL = {Electronic Journal of Combinatorics},
    VOLUME = {27},
      YEAR = {2020},
    NUMBER = {1},
     PAGES = {Paper No. 1.14, 31},
      ISSN = {1077-8926},
       DOI = {10.37236/8000},
       URL = {https://doi.org/10.37236/8000},
}

@article {chan2017expected,
    AUTHOR = {Chan, Melody and Haddadan, Shahrzad and Hopkins, Sam and Moci,
              Luca},
     TITLE = {The expected jaggedness of order ideals},
   JOURNAL = {Forum Math. Sigma},
  FJOURNAL = {Forum of Mathematics. Sigma},
    VOLUME = {5},
      YEAR = {2017},
     PAGES = {Paper No. e9, 27},
      ISSN = {2050-5094},
   MRCLASS = {05E10 (05A17 06A07 14Q05 60B15)},
  MRNUMBER = {3623577},
MRREVIEWER = {Russ\ Woodroofe},
       DOI = {10.1017/fms.2017.5},
       URL = {https://doi.org/10.1017/fms.2017.5},
}

@article {dao2022rowmotion,
    AUTHOR = {Dao, Quang Vu and Wellman, Julian and Yost-Wolff, Calvin and
              Zhang, Sylvester W.},
     TITLE = {Rowmotion orbits of trapezoid posets},
   JOURNAL = {Electron. J. Combin.},
  FJOURNAL = {Electronic Journal of Combinatorics},
    VOLUME = {29},
      YEAR = {2022},
    NUMBER = {2},
     PAGES = {Paper No. 2.29, 19},
      ISSN = {1077-8926},
   MRCLASS = {05E40 (06A07)},
  MRNUMBER = {4423786},
MRREVIEWER = {Toufik\ Mansour},
       DOI = {10.37236/9769},
       URL = {https://doi.org/10.37236/9769},
}

@article {dauvergne2022hidden,
    AUTHOR = {Dauvergne, Duncan},
     TITLE = {Hidden invariance of last passage percolation and directed
              polymers},
   JOURNAL = {Ann. Probab.},
  FJOURNAL = {The Annals of Probability},
    VOLUME = {50},
      YEAR = {2022},
    NUMBER = {1},
     PAGES = {18--60},
      ISSN = {0091-1798,2168-894X},
   MRCLASS = {60K35 (05A15)},
  MRNUMBER = {4385122},
       DOI = {10.1214/21-aop1527},
       URL = {https://doi.org/10.1214/21-aop1527},
}

@article {defant2023homomesy,
    AUTHOR = {Defant, Colin and Hopkins, Sam and Poznanovi\'c, Svetlana and
              Propp, James},
     TITLE = {Homomesy via toggleability statistics},
   JOURNAL = {Comb. Theory},
  FJOURNAL = {Combinatorial Theory},
    VOLUME = {3},
      YEAR = {2023},
    NUMBER = {2},
     PAGES = {Paper No. 14, 61},
      ISSN = {2766-1334},
   MRCLASS = {06A07 (05A30 05E18 52B05)},
  MRNUMBER = {4646095},
MRREVIEWER = {Claire\ Frechette},
       DOI = {10.5070/c63261992},
       URL = {https://doi.org/10.5070/c63261992},
}

@misc{defant2025echelonmotion,
    AUTHOR={Colin Defant and Yuhan Jiang and Rene Marczinzik and Adrien Segovia and David E Speyer and Hugh Thomas and Nathan Williams},
    TITLE={Rowmotion and Echelonmotion},
    YEAR={2025},
    HOWPUBLISHED={\arxiv{2507.18230}}
}

@article {defant2024tamari,
    AUTHOR = {Defant, Colin and Lin, James},
     TITLE = {Rowmotion on {$m$}-{T}amari and bi{C}ambrian lattices},
   JOURNAL = {Comb. Theory},
  FJOURNAL = {Combinatorial Theory},
    VOLUME = {4},
      YEAR = {2024},
    NUMBER = {1},
     PAGES = {Paper No. 15, 46},
      ISSN = {2766-1334},
   MRCLASS = {06B10 (05E18 06D75)},
  MRNUMBER = {4770594},
MRREVIEWER = {Konrad\ P.\ Pi\'oro},
}

@article {defant2023semidistrim,
    AUTHOR = {Defant, Colin and Williams, Nathan},
     TITLE = {Semidistrim lattices},
   JOURNAL = {Forum Math. Sigma},
  FJOURNAL = {Forum of Mathematics. Sigma},
    VOLUME = {11},
      YEAR = {2023},
     PAGES = {Paper No. e50, 35},
      ISSN = {2050-5094},
   MRCLASS = {06B05 (06B15 06D75)},
  MRNUMBER = {4603109},
MRREVIEWER = {Jinbo\ Yang},
       DOI = {10.1017/fms.2023.46},
       URL = {https://doi.org/10.1017/fms.2023.46},
}

@incollection {einstein2014piecewise,
    AUTHOR = {Einstein, David and Propp, James},
     TITLE = {Piecewise-linear and birational toggling},
 BOOKTITLE = {26th {I}nternational {C}onference on {F}ormal {P}ower {S}eries
              and {A}lgebraic {C}ombinatorics ({FPSAC} 2014)},
    SERIES = {Discrete Math. Theor. Comput. Sci. Proc.},
    VOLUME = {AT},
     PAGES = {513--524},
 PUBLISHER = {Assoc. Discrete Math. Theor. Comput. Sci., Nancy},
      YEAR = {2014},
   MRCLASS = {06A07 (05A19)},
  MRNUMBER = {3466399},
}

@article {einstein2021combinatorial,
    AUTHOR = {Einstein, David and Propp, James},
     TITLE = {Combinatorial, piecewise-linear, and birational homomesy for
              products of two chains},
   JOURNAL = {Algebr. Comb.},
  FJOURNAL = {Algebraic Combinatorics},
    VOLUME = {4},
      YEAR = {2021},
    NUMBER = {2},
     PAGES = {201--224},
      ISSN = {2589-5486},
   MRCLASS = {05E18 (06A07)},
  MRNUMBER = {4244370},
       DOI = {10.5802/alco.139},
       URL = {https://doi.org/10.5802/alco.139},
}

@misc{eu2026rowmotion,
    AUTHOR={Sen-Peng Eu and Vei-Cheng Hioe and Yi-Lin Lee},
    TITLE={Rowmotion on hook and two-row alt $\nu$-{T}amari lattices},
    YEAR={2026},
    HOWPUBLISHED={\arxiv{2605.29431}}
}

@article {garver2023minuscule,
    AUTHOR = {Garver, Alexander and Patrias, Rebecca and Thomas, Hugh},
     TITLE = {Minuscule reverse plane partitions via quiver representations},
   JOURNAL = {Selecta Math. (N.S.)},
  FJOURNAL = {Selecta Mathematica. New Series},
    VOLUME = {29},
      YEAR = {2023},
    NUMBER = {3},
     PAGES = {Paper No. 37, 48},
      ISSN = {1022-1824,1420-9020},
   MRCLASS = {16G20 (05E10)},
  MRNUMBER = {4581740},
MRREVIEWER = {Esther\ Banaian},
       DOI = {10.1007/s00029-023-00831-4},
       URL = {https://doi.org/10.1007/s00029-023-00831-4},
}

@article {grinberg2015birational,
    AUTHOR = {Grinberg, Darij and Roby, Tom},
     TITLE = {Iterative properties of birational rowmotion {II}: rectangles
              and triangles},
   JOURNAL = {Electron. J. Combin.},
  FJOURNAL = {Electronic Journal of Combinatorics},
    VOLUME = {22},
      YEAR = {2015},
    NUMBER = {3},
     PAGES = {Paper 3.40, 49},
      ISSN = {1077-8926},
   MRCLASS = {06A07 (05E99)},
  MRNUMBER = {3414186},
MRREVIEWER = {T.\ Kyle\ Petersen},
       DOI = {10.37236/4335},
       URL = {https://doi.org/10.37236/4335},
}

@article {hopkins2020cyclic,
    AUTHOR = {Hopkins, Sam},
     TITLE = {Cyclic sieving for plane partitions and symmetry},
   JOURNAL = {SIGMA Symmetry Integrability Geom. Methods Appl.},
  FJOURNAL = {SIGMA. Symmetry, Integrability and Geometry. Methods and
              Applications},
    VOLUME = {16},
      YEAR = {2020},
     PAGES = {Paper No. 130, 40},
      ISSN = {1815-0659},
   MRCLASS = {05E18 (05E10 17B10 17B37)},
  MRNUMBER = {4184618},
MRREVIEWER = {Vincenzo\ Chilla},
       DOI = {10.3842/SIGMA.2020.130},
       URL = {https://doi.org/10.3842/SIGMA.2020.130},
}

@article {hopkins2022minuscule,
    AUTHOR = {Hopkins, Sam},
     TITLE = {Minuscule doppelg\"angers, the coincidental down-degree
              expectations property, and rowmotion},
   JOURNAL = {Exp. Math.},
  FJOURNAL = {Experimental Mathematics},
    VOLUME = {31},
      YEAR = {2022},
    NUMBER = {3},
     PAGES = {946--974},
      ISSN = {1058-6458,1944-950X},
   MRCLASS = {06A07 (05E18 06A11)},
  MRNUMBER = {4477416},
MRREVIEWER = {Junyao\ Pan},
       DOI = {10.1080/10586458.2020.1731881},
       URL = {https://doi.org/10.1080/10586458.2020.1731881},
}

@incollection {hopkins2024order,
    AUTHOR = {Hopkins, Sam},
     TITLE = {Order polynomial product formulas and poset dynamics},
 BOOKTITLE = {Open problems in algebraic combinatorics},
    SERIES = {Proc. Sympos. Pure Math.},
    VOLUME = {110},
     PAGES = {135--157},
 PUBLISHER = {Amer. Math. Soc., Providence, RI},
      YEAR = {2024},
      ISBN = {[9781470473334]; [9781470477974]},
   MRCLASS = {06A07 (05A15 05A19 05E10 05E18)},
  MRNUMBER = {4780728},
MRREVIEWER = {Esther\ Banaian},
}

@misc{hopkins2026cyclic,
    AUTHOR={Sam Hopkins and Jesse Kim and Stephan Pfannerer},
    TITLE={Cyclic sieving for staircase plane partitions via crystals and electrical networks},
    YEAR={2026},
    HOWPUBLISHED={\arxiv{2607.14028}}
}

@article {iyama2022distributive,
    AUTHOR = {Iyama, Osamu and Marczinzik, Ren\'e},
     TITLE = {Distributive lattices and {A}uslander regular algebras},
   JOURNAL = {Adv. Math.},
  FJOURNAL = {Advances in Mathematics},
    VOLUME = {398},
      YEAR = {2022},
     PAGES = {Paper No. 108233, 27},
      ISSN = {0001-8708,1090-2082},
   MRCLASS = {16E10 (06D05 16G10)},
  MRNUMBER = {4379204},
MRREVIEWER = {Chao\ Zhang},
       DOI = {10.1016/j.aim.2022.108233},
       URL = {https://doi.org/10.1016/j.aim.2022.108233},
}

@article {johnson2024birational,
    AUTHOR = {Johnson, Joseph and Liu, Ricky Ini},
     TITLE = {Birational rowmotion and the octahedron recurrence},
   JOURNAL = {Algebr. Comb.},
  FJOURNAL = {Algebraic Combinatorics},
    VOLUME = {7},
      YEAR = {2024},
    NUMBER = {5},
     PAGES = {1453--1477},
      ISSN = {2589-5486},
   MRCLASS = {05E10 (06A07)},
  MRNUMBER = {4818780},
MRREVIEWER = {Nohra\ Hage},
       DOI = {10.5802/alco.385},
       URL = {https://doi.org/10.5802/alco.385},
}

@article {johnson2025piecewise,
    AUTHOR = {Johnson, Joseph and Liu, Ricky Ini},
     TITLE = {Piecewise-linear promotion and {RSK} in rectangles and moon
              polyominoes},
   JOURNAL = {Comb. Theory},
  FJOURNAL = {Combinatorial Theory},
    VOLUME = {5},
      YEAR = {2025},
    NUMBER = {3},
     PAGES = {Paper No. 9, 36},
      ISSN = {2766-1334},
   MRCLASS = {05E18 (05A05 05A19 52B05)},
  MRNUMBER = {4962123},
}

@misc{johnson2023trapezoid,
    AUTHOR={Joseph Johnson and Ricky Ini Liu},
    TITLE={Plane partitions and rowmotion on rectangular and trapezoidal posets},
    YEAR={2023},
    HOWPUBLISHED={\arxiv{2311.07133}}
}

@article{jonsson2005generalized,
    AUTHOR = {Jonsson, Jakob},
     TITLE = {Generalized triangulations and diagonal-free subsets of stack
              polyominoes},
   JOURNAL = {J. Combin. Theory Ser. A},
  FJOURNAL = {Journal of Combinatorial Theory. Series A},
    VOLUME = {112},
      YEAR = {2005},
    NUMBER = {1},
     PAGES = {117--142},
      ISSN = {0097-3165},
     CODEN = {JCBTA7},
   MRCLASS = {05A15 (05B50)},
  MRNUMBER = {2167478 (2006d:05011)},
MRREVIEWER = {Eric S. Egge},
       DOI = {10.1016/j.jcta.2005.01.009},
       FURL = {http://dx.doi.org/10.1016/j.jcta.2005.01.009},
}

@article{joseph2019antichain,
    AUTHOR = {Joseph, Michael},
     TITLE = {Antichain toggling and rowmotion},
   JOURNAL = {Electron. J. Combin.},
  FJOURNAL = {Electronic Journal of Combinatorics},
    VOLUME = {26},
      YEAR = {2019},
    NUMBER = {1},
     PAGES = {Paper No. 1.29, 43},
      ISSN = {1077-8926},
   MRCLASS = {05E18 (06A07 20B25)},
  MRNUMBER = {3919614},
       DOI = {10.37236/7454},
       URL = {https://doi.org/10.37236/7454},
}

@misc{klasz2025auslander,
    AUTHOR={Vikt\'{o}ria Kl\'{a}sz and  Rene Marczinzik and Hugh Thomas},
    TITLE={{A}uslander regular algebras and {C}oxeter matrices},
    YEAR={2025},
    HOWPUBLISHED={\arxiv{2501.09447}},
    NOTE={Forthcoming, \emph{Algebra Number Theory}}
}

@article{markowsky1992primes,
    AUTHOR = {Markowsky, George},
     TITLE = {Primes, irreducibles and extremal lattices},
   JOURNAL = {Order},
  FJOURNAL = {Order. A Journal on the Theory of Ordered Sets and its
              Applications},
    VOLUME = {9},
      YEAR = {1992},
    NUMBER = {3},
     PAGES = {265--290},
      ISSN = {0167-8094,1572-9273},
   MRCLASS = {06B05 (06D05)},
  MRNUMBER = {1211380},
       DOI = {10.1007/BF00383950},
       URL = {https://doi.org/10.1007/BF00383950},
}

@article {muhle2019core,
    AUTHOR = {M\"uhle, Henri},
     TITLE = {The core label order of a congruence-uniform lattice},
   JOURNAL = {Algebra Universalis},
  FJOURNAL = {Algebra Universalis},
    VOLUME = {80},
      YEAR = {2019},
    NUMBER = {1},
     PAGES = {Paper No. 10, 22},
      ISSN = {0002-5240,1420-8911},
   MRCLASS = {06B05 (06A07)},
  MRNUMBER = {3908324},
MRREVIEWER = {Keith\ A.\ Kearnes},
       DOI = {10.1007/s00012-019-0585-5},
       URL = {https://doi.org/10.1007/s00012-019-0585-5},
}

@misc{mueller2026combinatorial,
    AUTHOR={Matthias M\"{u}ller},
    TITLE={A combinatorial model for the canonical join complex of alt $\nu$-{T}amari lattices},
    YEAR={2026},
    HOWPUBLISHED={\arxiv{2605.13770}}
}

@article {panyushev2009orbits,
    AUTHOR = {Panyushev, Dmitri I.},
     TITLE = {On orbits of antichains of positive roots},
   JOURNAL = {European J. Combin.},
  FJOURNAL = {European Journal of Combinatorics},
    VOLUME = {30},
      YEAR = {2009},
    NUMBER = {2},
     PAGES = {586--594}
}

@article {preville2017nutamari,
    AUTHOR = {Pr\'{e}ville-Ratelle, Louis-Fran\c{c}ois and Viennot, Xavier},
     TITLE = {The enumeration of generalized {T}amari intervals},
   JOURNAL = {Trans. Amer. Math. Soc.},
  FJOURNAL = {Transactions of the American Mathematical Society},
    VOLUME = {369},
      YEAR = {2017},
    NUMBER = {7},
     PAGES = {5219--5239},
      ISSN = {0002-9947,1088-6850},
       DOI = {10.1090/tran/7004},
       URL = {https://doi.org/10.1090/tran/7004},
       NOTE = {Correct title, ``An extension of Tamari lattices''}
}

@article {propp2015homomesy,
    AUTHOR = {Propp, James and Roby, Tom},
     TITLE = {Homomesy in products of two chains},
   JOURNAL = {Electron. J. Combin.},
  FJOURNAL = {Electronic Journal of Combinatorics},
    VOLUME = {22},
      YEAR = {2015},
    NUMBER = {3},
     PAGES = {Paper 3.4, 29},
      ISSN = {1077-8926},
   MRCLASS = {05E18 (06A07 06A11)},
  MRNUMBER = {3367853},
MRREVIEWER = {Konrad\ P.\ Pi\'oro},
       DOI = {10.37236/3579},
       URL = {https://doi.org/10.37236/3579},
}

@article {reading2011noncrossing,
    AUTHOR = {Reading, Nathan},
     TITLE = {Noncrossing partitions and the shard intersection order},
   JOURNAL = {J. Algebraic Combin.},
  FJOURNAL = {Journal of Algebraic Combinatorics. An International Journal},
    VOLUME = {33},
      YEAR = {2011},
    NUMBER = {4},
     PAGES = {483--530},
      ISSN = {0925-9899,1572-9192},
   MRCLASS = {06A07 (05A18 05E15 20F55)},
  MRNUMBER = {2781960},
       DOI = {10.1007/s10801-010-0255-3},
       URL = {https://doi.org/10.1007/s10801-010-0255-3},
}

@article {reiner2004cyclic,
    AUTHOR = {Reiner, Vic and Stanton, Dennis and White, Dennis},
     TITLE = {The cyclic sieving phenomenon},
   JOURNAL = {J. Combin. Theory Ser. A},
  FJOURNAL = {Journal of Combinatorial Theory. Series A},
    VOLUME = {108},
      YEAR = {2004},
    NUMBER = {1},
     PAGES = {17--50},
      ISSN = {0097-3165,1096-0899},
   MRCLASS = {05A15 (05A30 05E99 20B05 20F55)},
  MRNUMBER = {2087303},
MRREVIEWER = {Timothy\ Y.\ Chow},
       DOI = {10.1016/j.jcta.2004.04.009},
       URL = {https://doi.org/10.1016/j.jcta.2004.04.009},
}

@incollection {roby2016dac,
    AUTHOR = {Roby, Tom},
     TITLE = {Dynamical algebraic combinatorics and the homomesy phenomenon},
 BOOKTITLE = {Recent trends in combinatorics},
    SERIES = {IMA Vol. Math. Appl.},
    VOLUME = {159},
     PAGES = {619--652},
 PUBLISHER = {Springer, [Cham]},
      YEAR = {2016},
      ISBN = {978-3-319-24296-5; 978-3-319-24298-9},
   MRCLASS = {05-02 (05E18 06A07)},
  MRNUMBER = {3526426},
       DOI = {10.1007/978-3-319-24298-9\_25},
       URL = {https://doi.org/10.1007/978-3-319-24298-9_25},
}

@article {rubey2011increasing,
    AUTHOR = {Rubey, Martin},
     TITLE = {Increasing and decreasing sequences in fillings of moon
              polyominoes},
   JOURNAL = {Adv. in Appl. Math.},
  FJOURNAL = {Advances in Applied Mathematics},
    VOLUME = {47},
      YEAR = {2011},
    NUMBER = {1},
     PAGES = {57--87},
      ISSN = {0196-8858,1090-2074},
   MRCLASS = {05E10 (05A19 05B50)},
  MRNUMBER = {2799611},
       DOI = {10.1016/j.aam.2009.11.013},
       URL = {https://doi.org/10.1016/j.aam.2009.11.013},
}

@article {rubey2012maximal,
    AUTHOR = {Rubey, Martin},
     TITLE = {Maximal {$0$}-{$1$}-fillings of moon polyominoes with
              restricted chain lengths and rc-graphs},
   JOURNAL = {Adv. in Appl. Math.},
  FJOURNAL = {Advances in Applied Mathematics},
    VOLUME = {48},
      YEAR = {2012},
    NUMBER = {2},
     PAGES = {290--305},
      ISSN = {0196-8858,1090-2074},
   MRCLASS = {05B50 (05A19 05E05 06A07)},
  MRNUMBER = {2873877},
MRREVIEWER = {Svetlana\ Poznanovi\'c},
       DOI = {10.1016/j.aam.2011.05.005},
       URL = {https://doi.org/10.1016/j.aam.2011.05.005},
}

@article {rush2013orbits,
    AUTHOR = {Rush, David B. and Shi, XiaoLin},
     TITLE = {On orbits of order ideals of minuscule posets},
   JOURNAL = {J. Algebraic Combin.},
  FJOURNAL = {Journal of Algebraic Combinatorics. An International Journal},
    VOLUME = {37},
      YEAR = {2013},
    NUMBER = {3},
     PAGES = {545--569},
      ISSN = {0925-9899,1572-9192},
   MRCLASS = {06A06 (05E15)},
  MRNUMBER = {3035516},
       DOI = {10.1007/s10801-012-0380-2},
       URL = {https://doi.org/10.1007/s10801-012-0380-2},
}

@incollection {sagan2011cyclic,
    AUTHOR = {Sagan, Bruce E.},
     TITLE = {The cyclic sieving phenomenon: a survey},
 BOOKTITLE = {Surveys in combinatorics 2011},
    SERIES = {London Math. Soc. Lecture Note Ser.},
    VOLUME = {392},
     PAGES = {183--233},
 PUBLISHER = {Cambridge Univ. Press, Cambridge},
      YEAR = {2011},
      ISBN = {978-1-107-60109-3},
   MRCLASS = {05-02 (05A05 05E10 05E15)},
  MRNUMBER = {2866734},
MRREVIEWER = {T.\ Kyle\ Petersen},
}

@manual{Sage,
  Key          = {SageMath},
  Author       = {{Sage Developers}},
  Title        = {{S}ageMath, the {S}age {M}athematics {S}oftware {S}ystem ({V}ersion 10.6)},
  note         = {\url{https://www.sagemath.org}},
  Year         = {2025},
}

@book {stanley2015catalan,
    AUTHOR = {Stanley, Richard P.},
     TITLE = {Catalan numbers},
 PUBLISHER = {Cambridge University Press, New York},
      YEAR = {2015},
     PAGES = {viii+215},
      ISBN = {978-1-107-42774-7; 978-1-107-07509-2},
   MRCLASS = {05-01 (01A05 11B75 11B83)},
  MRNUMBER = {3467982},
MRREVIEWER = {David\ Callan},
       DOI = {10.1017/CBO9781139871495},
       URL = {https://doi.org/10.1017/CBO9781139871495},
}

@article{stanley1986two,
    AUTHOR = {Stanley, Richard P.},
     TITLE = {Two poset polytopes},
   JOURNAL = {Discrete Comput. Geom.},
  FJOURNAL = {Discrete \& Computational Geometry. An International Journal
              of Mathematics and Computer Science},
    VOLUME = {1},
      YEAR = {1986},
    NUMBER = {1},
     PAGES = {9--23},
      ISSN = {0179-5376,1432-0444},
   MRCLASS = {52A25 (52A40)},
  MRNUMBER = {824105},
MRREVIEWER = {P.\ McMullen},
       DOI = {10.1007/BF02187680},
       URL = {https://doi.org/10.1007/BF02187680},
}

@book {stanley2012ec1,
    AUTHOR = {Stanley, Richard P.},
     TITLE = {Enumerative combinatorics. {V}olume 1},
    SERIES = {Cambridge Studies in Advanced Mathematics},
    VOLUME = {49},
   EDITION = {Second},
 PUBLISHER = {Cambridge University Press, Cambridge},
      YEAR = {2012},
     PAGES = {xiv+626},
      ISBN = {978-1-107-60262-5},
   MRCLASS = {05-02 (05A15 06-02)},
  MRNUMBER = {2868112},
}

@article {striker2015tog,
    AUTHOR = {Striker, Jessica},
     TITLE = {The toggle group, homomesy, and the {R}azumov-{S}troganov
              correspondence},
   JOURNAL = {Electron. J. Combin.},
  FJOURNAL = {Electronic Journal of Combinatorics},
    VOLUME = {22},
      YEAR = {2015},
    NUMBER = {2},
     PAGES = {Paper 2.57, 17},
      ISSN = {1077-8926},
   MRCLASS = {05E18 (06A06 82B05)},
  MRNUMBER = {3367300},
       DOI = {10.37236/5158},
       URL = {https://doi.org/10.37236/5158},
}

@article {striker2017dac,
    AUTHOR = {Striker, Jessica},
     TITLE = {Dynamical algebraic combinatorics: promotion, rowmotion, and
              resonance},
   JOURNAL = {Notices Amer. Math. Soc.},
  FJOURNAL = {Notices of the American Mathematical Society},
    VOLUME = {64},
      YEAR = {2017},
    NUMBER = {6},
     PAGES = {543--549},
      ISSN = {0002-9920,1088-9477},
   MRCLASS = {05E99 (05E10)},
  MRNUMBER = {3585535},
       DOI = {10.1090/noti1539},
       URL = {https://doi.org/10.1090/noti1539},
}

@article {striker2012rowmotion,
    AUTHOR = {Striker, Jessica and Williams, Nathan},
     TITLE = {Promotion and rowmotion},
   JOURNAL = {European J. Combin.},
  FJOURNAL = {European Journal of Combinatorics},
    VOLUME = {33},
      YEAR = {2012},
    NUMBER = {8},
     PAGES = {1919--1942},
      ISSN = {0195-6698,1095-9971},
   MRCLASS = {06A07 (05A19)},
  MRNUMBER = {2950491},
MRREVIEWER = {Luca\ Ferrari},
       DOI = {10.1016/j.ejc.2012.05.003},
       URL = {https://doi.org/10.1016/j.ejc.2012.05.003},
}

@article {stump2025cataland,
    AUTHOR = {Stump, Christian and Thomas, Hugh and Williams, Nathan},
     TITLE = {Cataland: why the {F}u\ss?},
   JOURNAL = {Mem. Amer. Math. Soc.},
  FJOURNAL = {Memoirs of the American Mathematical Society},
    VOLUME = {305},
      YEAR = {2025},
    NUMBER = {1535},
     PAGES = {vii+143},
      ISSN = {0065-9266,1947-6221},
      ISBN = {978-1-4704-6314-4; 978-1-4704-8037-0},
   MRCLASS = {20F55 (05E10 16G10 20F36)},
  MRNUMBER = {4853267},
MRREVIEWER = {Himmet\ Can},
       DOI = {10.1090/memo/1535},
       URL = {https://doi.org/10.1090/memo/1535},
}

@article {thomas2019independence,
    AUTHOR = {Thomas, Hugh and Williams, Nathan},
     TITLE = {Independence posets},
   JOURNAL = {J. Comb.},
  FJOURNAL = {Journal of Combinatorics},
    VOLUME = {10},
      YEAR = {2019},
    NUMBER = {3},
     PAGES = {545--578},
      ISSN = {2156-3527,2150-959X},
   MRCLASS = {05C69 (06D75)},
  MRNUMBER = {3960513},
MRREVIEWER = {Konrad\ P.\ Pi\'oro},
       DOI = {10.4310/JOC.2019.v10.n3.a5},
       URL = {https://doi.org/10.4310/JOC.2019.v10.n3.a5},
}

@article {thomas2019rowmotion,
    AUTHOR = {Thomas, Hugh and Williams, Nathan},
     TITLE = {Rowmotion in slow motion},
   JOURNAL = {Proc. Lond. Math. Soc. (3)},
  FJOURNAL = {Proceedings of the London Mathematical Society. Third Series},
    VOLUME = {119},
      YEAR = {2019},
    NUMBER = {5},
     PAGES = {1149--1178},
      ISSN = {0024-6115,1460-244X},
   MRCLASS = {06D75 (05E15)},
  MRNUMBER = {3968720},
MRREVIEWER = {Konrad\ P.\ Pi\'oro},
       DOI = {10.1112/plms.12251},
       URL = {https://doi.org/10.1112/plms.12251},
}

@misc{williams2020personal,
    AUTHOR={Nathan Williams},
    TITLE={Piecewise-linear extension of independence poset rowmotion},
    YEAR={2020},
    HOWPUBLISHED={Personal communication}
}

\end{document}